\documentclass[reqno,11pt]{amsart}

\usepackage{a4wide}
\usepackage{color}
\usepackage{mathrsfs}
\usepackage{mathtools}
\usepackage{tikz-cd}
\usepackage{amsmath}
\usepackage{amssymb}
\usepackage{bbm}
\mathtoolsset{showonlyrefs}

\numberwithin{equation}{section}
\usepackage[colorlinks,citecolor=green,linkcolor=red]{hyperref}
\usepackage{hyphenat}
\usepackage{pgfplots}

\pgfplotsset{compat=1.18}
\usepgfplotslibrary{external}

\usepackage[latin1]{inputenc}
\newcommand{\mrestr}{%
	\,\raisebox{-.127ex}{\reflectbox{\rotatebox[origin=br]{-90}{$\lnot$}}}\,%
}

\newcommand{\N}{\mathbb{N}}
\newcommand{\R}{\mathbb{R}}
\newcommand{\sfd}{{\sf d}}
\renewcommand{\d}{{\mathrm d}}
\newcommand{\restr}[1]{\lower3pt\hbox{\(|_{#1}\)}}

\newcommand{\nchi}{{\raise.3ex\hbox{\(\chi\)}}}
\newcommand{\1}{\mathbbm 1}
\newcommand{\fr}{\penalty-20\null\hfill\(\blacksquare\)}
\newcommand{\X}{{\rm X}}

\newcommand{\mm}{\mathfrak m}

\newtheorem{theorem}{Theorem}[section]
\newtheorem{corollary}[theorem]{Corollary}
\newtheorem{lemma}[theorem]{Lemma}
\newtheorem{proposition}[theorem]{Proposition}
\newtheorem{definition}[theorem]{Definition}
\newtheorem{example}[theorem]{Example}
\newtheorem{remark}[theorem]{Remark}

\title[A Bochner-type integration theory for random normed modules]{A Bochner-type integration theory \\ for random normed modules}
\author{Andrea Kubin}
\address{Department of Mathematics and Statistics,
P.O.\ Box 35 (MaD), FI-40014 University of Jyvaskyla}
\email{andrea.a.kubin@jyu.fi}
\author{Enrico Pasqualetto}
\address{Department of Mathematics and Statistics,
P.O.\ Box 35 (MaD), FI-40014 University of Jyvaskyla}
\email{enrico.e.pasqualetto@jyu.fi}

\begin{document}
\date{\today} 
\keywords{Random normed module; $L^0$-Banach $L^0$-module; $L^0$-valued measure; random Lebesgue--Bochner space;
random Riesz--Markov--Kakutani theorem; random set of finite perimeter.}
\subjclass[2020]{46H25, 46G12, 60G48}
\begin{abstract}
We develop a measure and integration theory for random normed modules. Given a probability
space $({\rm X},\Sigma,\mathfrak m)$, we introduce and study measures taking values into the
space $L^0(\mathfrak m)$ of $\mathfrak m$-measurable functions quotiented up to
$\mathfrak m$-a.e.\ equality. Moreover, we develop a Bochner-type integration theory with respect
to an $L^0(\mathfrak m)$-valued measure $\mu$, for maps whose target ${\rm M}$ is a complete random
normed module with base $({\rm X},\Sigma,\mathfrak m)$, or equivalently an $L^0(\mathfrak m)$-Banach
$L^0(\mathfrak m)$-module. Inter alia, we prove versions of the Radon--Nikod\'{y}m theorem and of
the Riesz--Markov--Kakutani representation theorem for $L^0(\mathfrak m)$-valued measures.
We also outline several applications of our integration theory: we introduce a notion of martingale
with values in a complete random normed module, we propose a definition of random Radon--Nikod\'{y}m
property and we discuss random sets of finite perimeter.
\end{abstract}
\maketitle
\tableofcontents
\section{Introduction}
\subsection{General overview}\label{s:general_overview}
The classical \emph{Bochner integral} for maps defined on a measurable space \((\Omega,\mathcal A)\) and
with values in a Banach space \(\mathbb B\) is constructed by approximation via \emph{simple maps}.
Letting \(\mu\) be a real-valued measure on \((\Omega,\mathcal A)\) and \(v=\sum_{i=1}^n\1_{A_i}v_i\colon\Omega\to\mathbb B\)
a measurable simple map, we have that the Bochner integral \(\int v\,\d\mu\in\mathbb B\) is defined as \(\sum_{i=1}^n\mu(A_i)v_i\).
Crucially, this definition relies on the fact that \(\mu(A_i)\in\R\) and \(v_i\in\mathbb B\) can be multiplied,
and that the elements \(\mu(A_i)v_i\) can be added, due to the fact that \(\mathbb B\) is a vector space.
Somehow dually, it is possible -- the so-called \emph{Bartle integral} -- to integrate real-valued measurable
maps \(f\colon\Omega\to\R\) with respect to a vector measure \(\nu\colon\mathcal A\to\mathbb B\).
In this case, the integral of a simple function \(f=\sum_{i=1}^n\1_{A_i}\lambda_i\colon\Omega\to\R\)
is given by \(\sum_{i=1}^n\lambda_i\,\nu(A_i)\in\mathbb B\). Here, the scalar \(\lambda_i\in\R\) is multiplied
by the vector \(\nu(A_i)\in\mathbb B\). We refer to \cite{DiestelUhl} for a detailed account of these topics.
\medskip

The goal of this paper is to develop a Bochner-type integration theory for maps taking values in a complete
\emph{random normed module} \(({\rm M},|\cdot|)\) whose base is a probability space \((\X,\Sigma,\mm)\). In short,
\({\rm M}\) is a module over the commutative ring \(L^0(\mm)\) of real-valued \(\mm\)-measurable functions
(considered up to \(\mm\)-a.e.\ equality) that is equipped with a \emph{random norm}
\(|\cdot|\colon{\rm M}\to L^0_+(\mm)\); see Section \ref{s:about_RNM} for historical remarks about this notion
and its applications, and Section \ref{s:RNM} for a brief technical account of the theory of random normed modules.
The basic idea our random-normed-module-valued theory of integration relies on is the following: since the elements
of \({\rm M}\) can be multiplied by functions in \(L^0(\mm)\), it is natural to integrate a map \(v\colon\Omega\to{\rm M}\)
with respect to an \emph{\(L^0\)-valued measure} \(\mu\colon\mathcal A\to L^0(\mm)\), which is a concept that we
will introduce and study in Section \ref{s:L0-val_meas}. By analogy with the classical Bochner integration theory,
we define the integral of a measurable simple map \(v=\sum_{i=1}^n\1_{A_i}v_i\colon\Omega\to{\rm M}\) as
\[
\int v\cdot\d\mu\coloneqq\sum_{i=1}^n\mu(A_i)\cdot v_i\in{\rm M}.
\]
By an approximation procedure, we then define the Lebesgue--Bochner-type space \(L^1_\mu(\Omega;{\rm M})\);
see Section \ref{s:M-val_int}. Before delving into a more detailed description of the contents of this paper,
we provide a brief outline of our main achievements:
\begin{itemize}
\item We thoroughly investigate \(L^0\)-valued measures in Section \ref{s:L0-val_meas}.
\item We develop the Bochner-type integration for random normed modules in Section \ref{s:M-val_int}.
\item We establish several related useful results, for instance a version of the \emph{Radon--Nikod\'{y}m theorem}
for \(L^0\)-valued measures (Section \ref{s:Radon-Nikodym_thm}) and the \emph{random Riesz--Markov--Kakutani
representation theorem} (Section \ref{s:RMK}).
\item We set-up some new tools that involve \(L^0\)-valued measures and
\({\rm M}\)-valued integrable maps. For example, we introduce \emph{\({\rm M}\)-valued martingales}, as well as
\emph{random sets of finite perimeter}; see Section \ref{s:appl}.
\end{itemize}
It is worth pointing out that the theory of random normed modules is in fact an extension of the Banach space theory:
if \(\X\) is a one-point space \(\{o\}\) and \(\mm\) is the Dirac delta \(\delta_o\),
then \(L^0(\delta_o)\) can be canonically identified with the real line \(\R\), and thus the complete random normed
modules with base \((\{o\},\{\varnothing,\{o\}\},\delta_o)\) can be identified with the Banach spaces (exactly with
the same algebraic and metric structures). Let us also highlight that the structure of \(L^0(\mm)\) is typically
more complicated than that of \(\R\), and this affects the kinds of results for random-normed-module-valued measures and
integrals that one might expect to hold, as well as their proofs. For example, the fact that \(L^0(\mm)\) -- differently
from \(\R\) -- is not totally ordered is responsible for the lack of any Hahn-type decomposition for \(L^0\)-valued
measures, as we will discuss in Section \ref{s:L0-val_meas_basic}. Due to this and other reasons, it was not even clear
how to formulate some of the generalisations of classical results for Banach spaces to the random normed module setting,
e.g.\ which are the assumptions in the random Radon--Nikod\'{y}m Theorem \ref{thm:RN}, or what is the natural candidate
for an isometric predual of the space of Radon \(L^0\)-valued measures over a compact Hausdorff space in the random
Riesz--Markov--Kakutani Theorem \ref{thm:RMK}. Moreover, some of the proofs will require new ideas and techniques
if compared to their Banach space counterparts, as we will see throughout the paper.
\subsection{About random normed modules}\label{s:about_RNM}
Random normed modules, which lie at the core of the so-called \emph{random functional analysis}, were developed
by Guo in \cite{guo1989theory,guo1992,guo1993} starting from ideas that date back to the study of probabilistic
metric spaces initiated by Menger, Schweizer and Sklar (see \cite{SS1983}). A central object of study in the
theory of random normed modules is the \emph{random conjugate space}, which was studied e.g.\ in
\cite{Guo1996,GuoYou96,Guo2000,GuoLi05,Guo2008,GuoZeng10}. The larger class of \emph{random locally convex modules}
was then introduced by Guo in \cite{guo1999} and studied e.g.\ in \cite{guo2013,GuoZhangWuYangYuanZeng17}.
In recent years, one of the main topics of research about random normed and locally convex modules is the
\emph{random fixed point theory}, which has been investigated e.g.\ in the works
\cite{GuoWangYangZhang20,GuoZhangWangGuo20,TuMuGuo24,TuMuGuoYangSun25,SunGuoTu25,GuoWangXuYuanChen25}.
See also \cite{GuoTuMuSun26} for a survey on random fixed point theory.
\medskip

Later and independently, Gigli developed in \cite{gigli2018nonsmooth} (see also \cite{gigli2017})
the theory of \emph{\(L^0(\mm)\)-Banach \(L^0(\mm)\)-modules}, taking inspiration from Weaver's work
\cite{Weaver99}, with the aim of building a nonsmooth differential structure for metric measure spaces.
A couple of words on terminology: in \cite{gigli2018nonsmooth,gigli2017} the term `\(L^0(\mm)\)-normed \(L^0(\mm)\)-module'
was used instead, but here (as well as in some previous papers) we prefer to highlight that we assume completeness.
\(L^0(\mm)\)-Banach \(L^0(\mm)\)-modules provide a robust framework for defining different measurable tensor
fields, such as \(1\)-forms and vector fields, in the nonsmooth setting. Although complete random normed modules
and \(L^0(\mm)\)-Banach \(L^0(\mm)\)-modules are fully equivalent concepts, the latter have a somewhat different
interpretation, coming from differential geometry: their elements are thought of as the measurable sections of
some measurable bundle whose fibers are Banach spaces. The representation of \(L^0(\mm)\)-Banach \(L^0(\mm)\)-modules
as section spaces of (various kinds of) measurable Banach bundles was studied in \cite{LP19,DMLP25,GLP},
and the relation between measurable Banach bundles and their section spaces was investigated further in \cite{LPV22,CLP}.
The class of \(L^0(\mm)\)-Banach \(L^0(\mm)\)-modules was examined from a categorical perspective in \cite{Pas24}.
\medskip

Furthermore, other strictly related notions have been studied in the literature. For instance:
\begin{itemize}
\item The notion of \emph{randomly normed \(L^0\)-module}, which is consistent with Guo's theory
of random normed modules, was developed by Haydon, Levy and Raynaud in \cite{HLR91} as 
a tool for studying ultraproducts of Lebesgue--Bochner spaces.
\item Filipovi\'{c}, Kupper and Vogelpoth \cite{FilKupVog09} introduced the \emph{locally \(L^0\)-convex modules}
in \cite{FilKupVog09}, as a framework for studying conditional convex risk measures. The locally \(L^0\)-convex
topology on a random locally convex module was investigated in \cite{WuGuo15,Zapata17}, and its relation with
the topology usually considered on them -- the so-called \emph{\((\varepsilon,\lambda)\)-topology} --
was then understood in \cite{Guo10}. These two topologies and their mutual connections 
have been widely used in mathematical finance (see e.g.\ \cite{Guo-2011,GuoZhaoZeng14,GuoZhaoZeng15,Guo24}).
\item In the works \cite{gigli2018nonsmooth,gigli2017} the focus was on \emph{\(L^p(\mm)\)-Banach \(L^\infty(\mm)\)-modules}
of exponent \(p\in[1,\infty]\), which are normed modules that satisfy an integrability condition.
Similar structures have been considered in the framework of Dirichlet forms (see e.g.\ \cite{IonescuRogersTeplayev12,HinzRocknerTeplyaev13}).
\item In \cite{CGP} the second named author -- together with Debin and Gigli -- introduced and studied the class
of \emph{\(L^0({\rm Cap})\)-Banach \(L^0({\rm Cap})\)-modules}, where \({\rm Cap}\) denotes the
\emph{Sobolev \(2\)-capacity} (which is an outer measure, but not a \(\sigma\)-additive measure)
over a given metric measure space. This specific variant of normed module turned out to be of fundamental
importance in understanding the fine differential structure of \emph{\({\sf RCD}\) spaces},
which are metric measure spaces with synthetic lower Ricci curvature and upper dimension bounds;
see e.g.\ \cite{BruePasqualettoSemola22,BreGig23}.
\item \emph{Normed \(A\)-modules} in the sense of
\cite{CerreiaVioglioKupperMaccheroniMarinacciVogelpoth16,CerreiaVioglioMaccheroniMarinacci17}, where \(A\)
is a suitable \(f\)-algebra, were studied in connection with mathematical models in finance.
\item The axiomatic theory of normed modules developed in \cite{luvcic2024axiomatic} covers
essentially all the normed-module-type structures that are mentioned above.
\end{itemize}
\subsection{Contents of the paper}
We now describe in details definitions and results that we will encounter in this paper.
Fix a probability space \((\X,\Sigma,\mm)\) and a measurable space \((\Omega,\mathcal A)\).
\subsubsection*{\(L^0\)-valued and \({\rm M}\)-valued measures}
In Definition \ref{def:L0_meas} we introduce our notion of \emph{\(L^0\)-valued measure}
\(\mu\colon\mathcal A\to L^0(\mm)\), whose characterising property is the \(\sigma\)-additivity condition
\[
\mu\bigg(\bigcup_{n\in\N}A_n\bigg)=\sum_{n\in\N}\mu(A_n)\quad\text{ for every sequence }(A_n)_{n\in\N}\subseteq\mathcal A
\text{ of pairwise disjoint sets,}
\]
where sums are intended in a suitable sense. The space of those \(L^0\)-valued measures \(\mu\)
\emph{of bounded variation}, which we define in Definition \ref{def:L0_meas_bv}, is a complete random normed module
\[
(\mathcal M(\Omega;L^0(\mm)),|\cdot|_{\rm TV})
\]
with base \((\X,\Sigma,\mm)\); see Lemma \ref{lem:L0-val_meas_RNM}. A distinguished class of \(L^0\)-valued measures
is the one of those \(\mu\in\mathcal M(\Omega;L^0(\mm))\) that can be \emph{foliated}, which means -- roughly speaking
-- that there exists a measurable selection \(\X\ni x\mapsto\mu_x\) of real-valued measures on \((\Omega,\mathcal A)\)
such that \(\mu(A)\) is the \(\mm\)-a.e.\ equivalence class of \(\X\ni x\mapsto\mu_x(A)\) for every \(A\in\mathcal A\);
see Definition \ref{def:foliation_meas}.
\medskip

In the special case where \(\Omega=(\Omega,\tau)\) is a Hausdorff topological space and \(\mathcal A\)
is the Borel \(\sigma\)-algebra \(\mathscr B(\Omega)\) of \(\Omega\), we call \emph{Borel \(L^0\)-valued measures}
the elements of \(\mathcal M(\Omega;L^0(\mm))\). By taking inspiration from the classical measure theory, we also
propose a notion of \emph{Radon \(L^0\)-valued measure} in Definition \ref{def:RadonL0_meas}. Radon \(L^0\)-valued
measures form the complete random normed submodule \((\mathfrak M(\Omega;L^0(\mm)),|\cdot|_{\rm TV})\) of
\(\mathcal M(\Omega;L^0(\mm))\). In Section \ref{s:Radon_L0-val_meas} we prove the basic properties of Radon
\(L^0\)-valued measures. We also study \emph{outer \(L^0\)-valued measures} and \emph{Baire \(L^0\)-valued measures}
in Sections \ref{s:outer_L0-val_meas} and \ref{s:Baire_L0-val_meas}, respectively, as auxiliary tools for the
construction of Radon \(L^0\)-valued measures in the proof of the random Riesz--Markov--Kakutani theorem.
\medskip

More generally, in Section \ref{s:M-val_meas} we discuss \emph{\(\rm M\)-valued measures}, where
\(({\rm M},|\cdot|)\) is a complete random normed module with base \((\X,\Sigma,\mm)\). One aspect
of the theory of \(\rm M\)-valued measures that is not an easy adaptation of its \(L^0\)-valued counterpart
is the construction of a foliation. Even the definition of the latter is non-trivial, since it requires
that the module \({\rm M}\) under consideration is the section space of some measurable Banach bundle;
see Section \ref{s:M-val_foliat}.
\subsubsection*{Module-valued Bochner integration}
In Section \ref{s:M-val_int} we develop our Bochner integration theory for maps \(v\colon\Omega\to{\rm M}\)
with respect to a given \(L^0\)-valued measure \(\mu\in\mathcal M(\Omega;L^0(\mm))\). As we mentioned in
Section \ref{s:general_overview}, we first define the integral of all measurable simple maps \(v\) and then
we obtain the \emph{module-valued Lebesgue--Bochner space} \(L^1_\mu(\Omega;{\rm M})\) via a completion
procedure; see Definition \ref{def:M-val_Leb-Boch}. It is not clear to us whether all the elements of
\(L^1_\mu(\Omega;{\rm M})\) -- which are defined through an abstract procedure -- can be represented
as (equivalence classes of) measurable maps from \(\Omega\) to \({\rm M}\). We prove in Section
\ref{s:ptwse_descr_L_1} that this is possible at least in the case where \(\mu\) can be foliated.
We also study the spaces \(L^p_\mu(\Omega;{\rm M})\) for \(p=\infty\) and \(1<p<\infty\) in Sections
\ref{s:Linfty_to_M} and \ref{s:Lp_to_M}, respectively, while in Section \ref{s:L1_tensor_prod} we characterise
\(L^1_\mu(\Omega;{\rm M})\) as a projective tensor product of random normed modules (developed by the
second named author in \cite{Pas23}), thus extending a well-known result for classical Lebesgue--Bochner spaces.
\medskip

An important result concerning \(L^0\)-valued measures and \(L^0\)-valued integrals that we achieve
is a version of the \emph{Radon--Nikod\'{y}m theorem} for random normed modules; see Theorem \ref{thm:RN}.
The result states that, given two \(L^0\)-valued measures of bounded variation \(\mu\) and \(\nu\) on
\((\Omega,\mathcal A)\) such that \(\mu\) satisfies an absolute-continuity-type condition with respect to \(\nu\),
then \(\mu\) can be represented in the integral form
\[
\mu(A)=\int_A\delta\cdot\d\nu\quad\text{ for every }A\in\mathcal A,
\]
for some `random Radon--Nikod\'{y}m derivative' \(\delta\in L^1_\nu(\Omega;L^0(\mm))\). More precisely, the
absolute continuity assumption of Theorem \ref{thm:RN} is that \(\hat\mu\ll\hat\nu\), where \(\hat\mu\) and
\(\hat\nu\) are real-valued measures on the product space \((\Omega\times\X,\mathcal A\otimes\Sigma)\) that
are canonically induced by \(\mu\) and \(\nu\), respectively, as in Proposition \ref{prop:def_hat_mu}. We do
not know whether the random Radon--Nikod\'{y}m theorem is valid under the (possibly strictly) less stringent and
seemingly more natural assumption that \(\mu\ll\nu\), by which we mean that \(\mu(N)=0\in L^0(\mm)\) whenever
\(N\in\Sigma\) satisfies \(\nu(N)=0\in L^0(\mm)\).
\subsubsection*{The random Riesz--Markov--Kakutani theorem}
A considerable part of this paper -- namely, the whole Section \ref{s:RMK} -- is devoted to the
study of the \emph{random Riesz--Markov--Kakutani theorem}, which consists of
the identification of a random normed module whose random conjugate space is isometrically isomorphic
to the space \(\mathfrak M(\Omega;L^0(\mm))\) of all Radon \(L^0\)-valued measures over a given
compact Hausdorff space \((\Omega,\tau)\). The classical Riesz--Markov--Kakutani theorem states
that the Banach space dual of \(C(\Omega)\) is the Banach space of real-valued Radon measures
\(\mathfrak M(\Omega)\). In the more general context of random normed modules, there are several
different random normed modules that can be regarded as generalisations of the space \(C(\Omega)\)
(see \cite[Section 3.4]{Pas23}), thus it was not a priori clear which could be a good candidate
for a predual of \(\mathfrak M(\Omega;L^0(\mm))\). We have identified in the random normed module
\({\rm UC}_{\rm ord}(\Omega;L^0(\mm))\) of all \emph{uniformly-order-continuous} maps \(f\colon\Omega\to L^0(\mm)\)
(see Definition \ref{def:UC_ord}) an isometric predual of the space \(\mathfrak M(\Omega;L^0(\mm))\). Namely,
we will prove in Theorem \ref{thm:RMK} that
\[
{\rm UC}_{\rm ord}(\Omega;L^0(\mm))^*\text{ and }\mathfrak M(\Omega;L^0(\mm))\text{ are isometrically isomorphic}
\]
for every compact Hausdorff space \((\Omega,\tau)\). In Section \ref{s:UC_ord} we study the space
\({\rm UC}_{\rm ord}(\Omega;{\rm M})\), previously introduced by the second named author
in \cite[Definition 3.13]{Pas23}, in its own right. Random normed modules are equipped with two
distinguished uniform structures: the one associated to the \emph{\((\varepsilon,\lambda)\)-topology}
\(\mathcal T_{\varepsilon,\lambda}\) introduced by Guo \cite{guo1989theory}  and considered also by Gigli
\cite{gigli2018nonsmooth}, and the \emph{locally \(L^0\)-convex topology} \(\mathcal T_c\)
introduced by Filipovi\'{c}, Kupper and Vogelpoth \cite{FilKupVog09}; see Remark \ref{rmk:two_unif_struct}.
Nevertheless, we show that -- interestingly -- the maps \(f\colon\Omega\to L^0(\mm)\) belonging to
\({\rm UC}_{\rm ord}(\Omega;L^0(\mm))\) are (in general) neither those that are uniformly continuous with
respect to the \((\varepsilon,\lambda)\)-uniformity (Example \ref{ex:UC_1}) nor those that are uniformly
continuous with respect to the locally \(L^0\)-convex uniformity (Example \ref{ex:UC_2}). In fact, it is
not even clear whether the uniform order continuity is the uniform continuity with respect to
some uniformity on \(L^0(\mm)\).
\medskip

In Section \ref{s:aux_RMK} we collect many auxiliary technical statements, while in Section \ref{s:proof_RMK}
we provide two different proofs of the random Riesz--Markov--Kakutani theorem. In the particular case where
\((\Omega,\sfd)\) is a compact metric space, we first present an argument that relies on the von Neumann theory of
liftings, where we essentially reduce the proof to the classical Riesz--Markov--Kakutani theorem for Banach spaces.
The same arguments also allow us to show that every Radon \(L^0\)-valued measure on \((\Omega,\sfd)\) can be
foliated (Corollary \ref{corolmainthm}), along with an alternative, `pointwise' characterisation of
\(\mathfrak M(\Omega;L^0(\mm))\) (Theorem \ref{thm:charact_foliation_cpt_metric}). We then present another
proof for the general case where \((\Omega,\tau)\) is a compact Hausdorff space. Shortly said, this second
argument is based on a reduction to the case where \((\Omega,\tau)\) is extremally disconnected, which can
be more easily established. This technique is adapted from Hartig's paper \cite{Hartig83}, where the argument
is attributed to Garling.
\subsubsection*{Applications}
In Section \ref{s:appl} we set-up some potential applications of our measure and integration theory for
random normed modules. More specifically:
\begin{itemize}
\item In Section \ref{s:random_martingale} we introduce \emph{random-normed-module-valued martingales},
see Definition \ref{def:RNM_martingale}. To show their well-posedness, we first need to prove the existence of
\emph{conditional expectations} for maps with values in a random normed module; see Theorem \ref{thm:E_M}.
\item In Section \ref{s:rRNP} we propose a definition of \emph{random Radon--Nikod\'{y}m property},
see Definition \ref{def:RNP}. We also discuss some possible reformulations of this condition, related to
module-valued martingales and to the differentiability of \emph{random Lipschitz curves}.
\item In Section \ref{s:random_FP_sets} we introduce the notion of \emph{random set of finite perimeter}
(Definition \ref{def:random_set_FP}) and we discuss some of its basic properties.
\end{itemize}
\subsection{Further research directions}
In addition to the applications that we have outlined above, we would like to discuss a few other potential
research directions. First, it would be interesting to compare our notions and results with others that are
available in the literature, for example with \emph{martingale-valued measures} (see e.g.\ \cite{Applebaum}),
with the theory of \emph{local vector measures} developed by Brena and Gigli in \cite{BreGig}, or with the
integration theory for module-valued maps developed by the second named author together with Caputo and
Gigli in \cite[Section 3.3]{CGP}. We expect that the last-mentioned concept of integration is consistent
with the one that we study in this paper; more precisely, the space \(L^1([0,1],\mathscr H)\) introduced
in \cite[Definition 3.15(i)]{CGP} -- for \(\mathscr H\) a separable Hilbertian random normed module
\(\mathscr H\) with base a metric measure space -- is likely to correspond to \(L^1_\mu([0,1];\mathscr H)\),
where as \(\mu\) we choose the `constant' \(L^0\)-valued measure induced by the Lebesgue measure
(as in Remark \ref{rmk:lambda^c}).
\medskip

Another possible research direction, which is one of the main motivations behind our newly-developed
integration theory, is to make further progress in \(L^0\)-convex analysis. In the paper \cite{GuoWangTang23},
Guo, Wang and Tang proved a Krein--Milman theorem for random locally convex modules.
A further step could be to obtain a version of the
\emph{Choquet(--Bishop--de Leeuw) theorem} \cite[Chapter 4]{Phelps03} for random locally convex modules
(or at least for random normed modules) by leveraging our random Lebesgue--Bochner space machinery
(or a suitable variant of it), and to explore its consequences.
\subsection{Main notation}
We provide a list of non-standard symbols that appear in this paper.
\medskip

\halign{#\quad&#\hfil\cr
\((\X,\bar\Sigma_\mm,\bar\mm)\) & completion of a probability space \((\X,\Sigma,\mm)\); Section \ref{s:meas_theory}.\cr
\(\1_E^\mm\) & \(\mm\)-a.e.\ equivalence class of the characteristic function \(\1_E\) of \(E\);
\eqref{eq:1_E_mm}.\cr
\({\rm M}^*\) & random conjugate space of a random normed module \({\rm M}\); Section \ref{s:RNM}.\cr
\(\bar{\rm M}\) & \(L^0\)-completion of a random normed module \({\rm M}\); Theorem \ref{thm:L0-complet}.\cr
\(\mathcal P_f(S)\) & family of all finite subsets of a set \(S\); Section \ref{s:RNM}.\cr
\(\lambda^c\) & \(L^0\)-valued measure induced by a real-valued measure \(\lambda\); Remark \ref{rmk:lambda^c}.\cr
\({|\cdot|_{\rm TV}}\) & \(L^0\)-total variation \(L^0\)-norm; \eqref{eq:def_|mu|_TV}.\cr
\(\mathcal M(\Omega;L^0(\mm))\) & space of \(L^0\)-valued measures of bounded variation; Definition \ref{def:L0_meas_bv}.\cr
\(\mathcal M_+(\Omega;L^0(\mm))\) & non-negative \(L^0\)-valued measures of bounded variation; Definition \ref{def:L0_meas_bv}.\cr
\(\mu\mrestr A\) & restriction of \(\mu\in\mathcal M(\Omega;L^0(\mm))\) to a measurable set \(A\); \eqref{eq:def_restr_L0-meas}.\cr
\(\varphi_\#\mu\) & pushforward of \(\mu\in\mathcal M(\Omega;L^0(\mm))\) with respect to \(\varphi\);
Definition \ref{def:pushforward_meas}.\cr
\(\mathcal P(\Omega;L^0(\mm))\) & space of probability \(L^0\)-valued measures; Definition \ref{def:prob_L0-meas}.\cr
\([\mu]\) & measure on \(\Omega\) induced by \(\mu\in\mathcal P(\Omega;L^0(\mm))\); \eqref{eq:def_[mu]}.\cr
\(\hat\mu\) & measure on \(\Omega\times\X\) induced by \(\mu\in\mathcal P(\Omega;L^0(\mm))\);
Proposition \ref{prop:def_hat_mu}.\cr
\(\mathcal N_\mu\) & collection of all \(\mu\)-null sets; Definition \ref{def:mu-null}.\cr
\(\mu^+\) & positive part of \(\mu\in\mathcal M(\Omega;L^0(\mm))\); \eqref{eq:pos_neg_parts}.\cr
\(\mu^-\) & negative part of \(\mu\in\mathcal M(\Omega;L^0(\mm))\); \eqref{eq:pos_neg_parts}.\cr
\(\mathfrak M(\Omega;L^0(\mm))\) & space of Radon \(L^0\)-valued measures; Definition \ref{def:RadonL0_meas}.\cr
\(\mathfrak M_+(\Omega;L^0(\mm))\) & non-negative Radon \(L^0\)-valued measures; Definition \ref{def:RadonL0_meas}.\cr
\(\Gamma(\nu)\) & Carath\'{e}odory \(\sigma\)-algebra of \(\nu\); Theorem \ref{thm:Carath}.\cr
\(\mathscr B_0(\Omega)\) & Baire \(\sigma\)-algebra of a compact Hausdorff space \((\Omega,\tau)\); Section \ref{s:Baire_L0-val_meas}.\cr
\(\mathcal M_+(\Omega,\mathscr B_0(\Omega);L^0(\mm))\) & space of Baire \(L^0\)-valued measures; \eqref{eq:def_space_Bair_L0-meas}.\cr
\(S(\Omega;{\rm M})\) & space of module-valued simple maps; Definition \ref{defMappesempliciM}.\cr
\(S_\mu(\Omega;{\rm M})\) & space of \(\mu\)-a.e.\ equivalence classes of elements of \(S(\Omega;{\rm M})\); \eqref{eq:def_S_mu}.\cr
\({\int v\cdot\d\mu}\) & \(\mu\)-integral of \(v\colon\Omega\to{\rm M}\); Definition \ref{def:int-mod-simp-map}.\cr
\(S^1_\mu(\Omega;{\rm M})\) & space of \(\mu\)-integrable module-valued simple maps; Definition \ref{def:int-mod-simp-map}.\cr
\((L^1_\mu(\Omega;{\rm M}),{|\cdot|^{1,\mu}})\) & module-valued Lebesgue--Bochner space; Definition \ref{def:M-val_Leb-Boch}.\cr
\(S_{\mu,f}(\Omega;{\rm M})\) & finite-range module-valued simple maps; \eqref{funsemplicisomsufinit}.\cr
\(L^1_\mu(\Omega;L^0_+(\mm))\) & space of non-negative elements of \(L^1_\mu(\Omega;L^0(\mm))\); \eqref{eq:non-neg_L1}.\cr
\(\mathcal L^0(\Omega;{\rm M})\) & space of measurable maps \(v\colon\Omega\to{\rm M}\); Section \ref{s:Linfty_to_M}.\cr
\(L^0_\mu(\Omega;{\rm M})\) & quotient of \(\mathcal L^0(\Omega;{\rm M})\) up to \(\mu\)-a.e.\ equality; \eqref{eq:def_L0_mu}.\cr
\(S^\infty_\mu(\Omega;{\rm M})\) & space of essentially bounded module-valued simple maps; \eqref{eq:def_Sinfty_mu}.\cr
\((L^\infty_\mu(\Omega;{\rm M}),{|\cdot|^{\infty,\mu}})\) & space of essentially bounded maps \(v\colon\Omega\to{\rm M}\); \eqref{defLinfmuOmM}.\cr
\((L^p_\mu(\Omega;{\rm M}),{|\cdot|^{p,\mu}})\) & space of \(p\)-integrable maps \(v\colon\Omega\to{\rm M}\); \eqref{eq:def_Lp_mu}.\cr
\(({\rm M}\hat\otimes_\pi{\rm N},{|\cdot|_\pi})\) & projective tensor product of \({\rm M}\) and \({\rm N}\); Section \ref{s:L1_tensor_prod}.\cr
\({\hat b\colon{\rm M}\hat\otimes_\pi{\rm N}\to{\rm Q}}\) & \(L^0\)-linearisation of \(b\colon{\rm M}\times{\rm N}\to{\rm Q}\); Theorem \ref{thm:univ_prop_proj_tens}.\cr
\(\tilde{\mathcal L}^1_\mu(\Omega;\mathcal L^0(\bar\Sigma_\mm))\) & `pointwise' \(\mu\)-integrable
maps \(\Omega\to\mathcal L^0(\bar\Sigma_\mm)\); Definition \ref{def:tildeL1}.\cr
\((\tilde L^1_\mu(\Omega;L^0(\mm)),{|\cdot|^{1,\mu}_\sim})\) & `pointwise' Lebesgue-Bochner space; Section \ref{s:ptwse_descr_L_1}.\cr
\(\tilde L^1_\mu(\Omega;L^0_+(\mm))\) & space of non-negative elements of \(\tilde L^1_\mu(\Omega;L^0(\mm))\); \eqref{eq:non-neg_tildeL1}.\cr
\(\tilde{\mathcal L}^p_\mu(\Omega;\mathcal L^0(\bar\Sigma_\mm))\) & space of `pointwise' \(p\)-integrable maps \(\Omega\to\mathcal L^0(\bar\Sigma_\mm)\);
\eqref{eq:def_tildecalLp}.\cr
\((\tilde L^p_\mu(\Omega;L^0(\mm)),{|\cdot|^{p,\mu}_\sim})\) & quotient of \(\tilde{\mathcal L}^p_\mu(\Omega;\mathcal L^0(\bar\Sigma_\mm))\)
up to \(\mu\)-a.e.\ equality; \eqref{eq:def_tildeLp} and \eqref{eq:def_tildeLp_norm}.\cr
\({\rm Var}(v;\mathcal U)\) & variation of a map \(v\colon\Omega\to{\rm M}\) on an entourage \(\mathcal U\); Definition \ref{def:UC_ord}.\cr
\(({\rm UC}_{\rm ord}(\Omega;{\rm M}),{|\cdot|^\infty})\) & space of uniformly-order-continuous maps \(v\colon\Omega\to{\rm M}\); Definition \ref{def:UC_ord}.\cr
\({\rm UC}_{{\rm ord},\mu}(\Omega;{\rm M})\) & quotient of \({\rm UC}_{\rm ord}(\Omega;{\rm M})\) up to \(\mu\)-a.e.\ equality; Section \ref{s:UC_ord}.\cr
\(\mathscr L_1\) & restriction of the Lebesgue measure to \([0,1]\); Example \ref{ex:UC_1}.\cr
\({\rm UC}_{\rm ord}(\Omega;L^0(\mm))^*_+\) & positive operators \(L\colon{\rm UC}_{\rm ord}(\Omega;L^0(\mm))\to L^0(\mm)\); Definition \ref{def:posit_operators}.\cr
\(L^0(\mm;\mathbb B)\) & \(L^0\)-Lebesgue--Bochner space from \((\X,\Sigma,\mm)\) to \(\mathbb B\); Section \ref{s:aux_RMK}.\cr
\({\rm I}_\Omega\) & the Riesz--Markov--Kakutani isometric isomorphism; Theorem \ref{thm:RMK}.\cr
\(L^0_{w^*}(\mm;\mathbb B^*)\) & space of weakly\(^*\) \(\mm\)-measurable maps from \(\X\) to \(\mathbb B^*\);
Section \ref{s:proof_RMK}.\cr
\((\mathcal M(\Omega;{\rm M}),{|\cdot|_{\rm TV}})\) & space of \({\rm M}\)-valued measures of bounded variation; Section \ref{s:M-val_basic}.\cr
\(\mathfrak M(\Omega;{\rm M})\) & space of Radon \({\rm M}\)-valued measures; Section \ref{s:M-val_basic}.\cr
\(\bar\Gamma(E)\) & measurable sections of a measurable Banach bundle \(E\); Section \ref{s:mBb}.\cr
\(\Gamma(E)\) & section space of a measurable Banach bundle \(E\); Section \ref{s:mBb}.\cr
\(({\rm M}_x,{\|\cdot\|_x})\) & fibers of a countably-generated random normed module \({\rm M}\); \eqref{eq:fibers_of_M}.\cr
\(\mathbb E(\cdot|\mathcal B)\) & module-valued conditional expectation; Theorem \ref{thm:E_M}.\cr
\(P(\nchi)\) & random perimeter of \(\nchi\); Definition \ref{def:random_set_FP}.\cr
}
\section{Preliminaries}
\subsection{Measure theory}\label{s:meas_theory}
Let \((\X,\Sigma,\mm)\) be a given probability space. We denote by \((\X,\bar\Sigma_\mm,\bar\mm)\) the probability
space that is obtained by taking the completion of \((\X,\Sigma,\mm)\). Let us define
\[\begin{split}
\bar{\mathcal L}^0(\Sigma)&\coloneqq\big\{f\colon\X\to[-\infty,+\infty]\;\big|\;f\text{ is }\Sigma\text{-measurable}\big\},\\
\mathcal L^0(\Sigma)&\coloneqq\big\{f\in\bar{\mathcal L}^0(\Sigma)\;\big|\;f(x)\in\R\text{ for every }x\in\X\big\},\\
\mathcal L^\infty(\Sigma)&\coloneqq\big\{f\in\mathcal L^0(\Sigma)\;\big|\;f\text{ is bounded}\big\}.
\end{split}\]
By identifying two functions \(f,g\in\bar{\mathcal L}^0(\Sigma)\) if they agree \(\mm\)-a.e., we obtain an equivalence relation
whose quotient space we denote by \(\bar L^0(\mm)\). The equivalence class of \(f\in\bar{\mathcal L}^0(\Sigma)\)
will be indicated with \([f]_\mm\in\bar L^0(\mm)\). We then define the spaces \(L^0(\mm),L^\infty(\mm)\subseteq\bar L^0(\mm)\) as
\[
L^0(\mm)\coloneqq\big\{[f]_\mm\;\big|\;f\in\mathcal L^0(\Sigma)\big\},\qquad
L^\infty(\mm)\coloneqq\big\{[f]_\mm\;\big|\;f\in\mathcal L^\infty(\Sigma)\big\}.
\]
We denote by \(L^p(\mm)\), for \(p\in[1,\infty)\), the space of all those \(f\in L^0(\mm)\) that are \(p\)-integrable
with respect to \(\mm\). For any \(E\in\Sigma\), we denote by \(\1_E\in\mathcal L^\infty(\Sigma)\) its characteristic
function and we write
\begin{equation}\label{eq:1_E_mm}
\1_E^\mm\coloneqq[\1_E]_\mm\in L^\infty(\mm).
\end{equation}
The function spaces introduced above carry different, mutually compatible structures:
\begin{itemize}
\item \textsc{Order structure.} The space \(\bar{\mathcal L}^0(\Sigma)\) is a lattice with respect to
the partial order \(\leq\) given by the pointwise inequality between functions, while
\(\bar L^0(\mm)\) is a lattice with respect to the \(\mm\)-a.e.\ inequality order relation, which
we still denote by \(\leq\). Likewise, \(\mathcal L^0(\Sigma)\), \(\mathcal L^\infty(\Sigma)\) and
\(L^0(\mm)\), \(L^p(\mm)\) for \(p\in[1,\infty]\) are lattices with respect to the partial orders induced
by \(\bar{\mathcal L}^0(\Sigma)\) and \(\bar L^0(\mm)\), respectively.
\item \textsc{Algebraic structure.} The spaces \(\mathcal L^0(\Sigma)\), \(\mathcal L^\infty(\Sigma)\),
\(L^0(\mm)\) and \(L^\infty(\mm)\) are commutative algebras, while \(L^p(\mm)\) for \(p\in[1,\infty)\)
is a vector space, with respect to the usual pointwise operations. All these spaces are Riesz spaces
with respect to the above order structures. We denote by \(L^0_+(\mm)\) the \emph{positive cone} of \(L^0(\mm)\), i.e.
\[
L^0_+(\mm)\coloneqq\big\{f\in L^0(\mm)\;\big|\;f\geq 0\big\}.
\]
Likewise, one defines \(L^\infty_+(\mm)\), \(\mathcal L^0_+(\Sigma)\) and \(\mathcal L^\infty_+(\Sigma)\).
\item \textsc{Topological structure.} We endow the space \(L^0(\mm)\) with the complete distance
\[
\sfd_{L^0(\mm)}(f,g)\coloneqq\int|f-g|\wedge 1\,\d\mm\quad\text{ for every }f,g\in L^0(\mm).
\]
The distance \(\sfd_{L^0(\mm)}\) metrises the convergence in \(\mm\)-measure, and the
\(\mm\)-a.e.\ convergence up to a subsequence. I.e., given any \((f_n)_{n\in\N}\subseteq L^0(\mm)\)
and \(f\in L^0(\mm)\), the following conditions are equivalent:
\begin{itemize}
\item[\(\rm i)\)] \(\sfd_{L^0(\mm)}(f_n,f)\to 0\) as \(n\to\infty\).
\item[\(\rm ii)\)] \(\mm(\{|f_n-f|>\varepsilon\})\to 0\) as \(n\to\infty\) for every \(\varepsilon>0\).
\item[\(\rm iii)\)] Any subsequence \((f_{n_i})_{i\in\N}\) has a subsequence
\((f_{n_{i_j}})_{j\in\N}\) such that \(f_{n_{i_j}}(x)\to f(x)\) as \(j\to\infty\) for \(\mm\)-a.e.\ \(x\in\X\).
\end{itemize}
The space \(L^0(\mm)\) is a topological algebra with respect to the topology induced by the distance \(\sfd_{L^0(\mm)}\).
Moreover, for any \(p\in[1,\infty]\), the space \(L^p(\mm)\) is a Banach space with respect
to the \(L^p\)-norm \(\|\cdot\|_{L^p(\mm)}\).
\end{itemize}

The lattice \(\bar L^0(\mm)\) is \emph{complete}, meaning that all its subsets \(S=\{f_i\}_{i\in I}\)
have both a supremum \(\bigvee S=\bigvee_{i\in I}f_i\in\bar L^0(\mm)\) and an infimum
\(\bigwedge S=\bigwedge_{i\in I}f_i\in\bar L^0(\mm)\). As a consequence, one has that
\(L^0(\mm)\) and \(L^\infty(\mm)\) are \emph{conditionally-complete} lattices, i.e.\ all
their subsets that are bounded above (resp.\ below) have a supremum (resp.\ an infimum).
Furthermore, the space \(\bar L^0(\mm)\) (thus also \(L^0(\mm)\) and \(L^\infty(\mm)\)) has the
\emph{countable sup} (resp.\ \emph{inf}) \emph{property}: if \(\{f_i\}_{i\in I}\) has a supremum
(resp.\ an infimum), there is a countable set \(C\subseteq I\) such that \(\bigvee_{i\in I}f_i=\bigvee_{i\in C}f_i\)
(resp.\ \(\bigwedge_{i\in I}f_i=\bigwedge_{i\in C}f_i\)).
\begin{remark}{\rm
The spaces \(\bar L^0(\mm)\) and \(\bar L^0(\bar\mm)\) can be canonically identified.
Similarly, \(L^0(\mm)\) can be identified with \(L^0(\bar\mm)\), and \(L^\infty(\mm)\) can be identified
with \(L^\infty(\bar\mm)\). Therefore, without loss of generality, we shall occasionally consider \(\bar\mm\)
in place of \(\mm\).
\fr}\end{remark}
\begin{remark}{\rm
Given any \(\sigma\)-finite measure \(\mathfrak n\) on a measurable space \((\X,\Sigma)\), one can easily
construct a probability measure \(\tilde{\mathfrak n}\) on \((\X,\Sigma)\) such that
\(\mathfrak n\ll\tilde{\mathfrak n}\leq C\mathfrak n\) for some constant \(C>0\). It is also clear that
\(\bar L^0(\mathfrak n)\) can be canonically identified with \(\bar L^0(\tilde{\mathfrak n})\); similarly
for \(L^0\) and \(L^\infty\). Accordingly, many of the results that we will present are valid also in the more
general case where \((\X,\Sigma,\mm)\) is a \(\sigma\)-finite measure space, but it is not restrictive
to assume that \(\mm\) is a probability measure, which we do for the sake of simplicity.
\fr}\end{remark}
\begin{remark}\label{rmk:suff_cond_ord-bdd}{\rm
Let \(\{f_i\}_{i\in I}\subseteq\mathcal L^0(\Sigma)\) be a (possibly uncountable) family of functions such that
\(\sup_{i\in I}|f_i(x)|<+\infty\) for every \(x\in\X\). Then we claim that \(\{[f_i]_\mm:i\in I\}\) is an order-bounded
subset of \(L^0(\mm)\); this is not obvious, since the function \(\X\ni x\mapsto\sup_{i\in I}|f_i(x)|\)
is not necessarily measurable. To prove it, take a countable set \(C\subseteq I\) such that
\(\bigvee_{i\in C}[|f_i|]_\mm=\bigvee_{i\in I}[|f_i|]_\mm\), where suprema are intended in \(\bar L^0(\mm)\).
Then the function \(g\in\mathcal L^0_+(\Sigma)\), which we define as
\[
g(x)\coloneqq\sup_{i\in C}|f_i(x)|\quad\text{ for every }x\in\X,
\]
satisfies \(|[f_i]_\mm|\leq\bigvee_{j\in I}[|f_j|]_\mm=\bigvee_{j\in C}[|f_j|]_\mm=[g]_\mm\) for every \(i\in I\),
proving the claim.
\fr}\end{remark}

In the proof of Theorem \ref{thm:RMK}, we will make use of the \emph{von Neumann lifting theory}, which we
are going to remind. The \emph{von Neumann--Maharam theorem} (see \cite[Theorem 341K]{fre200})
ensures the existence of a linear map \(\ell\colon L^\infty(\bar\mm)\to\mathcal L^\infty(\bar\Sigma_\mm)\) that satisfies
the following properties:
\[\begin{split}
[\ell(f)]_{\bar\mm}=f&\quad\text{ for every }f\in L^\infty(\bar\mm),\\
\ell(fg)=\ell(f)\ell(g)&\quad\text{ for every }f,g\in L^\infty(\bar\mm),\\
\ell(\lambda\1_\X^{\bar\mm})=\lambda\1_\X&\quad\text{ for every }\lambda\in\R,\\
\ell(f)\in\mathcal L^\infty_+(\bar\Sigma_\mm)&\quad\text{ for every }f\in L^\infty_+(\bar\mm),\\
|\ell(f)|=\ell(|f|)&\quad\text{ for every }f\in L^\infty(\bar\mm),\\
\sup_{x\in\X}|\ell(f)(x)|=\|f\|_{L^\infty(\bar\mm)}&\quad\text{ for every }f\in L^\infty(\bar\mm).
\end{split}\]
The map \(\ell\) is called a \emph{von Neumann lifting} of \(\bar\mm\). The existence of von Neumann liftings requires
a rather strong form of the Axiom of Choice and the completeness of the measure \(\bar\mm\) under consideration. We use
the same symbol \(\ell\) to denote the map \(\ell\colon\bar\Sigma_\mm\to\bar\Sigma_\mm\) defined as follows: given any
\(E\in\bar\Sigma_\mm\), we denote by \(\ell(E)\in\bar\Sigma_\mm\) the unique subset of \(\X\) satisfying
\[
\1_{\ell(E)}=\ell(\1_E^{\bar\mm}).
\]

Now, let \((\Omega,\tau)\) be a topological space. We denote by \(\mathcal M(\Omega)\) the space of all finite signed Borel measures
on \((\Omega,\tau)\), while \(\mathcal M_+(\Omega)\coloneqq\{\mu\in\mathcal M(\Omega):\mu(B)\geq 0\text{ for every }B\in\mathscr B(\Omega)\}\),
where \(\mathscr B(\Omega)\) stands for the Borel \(\sigma\)-algebra of \((\Omega,\tau)\). For any \(\mu\in\mathcal M(\Omega)\), we denote
by \(|\mu|\in\mathcal M_+(\Omega)\) its total variation measure. Recall that \(\mathcal M(\Omega)\) is a Banach space with respect to the
total variation norm
\[
\|\mu\|_{\rm TV}\coloneqq|\mu|(\Omega)\quad\text{ for every }\mu\in\mathcal M(\Omega).
\]
Assuming in addition that \((\Omega,\tau)\) is Hausdorff, we denote by \(\mathfrak M(\Omega)\) the space of finite signed Radon measures
on \((\Omega,\tau)\). Accordingly, we set \(\mathfrak M_+(\Omega)\coloneqq\mathfrak M(\Omega)\cap\mathcal M_+(\Omega)\). It holds that
\[
(\mathfrak M(\Omega),\|\cdot\|_{\rm TV})\quad\text{ is a Banach space,}
\]
as \(\mathfrak M(\Omega)\) is a closed vector subspace of \(\mathcal M(\Omega)\). The \emph{Riesz--Markov--Kakutani theorem} states that if
\((\Omega,\tau)\) is a compact Hausdorff space, then \(\mathfrak M(\Omega)\) is isometrically isomorphic to the dual of the Banach space
\(C(\Omega)\), which consists of all real-valued continuous functions defined on \((\Omega,\tau)\) and is equipped with the supremum norm
\(\|f\|_{C(\Omega)}\coloneqq\sup_{p\in\Omega}|f(p)|\). The canonical duality pairing between \(\mathfrak M(\Omega)\) and \(C(\Omega)\) is
given by
\[
\mathfrak M(\Omega)\times C(\Omega)\ni(\mu,f)\mapsto\int f\,\d\mu\in\R.
\]
\subsection{Random normed modules}\label{s:RNM}
Throughout this section, we assume that
\[
(\X,\Sigma,\mm)\quad\text{ is a probability space.}
\]
Given that \(L^0(\mm)\) is a commutative ring, we can consider modules \({\rm M}\) over \(L^0(\mm)\).
We denote by \(f\cdot v\in{\rm M}\) the multiplication of \(f\in L^0(\mm)\) and \(v\in{\rm M}\).
Let us now recall the definition of random normed module with base \((\X,\Sigma,\mm)\):
\begin{definition}[Random normed module]\label{def:random_normed_module}
We say that \(({\rm M},|\cdot|)\) is a \textbf{random normed module} with base \((\X,\Sigma,\mm)\) if
\({\rm M}\) is a module over \(L^0(\mm)\) and \(|\cdot|\colon{\rm M}\to L^0_+(\mm)\) is a map such that
\[\begin{split}
|0|=0&,\\
|v|\neq 0&\quad\text{ for every }v\in{\rm M}\setminus\{0\},\\
|v+w|\leq|v|+|w|&\quad\text{ for every }v,w\in{\rm M},\\
|f\cdot v|=|f||v|&\quad\text{ for every }f\in L^0(\mm)\text{ and }v\in{\rm M}.
\end{split}\]
We call \(|\cdot|\) an \textbf{\(L^0\)-norm} on \({\rm M}\). Moreover, we define the distance \(\sfd_{\rm M}\) on \({\rm M}\) as
\begin{equation}\label{eq:def_d_M}
\sfd_{\rm M}(v,w)\coloneqq\int|v-w|\wedge 1\,\d\mm=\sfd_{L^0(\mm)}(|v-w|,0)\quad\text{ for every }v,w\in{\rm M}.
\end{equation}
If \(({\rm M},\sfd_{\rm M})\) is complete, then we say that \(({\rm M},|\cdot|)\) is a \textbf{complete random normed module.}
\end{definition}
\begin{remark}\label{remarkainounif}{\rm
Given a random normed module \(({\rm M},|\cdot|)\) with base \((\X,\Sigma,\mm)\), we have
that the sets \(\{\mathcal U(\varepsilon,\lambda):\varepsilon>0,\lambda\in(0,1)\}\), which are given by
\[
\mathcal U(\varepsilon,\lambda)\coloneqq\big\{(v,w)\in{\rm M}\times{\rm M}\;\big|\;\mm(\{|v-w|\geq\varepsilon\})<\lambda\big\},
\]
form a fundamental system of entourages of a uniformity on \({\rm M}\), which we denote by \(\Phi_{\rm M}\).
The topology on \({\rm M}\) that is induced by \(\Phi_{\rm M}\) is typically called the \textbf{\((\varepsilon,\lambda)\)-topology}
and denoted by \(\mathcal T_{\varepsilon,\lambda}\). The distance \(\sfd_{\rm M}\) defined in \eqref{eq:def_d_M}
induces the topology \(\mathcal T_{\varepsilon,\lambda}\), the completeness of the uniform space \(({\rm M},\Phi_{\rm M})\) is equivalent
to the completeness of \(({\rm M},\sfd_{\rm M})\), and the uniformity \(\Phi_{\rm M}\) is generated by
\[
\Big\{\big\{(v,w)\in{\rm M}\times{\rm M}\;\big|\;\sfd_{\rm M}(v,w)<r\big\}\;\Big|\;r>0\Big\}.
\]
Consequently, Definition \ref{def:random_normed_module} is consistent both with the concept of
complete random normed module introduced by Guo in \cite{guo1989theory,guo1992} and with Gigli's
notion of \(L^0(\mm)\)-Banach \(L^0(\mm)\)-module \cite{gigli2018nonsmooth,gigli2017} (therein called
just `\(L^0(\mm)\)-normed \(L^0(\mm)\)-module').
\fr}\end{remark}

The space \(L^0(\mm)\) itself is an example of complete random normed module. Given two random normed modules
\(({\rm M},|\cdot|)\), \(({\rm N},|\cdot|)\) with base \((\X,\Sigma,\mm)\) and a map \(\varphi\colon{\rm M}\to{\rm N}\),
the following conditions are known to be equivalent:
\begin{itemize}
\item[\(\rm i)\)] \(\varphi\) is \(L^0(\mm)\)-linear (i.e.\ a homomorphism of \(L^0(\mm)\)-modules), and it is continuous
as a map from \(({\rm M},\sfd_{\rm M})\) to \(({\rm N},\sfd_{\rm N})\).
\item[\(\rm ii)\)] \(\varphi\) is linear and there exists \(g\in L^0_+(\mm)\) such that
\[
|\varphi(v)|\leq g|v|\quad\text{ for every }v\in{\rm M}.
\]
\end{itemize}
Each map \(\varphi\colon{\rm M}\to{\rm N}\) satisfying the above conditions is called a
\textbf{homomorphism of random normed modules}. We define \(|\varphi|\in L^0_+(\mm)\) as
\begin{equation}\label{eq:ptwse_norm_hom}
|\varphi|\coloneqq\bigwedge\big\{g\in L^0_+(\mm)\;\big|\;|\varphi(v)|\leq g|v|\text{ for every }v\in{\rm M}\big\}.
\end{equation}
We say that \(\varphi\) is an \textbf{isometric isomorphism
of random normed modules} if it is an isomorphism of \(L^0(\mm)\)-modules (i.e.\ an \(L^0(\mm)\)-linear bijection)
with \(|\varphi(v)|=|v|\) for all \(v\in{\rm M}\).
\begin{proposition}\label{prop:ext_result}
Let \(({\rm M},|\cdot|)\) and \(({\rm N},|\cdot|)\) be complete random normed modules with base \((\X,\Sigma,\mm)\).
Let \({\rm V}\) be a vector subspace of \({\rm M}\) that \textbf{generates} \({\rm M}\) in the sense of random normed modules, i.e.
\begin{equation}\label{eq:def_generates}
{\rm cl}_{\rm M}\bigg(\bigg\{\sum_{i=1}^n f_i\cdot v_i\;\bigg|\;n\in\N,\,(f_i)_{i=1}^n\subseteq L^0(\mm),
\,(v_i)_{i=1}^n\subseteq{\rm V}\bigg\}\bigg)={\rm M}.
\end{equation}
Let \(\varphi\colon{\rm V}\to{\rm N}\) be a linear operator. Assume that for some \(g\in L^0_+(\mm)\) it holds that
\[
|\varphi(v)|\leq g|v|\quad\text{ for every }v\in{\rm V}.
\]
Then there is a unique homomorphism of random normed modules \(\bar\varphi\colon{\rm M}\to{\rm N}\)
that extends \(\varphi\). Moreover, \(|\bar\varphi|\leq g\). If in addition \(|\varphi(v)|=g|v|\) for
all \(v\in{\rm V}\), then \(|\bar\varphi(v)|=g|v|\) for all \(v\in{\rm M}\).
\end{proposition}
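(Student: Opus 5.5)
The plan is to extend \(\varphi\) in two stages: first \(L^0(\mm)\)-linearly to the submodule \({\rm V}_0\coloneqq\{\sum_{i=1}^n f_i\cdot v_i\}\) generated by \({\rm V}\), and then by continuity to its closure \({\rm M}\).

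\emph{Step 1 (algebraic extension).} For \(w=\sum_{i=1}^n f_i\cdot v_i\in{\rm V}_0\) I set \(\varphi_0(w)\coloneqq\sum_{i=1}^n f_i\cdot\varphi(v_i)\). The main obstacle is well-posedness: \(w\) may have many representations. By linearity of everything involved, it suffices to prove the estimate
\[
\Big|\sum_{i=1}^n f_i\cdot\varphi(v_i)\Big|\leq g\,\Big|\sum_{i=1}^n f_i\cdot v_i\Big|.
\]
Applying it to the difference of two representations, which is zero, then shows that \(\varphi_0\) is well defined.

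To prove the estimate, I first treat simple coefficients. Suppose \(f_i=\sum_k a_{ik}\1_{E_k}^\mm\) with \(a_{ik}\in\R\) and \((E_k)_k\) a finite partition of \(\X\). On each \(E_k\), using \(|\1_E^\mm\cdot v|=\1_E^\mm|v|\) and the linearity of \(\varphi\) over \(\R\), we get
\[
\1_{E_k}^\mm\Big|\sum_i f_i\cdot\varphi(v_i)\Big|=\1_{E_k}^\mm\Big|\varphi\Big(\sum_i a_{ik}v_i\Big)\Big|\leq\1_{E_k}^\mm\,g\,\Big|\sum_i a_{ik}v_i\Big|=\1_{E_k}^\mm\,g\,\Big|\sum_i f_i\cdot v_i\Big|.
\]
Summing over \(k\) gives the estimate for simple coefficients. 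For general \(f_i\in L^0(\mm)\), I approximate each \(f_i\) by simple functions \(f_i^j\to f_i\) \(\mm\)-a.e. Then \(|f_i^j\cdot u-f_i\cdot u|=|f_i^j-f_i||u|\to0\) \(\mm\)-a.e., so both sides converge \(\mm\)-a.e. along the sequence, and the inequality passes to the limit.

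Consequently \(\varphi_0\) is \(L^0(\mm)\)-linear on \({\rm V}_0\) and satisfies \(|\varphi_0(w)|\leq g|w|\). If moreover \(|\varphi(v)|=g|v|\) on \({\rm V}\), the same computation with equalities gives \(|\varphi_0(w)|=g|w|\) on \({\rm V}_0\).

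\emph{Step 2 (extension by continuity).} The bound \(|\varphi_0(w)-\varphi_0(w')|\leq g|w-w'|\) implies uniform continuity of \(\varphi_0\) from \(\sfd_{\rm M}\) to \(\sfd_{\rm N}\). Indeed, given \(\varepsilon,\lambda\), I pick \(C\) with \(\mm(\{g>C\})<\lambda/2\). Then \(\mm(\{|w-w'|\geq\varepsilon/C\})<\lambda/2\) forces \(\mm(\{|\varphi_0(w)-\varphi_0(w')|\geq\varepsilon\})<\lambda\), and this suffices by Remark \ref{remarkainounif}.

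Since \({\rm N}\) is complete and \({\rm V}_0\) is dense in \({\rm M}\), \(\varphi_0\) extends uniquely to a continuous map \(\bar\varphi\colon{\rm M}\to{\rm N}\). Additivity and \(L^0(\mm)\)-homogeneity pass to the limit by continuity of the module operations. The inequality \(|\bar\varphi(v)|\leq g|v|\), and the equality in the isometric-type case, pass to the limit because \(w_n\to v\) in \({\rm M}\) implies \(|w_n|\to|v|\) in \(L^0(\mm)\) (from \(\big||w_n|-|v|\big|\leq|w_n-v|\)). One then extracts an \(\mm\)-a.e. convergent subsequence. By the definition \eqref{eq:ptwse_norm_hom}, this gives \(|\bar\varphi|\leq g\).

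\emph{Uniqueness.} Any homomorphism extending \(\varphi\) is \(L^0(\mm)\)-linear, so it agrees with \(\varphi_0\) on \({\rm V}_0\). Being continuous, it agrees with \(\bar\varphi\) on \({\rm cl}_{\rm M}({\rm V}_0)={\rm M}\).
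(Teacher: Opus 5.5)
Your proof is correct. The paper gives no proof of its own for Proposition~\ref{prop:ext_result}: it is recalled in the preliminaries as a known fact about random normed modules, so there is no argument in the paper to compare yours against.

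Your argument is the standard one. You establish well-posedness of the $L^0(\mm)$-linear extension to the generated submodule using the locality identity $|\1_E^\mm\cdot w|=\1_E^\mm|w|$ on a common finite partition, together with approximation of general coefficients by simple functions. You then extend by uniform continuity, where the $\mm$-a.e.\ finiteness of $g$ is exactly what turns the bound $|\varphi_0(w)|\leq g|w|$ into uniform continuity with respect to $\sfd_{\rm M}$ and $\sfd_{\rm N}$.
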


Using the fact that \({\rm V}\) is a vector space, it is easy to check that \eqref{eq:def_generates} is equivalent to
\[
{\rm cl}_{\rm M}\bigg(\bigg\{\sum_{i=1}^n\1_{E_i}^\mm\cdot v_i\;\bigg|\;n\in\N,\,(E_i)_{i=1}^n\subseteq\Sigma,
\,(v_i)_{i=1}^n\subseteq{\rm V}\bigg\}\bigg)={\rm M}.
\]
The \textbf{random conjugate space} \({\rm M}^*\) of \({\rm M}\)
is defined as the space of all homomorphisms of random normed modules from \({\rm M}\) to \(L^0(\mm)\).
Then \({\rm M}^*\) is a complete random normed module with base \((\X,\Sigma,\mm)\) if endowed with the
natural pointwise operations and the \(L^0\)-norm defined in \eqref{eq:ptwse_norm_hom}.
\begin{theorem}[\(L^0\)-completion]\label{thm:L0-complet}
Let \(({\rm M},|\cdot|)\) be a random normed module with base \((\X,\Sigma,\mm)\). Then there exist a
complete random normed module \((\bar{\rm M},|\cdot|)\), which we call the \textbf{\(L^0\)-completion}
or \textbf{random-normed-module completion} of \({\rm M}\), and a homomorphism of random normed modules
\(\iota\colon{\rm M}\to\bar{\rm M}\) such that \(|\iota(v)|=|v|\) for every \(v\in{\rm M}\) and \(\iota({\rm M})\)
is dense in \(\bar{\rm M}\).

Moreover, the pair \((\bar{\rm M},\iota)\) is unique up to a unique isomorphism, in the following sense: given any
pair \(({\rm N},j)\) having the same properties as \((\bar{\rm M},\iota)\), there exists a unique isometric isomorphism
of random normed modules \(\Phi\colon\bar{\rm M}\to{\rm N}\) such that \(j=\Phi\circ\iota\).
\end{theorem}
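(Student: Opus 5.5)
The plan is to build \(\bar{\rm M}\) as the metric completion of \(({\rm M},\sfd_{\rm M})\) and then extend the module operations and the \(L^0\)-norm to it by uniform continuity.

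\emph{Step 1: the completion as a set and as a group.} Let \(\bar{\rm M}\) be the set of equivalence classes of \(\sfd_{\rm M}\)-Cauchy sequences in \({\rm M}\). Let \(\iota(v)\) be the class of the constant sequence \((v)\). Since \(\sfd_{\rm M}\) is translation invariant, addition is uniformly continuous. Hence \([(v_n)]+[(w_n)]\coloneqq[(v_n+w_n)]\) is well defined and makes \(\bar{\rm M}\) an abelian group.

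\emph{Step 2: the norm.} The inequality \(\big||v|-|w|\big|\leq|v-w|\) gives \(\sfd_{L^0(\mm)}(|v|,|w|)\leq\sfd_{\rm M}(v,w)\). So \(|v_n|\) is Cauchy in the complete space \(L^0(\mm)\), and we set \(|[(v_n)]|\coloneqq\lim_n|v_n|\). The triangle inequality and \(|0|=0\) pass to the limit, because the a.e.\ order is closed under convergence in measure (use a.e.-convergent subsequences). Moreover \(\sfd_{\bar{\rm M}}(\bar v,\bar w)=\int|\bar v-\bar w|\wedge1\,\d\mm\) coincides with the completion metric. Consequently \(|\bar v|=0\) forces \(\bar v=0\), and \(|\iota(v)|=|v|\).

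\emph{Step 3: the \(L^0(\mm)\)-module structure.} This is the main obstacle, since scalar multiplication \(L^0(\mm)\times{\rm M}\to{\rm M}\) is not uniformly continuous. I would handle it in two stages.
\begin{itemize}
\item For a fixed \(f\in L^0(\mm)\), the map \(v\mapsto f\cdot v\) is uniformly continuous. Given \(\varepsilon\), choose \(C\) with \(\mm(\{|f|>C\})<\varepsilon\). Then \(\sfd_{\rm M}(fv,fw)\leq\varepsilon+\int(C|v-w|)\wedge1\,\d\mm\), and the last term is small whenever \(\sfd_{\rm M}(v,w)\) is small. So \(f\cdot[(v_n)]\coloneqq[(f\cdot v_n)]\) is well defined.
\item The module axioms \((fg)\bar v=f(g\bar v)\), \((f+g)\bar v=f\bar v+g\bar v\), \(f(\bar v+\bar w)=f\bar v+f\bar w\) and \(1\cdot\bar v=\bar v\) hold on \({\rm M}\). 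They pass to \(\bar{\rm M}\) by the continuity just shown, applied to the relevant fixed scalars.
\item Finally, \(|f\bar v|=\lim|fv_n|=\lim|f||v_n|=|f||\bar v|\), using that multiplication by a fixed \(f\) is continuous in \(L^0(\mm)\).
\end{itemize}
By construction \(\iota\) is \(L^0\)-linear and isometric with dense image, so \((\bar{\rm M},\iota)\) has all the required properties.

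\emph{Uniqueness.} Let \(({\rm N},j)\) be a second pair with the same properties. Set \(\varphi\coloneqq j\circ\iota^{-1}\colon\iota({\rm M})\to{\rm N}\); this is linear with \(|\varphi(w)|=|w|\). The subspace \(\iota({\rm M})\) is dense, so it generates \(\bar{\rm M}\) in the sense of \eqref{eq:def_generates}. Proposition~\ref{prop:ext_result} with \(g=1\) then gives a unique homomorphism \(\Phi\colon\bar{\rm M}\to{\rm N}\) with \(j=\Phi\circ\iota\) and \(|\Phi(\bar v)|=|\bar v|\). In particular \(\Phi\) is injective. Its image is complete, hence closed, and it contains the dense set \(j({\rm M})\); therefore \(\Phi\) is surjective. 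Any isometric isomorphism with \(j=\Phi\circ\iota\) is such an extension, so uniqueness of \(\Phi\) also follows from Proposition~\ref{prop:ext_result}.
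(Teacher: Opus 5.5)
Your proof is correct. The paper states this theorem as a standard preliminary and gives no proof of it. Your route is exactly the standard construction the result rests on: take the metric completion of \(({\rm M},\sfd_{\rm M})\), extend addition, the multiplication by each fixed \(f\in L^0(\mm)\), and the \(L^0\)-norm by continuity, then obtain uniqueness from Proposition~\ref{prop:ext_result}. You also correctly observe that multiplication by a fixed \(f\) is uniformly continuous even though the joint multiplication is not, and this is the one point where care is needed.
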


In the sequel, we implicitly assume that \({\rm M}\) is an \(L^0(\mm)\)-submodule of its \(L^0\)-completion
\(\bar{\rm M}\) and that the map \(\iota\colon{\rm M}\to\bar{\rm M}\) is the inclusion.
\begin{corollary}\label{cor:ext_hom_mod}
Let \(({\rm M},|\cdot|)\) be a random normed module and \(({\rm N},|\cdot|)\) a complete random normed module.
Let \(\varphi\colon{\rm M}\to{\rm N}\) be a homomorphism of random normed modules. Then there exists a unique homomorphism
of random normed modules \(\bar\varphi\colon\bar{\rm M}\to{\rm N}\) such that \(\bar\varphi|_{\rm M}=\varphi\). Moreover, it holds that
\[
|\bar\varphi|=|\varphi|.
\]
\end{corollary}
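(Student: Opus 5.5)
The plan is to deduce the corollary directly from Proposition \ref{prop:ext_result}, applied with \(\bar{\rm M}\) as the ambient complete module and \({\rm V}\coloneqq{\rm M}\) as the generating subspace. Since \({\rm M}\) is an \(L^0(\mm)\)-submodule of \(\bar{\rm M}\), it is in particular a vector subspace. Because \({\rm M}\) is dense in \(\bar{\rm M}\) by Theorem \ref{thm:L0-complet}, and \({\rm M}\) is contained in the set of finite \(L^0\)-combinations of its own elements, that set is dense. Hence \({\rm V}\) generates \(\bar{\rm M}\) in the sense of \eqref{eq:def_generates}.

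We then take \(g\coloneqq|\varphi|\), as defined in \eqref{eq:ptwse_norm_hom}. First we check that \(|\varphi(v)|\le|\varphi||v|\) for all \(v\in{\rm M}\), i.e.\ that the infimum in \eqref{eq:ptwse_norm_hom} is attained. The family of admissible \(g\) is stable under finite minima: given admissible \(g_1,g_2\), set \(E\coloneqq\{g_1\le g_2\}\). Then
\[
|\varphi(v)|=\1_E^\mm|\varphi(v)|+\1_{E^c}^\mm|\varphi(v)|\le(\1_E^\mm g_1+\1_{E^c}^\mm g_2)|v|=(g_1\wedge g_2)|v|.
\]
By the countable inf property, there is a decreasing sequence of admissible functions converging \(\mm\)-a.e.\ to \(|\varphi|\), and the inequality passes to the limit. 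Proposition \ref{prop:ext_result} now gives a unique homomorphism \(\bar\varphi\colon\bar{\rm M}\to{\rm N}\) extending \(\varphi\), with \(|\bar\varphi|\le|\varphi|\). Uniqueness as a homomorphism of random normed modules is exactly the uniqueness part of that proposition. Alternatively, it follows from continuity together with the density of \({\rm M}\).

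It remains to prove \(|\bar\varphi|\ge|\varphi|\). Any \(g\in L^0_+(\mm)\) with \(|\bar\varphi(v)|\le g|v|\) for all \(v\in\bar{\rm M}\) satisfies in particular \(|\varphi(v)|=|\bar\varphi(v)|\le g|v|\) for all \(v\in{\rm M}\), because \(\bar\varphi|_{\rm M}=\varphi\) and the \(L^0\)-norm of \(\bar{\rm M}\) restricts to that of \({\rm M}\). So \(g\) is admissible for \(\varphi\), giving \(|\varphi|\le g\). Taking the infimum over all such \(g\) yields \(|\varphi|\le|\bar\varphi|\), hence equality.

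The only mildly delicate step is the attainment of the infimum in \eqref{eq:ptwse_norm_hom}, handled via the gluing and countable-inf argument above. Everything else is a direct invocation of Proposition \ref{prop:ext_result} and Theorem \ref{thm:L0-complet}.
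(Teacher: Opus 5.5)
Your proof is correct and takes the paper's route: the paper also deduces the corollary from Proposition~\ref{prop:ext_result} with \({\rm V}={\rm M}\), which generates \(\bar{\rm M}\) because it is dense in it. You additionally make explicit two points the paper leaves implicit: the infimum in \eqref{eq:ptwse_norm_hom} is attained, so \(g=|\varphi|\) is admissible; and \(|\varphi|\leq|\bar\varphi|\), obtained by restricting admissible bounds from \(\bar{\rm M}\) to \({\rm M}\).
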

\begin{proof}
The space \({\rm M}\) is a dense vector subspace of \(\bar{\rm M}\), thus a fortiori it generates \(\bar{\rm M}\) in
the sense of normed modules. Moreover, the inclusion map \(\iota\colon{\rm M}\to\bar{\rm M}\) is a linear operator
satisfying \(|\iota(v)|=|v|\) for every \(v\in{\rm M}\). Therefore, the statement follows from Proposition \ref{prop:ext_result}.
\end{proof}

Next, we recall the concept of a summable (countable) family of elements of a random normed module,
following \cite[Definition 3.5]{Pas23}. We adopt the notation \(\mathcal P_f(S)\) to denote
the collection of all finite subsets of a given set \(S\).
\begin{definition}[Summable sequence]\label{defconvserie}
Let \(({\rm M},|\cdot|)\) be a random normed module with base \((\X,\Sigma,\mm)\). Then we say that a countable family
\(\{v_n:n\in\N\}\subseteq{\rm M}\) is \textbf{summable}, with \textbf{sum} \(v\in{\rm M}\), provided
\begin{equation}\label{eq:sum_in_Ban_mod}
\bigvee_{F\in\mathcal P_f(\{n+1,n+2,\ldots\})}\bigg|v-\sum_{i\in\{1,\ldots,n\}\cup F}v_i\bigg|\to 0\quad\text{ in }L^0(\mm)\text{ as }n\to\infty.
\end{equation}
The sum \(v\) of \(\{v_n:n\in\N\}\), which is uniquely determined by \eqref{eq:sum_in_Ban_mod}, will be denoted by \(\sum_{n\in\N}v_n\).
\end{definition}
\begin{remark}\label{rmk:fact_about_summability}{\rm
Some observations regarding summable sequences are in order:
\begin{itemize}
\item[\(\rm i)\)] If \(v_n=0\) for all but finitely many \(n\in\N\), then \(\{v_n:n\in\N\}\) is summable and \(\sum_{n\in\N}v_n\) coincides with the algebraic sum
in \({\rm M}\) of the non-zero elements of \(\{v_n:n\in\N\}\).
\item[\(\rm ii)\)] If \(\{v_n:n\in\N\}\) is summable, then for any permutation \(\sigma\colon\N\to\N\) of \(\N\) we have that
\[
\sum_{n=1}^N v_{\sigma(n)}\to\sum_{n\in\N}v_n\quad\text{ in }{\rm M}\text{ as }N\to\infty.
\]
In particular, \(\{v_{\sigma(n)}:n\in\N\}\) is summable and \(\sum_{n\in\N}v_{\sigma(n)}=\sum_{n\in\N}v_n\).
\item[\(\rm iii)\)] Recall that \((\mathcal P_f(\N),\subseteq)\) is a directed set. Define \(s_F\coloneqq\sum_{n\in F}v_n\in{\rm M}\) for all \(F\in\mathcal P_f(\N)\).
Then it can be readily checked that \(\{v_n:n\in\N\}\subseteq{\rm M}\) is summable with sum \(v\in{\rm M}\) if and only if the net \((s_F)_{F\in\mathcal P_f(\N)}\)
converges to \(v\) in \({\rm M}\).
\item[\(\rm iv)\)] As it was observed in \cite[Section 3.2]{Pas23}, a sequence \(\{f_n:n\in\N\}\subseteq L^0_+(\mm)\) is summable in \(L^0(\mm)\)
if and only if \(\big\{\sum_{i=1}^n f_i:n\in\N\big\}\) is order bounded in \(L^0(\mm)\), and in this case
\[
\sum_{n\in\N}f_n=\bigvee_{n\in\N}\sum_{i=1}^n f_i\in L^0_+(\mm).
\]
\item[\(\rm v)\)] \textsc{Cauchy summability criterion \cite[Proposition 3.6]{Pas23}.} If \({\rm M}\) is complete,
then a given sequence \(\{v_n:n\in\N\}\subseteq{\rm M}\) is summable if and only if 
\[
\bigvee_{F\in\mathcal P_f(\{n,n+1,\ldots\})}\bigg|\sum_{n\in F}v_n\bigg|\to 0\quad\text{ in }L^0(\mm)\text{ as }n\to\infty.
\]
\item[\(\rm vi)\)] If \({\rm M}\) is a complete random normed module and a sequence \(\{v_n:n\in\N\}\subseteq{\rm M}\) is chosen
so that \(\{|v_n|:n\in\N\}\subseteq L^0(\mm)\) is summable, then \(\{v_n:n\in\N\}\) is summable in \({\rm M}\) and
\[
\bigg|\sum_{n\in\N}v_n\bigg|\leq\sum_{n\in\N}|v_n|.
\]
\end{itemize}
The validity of vi) follows from the elementary inequality
\[
\bigvee_{F\in\mathcal P_f(\{n,n+1,\ldots\})}\bigg|\sum_{n\in F}v_n\bigg|\leq\bigvee_{F\in\mathcal P_f(\{n,n+1,\ldots\})}\sum_{n\in F}|v_n|,
\]
the Cauchy summability criterion v) and \cite[Remark 3.7]{Pas23}.
\fr}\end{remark}
\section{\texorpdfstring{\(L^0\)}{L0}-valued measures}\label{s:L0-val_meas}
We assume throughout this section that \((\X,\Sigma,\mm)\) is a probability space. Moreover, by \((\Omega,\mathcal A)\) we mean an arbitrary measurable space,
until further notice.
\subsection{Definitions and basic properties}\label{s:L0-val_meas_basic}
Let us introduce the following concept:
\begin{definition}[\(L^0\)-valued measure]\label{def:L0_meas}
We say that \(\mu\colon\mathcal A\to L^0(\mm)\) is an \textbf{\(L^0\)-valued measure} provided:
\begin{itemize}
\item[\(\rm i)\)] \(\mu(\varnothing)=0\).
\item[\(\rm ii)\)] If \((A_n)_{n\in\N}\subseteq\mathcal A\) are pairwise disjoint, then \(\{|\mu(A_n)|:n\in\N\}\)
is summable and
\[
\mu\bigg(\bigcup_{n\in\N}A_n\bigg)=\sum_{n\in\N}\mu(A_n).
\]
\end{itemize}
If in addition \(\mu(A)\geq 0\) for every \(A\in\mathcal A\), then we say that \(\mu\) is \textbf{non-negative}.
\end{definition}

In the above definition, the summability of the sequence \(\{\mu(A_n):n\in\N\}\subseteq L^0(\mm)\) is guaranteed by Remark \ref{rmk:fact_about_summability} vi).
\begin{remark}\label{rmk:lambda^c}{\rm
If \(\mu(A)\) is (\(\mm\)-a.e.\ equal to) some constant function \(\lambda_\mu(A)\1_\X\) for every \(A\in\mathcal A\),
then it is clear that \(\lambda_\mu\colon\mathcal A\to\R\) is a finite signed measure on \((\Omega,\mathcal A)\).
Conversely, if \(\lambda\colon\mathcal A\to\R\) is a finite signed measure, then by defining
\[
\lambda^c(A)\coloneqq\lambda(A)\1_\X^\mm\in L^0(\mm)\quad\text{ for every }A\in\mathcal A
\]
we obtain an \(L^0\)-valued measure \(\lambda^c\colon\mathcal A\to L^0(\mm)\). This clarifies
how the class of \(L^0\)-valued measures generalises the one of finite signed measures.
\fr}\end{remark}
\begin{definition}[\(L^0\)-valued measure of bounded variation]\label{def:L0_meas_bv}
Let \(\mu\colon\mathcal A\to L^0(\mm)\) be an \(L^0\)-valued measure. For any set \(A\in\mathcal A\),
we define the \textbf{\(L^0\)-total variation} of \(\mu\) on \(A\) as
\begin{equation}\label{eq:def_|mu|}
|\mu|(A)\coloneqq\bigvee\bigg\{\sum_{n=1}^\infty|\mu(A_n)|\;\bigg|\;(A_n)_{n\in\N}\subseteq\mathcal A\text{ partition of }A\bigg\}\in\bar L^0_+(\mm).
\end{equation}
Then we say that the \(L^0\)-valued measure \(\mu\) is \textbf{of bounded variation} provided it satisfies
\begin{equation}\label{eq:def_|mu|_TV}
|\mu|_{\rm TV}\coloneqq|\mu|(\Omega)\in L^0_+(\mm).
\end{equation}
We denote by \(\mathcal M(\Omega;L^0(\mm))\) the space of all \(L^0\)-valued measures \(\mu\colon\mathcal A\to L^0(\mm)\)
of bounded variation and by \(\mathcal M_+(\Omega;L^0(\mm))\) the space of all those \(\mu\in\mathcal M(\Omega;L^0(\mm))\)
that are non-negative.
\end{definition}
\begin{remark}\label{rmk:cont_above_L0_meas}{\rm
Several properties of real-valued measures can be generalised to \(L^0\)-valued measures. For example, given \(\mu\in\mathcal M_+(\Omega;L^0(\mm))\)
and \((A_n)_{n\in\N},(B_n)_{n\in\N}\subseteq\mathcal A\), the following properties hold:
\begin{itemize}
\item[\(\rm i)\)] If \(A_n\subseteq A_{n+1}\) for every \(n\in\N\), then \(\mu(A_n)\to\mu\big(\bigcup_{m\in\N}A_m\big)\) in \(L^0(\mm)\) as \(n\to\infty\).
\item[\(\rm ii)\)] If \(B_{n+1}\subseteq B_n\) for every \(n\in\N\), then \(\mu(B_n)\to\mu\big(\bigcap_{m\in\N}B_m\big)\) in \(L^0(\mm)\) as \(n\to\infty\).
\end{itemize}
To prove i), note that \(A_1,A_2\setminus A_1,A_3\setminus A_2,\ldots\) are pairwise disjoint sets with union \(\bigcup_{n\in\N}A_n\), thus
\[
\mu(A_n)=\mu(A_1)+\sum_{i=2}^n\mu(A_n\setminus A_{n-1})\to\mu\bigg(\bigcup_{m\in\N}A_m\bigg)\quad\text{ in }L^0(\mm)\text{ as }n\to\infty.
\]
To prove ii), note that \(\Omega\setminus B_n\subseteq\Omega\setminus B_{n+1}\) for every \(n\in\N\), thus i) yields
\[
\mu(B_n)=\mu(\Omega)-\mu(\Omega\setminus B_n)\to\mu(\Omega)-\mu\bigg(\bigcup_{m\in\N}\Omega\setminus B_m\bigg)=\mu\bigg(\bigcap_{m\in\N}B_m\bigg)
\]
in \(L^0(\mm)\) as \(n\to\infty\). Consequently, the claim is proved.
\fr}\end{remark}
It is easy to check that \(\mathcal M(\Omega;L^0(\mm))\) is an \(L^0(\mm)\)-module with respect to these operations:
\[\begin{split}
(\mu+\nu)(A)\coloneqq\mu(A)+\nu(A)&\quad\text{ for every }\mu,\nu\in\mathcal M(\Omega;L^0(\mm))\text{ and }A\in\mathcal A,\\
(f\cdot\mu)(A)\coloneqq f\cdot\mu(A)&\quad\text{ for every }f\in L^0(\mm),\,\mu\in\mathcal M(\Omega;L^0(\mm))\text{ and }A\in\mathcal A.
\end{split}\]
Furthermore, it is straightforward to show that
\[
|\mu|\in\mathcal M_+(\Omega;L^0(\mm))\quad\text{ for every }\mu\in\mathcal M(\Omega;L^0(\mm)),
\]
where \(|\mu|\colon\mathcal A\to L^0_+(\mm)\) is given by \eqref{eq:def_|mu|}.
\begin{lemma}\label{lem:L0-val_meas_RNM}
The space \((\mathcal M(\Omega;L^0(\mm)),|\cdot|_{\rm TV})\) is a complete random normed module.
\end{lemma}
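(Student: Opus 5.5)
The proof splits into two parts. First, \(|\cdot|_{\rm TV}\) is an \(L^0\)-norm on the \(L^0(\mm)\)-module \(\mathcal M(\Omega;L^0(\mm))\). Second, the associated distance \(\sfd_{\mathcal M}(\mu,\nu)=\int|\mu-\nu|_{\rm TV}\wedge 1\,\d\mm\) is complete.

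For the norm axioms, I first note that \(\mu+\nu\) and \(f\cdot\mu\) are again \(L^0\)-valued measures of bounded variation. For any partition \((A_n)\) of \(A\) we have \(\sum_n|\mu(A_n)+\nu(A_n)|\le\sum_n|\mu(A_n)|+\sum_n|\nu(A_n)|\le|\mu|(A)+|\nu|(A)\), using Remark \ref{rmk:fact_about_summability} iv) to handle series of non-negative functions as suprema of partial sums. This gives \(|\mu+\nu|(A)\le|\mu|(A)+|\nu|(A)\), and in particular the triangle inequality for \(|\cdot|_{\rm TV}\). The same computation gives \(|f\cdot\mu|(A)=|f|\,|\mu|(A)\), because multiplication by \(|f|\) commutes with countable sums and with suprema in \(\bar L^0_+(\mm)\). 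Clearly \(|0|_{\rm TV}=0\). If \(|\mu|_{\rm TV}=0\), then \(|\mu(A)|\le|\mu|(A)\le|\mu|(\Omega)=0\) for every \(A\), using the trivial partition \(\{A,\varnothing,\ldots\}\) and monotonicity of \(|\mu|\) (which follows from \(|\mu|\in\mathcal M_+\)). Hence \(\mu=0\).

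For completeness, take a Cauchy sequence \((\mu_k)\) for \(\sfd_{\mathcal M}\) and pass to a subsequence with \(\sfd_{\mathcal M}(\mu_k,\mu_{k+1})\le 2^{-k}\). Then \(\sum_k|\mu_{k+1}-\mu_k|_{\rm TV}\) is finite \(\mm\)-a.e., by Borel--Cantelli on the sets \(\{|\mu_{k+1}-\mu_k|_{\rm TV}\ge 2^{-k/2}\}\), say. So \(G_k\coloneqq\sum_{j\ge k}|\mu_{j+1}-\mu_j|_{\rm TV}\in L^0_+(\mm)\) and \(G_k\downarrow 0\) a.e. Since \(|\mu_j(A)-\mu_k(A)|\le|\mu_j-\mu_k|_{\rm TV}\le G_k\) for \(j\ge k\), uniformly in \(A\), I can define \(\mu(A)\coloneqq\lim_k\mu_k(A)\) in \(L^0(\mm)\), and get \(|\mu(A)-\mu_k(A)|\le G_k\) for all \(A\).

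Next I check that \(\mu\) is an \(L^0\)-valued measure, and this is the step I expect to be the main obstacle. Finite additivity and \(\mu(\varnothing)=0\) pass to the limit directly. For countable additivity with disjoint \((A_n)\), I use the uniform-in-partition estimates. The key bound is \(\sum_{n\in F}|\mu(A_n)-\mu_k(A_n)|\le G_k\) for every finite \(F\), which comes from \(\sum_{n\in F}|(\mu_j-\mu_k)(A_n)|\le|\mu_j-\mu_k|_{\rm TV}\) after letting \(j\to\infty\). It follows that \(\sum_n|\mu(A_n)|\le|\mu_k|_{\rm TV}+G_k\), so \(\{|\mu(A_n)|\}\) is summable by Remark \ref{rmk:fact_about_summability} iv). Moreover, for each \(k\),
\[
\Big|\mu\big(\textstyle\bigcup_n A_n\big)-\sum_{n\le N}\mu(A_n)\Big|\le 2G_k+\Big|\mu_k\big(\textstyle\bigcup_{n>N}A_n\big)\Big|,
\]
and the last term tends to \(0\) in \(L^0\) as \(N\to\infty\) by \(\sigma\)-additivity of \(\mu_k\). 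Letting \(N\to\infty\) and then \(k\to\infty\) gives \(\sigma\)-additivity. The same partition estimate, taking suprema over partitions of \(\Omega\), yields \(|\mu-\mu_k|_{\rm TV}\le G_k\). Therefore \(\mu\in\mathcal M(\Omega;L^0(\mm))\), since \(|\mu|_{\rm TV}\le|\mu_k|_{\rm TV}+G_k\), and \(\mu_k\to\mu\) in \(\sfd_{\mathcal M}\). Since a Cauchy sequence with a convergent subsequence converges, this proves completeness.
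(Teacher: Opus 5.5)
Your proof is correct. It has the same overall skeleton as the paper: define \(\mu(A)\) as the \(L^0\)-limit of \(\mu_k(A)\), check \(\sigma\)-additivity and bounded variation, then show \(|\mu_k-\mu|_{\rm TV}\to 0\). The technical device, however, is different.

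You extract a fast Cauchy subsequence and use Borel--Cantelli to produce one a.e.-finite dominating function \(G_k=\sum_{j\ge k}|\mu_{j+1}-\mu_j|_{\rm TV}\). After that, every step becomes an order inequality against a fixed element, which passes to \(L^0\)-limits for free. This covers the bound \(|\mu(A)-\mu_k(A)|\le G_k\), the finite-partition estimate, the summability of \(\{|\mu(A_n)|\}\), \(\sigma\)-additivity, and \(|\mu-\mu_k|_{\rm TV}\le G_k\).

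The paper instead keeps the whole sequence. It introduces the auxiliary limits \(g(A)=\lim_n|\mu_n|(A)\) to get \(\sum_j|\mu(A_j)|\le g(\Omega)\). It then obtains convergence from \(|\mu_n-\mu|_{\rm TV}\le\varliminf_m|\mu_n-\mu_m|_{\rm TV}\) combined with Fatou's lemma.

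The paper's route avoids extracting a subsequence and the Borel--Cantelli step. Yours has a cleaner logical order. Your \(\sigma\)-additivity argument uses only the fixed bound \(2G_k\) and the \(\sigma\)-additivity of \(\mu_k\). The paper's \(\sigma\)-additivity step instead invokes \(|\mu_n-\mu|_{\rm TV}\to 0\), which it only establishes at the end of the proof.

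You also verify the \(L^0\)-norm axioms explicitly, which the paper leaves to the reader.
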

\begin{proof}
We check only completeness. Fix a Cauchy sequence \((\mu_n)_{n\in\N}\) in \((\mathcal M(\Omega;L^0(\mm)),|\cdot|_{\rm TV})\).
For any \(A\in\mathcal A\) we have that
\[
\sfd_{L^0(\mm)}(\mu_n(A),\mu_m(A))\leq\sfd_{L^0(\mm)}(|\mu_n-\mu_m|(A),0)\leq\sfd_{L^0(\mm)}(|\mu_n-\mu_m|_{\rm TV},0)\to 0
\]
as \(n,m\to\infty\), which shows that \((\mu_n(A))_{n\in\N}\subseteq L^0(\mm)\) is Cauchy. Since \(L^0(\mm)\) is complete, we can then define
\[
\mu(A)\coloneqq\lim_{n\to\infty}\mu_n(A)\in L^0(\mm)\quad\text{ for every }A\in\mathcal A.
\]
We claim that \(\mu\in\mathcal M(\Omega;L^0(\mm))\) and \(|\mu_n-\mu|_{\rm TV}\to 0\) in \(L^0(\mm)\). Trivially,
\(\mu\) satisfies Definition \ref{def:L0_meas} i). To prove that \(\mu\) satisfies Definition \ref{def:L0_meas} ii),
we first note that for any \(A\in\mathcal A\) it holds
\[
\sfd_{L^0(\mm)}(|\mu_n|(A),|\mu_m|(A))\leq\sfd_{L^0(\mm)}(|\mu_n-\mu_m|_{\rm TV},0)\quad\text{ for every }n,m\in\N,
\]
thus the limit \(g(A)\coloneqq\lim_n|\mu_n|(A)\in L^0_+(\mm)\) exists. Observe that \(g(A_1)+\ldots+g(A_k)\leq g(\Omega)\)
whenever \(A_1,\ldots,A_k\in\mathcal A\) are pairwise disjoint sets. If \((A_j)_{j\in\N}\subseteq\mathcal A\) are pairwise
disjoint, then
\[\begin{split}
\sum_{j\in\N}|\mu(A_j)|&\leq\sum_{j\in\N}\lim_{n\to\infty}|\mu_n(A_j)|
=\lim_{N\to\infty}\sum_{j=1}^N\lim_{n\to\infty}|\mu_n(A_j)|
\leq\lim_{N\to\infty}\sum_{j=1}^N\lim_{n\to\infty}|\mu_n|(A_j)\\
&=\lim_{N\to\infty}\sum_{j=1}^N g(A_j)\leq g(\Omega),
\end{split}\]
which implies that \(\{|\mu|(A_j):j\in\N\}\subseteq L^0(\mm)\) is summable. Also, the element
\(\sum_{j\in\N}\mu(A_j)\), whose existence is guaranteed by Remark \ref{rmk:fact_about_summability} vi), satisfies
\[
\bigg|\sum_{j\in\N}\mu_n(A_j)-\sum_{j\in\N}\mu(A_j)\bigg|\leq\lim_{N\to\infty}\sum_{j=1}^N|\mu_n-\mu|(A_j)
\leq|\mu_n-\mu|_{\rm TV}\to 0\quad\text{ in }L^0(\mm)
\]
as \(n\to\infty\), which implies that
\[
\mu\bigg(\bigcup_{j\in\N}A_j\bigg)=\lim_{n\to\infty}\mu_n\bigg(\bigcup_{j\in\N}A_j\bigg)
=\lim_{n\to\infty}\sum_{j\in\N}\mu_n(A_j)=\sum_{j\in\N}\mu(A_j)\quad\text{ in }L^0(\mm).
\]
Hence, we have shown that \(\mu\) is an \(L^0\)-valued measure. Next, we observe that for any given partition
\((A_j)_{j\in\N}\subseteq\mathcal A\) of \(\Omega\) we can estimate
\[
\sum_{j=1}^\infty|\mu(A_j)|=\lim_{N\to\infty}\lim_{n\to\infty}\sum_{j=1}^N|\mu_n(A_j)|
\leq\lim_{N\to\infty}\lim_{n\to\infty}|\mu_n|_{\rm TV}=g(\Omega)\in L^0_+(\mm),
\]
which means that \(\mu\) is of bounded variation and thus \(\mu\in\mathcal M(\Omega;L^0(\mm))\).
Finally, given \(n\in\N\) and a partition \((A_j)_{j\in\N}\subseteq\mathcal A\) of \(\Omega\), we have that
\[\begin{split}
\sum_{j\in\N}|\mu_n(A_j)-\mu(A_j)|&=\sup_{N\in\N}\sum_{j=1}^N|\mu_n(A_j)-\mu(A_j)|
=\sup_{N\in\N}\lim_{m\to\infty}\sum_{j=1}^N|\mu_n(A_j)-\mu_m(A_j)|\\
&\leq\varliminf_{m\to\infty}|\mu_n-\mu_m|_{\rm TV},
\end{split}\]
which gives \(|\mu_n-\mu|_{\rm TV}\leq\varliminf_m|\mu_n-\mu_m|_{\rm TV}\). Therefore,
an application of Fatou's lemma yields
\[\begin{split}
\varlimsup_{n\to\infty}\sfd_{L^0(\mm)}(|\mu_n-\mu|_{\rm TV},0)&=\varlimsup_{n\to\infty}\int|\mu_n-\mu|_{\rm TV}\wedge 1\,\d\mm
\leq\varlimsup_{n\to\infty}\int\varliminf_{m\to\infty}|\mu_n-\mu_m|_{\rm TV}\wedge 1\,\d\mm\\
&\leq\varlimsup_{n\to\infty}\varliminf_{m\to\infty}\sfd_{L^0(\mm)}(|\mu_n-\mu_m|_{\rm TV},0)=0,
\end{split}\]
proving that \(|\mu_n-\mu|_{\rm TV}\to 0\) in \(L^0(\mm)\). This completes the proof.
\end{proof}

It is clear that every non-negative \(L^0\)-valued measure \(\mu\colon\mathcal A\to L^0(\mm)\) is of bounded variation
and \(|\mu|=\mu\). Given \(\mu\in\mathcal M(\Omega;L^0(\mm))\) and \(A\in\mathcal A\), we define the restriction
\(\mu\mrestr A\) of \(\mu\) to \(A\) as
\begin{equation}\label{eq:def_restr_L0-meas}
(\mu\mrestr A)(B)\coloneqq\mu(A\cap B)\quad\text{ for every }B\in\mathcal A.
\end{equation}
It is easy to check that \(\mu\mrestr A\in\mathcal M(\Omega;L^0(\mm))\) and \(|\mu\mrestr A|=|\mu|\mrestr A\).
\begin{definition}[Pushforward of an \(L^0\)-valued measure]\label{def:pushforward_meas}
Let \((\tilde\Omega,\tilde{\mathcal A})\) be a measurable space and \(\varphi\colon\Omega\to\tilde\Omega\)
a measurable map. Then we define the \textbf{pushforward} operator
\[
\varphi_\#\colon\mathcal M(\Omega;L^0(\mm))\to\mathcal M(\tilde\Omega;L^0(\mm))
\]
as follows: given any \(\mu\in\mathcal M(\Omega;L^0(\mm))\), we define \(\varphi_\#\mu\in\mathcal M(\tilde\Omega;L^0(\mm))\) as
\[
\varphi_\#\mu(\tilde A)\coloneqq\mu(\varphi^{-1}(\tilde A))\in L^0(\mm)\quad\text{ for every }\tilde A\in\tilde{\mathcal A}.
\]
\end{definition}

Let us check that \(\varphi_\#\) is well defined. Since \((\varphi^{-1}(\tilde A_n))_{n\in\N}\subseteq\mathcal A\)
are pairwise disjoint whenever \((\tilde A_n)_{n\in\N}\subseteq\tilde{\mathcal A}\) are pairwise disjoint, it is easy to
show that \(\varphi_\#\mu\) is an \(L^0\)-valued measure. Moreover, given any \(\tilde A\in\tilde{\mathcal A}\) and a partition
\((\tilde A_n)_{n\in\N}\subseteq\tilde{\mathcal A}\) of \(\tilde A\), we can estimate
\[
\sum_{n=1}^\infty|\varphi_\#\mu(\tilde A_n)|=\sum_{n=1}^\infty|\mu(\varphi^{-1}(\tilde A_n))|\leq|\mu|(\varphi^{-1}(\tilde A))
=\varphi_\#|\mu|(\tilde A),
\]
whence -- by taking the supremum over \((A_n)_{n\in\N}\) -- it follows that \(|\varphi_\#\mu|(\tilde A)\leq\varphi_\#|\mu|(\tilde A)\).
Moreover, we have \(\varphi_\#|\mu|(\tilde A)=\varphi_\#\mu^+(\tilde A)+\varphi_\#\mu^-(\tilde A)\leq 2|\varphi_\#\mu|(\tilde A)\).
This proves that
\begin{equation}\label{eq:ineq_pushforward_as_meas}
|\varphi_\#\mu|\leq\varphi_\#|\mu|\leq 2|\varphi_\#\mu|\quad\text{ for every }\mu\in\mathcal M(\Omega;L^0(\mm)),
\end{equation}
thus in particular \(|\varphi_\#\mu|(\tilde\Omega)\leq\varphi_\#|\mu|(\tilde\Omega)=|\mu|(\varphi^{-1}(\tilde\Omega))=|\mu|(\Omega)\),
so that
\begin{equation}\label{eq:ineq_pushforward}
|\varphi_\#\mu|_{\rm TV}\leq|\mu|_{\rm TV}\quad\text{ for every }\mu\in\mathcal M(\Omega;L^0(\mm)),
\end{equation}
which implies that \(\varphi_\#\mu\in\mathcal M(\tilde\Omega;L^0(\mm))\) for every \(\mu\in\mathcal M(\Omega;L^0(\mm))\).
It is also easy to check that \(\varphi_\#\colon\mathcal M(\Omega;L^0(\mm))\to\mathcal M(\tilde\Omega;L^0(\mm))\)
is a homomorphism of random normed modules.
\medskip

A distinguished class of \(L^0\)-valued measures is the following one:
\begin{definition}[Probability \(L^0\)-valued measure]\label{def:prob_L0-meas}
We say that \(\mu\in\mathcal M_+(\Omega;L^0(\mm))\) is a \textbf{probability \(L^0\)-valued measure} if
\[
\mu(\Omega)=\1_\X^\mm.
\]
We denote by \(\mathcal P(\Omega;L^0(\mm))\) the space of all probability \(L^0\)-valued measures \(\mu\colon\mathcal A\to L^0_+(\mm)\).
\end{definition}

A probability \(L^0\)-valued measure \(\mu\) is canonically associated with two probability measures
\([\mu]\) and \(\hat\mu\), defined on \((\Omega,\mathcal A)\) and \((\Omega\times\X,\mathcal A\otimes\Sigma)\), respectively. The former is given by
\begin{equation}\label{eq:def_[mu]}
[\mu](A)\coloneqq\int\mu(A)\,\d\mm\quad\text{ for every }A\in\mathcal A.
\end{equation}
The fact that \([\mu]\) is countably additive can be checked by using the monotone convergence theorem. Since \([\mu](\varnothing)=\int\mu(\varnothing)\,\d\mm=0\)
and \([\mu](\Omega)=\int\mu(\Omega)\,\d\mm=\mm(\X)=1\), we have that \([\mu]\in\mathcal P(\Omega)\). As for the latter measure \(\hat\mu\), its existence
is guaranteed by the following result:
\begin{proposition}\label{prop:def_hat_mu}
Let \(\mu\in\mathcal P(\Omega;L^0(\mm))\) be given. Then there exists a unique probability measure \(\hat\mu\colon\mathcal A\otimes\Sigma\to[0,1]\) such that
\begin{equation}\label{eq:def_hat_mu}
\hat\mu(A\times E)=\int_E\mu(A)\,\d\mm\quad\text{ for every }A\in\mathcal A\text{ and }E\in\Sigma.
\end{equation}
\end{proposition}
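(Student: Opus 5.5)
Uniqueness is the easy half, so we settle it first. The measurable rectangles \(A\times E\) with \(A\in\mathcal A\) and \(E\in\Sigma\) form a \(\pi\)-system that generates \(\mathcal A\otimes\Sigma\) and contains \(\Omega\times\X\). Hence two probability measures that agree on all rectangles coincide, by Dynkin's \(\pi\)-\(\lambda\) theorem.

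For existence we use Carath\'{e}odory's extension theorem. Let \(\mathcal R\) be the algebra consisting of finite disjoint unions of rectangles. On a rectangle we set \(\hat\mu_0(A\times E)\coloneqq\int_E\mu(A)\,\d\mm\), and we extend \(\hat\mu_0\) additively to \(\mathcal R\). We must check that this extension is well defined, i.e.\ independent of the chosen decomposition. Suppose \(A\times E=\bigsqcup_{i=1}^k A_i\times E_i\). Refine everything to a common partition: take the atoms \(\{B_j\}\) of the finite algebra generated by \(A,A_1,\dots,A_k\) and the atoms \(\{F_l\}\) of the finite algebra generated by \(E,E_1,\dots,E_k\). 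Since \(\mu\) is finitely additive in \(L^0(\mm)\) and \(E\mapsto\int_E f\,\d\mm\) is finitely additive, both sides reduce to the same sum \(\sum_{j,l}\int_{F_l}\mu(B_j)\,\d\mm\) over those pairs with \(B_j\times F_l\subseteq A\times E\). Non-negativity of \(\hat\mu_0\) follows from \(\mu\geq0\), and \(\hat\mu_0(\Omega\times\X)=\int\1^\mm_\X\,\d\mm=1\).

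The main step is countable additivity of \(\hat\mu_0\) on \(\mathcal R\). By the refinement argument above, it suffices to treat a rectangle \(A\times E=\bigsqcup_{n\in\N}A_n\times E_n\). We would not argue on the product directly, since pointwise sections of \(\mu\) need not be measures. Instead, we write the identity \(\1_A(\omega)\1_E(x)=\sum_n\1_{A_n}(\omega)\1_{E_n}(x)\) and pass to finite partial unions. For each \(N\), the set \(A\times E\setminus\bigcup_{n\le N}A_n\times E_n\) lies in \(\mathcal R\) and decreases to \(\varnothing\). So it suffices to prove continuity at \(\varnothing\) along decreasing sequences in \(\mathcal R\).

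We reduce that to a statement about \(\mu\). Take a decreasing sequence \(R_N\in\mathcal R\) with \(\bigcap_N R_N=\varnothing\). Write \(R_N=\bigsqcup_l B^N_l\times F^N_l\), where, after refinement, the sets \(F^N_l\) form a partition of \(\X\) and \(B^N_l\in\mathcal A\); this is possible because any element of \(\mathcal R\) can be written as \(\bigsqcup_l C_l\times F_l\) with \((F_l)\) a partition of \(\X\). Then \(\hat\mu_0(R_N)=\int\sum_l\1_{F^N_l}\mu(B^N_l)\,\d\mm\). Define \(g_N\coloneqq\sum_l\1^\mm_{F^N_l}\mu(B^N_l)\in L^0_+(\mm)\), with \(0\le g_N\le1\). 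Monotonicity of \(R_N\) and of \(\mu\) gives \(g_{N+1}\le g_N\). It remains to show \(g_N\to0\) \(\mm\)-a.e., after which dominated convergence finishes the argument.

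That last step is the main obstacle, because for a fixed \(x\) the slices \(\{\omega:(\omega,x)\in R_N\}\) decrease to \(\varnothing\), but \(\mu\) cannot be evaluated at a single \(x\) without a foliation. We would argue by contradiction. Suppose \(g\coloneqq\bigwedge_Ng_N\) satisfies \(g\ge\varepsilon\) on some \(G\in\Sigma\) with \(\mm(G)>0\). Since each \(R_N\) involves only finitely many sets of \(\Sigma\), all the \(F^N_l\) together generate a countable \(\sigma\)-algebra \(\Sigma_0\). Averaging over the atoms of the finite algebras generated by \(F^1,\dots,F^N\) and tracking the positive mass then yields, via a K\"onig-type diagonal selection, a nested sequence \(F_N\) of atoms (each \(F_N\) an atom at level \(N\), with \(F_{N+1}\subseteq F_N\)) and sets \(B_N\) satisfying \(F_N\times B_N\subseteq R_N\), \(B_{N+1}\subseteq B_N\) and \(\int_{F_N\cap G}\mu(B_N)\,\d\mm\ge\varepsilon\,\mm(F_N\cap G)>0\) at every level. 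Countable additivity of the real measure \([\mu]\)-type functional \(B\mapsto\int_{H}\mu(B)\,\d\mm\), which holds for every \(H\in\Sigma\) by the monotone convergence argument used for \([\mu]\), then forces \(\bigcap_NB_N\neq\varnothing\). The difficulty is that \(\bigcap_NF_N\) may have measure zero, so producing a point of \(\bigcap_NR_N\) is delicate. If this diagonal argument becomes too delicate, the alternative is a direct definition: set \(\hat\mu(S)\coloneqq\int\mu(S^\cdot)\,\d\mm\) for \(S\in\mathcal A\otimes\Sigma_0\) with \(\Sigma_0\) countably generated. Here \(\mu(S^\cdot)\) is built from a countable partition of \(\X\) into \(\Sigma_0\)-atoms after a limiting procedure, and we check \(\sigma\)-additivity through Remark~\ref{rmk:cont_above_L0_meas} and the monotone class theorem.
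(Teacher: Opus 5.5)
Your uniqueness argument is correct. Your existence strategy is the same route the paper takes: Carath\'eodory/Hahn--Kolmogorov on the algebra \(\mathcal R\) of finite disjoint unions of rectangles, with well-posedness by common refinement. The gap is exactly the step you flag, namely showing \(g_N\to 0\) \(\mm\)-a.e.\ when \(R_N\downarrow\varnothing\). Countable additivity of \(\mu\) only gives \(\mu(C_N)\to 0\) a.e.\ along each \emph{fixed} sequence \(C_N\downarrow\varnothing\). The fibres \(\{\omega:(\omega,x)\in R_N\}\), however, depend on \(x\), possibly through uncountably many different sequences.

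Your diagonal selection does not repair this. The bound \(\int_{F_N\cap G}\mu(B_N)\,\d\mm>0\) holds on sets \(F_N\cap G\) whose measure may tend to \(0\). So countable additivity of \(B\mapsto\int_H\mu(B)\,\d\mm\) for a fixed \(H\) gives nothing, and neither \(\bigcap_N B_N\) nor \(\bigcap_N F_N\) need be non-empty. Your fallback via a partition of \(\X\) into atoms of a countably generated \(\Sigma_0\) is also unavailable, since such atoms are typically uncountably many and \(\mm\)-null. In fact no argument can close this step in the stated generality, because the proposition is false. The paper's proof does not fill this either: it just asserts that \(\sigma\)-additivity on \(\mathcal R\) is ``easy to check''.

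\textbf{Counterexample} (of Marczewski--Ryll-Nardzewski type).
\begin{itemize}
\item Let \(E\subseteq[0,1]\) be a Bernstein set, so neither \(E\) nor \([0,1]\setminus E\) contains a Borel set of positive Lebesgue measure.
\item Put \(\X\coloneqq[0,1]\setminus E\), \(\Sigma\coloneqq\{B\cap\X:B\in\mathscr B([0,1])\}\) and \(\mm(B\cap\X)\coloneqq\mathscr L_1(B)\).
\item Put \(\Omega\coloneqq E\), \(\mathcal A\coloneqq\{B\cap E:B\in\mathscr B([0,1])\}\) and \(\mu(B\cap E)\coloneqq\1^\mm_{B\cap\X}\).
\item Two Borel sets with the same trace on \(E\) (or on \(\X\)) differ by an \(\mathscr L_1\)-null set. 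Hence \(\mm\) and \(\mu\) are well defined and \(\sigma\)-additive, \(\mu\in\mathcal P(\Omega;L^0(\mm))\), and \(\int_{B'\cap\X}\mu(B\cap E)\,\d\mm=\mathscr L_1(B\cap B')\).
\item Let \(I_{n,k}\) be the dyadic intervals of length \(2^{-n}\), and set \(R_n\coloneqq\bigcup_k(I_{n,k}\cap E)\times(I_{n,k}\cap\X)\in\mathcal R\).
\item The sets \(R_n\) decrease and satisfy \(\hat\mu_0(R_n)=1\); in your notation, \(g_n=\1_\X^\mm\) for every \(n\).
\item Yet \(\bigcap_n R_n=\varnothing\), since a point \((\omega,x)\) in it would have \(\omega=x\in E\cap\X\).
\end{itemize}
Hence no measure satisfies \eqref{eq:def_hat_mu}.

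What is genuinely missing is an extra hypothesis. Two that work:
\begin{itemize}
\item If \(\mu\) has a foliation \((\mu_x)_{x\in\X}\subseteq\mathcal P(\Omega)\), one can set \(\hat\mu(S)\coloneqq\int\mu_x(\{\omega:(\omega,x)\in S\})\,\d\mm(x)\), with measurability from a monotone class argument.
\item If \([\mu]\) (or \(\mm\)) is inner regular with respect to a compact class (e.g.\ \(\Omega\) Polish, or \(\mu\) Radon), shrink the corresponding sides of the rectangles in \(R_N\) to compact sets at arbitrarily small cost. The fibres of the shrunken sets then decrease to \(\varnothing\) and are therefore eventually empty. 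This gives continuity at \(\varnothing\) without ever evaluating \(\mu\) at single points.
\end{itemize}
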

\begin{proof}
By a measurable rectangle in \(\Omega\times\X\) we mean a set of the form \(A\times E\), with \(A\in\mathcal A\) and \(E\in\Sigma\). We denote by \(\mathcal R\) the algebra of all finite unions of pairwise-disjoint measurable rectangles in \(\Omega\times\X\). Since \(\mathcal A\otimes\Sigma\) is the \(\sigma\)-algebra generated by measurable
rectangles, we know a fortiori that \(\mathcal A\otimes\Sigma\) is the \(\sigma\)-algebra generated by \(\mathcal R\). Now, we define \(\tilde\mu\colon\mathcal R\to[0,1]\) as
\[
\tilde\mu\bigg(\bigsqcup_{i=1}^n A_i\times E_i\bigg)\coloneqq\sum_{i=1}^n\int_{E_i}\mu(A_i)\,\d\mm\quad\text{ for every }\bigsqcup_{i=1}^n A_i\times E_i\in\mathcal R.
\]
It is easy to check that \(\tilde\mu\) is well posed (meaning that it does not depend on the specific way of expressing an element of \(\mathcal R\)) and
\(\tilde\mu\big(\bigcup_{n\in\N}U_n\big)=\sum_{n=1}^\infty\tilde\mu(U_n)\) holds whenever \((U_n)_{n\in\N}\subseteq\mathcal R\) are pairwise-disjoint sets satisfying
\(\bigcup_{n\in\N}U_n\in\mathcal R\). By applying the Hahn--Kolmogorov theorem, we thus conclude that there exists a unique measure
\(\hat\mu\colon\mathcal A\otimes\Sigma\to[0,1]\) extending \(\tilde\mu\). Note also that \(\hat\mu\) is the unique measure on \((\Omega\times\X,\mathcal A\otimes\Sigma)\)
satisfying \eqref{eq:def_hat_mu}. Moreover, we have
\[
\hat\mu(\Omega\times\X)=\int_\X\mu(\Omega)\,\d\mm=\mm(\X)=1,
\]
so that \(\hat\mu\in\mathcal P(\Omega\times\X)\). This completes the proof.
\end{proof}

It is straighforward to check that if \(\lambda\colon\mathcal A\to[0,1]\) is a given probability measure,
then its associated probability \(L^0\)-valued measure \(\lambda^c\colon\mathcal A\to L^0_+(\mm)\)
as in Remark \ref{rmk:lambda^c} satisfies
\begin{equation}\label{eq:hat_lambda^c}
[\lambda^c]=\lambda,\qquad\hat{\lambda^c}=\lambda\otimes\mm.
\end{equation}
\begin{lemma}\label{lem:rect_dense_L1_hat_mu}
Fix any \(\mu\in\mathcal P(\Omega;L^0(\mm))\). Then the \(\R\)-linear span of
\(\{\1_{A\times E}^{\hat\mu}:A\in\mathcal A,E\in\Sigma\}\) is dense in \(L^1(\hat\mu)\).
\end{lemma}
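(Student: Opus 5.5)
The plan is to reduce the claim to two standard facts. First, simple functions over \(\mathcal A\otimes\Sigma\) are dense in \(L^1(\hat\mu)\). Second, indicators of sets in \(\mathcal A\otimes\Sigma\) can be approximated in \(L^1(\hat\mu)\) by indicators of sets in the algebra \(\mathcal R\) of finite disjoint unions of measurable rectangles; the algebra \(\mathcal R\) was already used in the proof of Proposition \ref{prop:def_hat_mu}. Let \(V\) denote the closure in \(L^1(\hat\mu)\) of the \(\R\)-linear span of \(\{\1_{A\times E}^{\hat\mu}:A\in\mathcal A,E\in\Sigma\}\). Then \(V\) is a closed vector subspace, and it suffices to show that \(\1_G^{\hat\mu}\in V\) for every \(G\in\mathcal A\otimes\Sigma\). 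Indeed, simple functions are dense in \(L^1(\hat\mu)\): any \(f\in L^1(\hat\mu)\) is the \(L^1\)-limit of truncated dyadic simple approximations, by dominated convergence.

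To prove \(\1_G^{\hat\mu}\in V\), I would run a Dynkin (\(\pi\)--\(\lambda\)) argument. Let
\[
\mathcal D\coloneqq\big\{G\in\mathcal A\otimes\Sigma\;\big|\;\1_G^{\hat\mu}\in V\big\}.
\]
The family of measurable rectangles is a \(\pi\)-system, since \((A\times E)\cap(A'\times E')=(A\cap A')\times(E\cap E')\). It contains \(\Omega\times\X\) and, by construction, lies in \(\mathcal D\). Next I would check that \(\mathcal D\) is a \(\lambda\)-system.
\begin{itemize}
\item It contains \(\Omega\times\X\).
\item If \(G\subseteq H\) are both in \(\mathcal D\), then \(\1_{H\setminus G}=\1_H-\1_G\in V\) because \(V\) is a vector space.
\item If \(G_n\in\mathcal D\) increase to \(G\), then \(\|\1_G-\1_{G_n}\|_{L^1(\hat\mu)}=\hat\mu(G\setminus G_n)\to0\) by continuity from above of the finite measure \(\hat\mu\). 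Since \(V\) is closed, \(\1_G^{\hat\mu}\in V\).
\end{itemize}
Dynkin's \(\pi\)--\(\lambda\) theorem then gives \(\mathcal D=\mathcal A\otimes\Sigma\).

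I do not expect a genuine obstacle; the whole argument is routine once \(\hat\mu\) is known to be a finite measure on \(\mathcal A\otimes\Sigma\), which Proposition \ref{prop:def_hat_mu} provides. The only point deserving care is the monotone-limit step, where the finiteness of \(\hat\mu\) is used; \(\hat\mu\) is a probability measure, so this is immediate. Alternatively, one could invoke the standard approximation lemma: for every \(G\in\mathcal A\otimes\Sigma\) and \(\varepsilon>0\) there is \(U\in\mathcal R\) with \(\hat\mu(G\triangle U)<\varepsilon\). Since \(\1_U\) is a finite sum of indicators of disjoint rectangles, this yields the same conclusion.
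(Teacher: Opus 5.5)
Your proof is correct and is essentially the paper's argument: both are monotone-class arguments over the $\pi$-system of measurable rectangles, with closure under increasing limits coming from the finiteness of $\hat\mu$. The difference is only one of packaging. The paper works with bounded functions, reducing to $L^\infty(\hat\mu)$ by truncation and applying the functional monotone class theorem, whereas you work with sets via Dynkin's $\pi$--$\lambda$ theorem and then use the density of simple functions in $L^1(\hat\mu)$.
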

\begin{proof}
Let us denote \({\rm R}\coloneqq\{A\times E:A\in\mathcal A,E\in\Sigma\}\) and
\[
{\rm S}\coloneqq\bigg\{\sum_{i=1}^n\lambda_i\1_{R_i}^{\hat\mu}\;\bigg|\;n\in\N,\,(\lambda_i)_{i=1}^n\subseteq\R,\,
(R_i)_{i=1}^n\subseteq{\rm R}\bigg\}.
\]
Thanks to a truncation argument, it suffices to check that for any \(F\in L^\infty(\hat\mu)\) there exists a
sequence \((F_k)_{k\in\N}\subseteq{\rm S}\) such that \(F_k\to F\) in \(L^1(\hat\mu)\). To this aim, we define
\[
{\rm H}\coloneqq\big\{\bar F\in\mathcal L^\infty(\mathcal A\otimes\Sigma)\;\big|\;\text{there exists }
(F_k)_{k\in\N}\subseteq{\rm S}\text{ such that }F_k\to[\bar F]_{\hat\mu}\text{ in }L^1(\hat\mu)\big\}.
\]
Clearly, we have that \(\1_R\in{\rm H}\) for every \(R\in{\rm R}\) and \({\rm H}\) is a vector subspace
of \(\mathcal L^\infty(\mathcal A\otimes\Sigma)\). Moreover, if \((\bar F_j)_{j\in\N}\subseteq{\rm H}\) is a
sequence of non-negative functions such that \(\bar F_j\nearrow\bar F\) for some bounded function \(\bar F\colon\Omega\times\X\to[0,+\infty)\),
then \(\bar F\in\mathcal L^\infty(\mathcal A\otimes\Sigma)\) and \([\bar F_j]_{\hat\mu}\to[\bar F]_{\hat\mu}\)
in \(L^1(\hat\mu)\) by the monotone convergence theorem; since each \([\bar F_j]_{\hat\mu}\) can be approximated
in \(L^1(\hat\mu)\) by elements of \({\rm S}\), by a diagonalisation argument we conclude that \(\bar F\in{\rm H}\).
Note also that \({\rm R}\) is a \(\pi\)-system (i.e.\ a non-empty family of sets that is closed under finite unions)
containing \(\Omega\times\X\). The monotone class theorem ensures that
\({\rm H}=\mathcal L^\infty(\mathcal A\otimes\Sigma)\), whence the statement follows.
\end{proof}

The relation between \([\mu]\) and \(\hat\mu\) is illustrated by the following identity:
\begin{equation}\label{eq:relation_hat_mu_[mu]}
(\pi_\Omega)_\#\hat\mu=[\mu],
\end{equation}
where \(\pi_\Omega\colon\Omega\times\X\to\Omega\) denotes the projection map \(\pi_\Omega(p,x)\coloneqq p\).
Indeed, we have
\[
(\pi_\Omega)_\#\hat\mu(A)=\hat\mu(\pi_\Omega^{-1}(A))=\hat\mu(A\times\X)=\int\mu(A)\,\d\mm=[\mu](A)
\quad\text{ for every }A\in\mathcal A,
\]
thus proving \eqref{eq:relation_hat_mu_[mu]}.
\medskip

More generally, for any non-negative \(L^0\)-valued measure \(\mu\in\mathcal M_+(\Omega;L^0(\mm))\), one can define
\begin{equation}\label{def[mu](A)}
   [\mu](A)\coloneqq\sum_{\substack{k\in\N:\\\mu(P_k)\neq 0}}\frac{\int\mu(A\cap P_k)\,\d\mm}{2^k\int\mu(P_k)\,\d\mm}\quad\text{ for every }A\in\mathcal A, 
\end{equation}
where the partition \((P_k)_{k\in\N}\subseteq\Sigma\) is chosen so that \(k-1\leq\mu(\Omega)\leq k\) holds \(\mm\)-a.e.\ on \(P_k\) for every \(k\in\N\).
Note that \([\mu]\in\mathcal P(\Omega)\). Moreover, one can define \(\hat\mu\in\mathcal P(\Omega\times\X)\) as the unique measure such that
\[
\hat\mu(A\times E)=\sum_{\substack{k\in\N:\\ \mu(P_k)\neq 0}}\frac{\int_E\mu(A\cap P_k)\,\d\mm}{2^k\int\mu(P_k)\,\d\mm}\quad\text{ for every }A\in\mathcal A\text{ and }E\in\Sigma.
\]
These definitions are consistent with the above ones for \(\mu\in\mathcal P(\Omega;L^0(\mm))\), and \((\pi_\Omega)_\#\hat\mu=[\mu]\).
\begin{definition}[\(\mu\)-null set]\label{def:mu-null}
Fix \(\mu\in\mathcal M(\Omega;L^0(\mm))\). Then we say that \(N\in\mathcal A\) is \textbf{\(\mu\)-null} if
\[
|\mu|(N)=0\in L^0(\mm).
\]
We denote by \(\mathcal N_\mu\subseteq\mathcal A\) the collection of all \(\mu\)-null sets.
\end{definition}

Note that \(\mathcal N_\mu\) is a \(\sigma\)-ideal, i.e.\ a family of sets containing the empty
set that is closed under taking subsets and countable unions. It is immediate to check that,
for any \(N\in\mathcal A\), one has
\[
N\in\mathcal N_\mu\quad\Longleftrightarrow\quad[|\mu|](N)=0,
\]
thus in particular \(\mathcal N_\mu\) coincides with the \(\sigma\)-ideal of null sets of some real-valued
finite measure.
\begin{example}\label{ex:Rn-valued_meas}{\rm
When \(\mm\) is purely atomic and consists of \(n\)-many atoms, with \(n\in\N\), the space \(L^0(\mm)\)
can be identified with \(\R^n\) in a natural way. It can be also readily checked that \(\mathcal M(\Omega;L^0(\mm))\)
can be identified with the space of all vector measures \(\mu\colon\mathcal A\to\R^n\).
\fr}\end{example}

Due to the fact that \((L^0(\mm),\leq)\) is not totally ordered (unless \(\mm\) consists of a unique atom), it is not reasonable
to expect the existence of some sort of Hahn decomposition of an \(L^0\)-valued measure \(\mu\in\mathcal M(\Omega;L^0(\mm))\);
this assertion is supported also by Example \ref{ex:Rn-valued_meas}. Still, it is possible to define the
\textbf{positive part} \(\mu^+\) and \textbf{negative part} \(\mu^-\) of \(\mu\in\mathcal M(\Omega;L^0(\mm))\) as
\begin{equation}\label{eq:pos_neg_parts}
\mu^+\coloneqq\frac{|\mu|+\mu}{2}\in\mathcal M_+(\Omega;L^0(\mm)),
\qquad\mu^-\coloneqq\frac{|\mu|-\mu}{2}\in\mathcal M_+(\Omega;L^0(\mm)),
\end{equation}
respectively.
\begin{definition}[Absolute continuity of \(L^0\)-valued measures]\label{def:ac_L0-meas}
Let \(\mu,\nu\in\mathcal M_+(\Omega;L^0(\mm))\) be given. Then we say that
\(\mu\) is \textbf{absolutely continuous} with respect to \(\nu\), and we write
\[
\mu\ll\nu,
\]
provided it holds that \(\mathcal N_\nu\subseteq\mathcal N_\mu\) (i.e.\ \(\mu(N)=0\) for every
\(N\in\mathcal A\) such that \(\nu(N)=0\)).
\end{definition}

It can be easily checked that, given any \(\mu,\nu\in\mathcal M_+(\Omega;L^0(\mm))\), it holds that
\[
\mu\ll\nu\quad\Longleftrightarrow\quad[\mu]\ll[\nu].
\]
\begin{remark}\label{rmk:implication_ac}{\rm
We claim that for any \(\mu,\nu\in\mathcal M_+(\Omega;L^0(\mm))\) it holds that
\[
\hat\mu\ll\hat\nu\quad\Longrightarrow\quad\mu\ll\nu.
\]
Indeed, if \(N\in\mathcal A\) with \(\nu(N)=0\) is given, then we have that \(\hat\nu(N\times\X)=\int\nu(N)\,\d\mm=0\),
thus \(\int\mu(N)\,\d\mm=\hat\mu(N\times\X)=0\) and accordingly \(\mu(N)=0\). We do not know
whether also the converse implication holds.
\fr}\end{remark}
\subsection{The lattice of non-negative \texorpdfstring{\(L^0\)}{L0}-valued measures}
The space \(\mathcal M_+(\Omega;L^0(\mm))\) is a poset if endowed with the ensuing partial order:
given \(\mu,\nu\in\mathcal M_+(\Omega;L^0(\mm))\), we declare that
\[
\mu\leq\nu\quad\Longleftrightarrow\quad\mu(A)\leq\nu(A)\;\text{ for every }A\in\mathcal A.
\]
It is straightforward to check that \((\mathcal M_+(\Omega;L^0(\mm)),\leq)\) is a conditionally-complete lattice.
More specifically, given a subset \(\mathcal S\) of \(\mathcal M_+(\Omega;L^0(\mm))\) for which there exists \(\nu\in\mathcal M_+(\Omega;L^0(\mm))\)
satisfying \(\mu\leq\nu\) for every \(\mu\in\mathcal S\), we have that
\[
\bigg(\bigvee\mathcal S\bigg)(A)=\bigg(\bigvee_{\mu\in\mathcal S}\mu\bigg)(A)=\bigvee\bigg\{\sum_{n=1}^\infty\mu_n(A_n)\;\bigg|\;(\mu_n)_{n\in\N}\subseteq\mathcal S,\,
(A_n)_{n\in\N}\subseteq\mathcal A\text{ partition of }A\bigg\}
\]
for all \(A\in\mathcal A\). Moreover, if \(\mathcal S\) is an arbitrary subset of \(\mathcal M_+(\Omega;L^0(\mm))\), then for any \(A\in\mathcal A\) we have that
\[
\bigg(\bigwedge\mathcal S\bigg)(A)=\bigg(\bigwedge_{\mu\in\mathcal S}\mu\bigg)(A)=\bigwedge\bigg\{\sum_{n=1}^\infty\mu_n(A_n)\;\bigg|\;(\mu_n)_{n\in\N}\subseteq\mathcal S,\,
(A_n)_{n\in\N}\subseteq\mathcal A\text{ partition of }A\bigg\}.
\]

In particular, given any \(p\in\mathcal M_+(\Omega;L^0(\mm))\) and letting \(S_p\coloneqq\{\mu\in\mathcal M_+(\Omega;L^0(\mm)):\mu\leq p\}\), we have that
\((S_p,\leq)\) is a \emph{Heyting algebra}, since for any \(\mu_1,\mu_2\in S_p\) there exists the supremum of the set \(\{\nu\in S_p:\mu_1\wedge\nu\leq\mu_2\}\) in \((S_p,\leq)\).
\begin{remark}\label{ossmintotvar}{\rm
Given any \(\mu\in\mathcal M(\Omega;L^0(\mm))\), its associated total variation \(L^0\)-valued measure \(|\mu|\) can be equivalently characterised as follows:
\begin{equation}\label{eq:equiv_tv_meas_mu}
|\mu|=\bigwedge\big\{\sigma\in\mathcal M_+(\Omega;L^0(\mm))\;\big|\;|\mu(A)|\leq\sigma(A)\text{ for every }A\in\mathcal A\big\}.
\end{equation}
Let us prove it. On the one hand, the inequality \(\geq\) in \eqref{eq:equiv_tv_meas_mu} follows from the observation that
\(|\mu(A)|\leq|\mu|(A)\) for every \(A\in\mathcal A\). On the other hand, for any \(\sigma\) as above and \(A\in\mathcal A\) we have
\[\begin{split}
|\mu|(A)&=\bigvee\bigg\{\sum_{n=1}^\infty|\mu(A_n)|\;\bigg|\;(A_n)_{n\in\N}\subseteq\mathcal A\text{ partition of }A\bigg\}\\
&\leq\bigvee\bigg\{\sum_{n=1}^\infty\sigma(A_n)\;\bigg|\;(A_n)_{n\in\N}\subseteq\mathcal A\text{ partition of }A\bigg\}=\sigma(A),
\end{split}\]
whence the validity of the inequality \(\leq\) in \eqref{eq:equiv_tv_meas_mu} follows.
\fr}\end{remark}
\subsection{Foliation of an \texorpdfstring{\(L^0\)}{L0}-valued measure}
In this section, we study those \(L^0\)-valued measures that can be `foliated' with respect to the random parameter \(x\in\X\).
\begin{lemma}\label{lem:foliation}
Let \((\mu_x)_{x\in\X}\subseteq\mathcal M(\Omega)\) be an \textbf{\(\mm\)-measurable collection of measures},
with which we mean that the function \(\X\ni x\mapsto\mu_x(A)\in\R\) is \(\mm\)-measurable for every \(A\in\mathcal A\).
We define the map \(\mu\colon\mathcal A\to L^0(\mm)\) as
\[
\mu(A)\coloneqq[\mu_\cdot(A)]_\mm\in L^0(\mm)\quad\text{ for every }A\in\mathcal A.
\]
Then it holds that \(\mu\in\mathcal M(\Omega;L^0(\mm) )\). In particular, we have that \(\mu\) is of bounded variation.
\end{lemma}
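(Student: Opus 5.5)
We must verify three things in turn: \(\mu(\varnothing)=0\), the \(\sigma\)-additivity in the sense of Definition \ref{def:L0_meas} ii), and the bounded variation condition \(|\mu|(\Omega)\in L^0_+(\mm)\). The first is immediate, since \(\mu_x(\varnothing)=0\) for every \(x\in\X\).

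The key object is the total variation function \(x\mapsto\|\mu_x\|_{\rm TV}=|\mu_x|(\Omega)\in[0,+\infty)\). This function need not be \(\mm\)-measurable a priori, because it is a supremum over an uncountable family of partitions. We circumvent this using Remark \ref{rmk:suff_cond_ord-bdd}.

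Consider the family of functions \(f_{\mathcal P}(x)\coloneqq\sum_n|\mu_x(A_n)|\), indexed by countable measurable partitions \(\mathcal P=(A_n)_{n\in\N}\) of \(\Omega\). Each \(f_{\mathcal P}\) is \(\mm\)-measurable, being a countable sum of \(\mm\)-measurable non-negative functions. Moreover \(\sup_{\mathcal P}f_{\mathcal P}(x)=|\mu_x|(\Omega)<+\infty\) for every \(x\). Working with the completion \(\bar\mm\) (harmless by the identification \(L^0(\mm)\cong L^0(\bar\mm)\)), Remark \ref{rmk:suff_cond_ord-bdd} then provides \(g\in L^0_+(\mm)\) with \([f_{\mathcal P}]_\mm\leq g\) for every partition \(\mathcal P\).

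Now fix pairwise disjoint \((A_n)_{n\in\N}\subseteq\mathcal A\) and set \(A\coloneqq\bigcup_nA_n\).

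For summability, the partial sums \(\sum_{i=1}^N|\mu(A_i)|\) are bounded by \(g\): the sets \(A_1,\ldots,A_N,\Omega\setminus\bigcup_{i\le N}A_i\) form a partition of \(\Omega\). Hence \(\{|\mu(A_n)|\}\) is summable by Remark \ref{rmk:fact_about_summability} iv).

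For the identity \(\mu(A)=\sum_n\mu(A_n)\), pointwise \(\sigma\)-additivity of each \(\mu_x\) gives \(\sum_n\mu_x(A_n)=\mu_x(A)\) for every \(x\). It remains to check that this pointwise limit agrees with the sum in the sense of Definition \ref{defconvserie}. Using the Cauchy-type bound, for every \(x\) we have
\[
\sup_{F\in\mathcal P_f(\{n+1,\ldots\})}\Big|\mu_x(A)-\sum_{i\in\{1,\ldots,n\}\cup F}\mu_x(A_i)\Big|\leq2\sum_{i>n}|\mu_x(A_i)|.
\]
The right-hand side is measurable and tends to \(0\) pointwise, hence in measure. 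The left-hand side is a supremum over countably many finite \(F\), so its class dominates the lattice supremum in \eqref{eq:sum_in_Ban_mod}. This yields the identity.

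For bounded variation, any partition \((A_n)\) of \(\Omega\) satisfies \(\sum_n|\mu(A_n)|=[f_{\mathcal P}]_\mm\leq g\). Therefore \(|\mu|(\Omega)\leq g\in L^0_+(\mm)\).

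The main obstacle is precisely the non-measurability of \(x\mapsto|\mu_x|(\Omega)\). This is why we pass through Remark \ref{rmk:suff_cond_ord-bdd} and the countable sup property, rather than simply taking \(g=[|\mu_\cdot|(\Omega)]_\mm\). One should also double-check that Remark \ref{rmk:suff_cond_ord-bdd} is applied to \(\bar\Sigma_\mm\)-measurable functions, which is fine after passing to the completion.
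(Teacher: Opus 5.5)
Your proof is correct and follows essentially the same route as the paper's proof. Both use the pointwise $\sigma$-additivity of each $\mu_x$, Remark \ref{rmk:fact_about_summability} iv) for summability, and Remark \ref{rmk:suff_cond_ord-bdd} to get order-boundedness of $\{\sum_n|\mu(A_n)|\}$ despite the possible non-measurability of $x\mapsto|\mu_x|(\Omega)$. The only (cosmetic) differences are these: you check the defining condition of Definition \ref{defconvserie} directly through the pointwise tail bound, whereas the paper shows $\sum_{i\le n}\mu(A_i)\to\mu(A)$ in $L^0(\mm)$ and invokes Remark \ref{rmk:fact_about_summability} ii); and you dominate the partial sums by the global bound $g$ rather than by the class of the pointwise series $\sum_n|\mu_x(A_n)|$.
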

\begin{proof}
Since \(\mu_x(\varnothing)=0\) for every \(x\in\X\), we have that \(\mu(\varnothing)=0\in L^0(\mm)\). Given pairwise disjoint sets
\((A_n)_{n\in\N}\subseteq\mathcal A\), we have that \(\sum_{n=1}^\infty|\mu_x(A_n)|<+\infty\) for every \(x\in\X\), thus in particular the \(\mm\)-a.e.\ equivalence
class \(G\) of \(\X\ni x\mapsto\sum_{n=1}^\infty|\mu_x(A_n)|\) belongs to \(L^0_+(\mm)\). Since \(\sum_{i=1}^n|\mu(A_i)|\leq G\) for all \(n\in\N\), the set
\(\big\{\sum_{i=1}^n|\mu(A_i)|:n\in\N\big\}\) is order bounded in \(L^0(\mm)\) and thus \(\{|\mu(A_n)|:n\in\N\}\) is summable by Remark \ref{rmk:fact_about_summability} iv).
Moreover, we have that \(\sum_{n\in\N}|\mu(A_n)|=\bigvee_{n\in\N}\sum_{i=1}^n|\mu(A_i)|\) is the \(\mm\)-a.e.\ equivalence class of the function
\(\X\ni x\mapsto\sum_{n=1}^\infty|\mu_x(A_n)|\), so that \(\{\mu(A_n):n\in\N\}\subseteq L^0(\mm)\) is summable by Remark \ref{rmk:fact_about_summability} vi).
Letting \(A\coloneqq\bigcup_{n\in\N}A_n\) for brevity, we have that
\[
\sum_{i=1}^n\mu_x(A_i)\to\sum_{i=1}^\infty\mu_x(A_i)=\mu_x(A)\quad\text{ as }n\to\infty
\]
for every \(x\in\X\), whence it follows that \(\sum_{i=1}^n\mu(A_i)\to\mu(A)\) in \(L^0(\mm)\) as \(n\to\infty\). Taking Remark \ref{rmk:fact_about_summability} ii) into account,
we deduce that \(\mu(A)=\sum_{n\in\N}\mu(A_n)\), proving that \(\mu\) is an \(L^0\)-valued measure. To conclude, it remains to check that \(\mu\) is of bounded variation. Note that for any \(x\in\X\) we have
\[
\sup_{(A_n)_n}\sum_{n=1}^\infty|\mu_x(A_n)|\leq\sup_{(A_n)_n}\sum_{n=1}^\infty|\mu_x|(A_n)=|\mu_x|(\Omega),
\]
where the supremum is taken over all partitions \((A_n)_{n\in\N}\subseteq\mathcal A\) of \(\Omega\). Recalling Remark
\ref{rmk:suff_cond_ord-bdd} and using the fact that \(\X\ni x\mapsto\sum_{n=1}^\infty|\mu_x(A_n)|\) is an
\(\mm\)-a.e.\ representative of \(\sum_{n=1}^\infty|\mu(A_n)|\), we deduce that
\(\big\{\sum_{n=1}^\infty|\mu(A_n)|:(A_n)_{n\in\N}\subseteq\mathcal A\text{ partition of }\Omega\big\}\)
is an order-bounded subset of \(L^0(\mm)\), so that accordingly \(|\mu|_{\rm TV}\in L^0_+(\mm)\). Hence,
\(\mu\) is of bounded variation.
\end{proof}
\begin{definition}[Foliation of an \(L^0\)-valued measure]\label{def:foliation_meas}
We say that a given \(\mu\in\mathcal M(\Omega;L^0(\mm))\) can be \textbf{foliated} if it can be expressed as in Lemma
\ref{lem:foliation} for some \(\mm\)-measurable collection of measures \((\mu_x)_{x\in\X}\subseteq\mathcal M(\Omega)\),
which we call a \textbf{foliation} of \(\mu\).
\end{definition}

The foliation of an \(L^0\)-valued measure is not necessarily unique. In the following proposition we provide
a sufficient condition for the (essential) uniqueness of the foliation.
\begin{proposition}[Uniqueness of the foliation]\label{prop:unique_foliation}
Assume that \(\mathcal A\) is countably generated and \((\mu_x)_{x\in\X},(\nu_x)_{x\in\X}\subseteq\mathcal M(\Omega)\)
are foliations of the same \(L^0\)-valued measure \(\mu\in\mathcal M(\Omega;L^0(\mm))\). Then it holds that
\[
\mu_x=\nu_x\quad\text{ for }\mm\text{-a.e.\ }x\in\X.
\]
\end{proposition}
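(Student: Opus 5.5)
The plan is to reduce the statement to a countable family of sets by means of a \(\pi\)-\(\lambda\) (Dynkin) argument. Since \(\mathcal A\) is countably generated, fix a countable family \(\mathcal G\subseteq\mathcal A\) generating \(\mathcal A\). Let \(\mathcal C\) be the algebra generated by \(\mathcal G\), i.e.\ the family of finite unions of finite intersections of elements of \(\mathcal G\) and their complements. Then \(\mathcal C\) is still countable, it is closed under finite intersections, it contains \(\Omega\), and \(\sigma(\mathcal C)=\mathcal A\).

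For each \(C\in\mathcal C\), both \(x\mapsto\mu_x(C)\) and \(x\mapsto\nu_x(C)\) are \(\mm\)-measurable representatives of the same class \(\mu(C)\in L^0(\mm)\). Hence there is an \(\mm\)-null set \(N_C\) outside of which \(\mu_x(C)=\nu_x(C)\). We set \(N\coloneqq\bigcup_{C\in\mathcal C}N_C\). This is a countable union, so \(N\) is \(\mm\)-null; if the \(N_C\) are only \(\bar\Sigma_\mm\)-measurable, we replace them with \(\Sigma\)-measurable null supersets. Countability of \(\mathcal C\) is precisely where the hypothesis enters, and it is the only real point of the proof.

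Now fix \(x\in\X\setminus N\). The finite signed measures \(\mu_x,\nu_x\in\mathcal M(\Omega)\) agree on \(\mathcal C\). Consider
\[
\mathcal D_x\coloneqq\{A\in\mathcal A\;|\;\mu_x(A)=\nu_x(A)\}.
\]
It is a Dynkin system. It contains \(\Omega\) because \(\Omega\in\mathcal C\). It is closed under proper differences because both measures are finite, so subtraction is legitimate. It is closed under increasing unions by continuity from below of finite signed measures. Since \(\mathcal C\subseteq\mathcal D_x\) and \(\mathcal C\) is a \(\pi\)-system, Dynkin's \(\pi\)-\(\lambda\) theorem gives \(\mathcal A=\sigma(\mathcal C)\subseteq\mathcal D_x\). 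Therefore \(\mu_x=\nu_x\) for every \(x\notin N\), which is the claim.

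The obstacle in this argument is small. One must use finiteness of the signed measures, which is guaranteed because they lie in \(\mathcal M(\Omega)\), so that differences make sense in the Dynkin argument. Alternatively, one can avoid signed measures altogether by applying the argument to the Jordan decompositions, but the direct Dynkin argument already works for finite signed measures.
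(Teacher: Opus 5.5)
Your proof is correct and follows the paper's argument essentially verbatim. Both use a countable \(\pi\)-system generating \(\mathcal A\) (the paper takes \(\{\Omega\}\) together with finite intersections of generators rather than the generated algebra, which makes no difference), a single \(\mm\)-null set off which \(\mu_x\) and \(\nu_x\) agree on that \(\pi\)-system, and the Sierpi\'{n}ski--Dynkin \(\pi\)-\(\lambda\) theorem applied to \(\mathcal D_x\), where your use of continuity from below as a limit rather than a supremum is the right formulation for signed measures.
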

\begin{proof}
Fix a countable set \({\rm C}\subseteq\mathcal A\) that generates the \(\sigma\)-algebra \(\mathcal A\), and denote
\begin{equation}\label{eq:def_pi-system_P}
{\rm P}\coloneqq\{\Omega\}\cup\big\{A_1\cap\ldots\cap A_n\;\big|\;n\in\N,\,A_1,\ldots,A_n\in{\rm C}\big\}\subseteq\mathcal A.
\end{equation}
Then \({\rm P}\) is a countable \(\pi\)-system that generates the \(\sigma\)-algebra \(\mathcal A\).
Now, for any \(x\in\X\) we define
\[
{\rm D}_x\coloneqq\big\{A\in\mathcal A\;\big|\;\mu_x(A)=\nu_x(A)\big\}.
\]
Given any \(A,B\in{\rm D}_x\) with \(A\subseteq B\), we have
\[
\mu_x(B\setminus A)=\mu_x(B)-\mu_x(A)=\nu_x(B)-\nu_x(A)=\nu_x(B\setminus A),
\]
so that \(B\setminus A\in{\rm D}_x\). Moreover, given any sequence \((A_n)_{n\in\N}\subseteq{\rm D}_x\) with
\(A_1\subseteq A_2\subseteq A_3\subseteq\ldots\), we have
\(\mu_x\big(\bigcup_{n\in\N}A_n\big)=\sup_{n\in\N}\mu_x(A_n)=\sup_{n\in\N}\nu_x(A_n)=\nu_x\big(\bigcup_{n\in\N}A_n\big)\),
thus \(\bigcup_{n\in\N}A_n\in{\rm D}_x\). For any \(A\in{\rm P}\), we have that \(\mu_x(A)=\mu(A)(x)=\nu(A)(x)=\nu_x(A)\)
for \(\mm\)-a.e.\ \(x\in\X\). Since \({\rm P}\) is a countable family, we deduce that \(\mm(N)=0\), where we set
\[
N\coloneqq\big\{x\in\X\;\big|\;\mu_x(A)\neq\nu_x(A)\text{ for some }A\in{\rm P}\big\}.
\]
Fix \(x\in\X\setminus N\). Then we have that \(\mu_x(\Omega)=\nu_x(\Omega)\), thus accordingly \({\rm D}_x\) is a Dynkin
system, and \({\rm P}\subseteq{\rm D}_x\). Thanks to the Sierpi\'{n}ski--Dynkin \(\pi\)-\(\lambda\) theorem, we conclude
that \({\rm D}_x=\mathcal A\) (i.e.\ that \(\mu_x=\nu_x\)) for every \(x\in\X\setminus N\), which yields the statement.
\end{proof}
\begin{remark}\label{rmk:integr_mu_x}{\rm
Assume \((\mu_x)_{x\in\X}\subseteq\mathcal M(\Omega)\) is a foliation of a given \(\mu\in\mathcal M(\Omega;L^0(\mm))\) such that
\((|\mu_x|)_{x\in\X}\subseteq\mathcal M_+(\Omega)\) is an \(\mm\)-measurable collection. Let \(\nu\in\mathcal M_+(\Omega;L^0(\mm))\)
denote the \(L^0\)-valued measure whose foliation is \((|\mu_x|)_{x\in\X}\). Then we claim that \(|\mu|\leq\nu\).
Indeed, for every \(A\in\mathcal A\) and \(x\in\X\) we have that \(|\mu_x(A)|\leq|\mu_x|(A)\), so that \(|\mu(A)|\leq\nu(A)\).
Recalling Remark \ref{ossmintotvar}, we thus conclude that \(|\mu|\leq\nu\), as we claimed.
\fr}\end{remark}

We expect that, in general, the fact that \((\mu_x)_{x\in\X}\subseteq\mathcal M(\Omega)\) is an \(\mm\)-measurable collection
of measures does not imply that \((|\mu_x|)_{x\in\X}\subseteq\mathcal M_+(\Omega)\) is an \(\mm\)-measurable collection.
However, in Corollary \ref{cor:meas_|mu_x|} we will provide a sufficient condition for the validity of this implication.
\begin{lemma}\label{lem:meas_by_mon_class_thm}
Let \(\mu\in\mathcal P(\Omega;L^0(\mm))\) have a foliation \((\mu_x)_{x\in\X}\subseteq\mathcal P(\Omega)\).
Fix \(F\in\mathcal L^0_+(\mathcal A\otimes\bar\Sigma_\mm)\).
Then the function \(\X\ni x\mapsto\int F(\cdot,x)\,\d\mu_x\in[0,+\infty]\) is \(\mm\)-measurable and
\begin{equation}\label{eq:meas_by_mon_class_thm_cl}
\int F\,\d\hat\mu=\int\!\!\!\int F(\cdot,x)\,\d\mu_x\,\d\mm(x).
\end{equation}
\end{lemma}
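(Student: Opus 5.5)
The plan is a monotone class argument: first establish the claim for indicator functions of measurable rectangles, then extend to all of \(\mathcal A\otimes\bar\Sigma_\mm\) via the \(\pi\)-\(\lambda\) theorem, and finally pass to non-negative functions by simple-function approximation and monotone convergence. Throughout, \(\hat\mu\) is regarded on \(\mathcal A\otimes\bar\Sigma_\mm\). Since \(\bar L^0(\mm)\) and \(\bar L^0(\bar\mm)\) are identified, the construction of Proposition \ref{prop:def_hat_mu} can be run with \((\X,\bar\Sigma_\mm,\bar\mm)\) in place of \((\X,\Sigma,\mm)\). This gives a measure on \(\mathcal A\otimes\bar\Sigma_\mm\) extending \(\hat\mu\). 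Likewise, \(\mm\)-measurability of a function is understood as \(\bar\Sigma_\mm\)-measurability.

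\emph{Step 1 (rectangles).} Take \(F=\1_{A\times E}\) with \(A\in\mathcal A\) and \(E\in\bar\Sigma_\mm\). Then
\[
\int F(\cdot,x)\,\d\mu_x=\1_E(x)\mu_x(A),
\]
which is \(\mm\)-measurable because \((\mu_x)_x\) is an \(\mm\)-measurable collection. Moreover, \(x\mapsto\mu_x(A)\) is a representative of \(\mu(A)\) by the definition of foliation. Hence
\[
\int\1_E(x)\mu_x(A)\,\d\mm(x)=\int_E\mu(A)\,\d\mm=\hat\mu(A\times E),
\]
which is \eqref{eq:meas_by_mon_class_thm_cl} in this case.

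\emph{Step 2 (Dynkin system).} Let \({\rm D}\) be the family of sets \(G\in\mathcal A\otimes\bar\Sigma_\mm\) for which \(x\mapsto\mu_x(G_x)\) is \(\mm\)-measurable and \(\hat\mu(G)=\int\mu_x(G_x)\,\d\mm(x)\), where \(G_x\coloneqq\{p\in\Omega:(p,x)\in G\}\) denotes the \(x\)-section. Each section is \(\mathcal A\)-measurable, as usual for product \(\sigma\)-algebras, so \(\mu_x(G_x)\) makes sense. By Step 1, \({\rm D}\) contains the \(\pi\)-system of rectangles, which includes \(\Omega\times\X\).

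It remains to check that \({\rm D}\) is a Dynkin system. For proper differences \(H\setminus G\) with \(G\subseteq H\), one uses \(\mu_x((H\setminus G)_x)=\mu_x(H_x)-\mu_x(G_x)\). This subtraction is legitimate because every \(\mu_x\) is a probability measure, so all quantities are finite and bounded by \(1\). For increasing unions, one uses continuity from below of each \(\mu_x\) and of \(\hat\mu\), together with the monotone convergence theorem in \(x\); pointwise limits of measurable functions stay measurable. The \(\pi\)-\(\lambda\) theorem then gives \({\rm D}=\mathcal A\otimes\bar\Sigma_\mm\).

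\emph{Step 3 (functions).} By linearity, the statement holds for non-negative simple functions. A general \(F\in\mathcal L^0_+(\mathcal A\otimes\bar\Sigma_\mm)\) is the pointwise increasing limit of simple functions \(F_k\nearrow F\). For each \(x\), monotone convergence with respect to \(\mu_x\) gives
\[
\int F_k(\cdot,x)\,\d\mu_x\nearrow\int F(\cdot,x)\,\d\mu_x,
\]
so the limit is \(\mm\)-measurable with values in \([0,+\infty]\). Applying monotone convergence once more, for \(\mm\) on the right-hand side and for \(\hat\mu\) on the left-hand side, yields \eqref{eq:meas_by_mon_class_thm_cl}.

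The only delicate point is the base of the argument: making sure that \(\hat\mu\) is available on \(\mathcal A\otimes\bar\Sigma_\mm\) and not only on \(\mathcal A\otimes\Sigma\), and that Step 1 covers rectangles with \(E\in\bar\Sigma_\mm\). The first is handled by rerunning Proposition \ref{prop:def_hat_mu} with \(\bar\mm\). The second is immediate, since modifying \(E\) on an \(\bar\mm\)-null set does not change \(\int_E\mu(A)\,\d\mm\). Everything else is routine monotone class machinery, and the finiteness of each \(\mu_x\) is exactly what makes the Dynkin subtraction step work.
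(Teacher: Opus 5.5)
Your proposal is correct and follows essentially the paper's monotone class strategy. The paper applies the functional monotone class theorem directly to bounded \(\mathcal A\otimes\bar\Sigma_\mm\)-measurable functions, starting from indicators of rectangles \(A\times E\) with \(E\in\bar\Sigma_\mm\), and then handles general \(F\) via \(F\wedge n\nearrow F\); you instead use the set version (Sierpi\'{n}ski--Dynkin) and pass through simple functions, which is an equivalent route, and your explicit construction of \(\hat\mu\) on \(\mathcal A\otimes\bar\Sigma_\mm\) by rerunning Proposition \ref{prop:def_hat_mu} with \(\bar\mm\) spells out a point the paper leaves implicit.
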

\begin{proof}
Let us denote \({\rm R}\coloneqq\{A\times E:A\in\mathcal A,E\in\bar\Sigma_\mm\}\) and
\[
{\rm H}\coloneqq\bigg\{F\in\mathcal L^\infty(\mathcal A\otimes\bar\Sigma_\mm)\;\bigg|\;\int F(p,\cdot)\,\d\mu_\cdot(p)
\in\mathcal L^\infty(\bar\Sigma_\mm),\,\int F\,\d\hat\mu=\int\!\!\!\int F(\cdot,x)\,\d\mu_x\,\d\mm(x)\bigg\}.
\]
Note that \({\rm R}\) is a \(\pi\)-system containing the set \(\Omega\times\X\). It is easy to check that \({\rm H}\)
is a vector subspace of \(\mathcal L^\infty(\mathcal A\otimes\bar\Sigma_\mm)\) and, using the monotone convergence theorem,
that \(F\in{\rm H}\) whenever \(F\colon\Omega\times\X\to[0,+\infty)\) is a bounded function such that
\(F_n\nearrow F\) for some \((F_n)_n\subseteq{\rm H}\) with \(F_n\geq 0\). Given \(A\times E\in{\rm R}\),
we have that the function \(\X\ni x\mapsto\int\1_{A\times E}(p,x)\,\d\mu_x(p)=\1_E(x)\mu_x(A)\) is \(\mm\)-measurable and
\[
\int\1_{A\times E}\,\d\hat\mu=\hat\mu(A\times E)=\int_E\mu(A)\,\d\mm=\int_E\mu_x(A)\,\d\mm(x)
=\int\!\!\!\int\1_{A\times E}(\cdot,x)\,\d\mu_x\,\d\mm(x)
\]
by \eqref{eq:def_hat_mu}, so that \(\1_{A\times E}\in{\rm H}\). Hence, the monotone class theorem yields
\({\rm H}=\mathcal L^\infty(\mathcal A\otimes\bar\Sigma_\mm)\). Finally, given any function
\(F\in\mathcal L^0_+(\mathcal A\otimes\bar\Sigma_\mm)\), it holds that \(F\wedge n\in{\rm H}\) for every \(n\in\N\)
and \(F\wedge n\nearrow F\) as \(n\to\infty\), thus the monotone convergence theorem implies that
the function \(\X\ni x\mapsto\int F(\cdot,x)\,\d\mu_x\in[0,+\infty]\) is \(\mm\)-measurable and that
\eqref{eq:meas_by_mon_class_thm_cl} holds. This completes the proof of the statement.
\end{proof}
\begin{remark}{\rm
Assume that \(\mu\in\mathcal M_+(\Omega;L^0(\mm))\) and \(\nu\in\mathcal M_+(\Omega;L^0(\mm))\) have foliations
\((\mu_x)_{x\in\X}\subseteq\mathcal M_+(\Omega)\) and \((\nu_x)_{x\in\X}\subseteq\mathcal M_+(\Omega)\),
respectively. Assume also \(\mu(\Omega),\nu(\Omega)\in L^\infty(\mm)\). Then we claim that
\begin{equation}\label{eq:impl_ac_2}
\mu_x\ll\nu_x\text{ for }\mm\text{-a.e.\ }x\in\X\quad\Longrightarrow\quad\hat\mu\ll\hat\nu.
\end{equation}
Indeed, if \(N\in\mathcal A\otimes\Sigma\) and \(\hat\nu(N)=0\), then 
\(\int\nu_x(\{p\in\Omega:(p,x)\in N\})\,\d\mm(x)=0\) by Lemma \ref{lem:meas_by_mon_class_thm}, which gives
\(\nu_x(\{p\in\Omega:(p,x)\in N\})=0\), so that \(\mu_x(\{p\in\Omega:(p,x)\in N\})=0\), for \(\mm\)-a.e.\ \(x\in\X\).
Accordingly, we have that \(\hat\mu(N)=\int\mu_x(\{p\in\Omega:(p,x)\in N\})\,\d\mm(x)=0\), thus proving
\eqref{eq:impl_ac_2}. We do not know whether the converse implication holds.
\fr}\end{remark}
\subsection{Radon \texorpdfstring{\(L^0\)}{L0}-valued measures}\label{s:Radon_L0-val_meas}
Let \((\Omega,\tau)\) be a topological space. We make the implicit assumption that we equip
\(\Omega\) with the Borel \(\sigma\)-algebra \(\mathcal A=\mathscr B(\Omega)\) induced by \((\Omega,\tau)\).
In this case, we refer to the elements of \(\mathcal M(\Omega;L^0(\mm))\) as the \textbf{Borel \(L^0\)-valued measures}
on \(\Omega\).
\begin{definition}[Radon \(L^0\)-valued measure]\label{def:RadonL0_meas}
Let \((\Omega,\tau)\) be a Hausdorff space. Then we say that a given Borel \(L^0\)-valued measure \(\mu\) on \(\Omega\)
is \textbf{Radon} provided the following conditions hold:
\begin{align}
\label{eq:inner_reg_on_open}
|\mu|(U)=\bigvee\big\{|\mu|(K)\;\big|\;K\subseteq\Omega\text{ compact, }K\subseteq U\big\}&\quad\text{ for every }U\in\tau,\\
\label{eq:outer_reg}
|\mu|(B)=\bigwedge\big\{|\mu|(U)\;\big|\;U\in\tau,\,B\subseteq U\big\}&\quad\text{ for every }B\in\mathscr B(\Omega).
\end{align}
We refer to \eqref{eq:inner_reg_on_open} and \eqref{eq:outer_reg} as the \textbf{inner regularity on open sets}
and \textbf{outer regularity}, respectively. Moreover, we denote by \(\mathfrak M(\Omega;L^0(\mm))\) the space
of all Radon \(L^0\)-valued measures \(\mu\colon\mathscr B(\Omega)\to L^0(\mm)\) and we define
\(\mathfrak M_+(\Omega;L^0(\mm))\coloneqq\mathfrak M(\Omega;L^0(\mm))\cap\mathcal M_+(\Omega;L^0(\mm))\).
\end{definition}

We have chosen the above axiomatisation because it is the most natural generalisation of the usual notion of
a (real-valued) Radon measure. Observe that \(L^0\)-valued measures are by definition `randomly finite'
(in the sense that \(|\mu|(A)\leq|\mu|_{\rm TV}\in L^0_+(\mm)\) for all \(A\in\mathscr B(\Omega)\)), thus the
`finiteness on compact sets' (i.e.\ \(|\mu|(K)\in L^0_+(\mm)\) for every compact set \(K\subseteq\Omega\)) is
automatically guaranteed. In the next result, we show that the inner regularity property can be extended to all Borel sets:
\begin{lemma}\label{lem:inner_reg_self-improv}
Let \((\Omega,\tau)\) be a Hausdorff space. Let \(\mu\in\mathcal M(\Omega;L^0(\mm))\) be given. Then we have that \(\mu\in\mathfrak M(\Omega;L^0(\mm))\) if and only if
\(\mu\) is \textbf{inner regular (on Borel sets)}, i.e.
\begin{equation}\label{eq:inner_reg}
|\mu|(B)=\bigvee\big\{|\mu|(K)\;\big|\;K\subseteq\Omega\text{ compact, }K\subseteq B\big\}
\quad\text{ for every }B\in\mathscr B(\Omega).
\end{equation}
\end{lemma}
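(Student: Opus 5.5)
Throughout, write \(\nu\coloneqq|\mu|\in\mathcal M_+(\Omega;L^0(\mm))\). The idea is to transplant the classical argument for finite Radon measures, with the quantifier ``up to \(\varepsilon\)'' replaced by monotone sequences extracted via the countable sup/inf property of \(L^0(\mm)\). Two facts will be used repeatedly. First, since \((\Omega,\tau)\) is Hausdorff, compact sets are closed, so \(\Omega\setminus K\in\tau\) for every compact \(K\). Second, \(\nu\) is finite-valued, so \(\nu(\Omega\setminus B)=\nu(\Omega)-\nu(B)\) in \(L^0(\mm)\).

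\emph{Inner regularity implies Radon.} Inner regularity on open sets is the special case \(B=U\) of \eqref{eq:inner_reg}. For outer regularity, fix \(B\in\mathscr B(\Omega)\) and apply \eqref{eq:inner_reg} to \(\Omega\setminus B\). Each compact \(K\subseteq\Omega\setminus B\) yields an open set \(U\coloneqq\Omega\setminus K\supseteq B\) with \(\nu(U)=\nu(\Omega)-\nu(K)\). Taking the infimum over such \(K\) gives
\[
\bigwedge\big\{\nu(U)\;\big|\;U\in\tau,\,B\subseteq U\big\}\leq\nu(\Omega)-\bigvee_K\nu(K)=\nu(\Omega)-\nu(\Omega\setminus B)=\nu(B).
\]
The reverse inequality holds by monotonicity. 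Here I only use that \(f\mapsto\nu(\Omega)-f\) exchanges suprema and infima in \(L^0(\mm)\).

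\emph{Radon implies inner regularity.} Fix \(B\in\mathscr B(\Omega)\). The inequality \(\geq\) in \eqref{eq:inner_reg} is trivial, so it suffices to produce compact sets \(C_n\subseteq B\) with \(\nu(C_n)\to\nu(B)\) \(\mm\)-a.e. I proceed in three steps.
\begin{itemize}
\item[(a)] Apply \eqref{eq:outer_reg} to \(\Omega\setminus B\). The family of open sets containing \(\Omega\setminus B\) is closed under finite intersections, so the corresponding values \(\nu(U)\) form a downward directed family. By the countable inf property, together with replacing the extracted sets by their finite intersections, I obtain a decreasing sequence of open sets \(U_n\supseteq\Omega\setminus B\) with \(\nu(U_n)\searrow\nu(\Omega\setminus B)\) \(\mm\)-a.e.
\item[(b)] Apply \eqref{eq:inner_reg_on_open} to the open set \(\Omega\). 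Finite unions of compact sets are compact, so the same device gives an increasing sequence of compact sets \(K_n\) with \(\nu(K_n)\nearrow\nu(\Omega)\) \(\mm\)-a.e.
\item[(c)] Set \(C_n\coloneqq K_n\setminus U_n=K_n\cap(\Omega\setminus U_n)\). It is a closed subset of a compact set, hence compact, and \(C_n\subseteq B\). Moreover \(B\setminus C_n\subseteq(\Omega\setminus K_n)\cup\big(U_n\setminus(\Omega\setminus B)\big)\). Hence
\[
0\leq\nu(B)-\nu(C_n)\leq\big(\nu(\Omega)-\nu(K_n)\big)+\big(\nu(U_n)-\nu(\Omega\setminus B)\big)\to0\quad\mm\text{-a.e.},
\]
which gives \(\nu(B)\leq\bigvee_n\nu(C_n)\).
\end{itemize}

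The main obstacle is the lattice-theoretic bookkeeping in steps (a) and (b). A supremum or infimum in \(L^0(\mm)\) need not be approached by a single set uniformly in \(x\). One must therefore check carefully that the countable sup/inf property, combined with the directedness obtained by passing to finite unions and intersections, yields genuinely monotone sequences whose values converge \(\mm\)-a.e. to the lattice supremum or infimum.
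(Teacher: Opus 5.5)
Your proof is correct and follows the paper's argument. The `if' direction goes by complementation. The `only if' direction intersects a closed subset of \(B\), obtained from outer regularity of \(\Omega\setminus B\), with a compact set obtained from inner regularity on an open set.

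The differences are minor. You apply inner regularity to \(\Omega\) itself rather than to an open \(U\supseteq B\), so you never need outer regularity of \(B\). You also make the countable inf/sup extraction explicit through monotone sequences converging \(\mm\)-a.e., where the paper works with \(\varepsilon\)-approximations in \(\sfd_{L^0(\mm)}\).
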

\begin{proof}
For the `if' implication, observe that the inner regularity implies (by passing to the complement)
the outer regularity. Let us then focus on the `only if' implication. Assume \(\mu\) is Radon.
Fix \(B\in\mathscr B(\Omega)\) and \(\varepsilon>0\). Using \eqref{eq:outer_reg} and the fact that finite intersections of open
sets are open, we can find sets \(U,V\in\tau\) such that \(B\subseteq U\) with \(\sfd_{L^0(\mm)}(|\mu|(U\setminus B),0)\leq\varepsilon\)
and \(\Omega\setminus B\subseteq V\) with \(\sfd_{L^0(\mm)}(|\mu|(V\cap B),0)\leq\varepsilon\). Letting \(C\) denote the closed
set \(\Omega\setminus V\), we thus have that \(C\subseteq B\) and \(\sfd_{L^0(\mm)}(|\mu|(B\setminus C),0)\leq\varepsilon\).
Next, using \eqref{eq:inner_reg_on_open} and the fact that finite unions of compact sets are compact, we can find a compact
set \(K'\subseteq U\) such that \(\sfd_{L^0(\mm)}(|\mu|(U\setminus K'),0)\leq\varepsilon\). Therefore, we have that
\(K\coloneqq C\cap K'\) is a compact subset of \(B\) satisfying \(\sfd_{L^0(\mm)}(|\mu|(B),|\mu|(K))\leq 3\varepsilon\).
\end{proof}

It is straightforward to check that \(\mathfrak M(\Omega;L^0(\mm))\) is an \(L^0(\mm)\)-submodule of \(\mathcal M(\Omega;L^0(\mm))\).
We also claim that
\begin{equation}\label{MesurL0mcompleto}
   \big(\mathfrak M(\Omega;L^0(\mm)),|\cdot|_{\rm TV}\big)\quad\text{ is a complete random normed module.} 
\end{equation}
To prove it, we show that \(\mathfrak M(\Omega;L^0(\mm))\) is closed in \(\mathcal M(\Omega;L^0(\mm))\).
Fix \((\mu_n)_{n\in\N}\subseteq\mathfrak M(\Omega;L^0(\mm))\) and \(\mu\in\mathcal M(\Omega;L^0(\mm))\) such
that \(|\mu_n-\mu|_{\rm TV}\to 0\) in \(L^0(\mm)\) as \(n\to\infty\). Given any \(\varepsilon>0\), we take
\(n_\varepsilon\in\N\) such that \(\sfd_{L^0(\mm)}(|\mu_{n_\varepsilon}-\mu|_{\rm TV},0)\leq\varepsilon\).
Now, fix any \(B\in\mathscr B(\Omega)\). Lemma \ref{lem:inner_reg_self-improv} guarantees the existence of a compact
set \(K\subseteq B\) such that \(\sfd_{L^0(\mm)}(|\mu_{n_\varepsilon}|(B),|\mu_{n_\varepsilon}|(K))\leq\varepsilon\).
Hence, we have that
\[\begin{split}
&\sfd_{L^0(\mm)}(|\mu|(B),|\mu|(K))\\
\leq\,&\sfd_{L^0(\mm)}(|\mu|(B),|\mu_{n_\varepsilon}|(B))+\sfd_{L^0(\mm)}(|\mu_{n_\varepsilon}|(B),|\mu_{n_\varepsilon}|(K))
+\sfd_{L^0(\mm)}(|\mu_{n_\varepsilon}|(K),|\mu|(K))\\
\leq\,&2\,\sfd_{L^0(\mm)}(|\mu_{n_\varepsilon}-\mu|_{\rm TV},0)+\varepsilon\leq 3\varepsilon.
\end{split}\]
This proves that \(\mu\) is inner regular on Borel sets, which implies that \(\mu\in\mathfrak M(\Omega;L^0(\mm))\).
\begin{proposition}\label{prop:suff_cond_Radon}
Let \((\Omega,\tau)\) be a compact Hausdorff space. Assume that \(\mu\in\mathcal M(\Omega;L^0(\mm))\)
is inner regular on open sets, i.e.\ it satisfies \eqref{eq:inner_reg_on_open}. Then it holds that
\(\mu\in\mathfrak M(\Omega;L^0(\mm))\).
\end{proposition}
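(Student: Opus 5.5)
It suffices to prove the outer regularity \eqref{eq:outer_reg}, since \eqref{eq:inner_reg_on_open} is assumed. Write \(\nu\coloneqq|\mu|\in\mathcal M_+(\Omega;L^0(\mm))\). The strategy is the classical Dynkin-class argument. Let \(\mathcal D\) be the family of all \(B\in\mathscr B(\Omega)\) that are simultaneously inner regular with respect to compact sets and outer regular with respect to open sets, i.e.
\[
\nu(B)=\bigvee\{\nu(K):K\subseteq B\text{ compact}\}=\bigwedge\{\nu(U):U\in\tau,\,B\subseteq U\}.
\]
We show that \(\mathcal D\) contains \(\tau\) and is a \(\sigma\)-algebra, so that \(\mathcal D=\mathscr B(\Omega)\).

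Throughout, the suprema and infima will be handled through the countable sup/inf property of \(L^0(\mm)\) together with directedness. The family \(\{\nu(K)\}\) is upward directed, since finite unions of compacts are compact. The family \(\{\nu(U)\}\) is downward directed, since finite intersections of open sets are open. Hence each supremum or infimum is the \(L^0\)-limit (equivalently, the \(\mm\)-a.e.\ monotone limit) of a monotone sequence \(\nu(K_n)\uparrow\), respectively \(\nu(U_n)\downarrow\). Consequently, \(B\in\mathcal D\) if and only if for every \(\varepsilon>0\) there are a compact \(K\) and an open \(U\) with \(K\subseteq B\subseteq U\) and \(\sfd_{L^0(\mm)}(\nu(U\setminus K),0)\le\varepsilon\). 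This reformulation is the working criterion.

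Open sets belong to \(\mathcal D\): inner regularity is the hypothesis, and outer regularity is trivial with \(U=B\). Closedness under complements uses compactness of \(\Omega\). If \(K\subseteq B\subseteq U\), then \(\Omega\setminus U\subseteq\Omega\setminus B\subseteq\Omega\setminus K\). Here \(\Omega\setminus U\) is closed, hence compact, and \(\Omega\setminus K\) is open, because compact sets are closed in a Hausdorff space. Moreover the difference set is the same, \(U\setminus K\), so the criterion transfers. This step is exactly where compactness enters.

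For countable unions, let \(B=\bigcup_n B_n\) with \(B_n\in\mathcal D\), and fix \(\varepsilon>0\). For each \(n\) choose \(K_n\subseteq B_n\subseteq U_n\) with \(\sfd(\nu(U_n\setminus K_n),0)\) small. Since \(\sfd\) is only a quasi-additive metric on \(L^0\), the choice must be arranged so that \(\sum_n\nu(U_n\setminus K_n)\) is small in \(L^0\). Requiring \(\int\nu(U_n\setminus K_n)\wedge1\,\d\mm\le\varepsilon2^{-n}\) is not quite enough. Instead, I would first pass to \(\tilde\nu\), obtained by dividing \(\nu\) by \(1\vee\nu(\Omega)\); this has the same null sets and the same regularity, and \(\tilde\nu(\Omega)\le1\), so \(\sfd\) becomes plain \(L^1\)-integration. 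Then choose \(\int\tilde\nu(U_n\setminus K_n)\,\d\mm\le\varepsilon2^{-n}\), and set \(U\coloneqq\bigcup_nU_n\), which is open. Then \(\int\tilde\nu(U\setminus B)\le\varepsilon\). Next take \(N\) with \(\int\tilde\nu(B\setminus\bigcup_{n\le N}B_n)\le\varepsilon\), using Remark~\ref{rmk:cont_above_L0_meas} and dominated convergence, and set \(K\coloneqq\bigcup_{n\le N}K_n\), which is compact. This gives \(\int\tilde\nu(U\setminus K)\,\d\mm\le3\varepsilon\).

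The main obstacle is precisely this bookkeeping with the non-additive metric \(\sfd_{L^0(\mm)}\), and the normalisation trick resolves it. One must also check that regularity of \(\tilde\nu\) and of \(\nu\) are equivalent. This holds because multiplication by the positive function \(1\vee\nu(\Omega)\in L^0_+(\mm)\) is an order isomorphism of \(L^0(\mm)\) commuting with suprema and infima. With \(\mathcal D=\mathscr B(\Omega)\), outer regularity \eqref{eq:outer_reg} follows and \(\mu\in\mathfrak M(\Omega;L^0(\mm))\).
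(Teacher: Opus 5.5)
Your proof is correct and is essentially the paper's argument. Your class $\mathcal D$ of Borel sets admitting a compact--open sandwich coincides, in a compact Hausdorff space, with the paper's family of Borel sets $B$ such that both $B$ and $\Omega\setminus B$ are inner regular, and your union-of-open-sets step replaces the paper's nested-compacts intersection step. The normalisation $\tilde\nu$ is superfluous, since $\bigl(\sum_n a_n\bigr)\wedge 1\le\sum_n(a_n\wedge 1)$ pointwise for $a_n\ge 0$; hence $\sfd_{L^0(\mm)}(\cdot,0)$ is monotone and countably subadditive on $L^0_+(\mm)$, and the choice $\sfd_{L^0(\mm)}(\nu(U_n\setminus K_n),0)\le\varepsilon 2^{-n}$ that you dismissed already suffices (this is exactly what the paper uses).
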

\begin{proof}
We denote by \(\tilde{\mathcal F}\) the collection of all \(B\in\mathscr B(\Omega)\) that satisfy
the identity in \eqref{eq:inner_reg}. We claim that \(\tilde{\mathcal F}\) is closed under countable unions
and intersections. To prove it, fix \((B_n)_{n\in\N}\subseteq\tilde{\mathcal F}\).
\begin{itemize}
\item Let \(U\coloneqq\bigcup_{n\in\N}B_n\) and fix \(\varepsilon>0\). Then \(\sfd_{L^0(\mm)}(|\mu|(U\setminus(B_1\cup\ldots\cup B_{\bar n})),0)\leq\varepsilon/2\) for some \(\bar n\in\N\).
For any \(i=1,\ldots,\bar n\), we can find a compact set \(K_i\subseteq B_i\) such that
\(\sfd_{L^0(\mm)}(|\mu|(B_i\setminus K_i),0)\leq\varepsilon/(2\bar n)\). Then \(K\coloneqq K_1\cup\ldots\cup K_{\bar n}\)
is a compact subset of \(B_1\cup\ldots\cup B_{\bar n}\subseteq U\) such that
\[
\sfd_{L^0(\mm)}(|\mu|(U\setminus K),0)\leq\frac{\varepsilon}{2}+\sum_{i=1}^{\bar n}\sfd_{L^0(\mm)}(|\mu|(B_i\setminus K_i),0)\leq\varepsilon.
\]
By the arbitrariness of \(\varepsilon>0\), we deduce that \(U\in\tilde{\mathcal F}\).
\item Let \(E\coloneqq\bigcap_{n\in\N}B_n\) and fix \(\varepsilon>0\). Recursively, we can find compact sets
\(K_1\subseteq B_1\) and \(K_{n+1}\subseteq B_{n+1}\cap K_n\) for \(n\in\N\) such that \(\sfd_{L^0(\mm)}(|\mu|(B_1\setminus K_1),0)\leq\varepsilon/2\) and
\[
\sfd_{L^0(\mm)}(|\mu|((B_{n+1}\cap K_n)\setminus K_{n+1}),0)\leq\frac{\varepsilon}{2^{n+1}}\quad\text{ for every }n\in\N.
\]
Since \(E\setminus K\subseteq(B_1\setminus K_1)\cup\bigcup_{n\in\N}((B_{n+1}\cap K_n)\setminus K_{n+1})\), the set \(K\coloneqq\bigcap_{n\in\N}K_n\subseteq E\) satisfies
\[\begin{split}
&\sfd_{L^0(\mm)}(|\mu|(E\setminus K),0)\\
\leq\,&\sfd_{L^0(\mm)}(|\mu|(B_1\setminus K_1),0)+\sum_{n\in\N}\sfd_{L^0(\mm)}(|\mu|((B_{n+1}\cap K_n)\setminus K_{n+1}),0)\leq\varepsilon.
\end{split}\]
Since \(K\) is compact, by the arbitrariness of \(\varepsilon>0\) we deduce that \(E\in\tilde{\mathcal F}\).
\end{itemize}
All in all, we have shown that \(\tilde{\mathcal F}\) is closed under countable unions and intersections. Therefore,
\[
\mathcal F\coloneqq\big\{B\in\tilde{\mathcal F}\;\big|\;\Omega\setminus B\in\tilde{\mathcal F}\big\}\quad\text{ is a }\sigma\text{-algebra.}
\]
Finally, observe that \(\tau\subseteq\mathcal F\), since any open set \(U\in\tau\) belongs to \(\tilde{\mathcal F}\) by assumption,
whereas its complement \(\Omega\setminus U\) is compact and thus trivially satisfies the identity in \eqref{eq:inner_reg}. Hence, we conclude that
\(\mathcal F=\mathscr B(\Omega)\), which means that \(\mu\) is inner regular on all Borel sets. Since a subset of \(\Omega\) is compact if and only
if its complement is open, inner regularity and outer regularity are equivalent, thus accordingly \(\mu\) is a Radon \(L^0\)-valued measure.
\end{proof}
The proof of the following result is adapted from \cite[Proposition 3.3.44]{HKST:15}:
\begin{lemma}\label{lem:enough_outer_reg}
Let \((\Omega,\sfd)\) be a complete separable metric space. Assume that \(\mu\in\mathcal M(\Omega;L^0(\mm))\) is outer regular,
i.e.\ that \(\mu\) satisfies \eqref{eq:outer_reg}. Then it holds that \(\mu\in\mathfrak M(\Omega;L^0(\mm))\).
\end{lemma}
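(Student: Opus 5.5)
Since \(\mu\) is already outer regular, it only remains to verify the inner regularity on open sets \eqref{eq:inner_reg_on_open}. Equivalently, by the argument in the proof of Lemma \ref{lem:inner_reg_self-improv}, it is enough to show that for every Borel set \(B\) and every \(\varepsilon>0\) there is a compact \(K\subseteq B\) with \(\sfd_{L^0(\mm)}(|\mu|(B\setminus K),0)\leq\varepsilon\). Throughout we work with the non-negative measure \(\nu\coloneqq|\mu|\in\mathcal M_+(\Omega;L^0(\mm))\). We use the continuity from above and below of Remark \ref{rmk:cont_above_L0_meas}, together with the fact that \(\sfd_{L^0(\mm)}(\cdot,0)\) is subadditive and monotone on \(L^0_+(\mm)\).

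\emph{Step 1 (closed inner approximation).} Fix \(B\) and \(\varepsilon\). Applying outer regularity \eqref{eq:outer_reg} to \(\Omega\setminus B\), and using the countable inf property to pass to a decreasing sequence of open sets (finite intersections of open sets are open), we get open sets \(V_n\supseteq\Omega\setminus B\) with \(\nu(V_n)\to\nu(\Omega\setminus B)\) in \(L^0(\mm)\). Hence \(\nu(V_n\setminus(\Omega\setminus B))\to 0\). The set \(C\coloneqq\Omega\setminus V_n\), for \(n\) large, is then closed, satisfies \(C\subseteq B\), and \(\sfd_{L^0(\mm)}(\nu(B\setminus C),0)\leq\varepsilon/2\).

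\emph{Step 2 (tightness).} This is the key metric step, adapted from \cite[Proposition 3.3.44]{HKST:15}. Let \((q_i)_{i\in\N}\) be a dense sequence in \(\Omega\). For each \(k\in\N\), the open sets \(\bigcup_{i\le N}B(q_i,1/k)\) increase to \(\Omega\) as \(N\to\infty\). By Remark \ref{rmk:cont_above_L0_meas} i), we can choose \(N_k\) such that
\[
\sfd_{L^0(\mm)}\Big(\nu\Big(\Omega\setminus\textstyle\bigcup_{i\le N_k}\bar B(q_i,1/k)\Big),0\Big)\leq\varepsilon 2^{-k-1}.
\]
Set \(T\coloneqq\bigcap_k\bigcup_{i\le N_k}\bar B(q_i,1/k)\). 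Then \(T\) is closed and totally bounded, hence compact by completeness of \(\Omega\). Moreover, subadditivity of \(\nu\) (in the form \(\nu(\bigcup_k E_k)\le\sum_k\nu(E_k)\), with the summability in \(L^0\) of Remark \ref{rmk:fact_about_summability}) combined with the subadditivity of \(\sfd_{L^0(\mm)}(\cdot,0)\) yields \(\sfd_{L^0(\mm)}(\nu(\Omega\setminus T),0)\le\varepsilon/2\).

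One point needs care here. The quantity \(\sfd_{L^0(\mm)}(\cdot,0)=\int(\cdot)\wedge1\,\d\mm\) is countably subadditive along countable sums in \(L^0_+\): since \((a+b)\wedge1\le a\wedge1+b\wedge1\), monotone convergence gives \(\int(\sum_k f_k)\wedge 1\,\d\mm\le\sum_k\int f_k\wedge 1\,\d\mm\). This is the one estimate I would check explicitly; I expect it to be the main (mild) obstacle, as it is the only place where \(L^0\)-valued measures differ from the scalar argument.

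\emph{Step 3 (conclusion).} Set \(K\coloneqq C\cap T\), which is compact and contained in \(B\). Since \(B\setminus K\subseteq(B\setminus C)\cup(\Omega\setminus T)\), Steps 1 and 2 give \(\sfd_{L^0(\mm)}(\nu(B\setminus K),0)\le\varepsilon\). Therefore \(\nu(B)=\bigvee\{\nu(K):K\subseteq B\text{ compact}\}\): monotonicity gives the inequality \(\geq\), and approximation in measure gives equality, because a family in \(L^0\) that is bounded above by \(\nu(B)\) and approximates it in \(\sfd_{L^0(\mm)}\) has supremum \(\nu(B)\). In particular \eqref{eq:inner_reg_on_open} holds, so \(\mu\in\mathfrak M(\Omega;L^0(\mm))\) by Definition \ref{def:RadonL0_meas}.
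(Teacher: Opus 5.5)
Your proof is correct and follows essentially the same route as the paper. Outer regularity applied to \(\Omega\setminus B\) gives a closed \(C\subseteq B\), and finitely many balls of radius \(1/k\), chosen via continuity from below, give a closed totally bounded (hence compact) set whose intersection with \(C\) is \(K\); the only differences are cosmetic: you prove tightness of \(|\mu|\) on all of \(\Omega\) with balls centred at a dense sequence rather than covering \(C\) by balls centred in \(C\), and you spell out the passage from the countable inf property to a decreasing sequence of open sets, which the paper leaves implicit.
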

\begin{proof}
It suffices to prove the statement in the case where \(\mu\in\mathcal M_+(\Omega;L^0(\mm))\). Our goal is to show that
\(\mu\) is inner regular. Fix any \(B\in\mathscr B(\Omega)\) and \(\varepsilon>0\). The outer regularity of \(\mu\) applied
to \(\Omega\setminus B\) provides us with a closed set \(C\subseteq\Omega\) such that \(C\subseteq B\) and
\(\sfd_{L^0(\mm)}(\mu(B),\mu(C))\leq\varepsilon\). Since \((\Omega,\sfd)\) is separable, for any \(n\in\N\) we can find
a countable collection \(\{B^n_i:i\in\N\}\) of closed balls in \(\Omega\) with center in \(C\), of radius \(1/n\) and such
that \(C\subseteq\bigcup_{i\in\N}B^n_i\). Remark \ref{rmk:cont_above_L0_meas} i) ensures that for any \(n\in\N\)
there exists \(i_n\in\N\) such that \(\sfd_{L^0(\mm)}(\mu(C\setminus(B^n_1\cup\ldots\cup B^n_{i_n})),0)\leq\varepsilon 2^{-n}\).
Now, define \(C_n\coloneqq B^n_1\cup\ldots\cup B^n_{i_n}\) for all \(n\in\N\) and \(K\coloneqq C\cap\bigcap_{n\in\N}C_n\subseteq C\).
The closed set \(K\) is totally bounded, thus the completeness of \((\Omega,\sfd)\)
ensures that \(K\) is compact. Moreover,
\[\begin{split}
\sfd_{L^0(\mm)}(\mu(C),\mu(K))&=\lim_{n\to\infty}\sfd_{L^0(\mm)}\big(\mu(C),\mu(C\cap C_1\cap\ldots\cap C_n)\big)\\
&\leq\varliminf_{n\to\infty}\sum_{j=1}^n\sfd_{L^0(\mm)}(\mu(C\setminus C_j),0)
\leq\sum_{j=1}^\infty\varepsilon 2^{-j}=\varepsilon.
\end{split}\]
It follows that \(\sfd_{L^0(\mm)}(\mu(B),\mu(K))\leq\sfd_{L^0(\mm)}(\mu(B),\mu(C))+\sfd_{L^0(\mm)}(\mu(C),\mu(K))\leq 2\varepsilon\).
Thanks to the arbitrariness of \(B\) and \(\varepsilon\), we conclude that \(\mu\) is inner regular, completing the proof.
\end{proof}

The proof of the next result follows along the lines of \cite[Proposition 3.3.37]{HKST:15}:
\begin{corollary}\label{cor:every_mu_Radon_Polish}
Let \((\Omega,\sfd)\) be a complete separable metric space. Then it holds that
\[
\mathcal M(\Omega;L^0(\mm))=\mathfrak M(\Omega;L^0(\mm)).
\]
\end{corollary}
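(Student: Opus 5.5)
By Lemma \ref{lem:enough_outer_reg}, it suffices to show that every \(\mu\in\mathcal M(\Omega;L^0(\mm))\) is outer regular. Since outer regularity is a condition on \(|\mu|\), and \(|\mu|\in\mathcal M_+(\Omega;L^0(\mm))\), we may assume without loss of generality that \(\mu\) is non-negative. We follow the classical argument of \cite[Proposition 3.3.37]{HKST:15} and consider the family
\[
\mathcal F\coloneqq\big\{B\in\mathscr B(\Omega)\;\big|\;\text{for every }\varepsilon>0\text{ there exist }C\text{ closed},\,U\in\tau\text{ with }C\subseteq B\subseteq U,\;\sfd_{L^0(\mm)}(\mu(U\setminus C),0)\leq\varepsilon\big\}.
\]
We will show that \(\mathcal F\) contains all open sets and is a \(\sigma\)-algebra, so that \(\mathcal F=\mathscr B(\Omega)\).

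First, open sets belong to \(\mathcal F\). Given \(U\in\tau\), the sets \(C_n\coloneqq\{p:\sfd(p,\Omega\setminus U)\geq 1/n\}\) are closed and increase to \(U\). By Remark \ref{rmk:cont_above_L0_meas} i), \(\mu(C_n)\to\mu(U)\) in \(L^0(\mm)\), so \(\mu(U\setminus C_n)\to 0\). This metric step is the only place where the structure of \(\Omega\) is used.

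Next, \(\mathcal F\) is closed under complements, since the definition is symmetric: we may swap \(C\leftrightarrow\Omega\setminus U\) and \(U\leftrightarrow\Omega\setminus C\), and the difference set stays the same. For countable unions, take \(B_n\in\mathcal F\), fix \(\varepsilon>0\), and choose \(C_n\subseteq B_n\subseteq U_n\) with \(\sfd_{L^0(\mm)}(\mu(U_n\setminus C_n),0)\leq\varepsilon2^{-n}\). Set \(U\coloneqq\bigcup_nU_n\), which is open. We have \(U\setminus\bigcup_nC_n\subseteq\bigcup_n(U_n\setminus C_n)\). Here we must use subadditivity of the distance. Since \(\mu\geq0\) is countably additive, \(\mu(\bigcup_n D_n)\leq\sum_n\mu(D_n)\) in \(L^0(\mm)\). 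Moreover, \(\sfd_{L^0(\mm)}(\sum_nf_n,0)\leq\sum_n\sfd_{L^0(\mm)}(f_n,0)\) for \(f_n\geq0\), because \((a+b)\wedge1\leq a\wedge1+b\wedge1\), monotone convergence applies, and \(\sfd(\cdot,0)\) is monotone on \(L^0_+\). Hence \(\sfd_{L^0(\mm)}(\mu(U\setminus\bigcup_nC_n),0)\leq\varepsilon\). The union \(\bigcup_nC_n\) need not be closed, so we replace it by the finite union \(C\coloneqq C_1\cup\ldots\cup C_N\). By Remark \ref{rmk:cont_above_L0_meas} i), \(\mu(\bigcup_{n\le N}C_n)\to\mu(\bigcup_nC_n)\) as \(N\to\infty\), so for \(N\) large the error is at most an extra \(\varepsilon\). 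This shows \(\bigcup_nB_n\in\mathcal F\).

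Therefore \(\mathcal F\supseteq\sigma(\tau)=\mathscr B(\Omega)\). For any Borel \(B\) and \(\varepsilon>0\) this yields an open \(U\supseteq B\) with \(\sfd_{L^0(\mm)}(\mu(U),\mu(B))\leq\varepsilon\). Since the open sets containing \(B\) form a family closed under finite intersections, the infimum \(\bigwedge\{\mu(U)\}\), which is attained along a countable sequence by the countable inf property, equals \(\mu(B)\). To see this, note that \(\mu(B)\leq\bigwedge_U\mu(U)\leq\mu(U_k)\) with \(\mu(U_k)\to\mu(B)\) in measure, which forces equality. This gives \eqref{eq:outer_reg}, and Lemma \ref{lem:enough_outer_reg} concludes. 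The main point requiring care is the passage from the \(\varepsilon\)-approximation in \(\sfd_{L^0(\mm)}\) to the lattice identity \eqref{eq:outer_reg}, together with the subadditivity of \(\sfd_{L^0(\mm)}(\cdot,0)\) over countable sums. Both are elementary.
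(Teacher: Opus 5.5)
Your proof is correct and follows essentially the same route as the paper: you reduce to non-negative $\mu$, invoke Lemma~\ref{lem:enough_outer_reg}, and use a good-sets argument showing that a family containing $\tau$ is a $\sigma$-algebra; your closed-inside/open-outside family is equivalent to the paper's family of Borel sets $B$ for which both $B$ and $\Omega\setminus B$ are outer regular. The only difference is that you carry out explicitly the $\sigma$-algebra verification and the passage from $\varepsilon$-approximation to the lattice identity \eqref{eq:outer_reg}, which the paper leaves to the reader by referring to the proof of Proposition~\ref{prop:suff_cond_Radon}.
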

\begin{proof}
It suffices to prove that any given \(\mu\in\mathcal M_+(\Omega;L^0(\mm))\) is Radon. To this aim, it is enough to check
that \(\mu\) is outer regular, as it is guaranteed by Lemma \ref{lem:enough_outer_reg}. We denote by \(\mathcal F\) the collection
of all those Borel subsets \(B\) of \(\Omega\) that satisfy the following two conditions:
\begin{equation}\label{eq:every_mu_Radon_Polish_aux}
\mu(B)=\bigwedge\big\{\mu(U)\;\big|\;U\in\tau,\,B\subseteq U\big\},\qquad
\mu(\Omega\setminus B)=\bigwedge\big\{\mu(V)\;\big|\;V\in\tau,\,\Omega\setminus B\subseteq V\big\}.
\end{equation}
If \(B\in\tau\), then the first identity in \eqref{eq:every_mu_Radon_Polish_aux} is clearly satisfied, while the second
one can be proved as follows: letting \(U_n\coloneqq\{p\in\Omega:\sfd(p,\Omega\setminus B)<1/n\}\in\tau\) for every \(n\in\N\),
we have that \(U_{n+1}\subseteq U_n\) for every \(n\in\N\) and \(\Omega\setminus B=\bigcap_{n\in\N}U_n\),
so that \(\mu(U_n)\to\mu(\Omega\setminus B)\) in \(L^0(\mm)\) by Remark \ref{rmk:cont_above_L0_meas} ii) and
thus the second condition in \eqref{eq:every_mu_Radon_Polish_aux} holds. Therefore, we have shown that \(\tau\subseteq\mathcal F\).
Finally, by slightly adapting the arguments in the proof of Proposition \ref{prop:suff_cond_Radon}, one can show that
\(\mathcal F\) is a \(\sigma\)-algebra. Consequently, we have that \(\mathcal F=\mathscr B(\Omega)\), thus \(\mu\) is outer regular
and the statement is achieved.
\end{proof}
\begin{remark}\label{rmk:pushforward_Radon}{\rm
Let \((\Omega,\tau)\), \((\tilde\Omega,\tilde\tau)\) be Hausdorff spaces and \(\varphi\colon\Omega\to\tilde\Omega\)
a continuous map. Then
\[
\varphi_\#\mu\in\mathfrak M(\tilde\Omega;L^0(\mm))\quad\text{ for every }\mu\in\mathfrak M(\Omega;L^0(\mm)).
\]
To prove it, fix \(\tilde B\in\mathscr B(\tilde\Omega)\) and \(\varepsilon>0\). The inner regularity of \(\mu\) 
ensures the existence of a compact set \(K\subseteq\varphi^{-1}(\tilde B)\) with
\(\sfd_{L^0(\mm)}(|\mu|(\varphi^{-1}(\tilde B)),|\mu|(K))\leq\varepsilon\). Letting
\[
\tilde K\coloneqq\varphi(K)\subseteq\varphi(\varphi^{-1}(\tilde B))\subseteq\tilde B,
\]
we have that \(\tilde K\) is compact by the continuity of \(\varphi\), and \eqref{eq:ineq_pushforward_as_meas} implies that
\[\begin{split}
\sfd_{L^0(\mm)}(|\varphi_\#\mu|(\tilde B),|\varphi_\#\mu|(\tilde K))&=\sfd_{L^0(\mm)}(|\varphi_\#\mu|(\tilde B\setminus\tilde K),0)
\leq\sfd_{L^0(\mm)}(|\mu|(\varphi^{-1}(\tilde B\setminus\tilde K)),0)\\
&=\sfd_{L^0(\mm)}(|\mu|(\varphi^{-1}(\tilde B)\setminus\varphi^{-1}(\varphi(K))),0)\\
&\leq\sfd_{L^0(\mm)}(|\mu|(\varphi^{-1}(\tilde B)\setminus K),0)=\sfd_{L^0(\mm)}(|\mu|(\varphi^{-1}(\tilde B)),|\mu|(K))\leq\varepsilon.
\end{split}\]
This shows that \(\varphi_\#\mu\) is inner regular, thus \(\varphi_\#\mu\in\mathfrak M(\tilde\Omega;L^0(\mm))\)
by Lemma \ref{lem:inner_reg_self-improv}.
\fr}\end{remark}

We say that a family \(\mathcal F\) of subsets of a non-empty set \(\Omega\) is \textbf{directed} provided the following condition holds:
given any \(A,B\in\mathcal F\), there exists \(C\in\mathcal F\) such that \(A\cup B\subseteq C\).
\begin{lemma}\label{lem:tau-add}
Let \((\Omega,\tau)\) be a compact Hausdorff space. Let \(\mu\in\mathfrak M_+(\Omega;L^0(\mm))\) be given.
Then \(\mu\) is \textbf{\(\tau\)-additive}, which means that the following property holds:
if \(\{U_i:i\in I\}\subseteq\tau\) is a directed family of open sets, then
\[
\mu\bigg(\bigcup_{i\in I}U_i\bigg)=\bigvee_{i\in I}\mu(U_i).
\]
\end{lemma}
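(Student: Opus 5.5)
Set \(U\coloneqq\bigcup_{i\in I}U_i\). The plan is to prove two inequalities and combine inner regularity on open sets with compactness. The inequality \(\bigvee_{i\in I}\mu(U_i)\leq\mu(U)\) is immediate from monotonicity of the non-negative \(L^0\)-valued measure \(\mu\), since \(U_i\subseteq U\) gives \(\mu(U_i)\leq\mu(U)\). In particular, the family \(\{\mu(U_i)\}_{i\in I}\) is order bounded in \(L^0(\mm)\), so its supremum \(g\coloneqq\bigvee_{i\in I}\mu(U_i)\in L^0_+(\mm)\) exists and satisfies \(g\leq\mu(U)\).

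For the converse inequality, we use that \(\mu\) is Radon, so \(|\mu|=\mu\) satisfies \eqref{eq:inner_reg_on_open}:
\[
\mu(U)=\bigvee\big\{\mu(K)\;\big|\;K\subseteq U\text{ compact}\big\}.
\]
It therefore suffices to show that \(\mu(K)\leq g\) for every compact set \(K\subseteq U\). Fix such a \(K\). Since \(\{U_i\}_{i\in I}\) is an open cover of \(K\), finitely many \(U_{i_1},\ldots,U_{i_n}\) cover \(K\). Because the family is directed, an induction on \(n\) yields some \(j\in I\) with \(U_{i_1}\cup\ldots\cup U_{i_n}\subseteq U_j\). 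Hence \(K\subseteq U_j\), and monotonicity gives \(\mu(K)\leq\mu(U_j)\leq g\). Taking the supremum over all such \(K\) gives \(\mu(U)\leq g\), which proves the identity.

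The argument is essentially the classical one, and no step should be a real obstacle. Two points deserve care. First, suprema must be understood in the conditionally-complete lattice \(L^0(\mm)\); the bound \(\mu(U_i)\leq\mu(U)\) guarantees that these suprema exist there. Second, we only need inner regularity on open sets, and compactness is used only for the finite-subcover step; the compactness of \(\Omega\) itself is not actually needed beyond the Radon assumption. If desired, the countable sup property allows rewriting \(g\) as a countable supremum \(\bigvee_{k\in\N}\mu(U_{i_k})\), but the argument above does not require it.
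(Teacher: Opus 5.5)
Your proof is correct and follows the same route as the paper: inner regularity of $\mu$ on the open set $\bigcup_{i\in I}U_i$, with compactness and directedness used to fit each compact $K\subseteq\bigcup_{i\in I}U_i$ inside a single $U_{i_0}$. The only difference is cosmetic: the paper phrases the final step as an $\varepsilon$-approximation in $\sfd_{L^0(\mm)}$, whereas you bound $\bigvee_K\mu(K)\leq\bigvee_{i\in I}\mu(U_i)$ directly in the lattice. Your remark that compactness of $\Omega$ is not needed is also correct.
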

\begin{proof}
Fix \(\varepsilon>0\). By the inner regularity of \(\mu\), there is a compact subset \(K\) of
\(U\coloneqq\bigcup_{i\in I}U_i\) such that \(\sfd_{L^0(\mm)}(\mu(U),\mu(K))\leq\varepsilon\). As \(\{U_i:i\in I\}\)
is a directed open cover of \(K\), we can find \(i_0\in I\) such that \(K\subseteq U_{i_0}\). In particular, we have that
\[
\sfd_{L^0(\mm)}(\mu(U),\mu(U_{i_0}))\leq\sfd_{L^0(\mm)}(\mu(U),\mu(K))\leq\varepsilon.
\]
In view of the arbitrariness of \(\varepsilon\), the statement follows.
\end{proof}
\begin{corollary}\label{cor:agree_on_good_basis}
Let \((\Omega,\tau)\) be a Hausdorff space and \(\mu_1,\mu_2\in\mathfrak M_+(\Omega;L^0(\mm))\).
Assume that \(\mathfrak B\) is a basis for \(\tau\) that is closed under finite intersections and such that
\(\mu_1(U)=\mu_2(U)\) for every \(U\in\mathfrak B\). Then it holds that \(\mu_1=\mu_2\).
\end{corollary}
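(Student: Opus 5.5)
The plan is to show first that \(\mu_1\) and \(\mu_2\) agree on every open set, and then to conclude by outer regularity \eqref{eq:outer_reg}.

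\emph{Step 1: agreement on finite unions.} Let \(\mathfrak U\) be the family of all finite unions of elements of \(\mathfrak B\). For \(U=B_1\cup\ldots\cup B_n\) with \(B_i\in\mathfrak B\), inclusion--exclusion gives
\[
\mu_j(U)=\sum_{\varnothing\neq S\subseteq\{1,\ldots,n\}}(-1)^{\#S+1}\mu_j\Big(\bigcap_{i\in S}B_i\Big),\qquad j=1,2.
\]
This identity holds in \(L^0(\mm)\) because \(\mu_j\) is finitely additive with \(L^0\)-values, so the usual algebraic proof applies verbatim. Since \(\mathfrak B\) is closed under finite intersections, every term on the right is the value of \(\mu_j\) on an element of \(\mathfrak B\). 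Hence \(\mu_1=\mu_2\) on \(\mathfrak U\).

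\emph{Step 2: agreement on all open sets.} Fix \(U\in\tau\). Since \(\mathfrak B\) is a basis, \(U\) is the union of the directed family \(\{V\in\mathfrak U:V\subseteq U\}\). I would like to invoke Lemma \ref{lem:tau-add}, but it is stated for compact \(\Omega\). Its proof only uses inner regularity on open sets together with the fact that a compact set covered by a directed open family lies in one member, so it carries over to Hausdorff \(\Omega\). I would re-run that short argument directly. Given \(\varepsilon>0\), pick a compact \(K\subseteq U\) with \(\sfd_{L^0(\mm)}(\mu_1(U),\mu_1(K))\le\varepsilon\) and \(\sfd_{L^0(\mm)}(\mu_2(U),\mu_2(K))\le\varepsilon\); this is possible by taking the union of the compact sets obtained for each measure separately, using monotonicity. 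Compactness then yields \(V\in\mathfrak U\) with \(K\subseteq V\subseteq U\). Monotonicity gives \(\sfd_{L^0(\mm)}(\mu_j(U),\mu_j(V))\le\varepsilon\) for \(j=1,2\). Since \(\mu_1(V)=\mu_2(V)\) by Step 1, we get \(\sfd_{L^0(\mm)}(\mu_1(U),\mu_2(U))\le 2\varepsilon\), and letting \(\varepsilon\to0\) gives \(\mu_1(U)=\mu_2(U)\).

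\emph{Step 3: agreement on all Borel sets.} For \(B\in\mathscr B(\Omega)\), outer regularity \eqref{eq:outer_reg} (with \(|\mu_j|=\mu_j\), since \(\mu_j\ge0\)) gives
\[
\mu_j(B)=\bigwedge\{\mu_j(U):U\in\tau,\ B\subseteq U\},\qquad j=1,2.
\]
The two infima run over identical values by Step 2, so \(\mu_1(B)=\mu_2(B)\).

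The main obstacle is Step 2: the directed-union passage without compactness of \(\Omega\), and the handling of the \(L^0\)-valued approximations with \(\sfd_{L^0(\mm)}\) and monotonicity. The other steps are routine.
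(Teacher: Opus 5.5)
Your proof is correct and follows the paper's argument. You first get agreement on finite unions of basis sets; you use inclusion--exclusion where the paper splits \(U_i\) into the disjoint pieces \(V_F\) and inducts on \(n-\mathrm{card}(F)\). You then pass to all open sets by the \(\tau\)-additivity argument of Lemma \ref{lem:tau-add}, and conclude by outer regularity. Re-running that lemma's argument instead of citing it is a sensible touch: the lemma is stated for compact \(\Omega\), but its proof only uses inner regularity on open sets, which the Hausdorff setting of the corollary already provides.
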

\begin{proof}
Fix any \(U\in\tau\). Since \(\mathfrak B\) is a basis for \(\tau\), we have that the collection \(\{U_i:i\in I\}\)
of all finite unions of elements of \(\mathfrak B\) contained in \(U\) is a directed family of sets satisfying
\(U=\bigcup_{i\in I}U_i\). Now, fix any \(i\in I\). Say that \(U_i=V_1\cup\ldots\cup V_n\), with \(V_1,\ldots,V_n\in\mathfrak B\).
For any \(F\subseteq\{1,\ldots,n\}\), we denote
\[
V_F\coloneqq\bigg(\bigcap_{j\in F}V_j\bigg)\setminus\bigg(\bigcup_{k\in\{1,\ldots,n\}\setminus F}V_k\bigg).
\]
Note that \(U_i\) is the disjoint union of the sets \(V_F\), as \(F\) runs over all subsets of \(\{1,\ldots,n\}\).
By an induction argument over \(n-{\rm card}(F)\), based on the fact that \(\mathfrak B\) is closed under finite intersections,
one can readily prove that \(\mu_1(V_F)=\mu_2(V_F)\) for every \(F\subseteq\{1,\ldots,n\}\), thus also \(\mu_1(U_i)=\mu_2(U_i)\).
By virtue of Lemma \ref{lem:tau-add}, we deduce that
\[
\mu_1(U)=\bigvee_{i\in I}\mu_1(U_i)=\bigvee_{i\in I}\mu_2(U_i)=\mu_2(U).
\]
This shows that \(\mu_1\) and \(\mu_2\) agree on all open sets, so that the outer regularity of \(\mu_1\) and \(\mu_2\)
finally allows us to conclude that \(\mu_1=\mu_2\), proving the statement.
\end{proof}
\subsection{Outer \texorpdfstring{\(L^0\)}{L0}-valued measures}\label{s:outer_L0-val_meas}
By mimicking the standard definition of outer measure, we propose the following notion of outer \(L^0\)-valued measure.
\begin{definition}[Outer \(L^0\)-valued measure]\label{def:outer_meas}
We say that a given map \(\nu\colon 2^\Omega\to L^0_+(\mm)\) is an \textbf{outer \(L^0\)-valued measure} if:
\begin{itemize}
\item[\(\rm i)\)] \(\nu(\varnothing)=0\).
\item[\(\rm ii)\)] \(\nu(A)\leq\nu(B)\) for every \(A,B\in 2^\Omega\) with \(A\subseteq B\).
\item[\(\rm iii)\)] \(\nu\big(\bigcup_{n\in\N}A_n\big)\leq\sum_{n\in\N}\nu(A_n)\) for every \((A_n)_{n\in\N}\subseteq 2^\Omega\).
\end{itemize}
\end{definition}

The next result provides us with a general method to construct an outer \(L^0\)-valued measure:
\begin{proposition}\label{prop:induced_outer_meas}
Let \(\mathcal C\subseteq 2^\Omega\) be a family of sets containing \(\varnothing\) and \(\Omega\).
Let \(\tau\colon\mathcal C\to L^0_+(\mm)\) be a set-function such that \(\tau(\varnothing)\coloneqq 0\).
Let us define \(\tau^*\colon 2^\Omega\to L^0_+(\mm)\) as
\[
\tau^*(S)\coloneqq\bigwedge\bigg\{\sum_{n=1}^\infty\tau(A_n)\;\bigg|\;(A_n)_{n\in\N}\subseteq\mathcal C,
\,S\subseteq\bigcup_{n\in\N}A_n\bigg\}\quad\text{ for every }S\in 2^\Omega.
\]
Then \(\tau^*\) is an outer \(L^0\)-valued measure.
\end{proposition}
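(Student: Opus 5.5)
The plan is to verify the three axioms of Definition \ref{def:outer_meas} directly from the definition of \(\tau^*\), keeping careful track of the fact that infima and sums are taken in the lattice \(\bar L^0(\mm)\). First, well-posedness of \(\tau^*\) as an element of \(L^0_+(\mm)\): the family over which we take the infimum is non-empty, because \(\Omega\in\mathcal C\) and the cover \(A_1=\Omega\), \(A_n=\varnothing\) for \(n\geq 2\) is admissible. This cover has series \(\tau(\Omega)+0+0+\ldots=\tau(\Omega)\in L^0_+(\mm)\). Every admissible sum lies in \(\bar L^0_+(\mm)\), interpreted as \(\bigvee_N\sum_{n\le N}\tau(A_n)\) as in Remark \ref{rmk:fact_about_summability} iv), possibly taking the value \(+\infty\) on a positive-measure set. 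Hence the infimum exists in \(\bar L^0(\mm)\) and satisfies \(0\leq\tau^*(S)\leq\tau(\Omega)\), so \(\tau^*(S)\in L^0_+(\mm)\).

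Axiom i) holds because the all-empty cover gives \(\tau^*(\varnothing)\leq0\). Axiom ii) holds because every cover of \(B\) also covers \(A\subseteq B\): the family defining \(\tau^*(A)\) contains the one defining \(\tau^*(B)\), so its infimum is smaller.

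The main step is countable subadditivity iii). Fix \((S_n)_{n\in\N}\) and \(\varepsilon>0\); we may assume \(\sum_n\tau^*(S_n)\) is finite on some set, and we argue locally. Since the infimum defining \(\tau^*(S_n)\) cannot generally be attained up to a constant \(\varepsilon\) by a single cover, we use the countable inf property together with the fact that the family of series is \emph{downward directed after gluing}. Concretely, given countably many covers \((A^{n,k}_j)_j\), \(k\in\N\), of \(S_n\), choose a measurable partition \((E_k)_k\) of \(\X\) on which the \(k\)-th cover is within \(\varepsilon2^{-n}\) of the infimum. The obstacle is that \(\mathcal C\) and \(\tau\) do not allow us to glue covers along the partition \(E_k\), since \(\tau\) is not assumed \(L^0\)-local. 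We therefore avoid gluing altogether.

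The plan is to use the lattice-theoretic inequality
\[
\bigwedge_{k}\sum_j\tau(A^{1,k}_j)+\bigwedge_{k}\sum_j\tau(A^{2,k}_j)+\ldots\geq\tau^*\Big(\bigcup_nS_n\Big)
\]
as follows. For every choice of one cover per index \(n\), i.e.\ a function \(\kappa\colon\N\to\N\), the concatenated family \((A^{n,\kappa(n)}_j)_{n,j}\) is a countable cover of \(\bigcup_nS_n\) by elements of \(\mathcal C\). By rearrangement of non-negative series, it gives \(\tau^*(\bigcup_nS_n)\leq\sum_n\sum_j\tau(A^{n,\kappa(n)}_j)\). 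Taking the infimum over \(\kappa\) then reduces the claim to showing
\[
\bigwedge_\kappa\sum_n c_{n,\kappa(n)}=\sum_n\bigwedge_k c_{n,k}
\]
for \(c_{n,k}\in\bar L^0_+(\mm)\). The inequality \(\geq\) is trivial. For \(\leq\), we work with representatives: by the countable inf property, \(\bigwedge_kc_{n,k}\) is represented by \(\inf_kc_{n,k}(x)\). Pointwise, for each \(x\) there is a choice \(\kappa_x\) with \(\sum_nc_{n,\kappa_x(n)}(x)\leq\sum_n\inf_kc_{n,k}(x)+\varepsilon\). Enumerating the countably many \emph{finitely supported modifications} of a fixed baseline would make the relevant set of choices countable, but a fully general \(\kappa\) ranges over an uncountable set. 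To handle this, we truncate. First we use \(\bigwedge_\kappa\leq\) the infimum over countably many \(\kappa\) that agree with a fixed \(\kappa_0\) after the index \(N\). Then we let \(N\to\infty\), controlling the tail by \(\sum_{n>N}c_{n,\kappa_0(n)}\); to make this tail vanish we choose \(\kappa_0\) with \(\sum_nc_{n,\kappa_0(n)}\) finite where the right-hand side is finite, which is possible locally on a countable partition.

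This final exchange of infimum and sum is the delicate point, and it is where I expect the real work to lie.
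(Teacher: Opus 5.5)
Your well-posedness check and your verification of axioms i)--ii) are fine. Your diagnosis of the obstacle is exactly right: covers cannot be glued along a partition of $\X$, so the family of cover-sums need not be downward directed, and $\tau^*(S_n)$ need not be approximable by a single cover. Your reduction is also correct. The truncation does prove $\tau^*(\bigcup_nS_n)\le\sum_{n\le N}\tau^*(S_n)+\sum_{n>N}c_{n,\kappa_0(n)}$ for every $N$ and $\kappa_0$, which in particular gives finite subadditivity.

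The step that fails is the last one: the claim that one can choose $\kappa_0$ with $\sum_nc_{n,\kappa_0(n)}$ finite wherever $\sum_n\bigwedge_kc_{n,k}$ is finite, locally on a countable partition. In fact the inequality $\bigwedge_\kappa\sum_nc_{n,\kappa(n)}\le\sum_n\bigwedge_kc_{n,k}$ is false in general. Take $\X=[0,1]$ with Lebesgue measure and let $B_n$ be the set where the $n$-th binary digit is $1$; these are independent with $\mm(B_n)=1/2$. Put $c_{n,1}=\1_{B_n}^\mm$ and $c_{n,2}=\1_{\X\setminus B_n}^\mm$. Then $\bigwedge_kc_{n,k}=0$ for all $n$. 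But for each fixed $\kappa$ the selected events are independent with probability $1/2$, so by the second Borel--Cantelli lemma $\sum_nc_{n,\kappa(n)}=+\infty$ holds $\mm$-a.e. Hence the left-hand side is $+\infty$ while the right-hand side is $0$, and no countable family of baselines $\kappa_0$ has finite tails anywhere.

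This cannot be repaired within your approach, because the example is realised by covers: the proposition as stated is false. Let $\Omega=\{p_n,q_n,r_n:n\in\N\}$ be distinct points, $U_n=\{p_n,q_n\}$, $V_n=\{p_n,r_n\}$ and $\mathcal C=\{\varnothing,\Omega\}\cup\{U_n,V_n:n\in\N\}$. Set $\tau(\Omega)=\1_\X^\mm$, $\tau(U_n)=\1_{B_n}^\mm$ and $\tau(V_n)=\1_{\X\setminus B_n}^\mm$. Then $\tau^*(\{p_n\})\le\1_{B_n}^\mm\wedge\1_{\X\setminus B_n}^\mm=0$. A cover of $P=\{p_n:n\in\N\}$ either contains $\Omega$, so its sum is $\ge\1_\X^\mm$, or contains one of $U_n,V_n$ for every $n$, so its sum is $+\infty$ $\mm$-a.e. as above. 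Thus $\tau^*(P)=\1_\X^\mm\not\le 0=\sum_n\tau^*(\{p_n\})$, and countable subadditivity fails.

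The paper's proof passes over precisely the point you flagged. It picks, for each $n$, a single cover $(A^k_n)_k$ with $\sfd_{L^0(\mm)}\big(\sum_k\tau(A^k_n),\tau^*(A_n)\big)\le\varepsilon2^{-n}$. That is justified only when the cover-sums are downward directed, since then the countable inf property yields a decreasing sequence of covers converging $\mm$-a.e. In the example above no such cover exists. Directedness does hold when $\mathcal C$ is an algebra and $\tau$ is finitely additive: disjointify two covers and take their common refinement, whose sum lies below both. That is the situation of the only place the proposition is used, namely clopen sets with $\tau_\pm(A)=T_\pm(\1_A\1_\X^\mm)$ in the general-case proof of Theorem~\ref{thm:RMK}. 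So the right fix is to add such a hypothesis, after which the paper's $\varepsilon2^{-n}$ argument goes through.
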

\begin{proof}
Clearly, \(\tau^*\) verifies items i) and ii) of Definition \ref{def:outer_meas}, thus let us focus on the verification
of iii). Fix an arbitrary sequence \((A_n)_{n\in\N}\subseteq 2^\Omega\). For any \(\varepsilon>0\) and \(n\in\N\),
we find a sequence \((A_n^k)_{k\in\N}\subseteq\mathcal C\) such that \(A_n\subseteq\bigcup_{k\in\N}A_n^k\) and
\(\sfd_{L^0(\mm)}\big(\sum_{k=1}^\infty\tau(A_n^k),\tau^*(A_n)\big)\leq\varepsilon 2^{-n}\). Then
\[\begin{split}
&\sfd_{L^0(\mm)}\bigg(\bigg(\tau^*\Big(\bigcup_{n\in\N}A_n\Big)-\sum_{n=1}^\infty\tau^*(A_n)\bigg)^+,0\bigg)\\
\leq\,&\sfd_{L^0(\mm)}\bigg(\bigg(\tau^*\Big(\bigcup_{n\in\N}A_n\Big)-\sum_{n,k=1}^\infty\tau(A_n^k)\bigg)^+,0\bigg)
+\sfd_{L^0(\mm)}\bigg(\bigg(\sum_{n=1}^\infty\sum_{k=1}^\infty\tau(A_n^k)-\sum_{n=1}^\infty\tau^*(A_n)\bigg)^+,0\bigg)\\
\leq\,&\sfd_{L^0(\mm)}\bigg(\sum_{n=1}^\infty\bigg|\sum_{k=1}^\infty\tau(A_n^k)-\tau^*(A_n)\bigg|,0\bigg)
\leq\sum_{n=1}^\infty\sfd_{L^0(\mm)}\bigg(\sum_{k=1}^\infty\tau(A_n^k),\tau^*(A_n)\bigg)\leq\varepsilon,
\end{split}\]
as a consequence of the fact that \(\bigcup_{n\in\N}A_n\subseteq\bigcup_{n,k\in\N}A_n^k\). By the arbitrariness of
\(\varepsilon\), we conclude that \(\tau^*\big(\bigcup_{n\in\N}A_n\big)\leq\sum_{n=1}^\infty\tau^*(A_n)\), thus
completing the proof of the statement.
\end{proof}

We omit to prove the following result, which can be obtained by repeating almost verbatim the standard proof of the corresponding
result for outer measures \cite[Theorems 1.1 and 1.2]{evans2015measure}.
\begin{theorem}[Carath\'{e}odory measurable sets]\label{thm:Carath}
Let \(\nu\colon 2^\Omega\to L^0_+(\mm)\) be an outer \(L^0\)-valued measure. Then we define the family
\(\Gamma(\nu)\subseteq 2^\Omega\) of all \textbf{Carath\'{e}odory \(\nu\)-measurable sets} as
\[
\Gamma(\nu)\coloneqq\big\{A\in 2^\Omega\;\big|\;\nu(S\cap A)+\nu(S\setminus A)\leq\nu(S)\text{ for every }S\in 2^\Omega\big\}.
\]
Then \(\Gamma(\nu)\) is a \(\sigma\)-algebra on \(\Omega\) and \(\nu|_{\Gamma(\nu)}\colon\Gamma(\nu)\to L^0_+(\mm)\)
is a non-negative \(L^0\)-valued measure.
\end{theorem}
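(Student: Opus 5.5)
We follow the classical Carathéodory argument \cite[Theorems 1.1 and 1.2]{evans2015measure}, replacing real inequalities by inequalities in the lattice \(L^0(\mm)\) and limits of monotone real sequences by suprema in \(L^0(\mm)\). Throughout we use that, by subadditivity iii) of Definition \ref{def:outer_meas}, a set \(A\) lies in \(\Gamma(\nu)\) if and only if \(\nu(S)=\nu(S\cap A)+\nu(S\setminus A)\) for every \(S\in 2^\Omega\).

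First, the algebra properties. The defining condition is symmetric in \(A\) and \(\Omega\setminus A\), so \(\Gamma(\nu)\) is closed under complements, and \(\varnothing\in\Gamma(\nu)\) since \(\nu(\varnothing)=0\). For \(A,B\in\Gamma(\nu)\) and any \(S\) we test \(A\) on \(S\) and then \(B\) on \(S\setminus A\):
\[
\nu(S)=\nu(S\cap A)+\nu((S\setminus A)\cap B)+\nu(S\setminus(A\cup B))\geq\nu(S\cap(A\cup B))+\nu(S\setminus(A\cup B)),
\]
where the inequality uses subadditivity on \(S\cap(A\cup B)=(S\cap A)\cup((S\setminus A)\cap B)\). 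Hence \(\Gamma(\nu)\) is an algebra. By induction, for pairwise disjoint \(A_1,\ldots,A_n\in\Gamma(\nu)\) we also get \(\nu(S\cap\bigcup_{i\le n}A_i)=\sum_{i\le n}\nu(S\cap A_i)\).

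Next, countable unions. Let \((A_n)_n\subseteq\Gamma(\nu)\) be pairwise disjoint (the general case reduces to this via the algebra property), and set \(B_n\coloneqq A_1\cup\ldots\cup A_n\), \(A\coloneqq\bigcup_n A_n\). For every \(S\) and \(n\), monotonicity gives
\[
\nu(S)=\nu(S\cap B_n)+\nu(S\setminus B_n)\geq\sum_{i=1}^n\nu(S\cap A_i)+\nu(S\setminus A).
\]
All terms are in \(L^0_+(\mm)\) and the partial sums are bounded above by \(\nu(S)\), so by Remark \ref{rmk:fact_about_summability} iv) the series \(\sum_i\nu(S\cap A_i)\) is summable and equals the supremum of its partial sums. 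Taking the supremum over \(n\) and applying subadditivity once more gives
\[
\nu(S)\geq\sum_{i\in\N}\nu(S\cap A_i)+\nu(S\setminus A)\geq\nu(S\cap A)+\nu(S\setminus A),
\]
so \(A\in\Gamma(\nu)\). Taking \(S=A\) forces equality, \(\nu(A)=\sum_i\nu(A_i)\), and this is the \(\sigma\)-additivity required in Definition \ref{def:L0_meas} ii) (summability of \(|\nu(A_n)|=\nu(A_n)\) is included). Since \(\nu(\varnothing)=0\), the restriction \(\nu|_{\Gamma(\nu)}\) is a non-negative \(L^0\)-valued measure.

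The only real obstacle is the passage to the limit in the countable step. Real monotone limits must be replaced by order suprema, and we have to check that inequalities survive taking the supremum in \(L^0(\mm)\). This holds because the partial sums form an increasing, order-bounded sequence, and because the order on \(L^0(\mm)\) is closed under a.e. and \(L^0\) convergence. No further difficulty is expected.
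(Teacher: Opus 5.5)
Your proposal is correct and is essentially the paper's approach. The paper omits the proof and refers to the classical Carath\'{e}odory argument of \cite[Theorems 1.1 and 1.2]{evans2015measure}, repeated almost verbatim, which is what you carry out. Your one genuine adaptation is to replace monotone real limits by suprema of order-bounded increasing partial sums, using Remark \ref{rmk:fact_about_summability} iv). This is exactly what is needed, both for the closure of $\Gamma(\nu)$ under countable unions and for the summability required in Definition \ref{def:L0_meas} ii).
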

\subsection{Baire \texorpdfstring{\(L^0\)}{L0}-valued measures}\label{s:Baire_L0-val_meas}
Let \((\Omega,\tau)\) be a compact Hausdorff space. We denote by \(\mathscr B_0(\Omega)\) the \emph{Baire \(\sigma\)-algebra}
of \((\Omega,\tau)\), which is -- by definition -- the \(\sigma\)-algebra generated by all compact \(G_\delta\) subsets of \(\Omega\);
recall that any countable intersection of open subsets is called a \emph{\(G_\delta\) set}.
Note that \(\mathscr B_0(\Omega)\subseteq\mathscr B(\Omega)\) and that the inclusion can be strict (unless e.g.\ \(\tau\) is metrisable).
In the next result, we remind a useful equivalent characterisation of \emph{Baire sets} (i.e.\ of the elements of the Baire \(\sigma\)-algebra).
The \emph{Hilbert cube} \(\mathbb H\coloneqq[0,1]^\N\) will be endowed with the distance \(\delta\colon\mathbb H\times\mathbb H\to[0,1]\) given by
\[
\delta\big((t_n)_{n\in\N},(s_n)_{n\in\N}\big)\coloneqq\sum_{n\in\N}\frac{|t_n-s_n|}{2^n}\quad\text{ for every }(t_n)_{n\in\N},(s_n)_{n\in\N}\in\mathbb H,
\]
which induces the product topology. In particular, \((\mathbb H,\delta)\) is compact by Tychonoff's theorem. Observe that \(\mathbb H\) is
homeomorphic to the product \({\mathbb H}^\N\) of countably many copies of itself.
\begin{lemma}\label{lem:char_Bar}
Let \((\Omega,\tau)\) be a compact Hausdorff space. Then \(A\subseteq\Omega\) is a Baire set if and only if there exist
a continuous map \(\varphi\colon\Omega\to\mathbb H\) and a Borel set \(B\subseteq\mathbb H\) such that \(A=\varphi^{-1}(B)\).
\end{lemma}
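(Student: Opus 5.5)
We prove the two inclusions separately. Denote by \(\mathcal G\) the family of all sets of the form \(\varphi^{-1}(B)\), where \(\varphi\colon\Omega\to\mathbb H\) is continuous and \(B\in\mathscr B(\mathbb H)\).

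\emph{Step 1: \(\mathcal G\subseteq\mathscr B_0(\Omega)\).} Fix a continuous \(\varphi\). Since \((\mathbb H,\delta)\) is a compact metric space, every closed set \(C\subseteq\mathbb H\) is a \(G_\delta\), namely \(C=\bigcap_n\{t:\delta(t,C)<1/n\}\). Hence \(\varphi^{-1}(C)=\bigcap_n\varphi^{-1}(\{\delta(\cdot,C)<1/n\})\) is a closed (thus compact) \(G_\delta\) subset of \(\Omega\), i.e.\ it lies in \(\mathscr B_0(\Omega)\). The family \(\{B\in\mathscr B(\mathbb H):\varphi^{-1}(B)\in\mathscr B_0(\Omega)\}\) is a \(\sigma\)-algebra, because preimages commute with complements and countable unions. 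It contains the closed sets, so it equals \(\mathscr B(\mathbb H)\).

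\emph{Step 2: \(\mathcal G\) is a \(\sigma\)-algebra.} Complements are immediate, since \(\Omega\setminus\varphi^{-1}(B)=\varphi^{-1}(\mathbb H\setminus B)\). For countable unions, take \(A_n=\varphi_n^{-1}(B_n)\) and set \(\Phi\coloneqq(\varphi_n)_n\colon\Omega\to\mathbb H^\N\), which is continuous for the product topology. Fix a homeomorphism \(h\colon\mathbb H^\N\to\mathbb H\) and put \(\psi\coloneqq h\circ\Phi\). With the projections \(\pi_n\colon\mathbb H^\N\to\mathbb H\), the set \(B\coloneqq h\big(\bigcup_n\pi_n^{-1}(B_n)\big)\) is Borel in \(\mathbb H\), since \(h\) is a homeomorphism. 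Then \(\psi^{-1}(B)=\bigcup_n A_n\). Finally, \(\Omega=\varphi^{-1}(\mathbb H)\) for any continuous \(\varphi\), so \(\Omega\in\mathcal G\).

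\emph{Step 3: every compact \(G_\delta\) set lies in \(\mathcal G\).} This is the main point. Let \(K=\bigcap_n U_n\), with \(K\) compact and each \(U_n\) open. The space \(\Omega\) is compact Hausdorff, hence normal, so Urysohn's lemma provides continuous \(f_n\colon\Omega\to[0,1]\) with \(f_n=0\) on \(K\) and \(f_n=1\) on \(\Omega\setminus U_n\). Define \(\varphi\coloneqq(f_n)_n\colon\Omega\to\mathbb H\). Then
\[
K=\varphi^{-1}(\{0\}),
\]
where \(0\) denotes the zero sequence. Indeed, if \(p\notin K\), then \(p\notin U_n\) for some \(n\), so \(f_n(p)=1\neq 0\). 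Since \(\{0\}\) is closed, hence Borel, we get \(K\in\mathcal G\). Combining this with Step 2, \(\mathscr B_0(\Omega)\subseteq\mathcal G\), and together with Step 1 this proves the statement.

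The only potentially delicate points are the normality of \(\Omega\), which is standard for compact Hausdorff spaces, and the existence of the homeomorphism \(\mathbb H^\N\cong\mathbb H\), which the paper has already noted.
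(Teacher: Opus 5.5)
Your proof is correct and follows essentially the same route as the paper. The family of preimages is shown to be a \(\sigma\)-algebra via the homeomorphism \(\mathbb H^\N\cong\mathbb H\), compact \(G_\delta\) sets are realised as \(\varphi^{-1}(\{0\})\) for an Urysohn-built map \(\varphi\colon\Omega\to\mathbb H\), and the reverse inclusion holds because preimages of closed sets are compact \(G_\delta\) sets. The only cosmetic difference is that you check closure under countable unions, using the Borel set \(h\big(\bigcup_n\pi_n^{-1}(B_n)\big)\), whereas the paper checks countable intersections using \(\prod_n B_n\).
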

\begin{proof}
For the reader's usefulness, we give a proof of this statement. To prove it, we aim to show that
\(\mathfrak S=\mathscr B_0(\Omega)\), where we define
\[
\mathfrak S\coloneqq\big\{\varphi^{-1}(B)\;\big|\;\varphi\colon\Omega\to\mathbb H\text{ continuous, }B\subseteq\mathbb H\text{ Borel}\big\}.
\]
Clearly, \(\Omega\in\mathfrak S\) and \(\mathfrak S\) is closed under complementation. We claim that \(\mathfrak S\) is also closed under
countable intersections. To prove it, fix a sequence \((A_n)_{n\in\N}\subseteq\mathfrak S\), say \(A_n=\varphi_n^{-1}(B_n)\). Let
\[
\varphi(p)\coloneqq(\varphi_n(p))_{n\in\N}\in{\mathbb H}^\N\quad\text{ for every }p\in\Omega.
\]
Note that \(\varphi\colon\Omega\to{\mathbb H}^\N\) is a continuous map and that \(\prod_{n\in\N}B_n\subseteq{\mathbb H}^\N\) is Borel. Since
\[
\bigcap_{n\in\N}A_n=\bigcap_{n\in\N}\varphi_n^{-1}(B_n)=\varphi^{-1}\bigg(\prod_{n\in\N}B_n\bigg)
\]
and \({\mathbb H}^\N\) is homeomorphic to \(\mathbb H\), we deduce that \(\bigcap_{n\in\N}A_n\in\mathfrak S\). All in all, we have shown
that \(\mathfrak S\) is a \(\sigma\)-algebra on \(\Omega\). Next, we claim that each compact \(G_\delta\) set \(A\subseteq\Omega\) belongs
to \(\mathfrak S\). To prove it, write \(A\) as \(\bigcap_{n\in\N}U_n\) for \((U_n)_{n\in\N}\subseteq\tau\). Fix \(n\in\N\).
Since \((\X,\tau)\) is normal, and \(A\) and \(\Omega\setminus U_n\) are disjoint closed sets, we can find a continuous function
\(\eta_n\colon\Omega\to[0,1]\) such that \(\eta_n=0\) on \(A\) and \(\eta_n=1\) on \(\Omega\setminus U_n\). In particular, we have that the
continuous map \(\Omega\ni p\mapsto\eta(p)\coloneqq(\eta_n(p))_{n\in\N}\in{\mathbb H}^\N\) satisfies
\(A=\eta^{-1}(\{(0,0,\ldots)\})\), thus accordingly \(A\in\mathfrak S\), as we claimed. Therefore,
we have that \(\mathscr B_0(\Omega)\subseteq\mathfrak S\).

Let us now prove the converse inclusion. Let \(\varphi\colon\Omega\to\mathbb H\) be a continuous map. We claim that the \(\sigma\)-algebra
\(\mathscr B(\mathbb H)\cap\varphi_*\mathscr B_0(\Omega)=\{B\in\mathscr B(\mathbb H):\varphi^{-1}(B)\in\mathscr B_0(\Omega)\}\)
contains all closed subsets of \(\mathbb H\). Indeed, if \(C\subseteq\mathbb H\) is a given closed set, then we have that
\[
\varphi^{-1}(C)=\varphi^{-1}\bigg(\bigcap_{n\in\N}\bigg\{t\in\mathbb H\;\bigg|\;\delta(t,C)<\frac{1}{n}\bigg\}\bigg)
=\bigcap_{n\in\N}\varphi^{-1}\bigg(\bigg\{t\in\mathbb H\;\bigg|\;\delta(t,C)<\frac{1}{n}\bigg\}\bigg)
\]
belongs to \(\mathscr B_0(\Omega)\), since the continuity of \(\varphi\) ensures that \(\varphi^{-1}(C)\) is closed
(thus, compact) and that each set \(\varphi^{-1}(\{t\in\mathbb H:\delta(t,C)<1/n\})\) is open (thus, \(\varphi^{-1}(C)\)
is a \(G_\delta\) set). Hence, we have shown that \(C\in\mathscr B(\mathbb H)\cap\varphi_*\mathscr B_0(\Omega)\). Given
that \(\mathscr B(\mathbb H)\) is generated by closed sets, we deduce that
\(\mathscr B(\mathbb H)\subseteq\varphi_*\mathscr B_0(\Omega)\) for every continuous map
\(\varphi\colon\Omega\to\mathbb H\), so that \(\mathfrak S\subseteq\mathscr B_0(\Omega)\).
\end{proof}

Let us now get back to \(L^0\)-valued measures. Given a compact Hausdorff space \((\Omega,\tau)\), we denote by
\begin{equation}\label{eq:def_space_Bair_L0-meas}
\mathcal M_+(\Omega,\mathscr B_0(\Omega);L^0(\mm))
\end{equation}
the collection of all non-negative \(L^0\)-valued measures of bounded variation defined on
\(\mathscr B_0(\Omega)\), which we refer to as \textbf{non-negative Baire \(L^0\)-valued measures}.
\begin{proposition}\label{prop:Baire_meas_is_reg}
Let \((\Omega,\tau)\) be a compact Hausdorff space.
Let \(\mu\in\mathcal M_+(\Omega,\mathscr B_0(\Omega);L^0(\mm))\) be given. Then for any \(A\in\mathscr B_0(\Omega)\) we have that
\begin{equation}\label{eq:Baire_meas_is_reg_cl1}
\mu(A)=\bigvee\Big\{\mu\big(f^{-1}(\{0\})\big)\;\Big|\;f\colon\Omega\to[0,1]\text{ continuous, }f^{-1}(\{0\})\subseteq A\Big\}.
\end{equation}
In particular, we have that \(\mu\) is \emph{regular}, meaning that for any \(A\in\mathscr B_0(\Omega)\) it holds that
\begin{equation}\begin{split}\label{eq:Baire_meas_is_reg_cl2}
\mu(A)&=\bigvee\big\{\mu(K)\;\big|\;K\in\mathscr B_0(\Omega)\text{ compact, }K\subseteq A\big\}\\
&=\bigwedge\big\{\mu(U)\;\big|\;U\in\tau\cap\mathscr B_0(\Omega),\,A\subseteq U\big\}.
\end{split}\end{equation}
\end{proposition}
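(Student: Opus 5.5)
The plan is a good-sets argument run on the Baire \(\sigma\)-algebra, in the same spirit as the proof of Proposition \ref{prop:suff_cond_Radon}. Call a continuous \(f\colon\Omega\to[0,1]\) with \(f^{-1}(\{0\})\subseteq A\) admissible for \(A\), and note that zero sets \(Z(f)\coloneqq f^{-1}(\{0\})\) are compact \(G_\delta\), hence Baire sets, by the proof of Lemma \ref{lem:char_Bar}. Let \(\mathcal G\) be the family of all \(A\in\mathscr B_0(\Omega)\) such that both \(A\) and \(\Omega\setminus A\) satisfy \eqref{eq:Baire_meas_is_reg_cl1}. The inequality \(\geq\) in \eqref{eq:Baire_meas_is_reg_cl1} is trivial by monotonicity. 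Since \(\mu(A)\) is finite in \(L^0\), the identity is equivalent to the following: for every \(\varepsilon>0\) there is an admissible \(f\) with \(\sfd_{L^0(\mm)}(\mu(A\setminus Z(f)),0)\leq\varepsilon\). This reduction uses the countable sup property, together with the fact that the set of zero sets contained in \(A\) is upward directed, because \(Z(f)\cup Z(g)=Z(fg)\).

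First I check that every compact \(G_\delta\) set \(K=\bigcap_n U_n\) lies in \(\mathcal G\). For \(K\) itself, the construction in the proof of Lemma \ref{lem:char_Bar} gives \(K=Z(\sum_n2^{-n}\eta_n)\). For \(\Omega\setminus K\), Urysohn gives continuous \(g_n\colon\Omega\to[0,1]\) with \(g_n=1\) on \(K\) and \(g_n=0\) off \(U_n\); I may assume \(U_{n+1}\subseteq U_n\). The zero sets \(Z(g_n)\supseteq\Omega\setminus U_n\) increase to a set containing \(\Omega\setminus K\), and each is contained in \(\Omega\setminus K\). Remark \ref{rmk:cont_above_L0_meas} i) then gives \(\mu(Z(g_n))\to\mu(\Omega\setminus K)\) in \(L^0(\mm)\).

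Next I show \(\mathcal G\) is a \(\sigma\)-algebra. It contains \(\Omega\), and it is closed under complements by definition. For countable unions, let \(A=\bigcup_nA_n\) with \(A_n\in\mathcal G\). Choose \(N\) with \(\sfd_{L^0(\mm)}(\mu(A\setminus(A_1\cup\dots\cup A_N)),0)\leq\varepsilon/2\), and admissible \(f_i\) for \(A_i\) with error \(\varepsilon/(2N)\). Then \(f_1\cdots f_N\) is admissible for \(A\) with error at most \(\varepsilon\), by subadditivity of \(\sfd_{L^0(\mm)}(\cdot,0)\) along \(\mu\).

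For countable intersections, let \(E=\bigcap_nA_n\). Pick admissible \(f_n\) for \(A_n\) with error \(\varepsilon2^{-n}\), and put \(f\coloneqq\sum_n2^{-n}f_n\), a uniformly convergent continuous sum. Then \(Z(f)=\bigcap_nZ(f_n)\subseteq E\), and
\[
\sfd_{L^0(\mm)}(\mu(E\setminus Z(f)),0)\leq\sum_n\sfd_{L^0(\mm)}(\mu(A_n\setminus Z(f_n)),0)\leq\varepsilon,
\]
using \(\sigma\)-subadditivity of the non-negative \(L^0\)-valued measure and the elementary estimate \(\sfd_{L^0(\mm)}(\sum_ng_n,0)\leq\sum_n\sfd_{L^0(\mm)}(g_n,0)\) for \(g_n\geq0\). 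Hence \(\mathcal G\) is a \(\sigma\)-algebra containing the generators of \(\mathscr B_0(\Omega)\), so \(\mathcal G=\mathscr B_0(\Omega)\), which proves \eqref{eq:Baire_meas_is_reg_cl1}.

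For \eqref{eq:Baire_meas_is_reg_cl2}, the first identity follows because each \(Z(f)\) is a compact Baire set, and \(\leq\) is trivial by monotonicity. For the second identity, apply \eqref{eq:Baire_meas_is_reg_cl1} to \(\Omega\setminus A\). The complement of \(Z(f)\) is an open Baire set containing \(A\), and \(\mu(\Omega\setminus Z(f))=\mu(\Omega)-\mu(Z(f))\), so the supremum turns into the required infimum.

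The main obstacle is the careful bookkeeping in \(L^0(\mm)\): passing from the lattice supremum to \(\varepsilon\)-approximations, and estimating \(\sfd_{L^0(\mm)}\) of countable sums. The directedness argument and the subadditivity of \(\sfd_{L^0(\mm)}(\cdot,0)\) on non-negative sums handle both.
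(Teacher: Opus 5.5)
Your argument is correct, but it is not how the paper proceeds. The paper never runs a good-sets argument on \(\mathscr B_0(\Omega)\). Instead it does the following:
\begin{itemize}
\item It writes \(A=\varphi^{-1}(B)\) with \(\varphi\colon\Omega\to\mathbb H\) continuous and \(B\subseteq\mathbb H\) Borel, by Lemma \ref{lem:char_Bar}.
\item It notes that \(\varphi_\#\mu\) is Radon on the compact metric space \(\mathbb H\), by Corollary \ref{cor:every_mu_Radon_Polish}.
\item It takes a compact \(K_\varepsilon\subseteq B\) with \(\sfd_{L^0(\mm)}(\varphi_\#\mu(B),\varphi_\#\mu(K_\varepsilon))\leq\varepsilon\).
\item It realises \(K_\varepsilon\) as the zero set of some continuous \(\tilde f_\varepsilon\colon\mathbb H\to[0,1]\), which is possible because \(\mathbb H\) is metric.
\item It pulls back via \(f_\varepsilon\coloneqq\tilde f_\varepsilon\circ\varphi\).
\end{itemize}
The paper thus delegates all the measure theory to the metric case, namely the good-sets argument of Corollary \ref{cor:every_mu_Radon_Polish} and the tightness argument of Lemma \ref{lem:enough_outer_reg}. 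Together with the Hilbert-cube representation of Baire sets, this makes its proof very short.

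Your proof is self-contained and more elementary. It needs only Urysohn's lemma and the identities \(Z(fg)=Z(f)\cup Z(g)\) and \(Z(\sum_n2^{-n}f_n)=\bigcap_nZ(f_n)\). It uses no pushforwards, no Lemma \ref{lem:char_Bar}, and no completeness or total-boundedness argument. In effect it is the classical proof of regularity of Baire measures, transplanted to the \(L^0\)-valued setting. Your \(L^0\)-bookkeeping is sound:
\begin{itemize}
\item the countable sup property plus directedness gives the \(\varepsilon\)-reformulation;
\item the estimate \(\sfd_{L^0(\mm)}(\sum_ng_n,0)\leq\sum_n\sfd_{L^0(\mm)}(g_n,0)\) holds for \(g_n\geq 0\).
\end{itemize}

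One small repair is needed in the step for \(\Omega\setminus K\). The zero sets of independently chosen Urysohn functions \(g_n\) need not increase. You also cannot compare with \(\Omega\setminus U_n\) directly, since \(U_n\) need not be a Baire set. Replace \(g_n\) by \(h_n\coloneqq g_1\cdots g_n\). Then \(Z(h_n)=Z(g_1)\cup\dots\cup Z(g_n)\) is an increasing sequence of compact \(G_\delta\) subsets of \(\Omega\setminus K\). Its union contains \(\bigcup_n(\Omega\setminus U_n)=\Omega\setminus K\), so Remark \ref{rmk:cont_above_L0_meas} i) applies. With this change the nesting assumption on the \(U_n\) also becomes unnecessary.
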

\begin{proof}
Fix a continuous map \(\varphi\colon\Omega\to\mathbb H\) and a Borel set \(B\subseteq\mathbb H\) such that
\(A=\varphi^{-1}(B)\), whose existence is guaranteed by Lemma \ref{lem:char_Bar}. Let us then consider the
pushforward \(L^0\)-valued measure \(\varphi_\#\mu\in\mathcal M_+(\mathbb H;L^0(\mm))\). We know from Corollary
\ref{cor:every_mu_Radon_Polish} that \(\varphi_\#\mu\) is Radon, thus for any \(\varepsilon>0\) we can find a
compact set \(K_\varepsilon\subseteq\mathbb H\)
such that \(K_\varepsilon\subseteq B\) and \(\sfd_{L^0(\mm)}(\varphi_\#\mu(B),\varphi_\#\mu(K_\varepsilon))\leq\varepsilon\).
Take a continuous function \(\tilde f_\varepsilon\colon\mathbb H\to[0,1]\) with \(\{\tilde f_\varepsilon=0\}=K_\varepsilon\).
Then \(f_\varepsilon\coloneqq\tilde f_\varepsilon\circ\varphi\colon\Omega\to[0,1]\) is a continuous function satisfying
\(f_\varepsilon^{-1}(\{0\})=\varphi^{-1}(K_\varepsilon)\subseteq\varphi^{-1}(B)=A\) and accordingly
\[
\sfd_{L^0(\mm)}\big(\mu(A),\mu\big(f_\varepsilon^{-1}(\{0\})\big)\big)
=\sfd_{L^0(\mm)}\big(\varphi_\#\mu(B),\varphi_\#\mu(K_\varepsilon)\big)\leq\varepsilon.
\]
Thanks to the arbitrariness of \(\varepsilon>0\), we deduce that \eqref{eq:Baire_meas_is_reg_cl1} is satisfied.
Moreover, each set \(f_\varepsilon^{-1}(\{0\})\) is compact (and is a Baire set), whence the first identity in
\eqref{eq:Baire_meas_is_reg_cl2} follows. The second identity then follows by applying the first one to the complement of \(A\).
\end{proof}

The following result can be proved by arguing as in Lemma \ref{lem:tau-add}, by taking also Proposition \ref{prop:Baire_meas_is_reg}
into account.
\begin{lemma}\label{lem:cont_below_wrt_net}
Let \((\Omega,\tau)\) be a compact Hausdorff space. Fix any \(\mu\in\mathcal M_+(\Omega,\mathscr B_0(\Omega);L^0(\mm))\).
Let \(\{U_i:i\in I\}\subseteq\tau\cap\mathscr B_0(\Omega)\) be a directed family of sets such that
\(\bigcup_{i\in I}U_i\in\mathscr B_0(\Omega)\). Then
\[
\mu\bigg(\bigcup_{i\in I}U_i\bigg)=\bigvee_{i\in I}\mu(U_i).
\]
\end{lemma}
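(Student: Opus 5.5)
The plan is to mimic the proof of Lemma \ref{lem:tau-add}, using the inner regularity supplied by Proposition \ref{prop:Baire_meas_is_reg} in place of Radon inner regularity. Set \(U\coloneqq\bigcup_{i\in I}U_i\in\mathscr B_0(\Omega)\).

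The inequality \(\mu(U)\geq\bigvee_{i\in I}\mu(U_i)\) is immediate from monotonicity. Monotonicity holds because \(\mu\) is non-negative and \(U_i\subseteq U\), with \(U_i,U\in\mathscr B_0(\Omega)\). The supremum exists in \(L^0(\mm)\) since the family is order bounded by \(\mu(U)\).

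For the reverse inequality, fix \(\varepsilon>0\). By the first identity in \eqref{eq:Baire_meas_is_reg_cl2}, applied to \(A=U\), there is a compact Baire set \(K\subseteq U\) with
\[
\sfd_{L^0(\mm)}(\mu(U),\mu(K))\leq\varepsilon.
\]
This step uses the countable sup property together with the fact that the family \(\{\mu(K)\}\) is upward directed, being closed under finite unions of compact Baire sets. Hence the supremum is attained as an increasing limit, which converges in \(L^0(\mm)\); alternatively, one may read off the \(\varepsilon\)-approximation directly from the proof of Proposition \ref{prop:Baire_meas_is_reg}. Since \(\{U_i\}_{i\in I}\) is a directed open cover of the compact set \(K\), compactness yields finitely many indices, and directedness then yields a single \(i_0\) with \(K\subseteq U_{i_0}\). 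Therefore
\[
\mu(K)\leq\mu(U_{i_0})\leq\bigvee_{i\in I}\mu(U_i)\leq\mu(U),
\]
so that
\[
\sfd_{L^0(\mm)}\Big(\mu(U),\bigvee_{i\in I}\mu(U_i)\Big)\leq\sfd_{L^0(\mm)}(\mu(U),\mu(K))\leq\varepsilon.
\]
Here we use that \(0\leq\mu(U)-\bigvee_i\mu(U_i)\leq\mu(U)-\mu(K)\) and that \(\sfd_{L^0(\mm)}(\cdot,0)\) is monotone on \(L^0_+(\mm)\). Letting \(\varepsilon\to0\) gives the equality.

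The only delicate point is the \(\varepsilon\)-approximation by a compact Baire set inside \(U\). This is not quite Lemma \ref{lem:tau-add}, because \(\mu\) is defined only on \(\mathscr B_0(\Omega)\) and \(K\) must be a Baire set for \(\mu(K)\) to make sense. Proposition \ref{prop:Baire_meas_is_reg} provides exactly such \(K\), of the form \(f^{-1}(\{0\})\), whose \(\mu\)-measure is \(\varepsilon\)-close to \(\mu(U)\). Apart from this, the argument is purely topological.
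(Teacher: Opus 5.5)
Your proposal is correct and follows the paper's intended argument. The paper proves this lemma by repeating the proof of Lemma \ref{lem:tau-add} with Proposition \ref{prop:Baire_meas_is_reg} supplying a compact Baire set \(K\subseteq U\) with \(\mu(K)\) \(\varepsilon\)-close to \(\mu(U)\), and then uses compactness plus directedness to place \(K\) inside a single \(U_{i_0}\), which is exactly what you do.
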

\begin{remark}\label{rmk:open_union_of_funct_open}{\rm
We remind that if \((\Omega,\tau)\) is a completely-regular Hausdorff space, then the collection of all
\emph{functionally-open subsets} of \(\Omega\) (i.e.\ of all those sets of the form \(f^{-1}(U)\), for some
\(f\colon\Omega\to[0,1]\) continuous and \(U\subseteq[0,1]\) open) is a basis for the topology \(\tau\).
If in addition we assume that \((\Omega,\tau)\) is compact, then each functionally-open subset of \(\Omega\)
is a Baire set (by Lemma \ref{lem:char_Bar}), thus in particular
\[
\text{each }U\in\tau\text{ is the union of some directed family }\{U_i:i\in I\}\subseteq\tau\cap\mathscr B_0(\Omega)
\]
(e.g.\ by taking all finite unions of those functionally-open sets that are contained in \(U\)).
\fr}\end{remark}

To prove the following result, we adapt the proof of \cite[Theorem 7.3.2]{Bog:07} to our setting.
\begin{theorem}[Extension of Baire \(L^0\)-valued measures]\label{thm:ext_Baire_to_Radon}
Let \((\Omega,\tau)\) be a compact Hausdorff space. Fix any \(\mu\in\mathcal M_+(\Omega,\mathscr B_0(\Omega);L^0(\mm))\).
Let us define \(\bar\mu\colon\mathscr B(\Omega)\to L^0_+(\mm)\) as
\[
\bar\mu(B)\coloneqq\bigwedge_{B\subseteq U\in\tau}\bigvee_{U\supseteq A\in\mathscr B_0(\Omega)}\mu(A)
\quad\text{ for every }B\in\mathscr B(\Omega).
\]
Then \(\bar\mu\in\mathfrak M_+(\Omega;L^0(\mm))\). Also, \(\bar\mu\) is the unique Radon \(L^0\)-valued measure on \(\Omega\)
extending \(\mu\).
\end{theorem}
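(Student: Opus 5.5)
The plan is to build \(\bar\mu\) in two stages through an outer \(L^0\)-valued measure, and then recognise Borel sets as Carath\'{e}odory measurable.

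First, define \(\nu\colon\tau\to L^0_+(\mm)\) by \(\nu(U)\coloneqq\bigvee\{\mu(A):A\in\mathscr B_0(\Omega),\,A\subseteq U\}\). This supremum exists, being bounded by \(\mu(\Omega)\). By Proposition \ref{prop:Baire_meas_is_reg}, \(\nu(U)\) equals the supremum of \(\mu(K)\) over compact Baire sets \(K\subseteq U\). Each such \(K\) is covered by finitely many functionally-open sets contained in \(U\), as in Remark \ref{rmk:open_union_of_funct_open}. Hence \(\nu(U)=\bigvee_i\mu(U_i)\) over the directed family of finite unions of functionally-open subsets of \(U\). Lemma \ref{lem:cont_below_wrt_net} then shows \(\nu(U)=\mu(U)\) whenever \(U\in\tau\cap\mathscr B_0(\Omega)\).

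Next, I show that \(\nu\) is monotone and countably subadditive on open sets. For subadditivity, take a compact Baire \(K\subseteq\bigcup_nU_n\). Then \(K\) is contained in finitely many \(U_n\). Writing \(K\) as a union of compact Baire pieces \(K_j\subseteq U_j\) via a functional partition of unity, we get \(\mu(K)\le\sum_j\nu(U_j)\). I then set \(\bar\mu(S)\coloneqq\bigwedge\{\nu(U):S\subseteq U\in\tau\}\) for every \(S\subseteq\Omega\). This agrees with \(\tau^*\) from Proposition \ref{prop:induced_outer_meas} with \(\mathcal C=\tau\), because \(\nu\) is subadditive on open sets, so \(\bar\mu\) is an outer \(L^0\)-valued measure. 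On Borel sets it coincides with the formula in the statement.

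The main step, which I expect to be the principal obstacle, is proving that every open set \(V\) lies in \(\Gamma(\bar\mu)\). It suffices to show \(\nu(U)\ge\bar\mu(U\cap V)+\bar\mu(U\setminus V)\) for open \(U\). Given \(\varepsilon>0\), I pick a compact Baire \(K\subseteq U\cap V\) with \(\mu(K)\) \(\varepsilon\)-close to \(\nu(U\cap V)\) in \(\sfd_{L^0(\mm)}\). Since \(U\setminus K\) is open and contains \(U\setminus V\), we get \(\bar\mu(U\setminus V)\le\nu(U\setminus K)\). I then pick a compact Baire \(K'\subseteq U\setminus K\) approximating \(\nu(U\setminus K)\). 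Because \(K\) and \(K'\) are disjoint Baire sets in \(U\), we obtain \(\mu(K)+\mu(K')\le\nu(U)\). Letting \(\varepsilon\to0\) is delicate because we work with \(L^0\)-approximation rather than real inequalities. I would handle it with the countable sup property: choose increasing sequences \(K_n,K'_n\) realising the suprema \(\mathfrak m\)-a.e. and pass to the limit a.e.

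Theorem \ref{thm:Carath} then makes \(\bar\mu\) restricted to \(\mathscr B(\Omega)\subseteq\Gamma(\bar\mu)\) a non-negative \(L^0\)-valued measure, bounded by \(\mu(\Omega)\). To see that it extends \(\mu\), take a Baire set \(A\). By the second identity in \eqref{eq:Baire_meas_is_reg_cl2} and \(\nu=\mu\) on open Baire sets, \(\bar\mu(A)\le\mu(A)\). Conversely, \(\nu(U)\ge\mu(A)\) for every open \(U\supseteq A\), so \(\bar\mu(A)\ge\mu(A)\).

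For the Radon property, inner regularity on open sets follows from \(\bar\mu(U)=\nu(U)=\bigvee\mu(K)=\bigvee\bar\mu(K)\), with the supremum over compact Baire \(K\subseteq U\). Proposition \ref{prop:suff_cond_Radon} then yields \(\bar\mu\in\mathfrak M_+(\Omega;L^0(\mm))\).

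For uniqueness, any Radon extension \(\mu'\) agrees with \(\mu\) on the basis of functionally-open sets, which is closed under finite intersections. Corollary \ref{cor:agree_on_good_basis} then gives \(\mu'=\bar\mu\).
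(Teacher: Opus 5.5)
Your argument is correct. It has the same skeleton as the paper's proof: the same set function \(\nu=\mu_*\) on open sets, the same outer \(L^0\)-valued measure, Carath\'{e}odory's theorem, and uniqueness via Corollary \ref{cor:agree_on_good_basis}. The two technical cores, however, are handled differently.

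The paper first proves two properties of \(\mu_*\). It is \(\tau\)-additive along directed families of open sets (Steps 1--2), and it is additive on disjoint open sets (Step 3). From these it derives both the \(\sigma\)-subadditivity of the outer measure and the Carath\'{e}odory measurability of an open \(U\). For the latter, it exhausts \(U\) by open \(U_i\) whose closures lie in \(U\), and uses additivity on the disjoint open sets \(V_\varepsilon\cap U_i\) and \(V_\varepsilon\setminus\overline{U_i}\).

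You bypass \(\tau\)-additivity entirely. Subadditivity comes from cutting a compact Baire \(K\subseteq U_1\cup\dots\cup U_m\) into compact Baire pieces \(K_j\subseteq U_j\), e.g.\ \(K_j=K\cap\{\phi_j\geq 1/m\}\) for a partition of unity. Measurability comes from the classical Riesz--Markov device of choosing \(K\subseteq U\cap V\) and then \(K'\subseteq U\setminus K\). This is more elementary and uses only inner regularity by compact Baire sets. The paper's route, modelled on Bogachev, gives the \(\tau\)-additivity of \(\mu_*\) as a by-product.

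You are also more explicit in checking that \(\bar\mu=\mu\) on all of \(\mathscr B_0(\Omega)\), via the second identity in \eqref{eq:Baire_meas_is_reg_cl2}. The paper only records agreement on functionally-open sets.

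Two small remarks:
\begin{itemize}
\item The limit \(\varepsilon\to0\) is not delicate. You obtain \(\bar\mu(U\cap V)+\bar\mu(U\setminus V)\leq\nu(U)+r_\varepsilon\) with \(\sfd_{L^0(\mm)}(r_\varepsilon,0)\leq 2\varepsilon\), and every term except \(r_\varepsilon\) is independent of \(\varepsilon\); this is exactly the paper's Step 5. Your proposed `increasing \(K'_n\)' would not make sense anyway, since \(U\setminus K_n\) shrinks as \(K_n\) grows.
\item The identity \(\nu(U)=\mu(U)\) for \(U\in\tau\cap\mathscr B_0(\Omega)\) follows immediately from monotonicity, because \(U\) is itself an admissible \(A\). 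Lemma \ref{lem:cont_below_wrt_net} is not needed there.
\end{itemize}
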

\begin{proof}
First of all, we define the auxiliary set-function \(\mu_*\colon\tau\to L^0_+(\mm)\) as
\[
\mu_*(U)\coloneqq\bigvee\big\{\mu(A)\;\big|\;A\in\mathscr B_0(\Omega),\,A\subseteq U\big\}\quad\text{ for every }U\in\tau,
\]
so \(\bar\mu(B)=\bigwedge\{\mu_*(U):B\subseteq U\in\tau\}\) for every \(B\in\mathscr B(\Omega)\).
We split the proof into many steps.
\smallskip

\textbf{Step 1.} We claim that
\begin{equation}\label{eq:ext_Baire_step1}
\mu_*\bigg(\bigcup_{i\in I}U_i\bigg)=\bigvee_{i\in I}\mu(U_i)\quad\text{ whenever }\{U_i:i\in I\}\subseteq\tau\cap\mathscr B_0(\Omega)
\text{ is directed.}
\end{equation}
Denote \(U\coloneqq\bigcup_{i\in I}U_i\) for brevity. Clearly, we have that \(\bigvee_{i\in I}\mu(U_i)\leq\mu_*(U)\).
Let us prove the converse inequality. The inner regularity of \(\mu\) (cf.\ Proposition \ref{prop:Baire_meas_is_reg})
and the countable sup property yield the existence of a sequence \((K_n)_{n\in\N}\) of compact \(G_\delta\) sets in \(U\)
such that \(\bigvee_{n\in\N}\mu(K_n)=\mu_*(U)\). Since each set \(K_1\cup\ldots\cup K_n\subseteq U\) is compact and \(\{U_i:i\in\N\}\)
is a directed family, we can find indices \(\{i_n:n\in\N\}\subseteq I\) such that \(K_n\subseteq U_{i_n}\) for all \(n\in\N\).
Hence, we have that \(\mu_*(U)=\bigvee_{n\in\N}\mu(U_{i_n})\leq\bigvee_{i\in I}\mu(U_i)\). This proves \eqref{eq:ext_Baire_step1}.
\smallskip

\textbf{Step 2.} We claim that
\begin{equation}\label{eq:ext_Baire_step2}
\mu_*\bigg(\bigcup_{i\in I}U_i\bigg)=\bigvee_{i\in I}\mu_*(U_i)\quad\text{ whenever }\{U_i:i\in I\}\subseteq\tau\text{ is directed.}
\end{equation}
Denote \(U\coloneqq\bigcup_{i\in I}U_i\) for brevity. Clearly, we have that \(\bigvee_{i\in I}\mu_*(U_i)\leq\mu_*(U)\).
To prove the converse inequality, consider the family 
\[
\mathcal W\coloneqq\big\{W\in\tau\cap\mathscr B_0(\Omega)\;\big|\;\text{there exists }i(W)\in I\text{ such that }W\subseteq U_{i(W)}\big\}.
\]
Given any set \(V\in\tau\cap\mathscr B_0(\Omega)\) with \(V\subseteq U\), we have that \(\{V\cap W:W\in\mathcal W\}\subseteq\tau\)
is a directed family of sets whose union is \(V\), thus accordingly \eqref{eq:ext_Baire_step1} ensures that
\[
\mu(V)=\mu_*(V)=\bigvee_{W\in\mathcal W}\mu(V\cap W)\leq\bigvee_{W\in\mathcal W}\mu_*(U_{i(W)})\leq\bigvee_{i\in I}\mu_*(U_i).
\]
Applying again \eqref{eq:ext_Baire_step1}, we then conclude that
\[
\mu_*(U)=\bigvee\big\{\mu(V)\;\big|\;V\in\tau\cap\mathscr B_0(\Omega),\,V\subseteq U\big\}\leq\bigvee_{i\in I}\mu_*(U_i).
\]
This proves \eqref{eq:ext_Baire_step2}.
\smallskip

\textbf{Step 3.} We claim that
\begin{subequations}\begin{align}
\label{eq:ext_Baire_step3a}
\mu_*(U\cup V)\leq\mu_*(U)+\mu_*(V)&\quad\text{ for every }U,V\in\tau,\\
\label{eq:ext_Baire_step3b}
\mu_*(U\cup V)=\mu_*(U)+\mu_*(V)&\quad\text{ for every }U,V\in\tau\text{ with }U\cap V=\varnothing.
\end{align}\end{subequations}
Remark \ref{rmk:open_union_of_funct_open} provides us with two directed families
\(\{U_i:i\in I\},\{V_j:j\in J\}\subseteq\tau\cap\mathscr B_0(\Omega)\) such that \(U=\bigcup_{i\in I}U_i\)
and \(V=\bigcup_{j\in J}V_j\). In particular, \(\{U_i\cup V_j:(i,j)\in I\times J\}\) is a directed family of sets
with union \(U\cup V\). As \(\mu(U_i\cup V_j)\leq\mu(U_i)+\mu(V_j)\leq\mu_*(U)+\mu_*(V)\) for all \((i,j)\in I\times J\),
it follows from \eqref{eq:ext_Baire_step1} that \(\mu_*(U\cup V)\leq\mu_*(U)+\mu_*(V)\), proving \eqref{eq:ext_Baire_step3a}.
Moreover, if we also assume that \(U\cap V=\varnothing\), then \(U_i\cap V_j=\varnothing\) for every \((i,j)\in I\times J\),
thus \eqref{eq:ext_Baire_step1} guarantees that
\[
\mu_*(U)+\mu_*(V)=\bigvee_{i\in I}\mu(U_i)+\bigvee_{j\in J}\mu(V_j)=\bigvee_{(i,j)\in I\times J}\mu(U_i\cup V_j)=\mu_*(U\cup V),
\]
proving \eqref{eq:ext_Baire_step3b}. 
\smallskip

\textbf{Step 4.} Let us define the set-function \(\nu\colon 2^\Omega\to L^0_+(\mm)\) as
\[
\nu(S)\coloneqq\bigwedge\big\{\mu_*(U)\;\big|\;S\subseteq U\in\tau\big\}\quad\text{ for every }S\in 2^\Omega.
\]
We now prove that \(\nu\) is an outer \(L^0\)-valued measure. Trivially,
\(\nu(\varnothing)=\mu_*(\varnothing)=\mu(\varnothing)=0\) and \(\nu(S)\leq\nu(T)\) whenever \(S\subseteq T\subseteq\Omega\),
thus it suffices to check that \(\nu\) is \(\sigma\)-subadditive. To this aim, fix a sequence \((S_n)_{n\in\N}\) of subsets
of \(\Omega\). Fix any \(k\in\N\). For any \(n\in\N\) we can find a set \(U^k_n\in\tau\) such that \(S_n\subseteq U^k_n\)
and \(\sfd_{L^0(\mm)}(\mu_*(U^k_n),\nu(S_n))\leq(k 2^n)^{-1}\). In particular, the element
\(r_k\coloneqq\sum_{n\in\N}\mu_*(U^k_n)-\nu(S_n)\in L^0_+(\mm)\) satisfies \(\sfd_{L^0(\mm)}(r_k,0)\leq 1/k\).
Letting \(\mathcal F_k\) be the family of all finite unions of elements of \(\{U^k_n:n\in\N\}\), we have that
\(\mathcal F_k\) is a directed family of sets such that \(\bigcup\mathcal F_k=\bigcup_{n\in\N}U^k_n\). Note also that
\eqref{eq:ext_Baire_step3a} implies that \(\mu_*(U)\leq\sum_{n\in\N}\mu_*(U^k_n)\) for every \(U\in\mathcal F_k\).
Therefore, taking also \eqref{eq:ext_Baire_step2} and the fact that \(\bigcup_{n\in\N}S_n\subseteq\bigcup_{n\in\N}U^k_n\)
into account, we obtain that
\[
\nu\bigg(\bigcup_{n\in\N}S_n\bigg)\leq\mu_*\bigg(\bigcup_{n\in\N}U^k_n\bigg)=\mu_*\bigg(\bigcup_{U\in\mathcal F_k}U\bigg)
=\bigvee_{U\in\mathcal F_k}\mu_*(U)\leq\sum_{n\in\N}\mu_*(U^k_n)\leq r_k+\sum_{n\in\N}\nu(S_n).
\]
Letting \(k\to\infty\), we get \(\nu\big(\bigcup_{n\in\N}S_n\big)\leq\sum_{n\in\N}\nu(S_n)\),
thus \(\nu\) is an outer \(L^0\)-valued measure.
\smallskip

\textbf{Step 5.} We claim that
\[
\mathscr B(\Omega)\subseteq\Gamma(\nu).
\]
Given that \(\Gamma(\nu)\) is a \(\sigma\)-algebra by Theorem \ref{thm:Carath}, it suffices to check that
\(U\in\Gamma(\nu)\) for every \(U\in\tau\). To this aim, we need to show that
\[
\nu(S\cap U)+\nu(S\setminus U)\leq\nu(S)\quad\text{ for every }S\in 2^\Omega.
\]
By the complete regularity of \((\Omega,\tau)\), we find a directed family of sets \(\{U_i:i\in I\}\subseteq\tau\)
with \(\bigcup_{i\in I}U_i=U\) such that the closure \(C_i\) of \(U_i\) is contained in \(U\). Given any \(\varepsilon>0\),
we find \(V_\varepsilon\in\tau\) such that \(S\subseteq V_\varepsilon\) and
\(\sfd_{L^0(\mm)}(\mu_*(V_\varepsilon),\nu(S))\leq\varepsilon\). Since \(V_\varepsilon\setminus C_i\in\tau\) and
\((V_\varepsilon\cap U_i)\cap(V_\varepsilon\setminus C_i)=\varnothing\) for every \(i\in I\), it follows from
\eqref{eq:ext_Baire_step3b} that
\[\begin{split}
\mu_*(V_\varepsilon\cap U_i)+\nu(V_\varepsilon\setminus U)&\leq\mu_*(V_\varepsilon\cap U_i)+\mu_*(V_\varepsilon\setminus C_i)
=\mu_*(V_\varepsilon\cap(C_i\setminus U_i))\leq\mu_*(V_\varepsilon)\\
&\leq\nu(S)+r_\varepsilon
\end{split}\]
for every \(i\in I\), where we set \(r_\varepsilon\coloneqq\mu_*(V_\varepsilon)-\nu(S)\) for brevity.
Passing to the supremum with respect to \(i\in I\) and using \eqref{eq:ext_Baire_step2}, we deduce that
\[
\nu(S\cap U)+\nu(S\setminus U)\leq\nu(V_\varepsilon\cap U)+\nu(V_\varepsilon\setminus U)
=\bigvee_{i\in I}\mu_*(V_\varepsilon\cap U_i)+\nu(V_\varepsilon\setminus U)\leq\nu(S)+r_\varepsilon.
\]
Since \(\sfd_{L^0(\mm)}(r_\varepsilon,0)\leq\varepsilon\), by letting \(\varepsilon\to 0\) we conclude that
\(\nu(S\cap U)+\nu(S\setminus U)\leq\nu(S)\).
\smallskip

\textbf{Step 6.} The last part of the statement of Theorem \ref{thm:Carath} implies that
\[
\bar\mu=\nu|_{\mathscr B(\Omega)}\in\mathcal M_+(\Omega;L^0(\mm)).
\]
For any \(B\in\mathscr B(\Omega)\) we have \(\bar\mu(B)=\bigwedge\{\mu_*(U):U\in\tau,\,B\subseteq U\}
=\bigwedge\{\bar\mu(U):U\in\tau,\,B\subseteq U\}\), so that \(\bar\mu\) is outer regular. Since every
closed subset of \(\Omega\) is compact, this is equivalent to saying that \(\bar\mu\) is inner regular, thus
\(\bar\mu\in\mathfrak M_+(\Omega;L^0(\mm))\) by Lemma \ref{lem:inner_reg_self-improv}. Finally, since
\[
\bar\mu(U)=\nu(U)=\mu_*(U)=\mu(U)\quad\text{ for every functionally-open set }U\subseteq\Omega,
\]
and the collection of all functionally-open subsets of \(\Omega\) is a basis for \(\tau\) (Remark \ref{rmk:open_union_of_funct_open})
that is closed under finite intersections, \(\bar\mu\) is uniquely determined by \(\mu\) thanks
to Corollary \ref{cor:agree_on_good_basis}.
\end{proof}
\section{Module-valued Bochner integration}\label{s:M-val_int}
Let \((\X,\Sigma,\mm)\) be a probability space. Let \(({\rm M},|\cdot|)\) be a complete random normed module
with base \((\X,\Sigma,\mm)\). Unless otherwise specified, by \((\Omega,\mathcal A)\) we mean an arbitrary measurable space.
\subsection{Definitions and basic properties}\label{s:L1_to_M}
Given any map \(v\colon\Omega\to{\rm M}\), we denote by
\[
v_p\coloneqq v(p)\in{\rm M}
\]
the value of \(v\) at the point \(p\in\Omega\).
\begin{definition}[Module-valued simple map]\label{defMappesempliciM}
We say that a given map \(v\colon\Omega\to{\rm M}\) is a \textbf{module-valued simple map} if there exist an at most countable
partition \((A_i)_{i\in I}\subseteq\mathcal A\) of \(\Omega\) and a set of elements \((\bar v^i)_{i\in I}\subseteq{\rm M}\)
such that 
\[
v_p=\bar v^i\quad\text{ for every }i\in I\text{ and }p\in A_i.
\]
We write \(v=\sum_{i\in I}\1_{A_i}\bar v^i\). We denote by \(S(\Omega;{\rm M})\) the space of all module-valued simple maps.
\end{definition}

If \(\bar v^{i_0}=0\) for some \(i_0\in I\), then we occasionally write
\(\sum_{i\in I\setminus\{i_0\}}\1_{A_i}\bar v^i\) instead of \(\sum_{i\in I}\1_{A_i}\bar v^i\).
It will be convenient to consider module-valued simple maps up to \(\mu\)-null sets, for some given \(L^0\)-valued
measure \(\mu\in\mathcal M(\Omega;L^0(\mm))\): for any \(v,w\in S(\Omega;{\rm M})\), we declare that \(v\sim_\mu w\)
if and only if \(\{p\in\Omega:v_p\neq w_p\}\in\mathcal N_\mu\). Observe that if
\(\sum_{i\in I}\1_{A_i}\bar v^i,\sum_{j\in J}\1_{B_j}\bar w^j\in S(\Omega;{\rm M})\) are written in such a way that
\((\bar v^i)_{i\in I}\), \((\bar w^j)_{j\in J}\) are collections of pairwise distinct elements, then it is clear
that \(\sum_{i\in I}\1_{A_i}\bar v^i\sim_\mu\sum_{j\in J}\1_{B_j}\bar w^j\) if and only if there exists a bijective map
\(\phi\colon I\to J\) such that \(A_i\Delta B_{\phi(i)}\in\mathcal N_\mu\) and \(\bar v^i=\bar w^{\phi(i)}\)
for every \(i\in I\). Let us define the space \(S_\mu(\Omega;{\rm M})\) as
\begin{equation}\label{eq:def_S_mu}
S_\mu(\Omega;{\rm M})\coloneqq S(\Omega;{\rm M})/\sim_\mu.
\end{equation}
We denote by \(\sum_{i\in I}\1_{A_i}^\mu\bar v^i\in S_\mu(\Omega;{\rm M})\) the equivalence class of
\(\sum_{i\in I}\1_{A_i}\bar v^i\in S(\Omega;{\rm M})\). Note that
\[
S_\mu(\Omega;{\rm M})=S_{|\mu|}(\Omega;{\rm M})\quad\text{ for every }\mu\in\mathcal M(\Omega;L^0(\mm)).
\]
The space \(S_\mu(\Omega;{\rm M})\) is a module over \(L^0(\mm)\) with respect to the following operations:
\[\begin{split}
\sum_{i\in I}\1_{A_i}^\mu\bar v^i+\sum_{j\in J}\1_{B_j}^\mu\bar w^j&\coloneqq
\sum_{(i,j)\in I\times J}\1_{A_i\cap B_j}^\mu(\bar v^i+\bar w^j),\\
f\cdot\sum_{i\in I}\1_{A_i}^\mu\bar v^i&\coloneqq\sum_{i\in I}\1_{A_i}^\mu(f\cdot\bar v^i)
\end{split}\]
for every \(\sum_{i\in I}\1_{A_i}^\mu\bar v^i,\sum_{j\in J}\1_{B_j}^\mu\bar w^j\in S_\mu(\Omega;{\rm M})\) and \(f\in L^0(\mm)\).
Likewise, \(S(\Omega;{\rm M})\) is a module over \(L^0(\mm)\) with respect to the operations defined in an analogous way.
\begin{definition}[Integral of a module-valued simple map]\label{def:int-mod-simp-map}
Fix any \(\mu\in\mathcal M(\Omega;L^0(\mm))\). Then we say that a map \(v=\sum_{i\in I}\1_{A_i}^\mu\bar v^i\in S_\mu(\Omega;{\rm M})\)
is \textbf{\(\mu\)-integrable} if \(\{|\mu|(A_i)|\bar v^i|:i\in I\}\subseteq L^0(\mm)\) is summable. Moreover, we define the \textbf{\(\mu\)-integral}
of \(v\) as
\[
\int v\cdot\d\mu\coloneqq\sum_{i\in I}\mu(A_i)\cdot\bar v^i\in{\rm M}.
\]
We denote by \(S_\mu^1(\Omega;{\rm M})\) the space of all \(\mu\)-integrable module-valued simple maps from \(\Omega\) to \({\rm M}\).
\end{definition}

Given any \(f\in S_\mu^1(\Omega;L^0(\mm))\), we write \(\int f\,\d\mu\) instead of \(\int f\cdot\,\d\mu\).
Some comments about the above-defined notions are in order:
\begin{itemize}
\item The summability of \(\{|\mu|(A_i)|\bar v^i|:i\in I\}\subseteq L^0(\mm)\) implies that \(\{\mu(A_i)\cdot\bar v^i:i\in I\}\subseteq{\rm M}\)
is summable (by Remark \ref{rmk:fact_about_summability} vi)) and it can be readily checked that \(\sum_{i\in I}\mu(A_i)\cdot\bar v^i\in{\rm M}\) is independent
of the specific way in which \(\sum_{i\in I}\1_{A_i}^\mu\bar v^i\) is written, thus the above definition of \(\int v\cdot\d\mu\) is well posed. Moreover, we have
\begin{equation}\label{eq:main_ineq_Boch_int_simple}
\bigg|\int v\cdot\d\mu\bigg|\leq\sum_{i\in I}|\mu|(A_i)|\bar v^i|\quad\text{ for every }v=\sum_{i\in I}\1_{A_i}^\mu\bar v^i\in S_\mu^1(\Omega;{\rm M}).
\end{equation}
\item The space \(S_\mu^1(\Omega;{\rm M})\) is an \(L^0(\mm)\)-submodule of \(S_\mu(\Omega;{\rm M})\) and \(S_\mu^1(\Omega;{\rm M})=S_{|\mu|}^1(\Omega;{\rm M})\).
\item The operator \(S_\mu^1(\Omega;{\rm M})\ni v\mapsto\int v\cdot\d\mu\in{\rm M}\) is \(L^0(\mm)\)-linear.
\item We denote
\[
|v|\coloneqq\sum_{i\in I}\1_{A_i}^\mu|\bar v^i|\in S_\mu(\Omega;L^0(\mm))
\quad\text{ for every }v=\sum_{i\in I}\1_{A_i}^\mu\bar v^i\in S_\mu(\Omega;{\rm M}).
\]
It can be easily checked that \(v\in S_\mu^1(\Omega;{\rm M})\) if and only if \(|v|\in S_\mu^1(\Omega;L^0(\mm))\).
\item Consequently, it makes sense to define \(|\cdot|^{1,\mu}\colon S_\mu^1(\Omega;{\rm M})\to L^0_+(\mm) \) as
\[
|v|^{1,\mu}\coloneqq\int|v|\,\d|\mu|\quad\text{ for every }v\in S_\mu^1(\Omega;{\rm M}).
\]
It is easy to check that \((S_\mu^1(\Omega;{\rm M}),|\cdot|^{1,\mu})\) is a random normed module with base
\((\X,\Sigma,\mm)\). Moreover, the inequality in \eqref{eq:main_ineq_Boch_int_simple} can be rewritten as
\begin{equation}\label{eq:main_ineq_Boch_int_simple_bis}
\bigg|\int v\cdot\d\mu\bigg|\leq|v|^{1,\mu}\quad\text{ for every }v\in S_\mu^1(\Omega;{\rm M}).
\end{equation}
\end{itemize}
\begin{definition}[Module-valued Lebesgue--Bochner space]\label{def:M-val_Leb-Boch}
Let \(\mu\in\mathcal M(\Omega;L^0(\mm))\) be given. Then we define the \textbf{module-valued Lebesgue--Bochner space}
\[
(L_\mu^1(\Omega;{\rm M}),|\cdot|^{1,\mu})
\]
as the random-normed-module completion of \((S_\mu^1(\Omega;{\rm M}),|\cdot|^{1,\mu})\).
\end{definition}

The operator \(S_\mu^1(\Omega;{\rm M})\ni v\mapsto\int v\cdot\,\d\mu\in{\rm M}\) is \(L^0(\mm)\)-linear and satisfies
\eqref{eq:main_ineq_Boch_int_simple_bis}, thus (by Corollary \ref{cor:ext_hom_mod}) it can be uniquely extended to a
homomorphism of random normed modules
\[
L_\mu^1(\Omega;{\rm M})\ni v\mapsto\int v\cdot\d\mu\in{\rm M}
\]
satisfying \(\big|\int v\cdot\d\mu\big|\leq|v|^{1,\mu}\) for every \(v\in L_\mu^1(\Omega;{\rm M})\).
\begin{remark}\label{rmk:density_finite_simple_maps}{\rm
We denote by \(S_{\mu,f}(\Omega;{\rm M})\) the set of \emph{finite-range} module-valued simple maps, i.e.
\begin{equation}\label{funsemplicisomsufinit}
    S_{\mu,f}(\Omega;{\rm M})\coloneqq\bigg\{\sum_{i\in F}\1_{A_i}^\mu\bar v^i\in S_\mu(\Omega;{\rm M})
\;\bigg|\;F\text{ is a finite set}\bigg\}.
\end{equation}
We claim that \(S_{\mu,f}(\Omega;{\rm M})\) is dense in \(L_\mu^1(\Omega;{\rm M})\). The claim follows by showing that
 \(S_{\mu,f}(\Omega;{\rm M})\) is dense in \((S_\mu(\Omega;{\rm M}),|\cdot|^{1,\mu})\). To this aim,
fix any element \(v=\sum_{i=1}^\infty\1_{A_i}^\mu\bar v^i\in S_\mu(\Omega;{\rm M})\). Then
\[
S_{\mu,f}(\Omega;{\rm M})\ni\sum_{i=1}^n\1_{A_i}^\mu\bar v^i\to v
\quad\text{ with respect to }|\cdot|^{1,\mu}\text{ as }n\to\infty,
\]
thus proving the claim.
\fr}\end{remark}

We define the subset \(L^1_\mu(\Omega;L^0_+(\mm))\) of \(L^1_\mu(\Omega;L^0(\mm))\) as
\begin{equation}\label{eq:non-neg_L1}
L^1_\mu(\Omega;L^0_+(\mm))\coloneqq{\rm cl}_{L^1_\mu(\Omega;L^0(\mm))}\big(S_{\mu,f}(\Omega;L^0_+(\mm))\big),
\end{equation}
where the subset \(S_{\mu,f}(\Omega;L^0_+(\mm))\) of \(S_{\mu,f}(\Omega;L^0(\mm))\) is given by
\[
S_{\mu,f}(\Omega;L^0_+(\mm))\coloneqq\bigg\{\sum_{i\in F}\1_{A_i}^\mu\bar f^i\in S_{\mu,f}(\Omega;L^0(\mm))\;
\bigg|\;\bar f^i\in L^0_+(\mm)\text{ for every }i\in F\bigg\}.
\]

Perhaps surprisingly, it is currently unclear to us whether (all) the elements of \(L_\mu^1(\Omega;{\rm M})\) -- which are
defined via an abstract completion procedure -- are given by \(\mu\)-a.e.\ equivalence classes of maps from \(\Omega\)
to \({\rm M}\); however, as we will see in Proposition \ref{prop:L_infty_in_L_1}, we can at least identify a subspace of
\(L^1_\mu(\Omega;{\rm M})\)
whose elements can be represented by maps. Furthermore, when \(\mu\) can be foliated, the space
\(L^1_\mu(\Omega;{\rm M})\) can be described as a space of (equivalence classes of) maps; cf.\ Section \ref{s:ptwse_descr_L_1}.
\begin{lemma}\label{lem:full_formula_pushforward}
Let \((\tilde\Omega,\tilde{\mathcal A})\) be a measurable space and \(\varphi\colon\Omega\to\tilde\Omega\)
a measurable map. Then there exists a unique homomorphism of random normed modules
\begin{equation}\label{eq:pushforward_L1}
\varphi^*\colon L^1_{\varphi_\#\mu}(\tilde\Omega;{\rm M})\to L^1_\mu(\Omega;{\rm M})
\end{equation}
such that \(\varphi^*(\1_{\tilde A}\bar v)=\1_{\varphi^{-1}(\tilde A)}\bar v\) for all \(\tilde A\in\tilde{\mathcal A}\)
and \(\bar v\in{\rm M}\). Moreover, \(|\varphi^*v|^{1,\mu}\leq 2|v|^{1,\varphi_\#\mu}\) and
\begin{equation}\label{eq:full_formula_pushforward}
\int\varphi^*v\cdot\d\mu=\int v\cdot\d(\varphi_\#\mu)\quad\text{ for every }v\in L^1_{\varphi_\#\mu}(\tilde\Omega;{\rm M}).
\end{equation}
\end{lemma}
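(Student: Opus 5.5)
The plan is to define \(\varphi^*\) on countable simple maps, bound its \(L^0\)-norm by twice the norm of its argument, and then extend it to the completion using Corollary \ref{cor:ext_hom_mod}.

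\textbf{Step 1: definition on simple maps.} Given \(v=\sum_{i\in I}\1_{\tilde A_i}^{\varphi_\#\mu}\bar v^i\in S^1_{\varphi_\#\mu}(\tilde\Omega;{\rm M})\), set
\[
\varphi^*v\coloneqq\sum_{i\in I}\1_{\varphi^{-1}(\tilde A_i)}^\mu\bar v^i .
\]
The sets \((\varphi^{-1}(\tilde A_i))_{i\in I}\) form a countable measurable partition of \(\Omega\), so this is an element of \(S_\mu(\Omega;{\rm M})\).

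\textbf{Step 2: well-posedness.} Suppose two representations of \(v\) agree outside a \(\varphi_\#\mu\)-null set \(\tilde N\). Their pullbacks then agree outside \(\varphi^{-1}(\tilde N)\). By \eqref{eq:ineq_pushforward_as_meas},
\[
|\mu|(\varphi^{-1}(\tilde N))=\varphi_\#|\mu|(\tilde N)\leq 2|\varphi_\#\mu|(\tilde N)=0,
\]
so \(\varphi^{-1}(\tilde N)\in\mathcal N_\mu\) and the map is well defined on \(\mu\)-classes. It is clearly \(L^0(\mm)\)-linear, since the module operations on simple maps are defined through common refinements of the partitions, and preimages commute with intersections.

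\textbf{Step 3: integrability and the norm bound.} Using \eqref{eq:ineq_pushforward_as_meas} again, for each \(i\in I\)
\[
|\mu|(\varphi^{-1}(\tilde A_i))\,|\bar v^i|=\varphi_\#|\mu|(\tilde A_i)\,|\bar v^i|\leq 2|\varphi_\#\mu|(\tilde A_i)\,|\bar v^i|.
\]
The right-hand side is summable because \(v\) is integrable. By Remark \ref{rmk:fact_about_summability} iv), the left-hand side is then summable too. Hence \(\varphi^*v\in S^1_\mu(\Omega;{\rm M})\) and \(|\varphi^*v|^{1,\mu}\leq 2|v|^{1,\varphi_\#\mu}\).

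\textbf{Step 4: the integral identity on simple maps.} For simple maps it is immediate:
\[
\int\varphi^*v\cdot\d\mu=\sum_{i\in I}\mu(\varphi^{-1}(\tilde A_i))\cdot\bar v^i=\sum_{i\in I}\varphi_\#\mu(\tilde A_i)\cdot\bar v^i=\int v\cdot\d(\varphi_\#\mu).
\]

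\textbf{Step 5: extension and uniqueness.} Compose \(\varphi^*\) with the inclusion \(S^1_\mu\subseteq L^1_\mu\). By Steps 2 and 3 this is a homomorphism of random normed modules from \(S^1_{\varphi_\#\mu}(\tilde\Omega;{\rm M})\) into the complete module \(L^1_\mu(\Omega;{\rm M})\), with \(|\varphi^*|\leq 2\). Corollary \ref{cor:ext_hom_mod} extends it uniquely to \(L^1_{\varphi_\#\mu}(\tilde\Omega;{\rm M})\), keeping the bound \(|\varphi^*v|^{1,\mu}\leq 2|v|^{1,\varphi_\#\mu}\). Both sides of \eqref{eq:full_formula_pushforward} are continuous homomorphisms that agree on the dense set of simple maps, so they agree everywhere.

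For uniqueness, take any homomorphism satisfying \(\varphi^*(\1_{\tilde A}\bar v)=\1_{\varphi^{-1}(\tilde A)}\bar v\). By additivity it is determined on \(S_{\varphi_\#\mu,f}\), which is dense by Remark \ref{rmk:density_finite_simple_maps}, so continuity determines it everywhere. The extension constructed above satisfies this prescription, so it is the unique such map.

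The main delicate point is that \(|\varphi_\#\mu|\) may be strictly smaller than \(\varphi_\#|\mu|\). This is why the constant \(2\) appears, and why the null-set compatibility in Step 2 and the summability transfer in Step 3 both have to go through \eqref{eq:ineq_pushforward_as_meas}.
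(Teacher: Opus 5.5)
Your argument follows the paper's own proof essentially step by step. The paper defines \(\varphi^*\) on finite-range simple maps and extends by density; using countable integrable simple maps instead changes nothing.

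The gap, which your proof shares with the paper's, is the inequality \(\varphi_\#|\mu|\leq 2|\varphi_\#\mu|\) from \eqref{eq:ineq_pushforward_as_meas}. Both your Step 2 and your Step 3 rest on it, and it is false for signed \(\mu\). Its justification would need \(\varphi_\#\mu^\pm\leq|\varphi_\#\mu|\), but in general only \((\varphi_\#\mu)^\pm\leq\varphi_\#\mu^\pm\) holds. Cancellations under \(\varphi\) can make \(|\varphi_\#\mu|\) arbitrarily small compared with \(\varphi_\#|\mu|\). Concretely, take \(\Omega=\{a,b\}\) and \(\tilde\Omega=\{c\}\) with their power sets, \(\varphi\) the constant map, and \(\mu\coloneqq(\delta_a-\delta_b)^c\) as in Remark \ref{rmk:lambda^c}. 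Then \(|\mu|=(\delta_a+\delta_b)^c\) and \(\varphi_\#|\mu|(\tilde\Omega)=2\cdot\1_\X^\mm\). On the other hand \(\varphi_\#\mu(\tilde\Omega)=\mu(\Omega)=0\), so \(\varphi_\#\mu=0\) and \(|\varphi_\#\mu|=0\). No constant in place of \(2\) helps.

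Your Step 2 breaks exactly here: \(\tilde\Omega\) is \(\varphi_\#\mu\)-null, while \(\varphi^{-1}(\tilde\Omega)=\Omega\) is not \(\mu\)-null. In fact the statement itself fails in this example, so no argument can rescue it as stated. Every subset of \(\tilde\Omega\) is \(\varphi_\#\mu\)-null, so \(L^1_{\varphi_\#\mu}(\tilde\Omega;{\rm M})=\{0\}\) and every homomorphism defined on it vanishes. Yet the prescribed value satisfies \(|\1_\Omega^\mu\bar v|^{1,\mu}=|\mu|(\Omega)|\bar v|=2|\bar v|\neq 0\) for \(\bar v\neq 0\). Likewise the bound \(|\varphi^*v|^{1,\mu}\leq 2|v|^{1,\varphi_\#\mu}\) would read \(2|\bar v|\leq 0\).

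Here is what your argument does prove verbatim.

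\begin{itemize}
\item If \(\mu\in\mathcal M_+(\Omega;L^0(\mm))\), then \(|\varphi_\#\mu|=\varphi_\#\mu=\varphi_\#|\mu|\) and null sets pull back to null sets. Your Steps 1--5 then give \(\varphi^*\) with \(|\varphi^*v|^{1,\mu}=|v|^{1,\varphi_\#\mu}\), together with \eqref{eq:full_formula_pushforward}.
\item For signed \(\mu\), take \(\nu\coloneqq\varphi_\#|\mu|\in\mathcal M_+(\tilde\Omega;L^0(\mm))\) as the reference measure on \(\tilde\Omega\). Since \(|\mu|(\varphi^{-1}(\tilde A))=\nu(\tilde A)\), your construction yields an isometric embedding \(\varphi^*\colon L^1_\nu(\tilde\Omega;{\rm M})\to L^1_\mu(\Omega;{\rm M})\).
\item The true half \(|\varphi_\#\mu|\leq\varphi_\#|\mu|\) of \eqref{eq:ineq_pushforward_as_meas} gives a contraction \(L^1_\nu(\tilde\Omega;{\rm M})\to L^1_{\varphi_\#\mu}(\tilde\Omega;{\rm M})\) extending the identity on simple maps.
\item Your Step 4 plus continuity then give \eqref{eq:full_formula_pushforward} for every \(v\in L^1_\nu(\tilde\Omega;{\rm M})\), with the right-hand side computed through this contraction.
\end{itemize}

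This corrected version is all that the later application needs. That is Step 2 of the general-case proof of Theorem \ref{thm:RMK}, where the integrands \(f\in{\rm UC}_{\rm ord}(\Omega;L^0(\mm))\) are bounded.
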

\begin{proof}
First, we define \(\varphi^*\colon S_{\varphi_\#\mu,f}(\tilde\Omega;{\rm M})\to S_{\mu,f}(\Omega;{\rm M})\) as
\[
\varphi^*v\coloneqq\sum_{i\in F}\1_{\varphi^{-1}(\tilde A_i)}^\mu\bar v^i\quad\text{ for every }
v=\sum_{i\in F}\1_{\tilde A_i}^{\varphi_\#\mu}\bar v^i\in S_{\varphi_\#\mu,f}(\tilde\Omega;{\rm M}).
\]
The well-posedness of the definition of \(\varphi^*\) stems from the estimates
\[
\bigg|\sum_{i\in F}\1_{\varphi^{-1}(\tilde A_i)}^\mu\bar v^i\bigg|^{1,\mu}=\sum_{i\in F}\varphi_\#|\mu|(\tilde A_i)|\bar v^i|
\leq 2\sum_{i\in F}|\varphi_\#\mu|(\tilde A_i)|\bar v^i|=2\bigg|\sum_{i\in F}\1_{\tilde A_i}^{\varphi_\#\mu}\bar v^i\bigg|^{1,\varphi_\#\mu},
\]
where the inequality follows from \eqref{eq:ineq_pushforward_as_meas}. Hence, by the density of
\(S_{\varphi_\#\mu,f}(\tilde\Omega;{\rm M})\) in \(L^1_{\varphi_\#\mu}(\tilde\Omega;{\rm M})\),
\(\varphi^*\) can be uniquely extended to a homomorphism as in \eqref{eq:pushforward_L1} satisfying
\(|\varphi^*v|^{1,\mu}\leq 2|v|^{1,\varphi_\#\mu}\) for every \(v\in L^1_{\varphi_\#\mu}(\tilde\Omega;{\rm M})\).
Finally, for any \(v=\sum_{i\in F}\1_{\tilde A_i}^{\varphi_\#\mu}\bar v^i\in S_{\varphi_\#\mu,f}(\tilde\Omega;{\rm M})\)
we can compute
\[
\int\varphi^*v\cdot\,\d\mu=\sum_{i\in F}\mu(\varphi^{-1}(\tilde A_i))\cdot\bar v^i=
\sum_{i\in F}\varphi_\#\mu(\tilde A_i)\cdot\bar v^i=\int v\cdot\d(\varphi_\#\mu),
\]
which gives \eqref{eq:full_formula_pushforward} for every \(v\in S_{\varphi_\#\mu,f}(\tilde\Omega;{\rm M})\).
The general case follows by approximation.
\end{proof}
\subsection{The space \texorpdfstring{\(L^\infty_\mu(\Omega;{\rm M})\)}{Linftymu}}\label{s:Linfty_to_M}
We denote by \(\mathcal L^0(\Omega;{\rm M})\) the collection of all measurable maps
\[
v\colon(\Omega,\mathcal A)\to{\rm M},
\]
where the target space \({\rm M}\) is equipped with the Borel \(\sigma\)-algebra induced by the distance \(\sfd_{\rm M}\). Clearly,
\(S(\Omega;{\rm M})\subseteq\mathcal L^0(\Omega;{\rm M})\). To any \(v\colon\Omega\to{\rm M}\) we associate the
map \(|v|\colon\Omega\to L^0_+(\mm)\) given by
\[
|v|_p\coloneqq|v_p|\in L^0_+(\mm)\quad\text{ for every }p\in\Omega.
\]
Since the \(L^0\)-norm \(|\cdot|\colon({\rm M},\sfd_{\rm M})\to(L^0(\mm),\sfd_{L^0(\mm)})\) is \(1\)-Lipschitz, we have in particular that
\[
|v|\in\mathcal L^0(\Omega;L^0(\mm))\quad\text{ for every }v\in\mathcal L^0(\Omega;{\rm M}).
\]
Given \(v,w\in\mathcal L^0(\Omega;{\rm M})\), we declare that \(v\sim_\mu w\) if and only if \(\{p\in\Omega:v_p\neq w_p\}\in\mathcal N_\mu\),
thus obtaining an equivalence relation \(\sim_\mu\) whose quotient space we denote by
\begin{equation}\label{eq:def_L0_mu}
L^0_\mu(\Omega;{\rm M})\coloneqq\mathcal L^0(\Omega;{\rm M})/\sim_\mu.
\end{equation}
Note that \(S_\mu(\Omega;{\rm M})\subseteq L^0_\mu(\Omega;{\rm M})\). For any \(v\in L^0_\mu(\Omega;{\rm M})\), we denote by
\(|v|\in L^0_\mu(\Omega;L^0(\mm))\) the equivalence class of \(|\bar v|\in\mathcal L^0(\Omega;L^0(\mm))\), where \(\bar v\in\mathcal L^0(\Omega;{\rm M})\)
is any representative of \(v\).
\medskip

Given any map \(v\in\mathcal L^0(\Omega;{\rm M})\), we define \(|v|^{\infty,\mu}\in\bar L^0_+(\mm)\) as
\begin{equation}\label{defNormaLinfmu} |v|^{\infty,\mu}\coloneqq\bigwedge_{N\in\mathcal N_\mu}\bigvee_{p\in\Omega\setminus N}|v_p|.   
\end{equation}
Clearly, \(|v|^{\infty,\mu}\) is invariant under \(\mu\)-a.e.\ modifications of \(v\), meaning that \(|v|^{\infty,\mu}=|w|^{\infty,\mu}\) when
\(v,w\in\mathcal L^0(\Omega;{\rm M})\) satisfy \(v\sim_\mu w\). Accordingly, we can unambiguously write \(|v|^{\infty,\mu}\in\bar L^0_+(\mm)\) for any
\(v\in L^0_\mu(\Omega;{\rm M})\). Clearly, we have \(\big||v|\big|^{\infty,\mu}=|v|^{\infty,\mu}\) for every \(v\in L^0_\mu(\Omega;{\rm M})\).
\begin{lemma}\label{lem:big_space_bdd_maps}
The space \(\big(\{v\in L^0_\mu(\Omega;{\rm M}):|v|^{\infty,\mu}\in L^0_+(\mm)\},|\cdot|^{\infty,\mu}\big)\)
is a complete random normed module with base \((\X,\Sigma,\mm)\).
\end{lemma}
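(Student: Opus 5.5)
The plan is to verify the random-normed-module axioms directly, and then to prove completeness by the usual ``fast Cauchy subsequence'' argument. The point that makes both parts work is that the infimum in \eqref{defNormaLinfmu} is attained by a single \(\mu\)-null set.

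\textbf{Step 1: the infimum is attained.} By the countable inf property of \(\bar L^0(\mm)\), there are \((N_k)_{k\in\N}\subseteq\mathcal N_\mu\) with \(|v|^{\infty,\mu}=\bigwedge_k\bigvee_{p\in\Omega\setminus N_k}|v_p|\). Since \(\mathcal N_\mu\) is a \(\sigma\)-ideal, \(N_v\coloneqq\bigcup_kN_k\in\mathcal N_\mu\), and \(\bigvee_{p\notin N_v}|v_p|\leq\bigvee_{p\notin N_k}|v_p|\) for every \(k\). Hence
\[
|v|^{\infty,\mu}=\bigvee_{p\in\Omega\setminus N_v}|v_p|,\qquad |v_p|\leq|v|^{\infty,\mu}\text{ for every }p\in\Omega\setminus N_v.
\]

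\textbf{Step 2: module structure and the norm axioms.} The space is an \(L^0(\mm)\)-module under pointwise operations. Measurability of \(v+w\) and \(f\cdot v\) for measurable \(v,w\) is the step I regard as the delicate one in this part. The operations are continuous on \(({\rm M},\sfd_{\rm M})\), but the Borel \(\sigma\)-algebra of \({\rm M}\times{\rm M}\) need not equal the product one without separability. I would first try to use the measurability conventions fixed in the paper (or essential separability of ranges). Failing that, I would restrict to the maps for which these operations are measurable, a class that still contains \(S(\Omega;{\rm M})\).

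Given that, the axioms follow from Step 1. If \(|v|^{\infty,\mu}=0\), then \(v_p=0\) for \(p\notin N_v\), so \(v=0\) in \(L^0_\mu(\Omega;{\rm M})\). The triangle inequality follows by working outside \(N_v\cup N_w\), where \(|v_p+w_p|\leq|v|^{\infty,\mu}+|w|^{\infty,\mu}\); this also shows that the space is closed under sums. For homogeneity, multiplication by \(|f|\geq0\) commutes with suprema in \(\bar L^0(\mm)\). This gives \(\bigvee_{p\notin N}|f||v_p|=|f|\bigvee_{p\notin N}|v_p|\) for every \(N\), and taking the infimum over \(N\), with a common attaining null set, yields \(|f\cdot v|^{\infty,\mu}=|f||v|^{\infty,\mu}\).

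\textbf{Step 3: completeness.} Take a Cauchy sequence \((v_n)_n\). It suffices to find a convergent subsequence. Set \(g_n\coloneqq|v_{n+1}-v_n|^{\infty,\mu}\) and pass to a subsequence with \(\mm(\{g_n\geq2^{-n}\})\leq2^{-n}\). By Borel--Cantelli, \(\sum_ng_n<\infty\) \(\mm\)-a.e., so \(\{g_n\}\) is summable in \(L^0(\mm)\) by Remark \ref{rmk:fact_about_summability} iv). Then the tails \(h_n\coloneqq\sum_{k\geq n}g_k\) are well defined and \(h_n\to0\) in \(L^0(\mm)\).

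Let \(N\) be the union of the attaining null sets of all \(v_{n+1}-v_n\), which is \(\mu\)-null. For \(p\notin N\) and \(m>n\) we get \(|v_{m,p}-v_{n,p}|\leq h_n\). Hence \((v_{n,p})_n\) is Cauchy in the complete space \({\rm M}\), and we set \(v_p\coloneqq\lim_nv_{n,p}\), with \(v_p\coloneqq0\) on \(N\). The map \(v\) is measurable as a pointwise limit of measurable maps into a metric space. Since the norm is continuous and order relations pass to \(L^0\)-limits, we get \(|v_p-v_{n,p}|\leq h_n\) for \(p\notin N\). Therefore \(|v-v_n|^{\infty,\mu}\leq h_n\to0\), and \(|v|^{\infty,\mu}\leq|v_1|^{\infty,\mu}+h_1\in L^0_+(\mm)\), so \(v\) belongs to the space and \(v_n\to v\).
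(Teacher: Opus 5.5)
Your proposal is correct and follows essentially the same route as the paper: a single $\mu$-null set $N$ controlling all the differences (your Step 1 spells out, via the countable inf property and the $\sigma$-ideal property of $\mathcal N_\mu$, what the paper obtains `by the definition'), then a fast Cauchy subsequence making $\sum_n g_n$ finite $\mm$-a.e., the pointwise limit off $N$, and the tail bound $|v-v_n|^{\infty,\mu}\leq\sum_{k\geq n}g_k$. The paper verifies only completeness, so your caveat on the measurability of $v+w$ for non-separable ${\rm M}$ concerns a point the paper leaves implicit; note that $f\cdot v$ causes no trouble, since $w\mapsto f\cdot w$ is continuous on $({\rm M},\sfd_{\rm M})$.
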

\begin{proof}
We check only completeness. Fix any \((v^n)_{n\in\N}\subseteq L^0_\mu(\Omega;{\rm M})\) with
\(|v^n|^{\infty,\mu}\in L^0_+(\mm)\) for every \(n\in\N\) and \(|v^n-v^m|^{\infty,\mu}\to 0\) in \(L^0(\mm)\)
as \(n,m\to\infty\). By the definition \eqref{defNormaLinfmu} of \(|\cdot|^{\infty,\mu}\), after having fixed
\((\bar v^n)_{n\in\N}\subseteq\mathcal L^0(\Omega;{\rm M})\) so that \(\bar v^n\) is a representative of \(v^n\)
for every \(n\in\N\), we can find a set \(N\in\mathcal N_\mu\) such that
\(\bigvee_{p\in\Omega\setminus N}|\bar v^n_p-\bar v^m_p|\to 0\) in \(L^0(\mm)\) as \(n,m\to\infty\). In particular,
for any \(p\in\Omega\setminus N\) the sequence \((\bar v^n_p)_{n\in\N}\subseteq{\rm M}\) is Cauchy, thus it makes
sense to define \(\bar v\colon\Omega\to{\rm M}\) as
\[
\bar v_p\coloneqq\left\{\begin{array}{ll}
\lim_n\bar v^n_p\\
0
\end{array}\quad\begin{array}{ll}
\text{ for every }p\in\Omega\setminus N,\\
\text{ for every }p\in N.
\end{array}\right.
\]
Note that \(\bar v\in\mathcal L^0(\Omega;{\rm M})\) by construction. Up to a non-relabelled subsequence,
we can also assume that
\begin{equation}\label{eq:aux_infty_mu_complete}
\int\sum_{n\in\N}g_n\wedge 1\,\d\mm=\sum_{n\in\N}\int g_n\wedge 1\,\d\mm<+\infty,\quad
\text{ where we set }g_n\coloneqq\bigvee_{p\in\Omega\setminus N}|\bar v^n_p-\bar v^{n+1}_p|.
\end{equation}
Now, fix any point \(p\in\Omega\setminus N\). We can extract a subsequence \((n_k)_{k\in\N}\), depending on \(p\), such that
\(n_1=1\) and \(|\bar v^{n_k}_p-\bar v_p|\to 0\) in the \(\mm\)-a.e.\ sense as \(k\to\infty\). Then by letting \(k\to\infty\)
in the estimates
\[
|\bar v_p|\leq|\bar v^1_p|+\sum_{i=1}^{k-1}|\bar v^{n_i}_p-\bar v^{n_{i+1}}_p|+|\bar v^{n_k}_p-\bar v_p|
\leq|\bar v^1|^{\infty,\mu}+\sum_{n\in\N}g_n+|\bar v^{n_k}_p-\bar v_p|
\]
we obtain that \(|\bar v_p|\leq|\bar v^1|^{\infty,\mu}+\sum_{n\in\N}g_n\), thus
\(|\bar v|^{\infty,\mu}\leq|\bar v^1|^{\infty,\mu}+\sum_{n\in\N}g_n\). Since
\[
\sum_{n\in\N}g_n(x)\wedge 1<+\infty\quad\text{ for }\mm\text{-a.e.\ }x\in\X
\]
by \eqref{eq:aux_infty_mu_complete}, we can find a partition \((E_j)_{j\in\N}\subseteq\Sigma\)
of \(\X\) such that \(\sup_{n>j}g_n(x)\leq 1\) holds for \(\mm\)-a.e.\ \(x\in E_j\). Denoting
\(g\coloneqq\sum_{n\in\N}g_n\wedge 1\in L^1(\mm)\) for brevity, we deduce that
\[
|\bar v|^{\infty,\mu}\leq|\bar v^1|^{\infty,\mu}+\sum_{j\in\N}\1_{E_j}^\mm(g_1+\ldots+g_j+g)\in L^0_+(\mm),
\]
in particular \(|\bar v|^{\infty,\mu}\in L^0_+(\mm)\). Finally, for any \(n,m\in\N\) with \(n<m\) and \(p\in\Omega\setminus N\),
we have
\[
|\bar v_p-\bar v^n_p|\leq|\bar v_p-\bar v^m_p|+\sum_{i=n}^{m-1}|\bar v^{i+1}_p-\bar v^i_p|
\leq|\bar v_p-\bar v^m_p|+\sum_{i\geq n}g_i,
\]
which implies that \(|\bar v_p-\bar v^n_p|\leq\sum_{i\geq n}g_i\) and thus \(|\bar v-\bar v^n|^{\infty,\mu}\leq\sum_{i\geq n}g_i\).
Arguing as above, one can show that \(\sum_{i\geq n}g_i\to 0\) in \(L^0(\mm)\) as \(n\to\infty\), whence it follows
that \(|\bar v-\bar v^n|^{\infty,\mu}\to 0\) in \(L^0(\mm)\). Letting \(v\in L^0_\mu(\Omega;{\rm M})\) be the equivalence
class of \(\bar v\), we conclude that \(|v-v^n|^{\infty,\mu}\to 0\) in \(L^\infty(\mm)\). This yields the sought completeness.
\end{proof}

Let us next define
\begin{equation}\label{eq:def_Sinfty_mu}
S_\mu^\infty(\Omega;{\rm M})\coloneqq\big\{v\in S_\mu(\Omega;{\rm M})\;\big|\;|v|^{\infty,\mu}\in L^0_+(\mm)\big\}.
\end{equation}
Note that \(|v|^{\infty,\mu}=\bigvee\{|\bar v_i|:i\in I,A_i\notin\mathcal N_\mu\}\) for every \(v=\sum_{i\in I}\1_{A_i}^\mu\bar v_i\in S_\mu(\Omega;{\rm M})\),
thus accordingly \(v\in S_\mu^\infty(\Omega;{\rm M})\) if and only if \(\bigvee\{|\bar v_i|:i\in I,A_i\notin\mathcal N_\mu\}\in L^0_+(\mm)\).
Now, we define
\begin{equation}\label{defLinfmuOmM}
    (L_\mu^\infty(\Omega;{\rm M}),|\cdot|^{\infty,\mu})
\end{equation}
as the closure of \(S_\mu^\infty(\Omega;{\rm M})\) in the random normed module considered in
Lemma \ref{lem:big_space_bdd_maps}.
In particular, \(L_\mu^\infty(\Omega;{\rm M})\) is a complete random normed module.
Note that \(S_{\mu,f}(\Omega;{\rm M})\subseteq S_\mu^\infty(\Omega;{\rm M})\).
\begin{proposition}\label{prop:L_infty_in_L_1}
Let \(\mu\in\mathcal M(\Omega;L^0(\mm))\) be given. Then there exists a unique homomorphism of random normed modules
\[
\iota\colon L^\infty_\mu(\Omega;{\rm M})\to L^1_\mu(\Omega;{\rm M})
\]
such that \(\iota(v)=v\) for every \(v\in S_\mu^\infty(\Omega;{\rm M})\). Moreover, it holds that
\begin{equation}\label{eq:ineq_infty_1}
|\iota(v)|^{1,\mu}\leq|\mu|_{\rm TV}|v|^{\infty,\mu}\quad\text{ for every }v\in L^\infty_\mu(\Omega;{\rm M}).
\end{equation}
\end{proposition}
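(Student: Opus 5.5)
The plan is to prove the estimate first on simple maps and then extend by density. Fix \(v=\sum_{i\in I}\1_{A_i}^\mu\bar v^i\in S^\infty_\mu(\Omega;{\rm M})\) with \(I\) at most countable. Discarding the indices with \(A_i\in\mathcal N_\mu\), which does not change the class of \(v\), we may assume \(|\bar v^i|\leq|v|^{\infty,\mu}\eqqcolon g\in L^0_+(\mm)\) for all \(i\), by the formula for \(|v|^{\infty,\mu}\) on simple maps recorded after \eqref{eq:def_Sinfty_mu}.

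\textbf{Step 1: simple maps are integrable.} For every finite \(F\subseteq I\) we have
\[
\sum_{i\in F}|\mu|(A_i)|\bar v^i|\leq g\sum_{i\in F}|\mu|(A_i)=g\,|\mu|\Big(\bigcup_{i\in F}A_i\Big)\leq g\,|\mu|_{\rm TV},
\]
using that \(|\mu|\) is a non-negative \(L^0\)-valued measure. Hence the partial sums of \(\{|\mu|(A_i)|\bar v^i|\}\) are order bounded. By Remark \ref{rmk:fact_about_summability} iv), this family is summable, so \(v\in S^1_\mu(\Omega;{\rm M})\). Its sum, which equals \(|v|^{1,\mu}=\int|v|\,\d|\mu|\), is the supremum of the partial sums. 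The displayed estimate therefore gives
\[
|v|^{1,\mu}\leq|\mu|_{\rm TV}|v|^{\infty,\mu}.
\]
Thus the identity map \(S^\infty_\mu(\Omega;{\rm M})\to S^1_\mu(\Omega;{\rm M})\subseteq L^1_\mu(\Omega;{\rm M})\) is well defined and \(L^0(\mm)\)-linear, with the pointwise bound above.

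\textbf{Step 2: extension.} We apply Proposition \ref{prop:ext_result} with \({\rm V}=S^\infty_\mu(\Omega;{\rm M})\), which is dense in \(L^\infty_\mu(\Omega;{\rm M})\) by definition, so a fortiori it generates it. The target is \({\rm N}=L^1_\mu(\Omega;{\rm M})\), which is complete, and we take \(g=|\mu|_{\rm TV}\). There is a mismatch: Step 1 gives \(|\varphi(v)|\leq|\mu|_{\rm TV}|v|^{\infty,\mu}\), which has the required form \(g|v|\) since \(|\mu|_{\rm TV}\) does not depend on \(v\). The proposition then yields a homomorphism \(\iota\) extending the identity on \(S^\infty_\mu\) with \(|\iota|\leq|\mu|_{\rm TV}\), which is \eqref{eq:ineq_infty_1}. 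Uniqueness holds because any homomorphism is continuous and \(S^\infty_\mu\) is dense.

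\textbf{Main obstacle.} The only delicate point is Step 1. We must make sure that the sum \(\sum_i|\mu|(A_i)\) over a countable partition is controlled by \(|\mu|_{\rm TV}\) in the summability sense of Definition \ref{defconvserie}, rather than just for finite partial sums. Remark iv) handles exactly this for non-negative families, so we route the argument through it. We also need \(|\bar v^i|\leq|v|^{\infty,\mu}\) only for the non-null \(A_i\), which is why the null pieces are discarded at the start.
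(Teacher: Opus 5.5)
Your proof is correct and is essentially the paper's argument: you establish \(|v|^{1,\mu}\leq|\mu|_{\rm TV}|v|^{\infty,\mu}\) on \(S^\infty_\mu(\Omega;{\rm M})\) and extend by density, and your appeal to Remark~\ref{rmk:fact_about_summability}~iv) only makes explicit the inclusion \(S^\infty_\mu(\Omega;{\rm M})\subseteq S^1_\mu(\Omega;{\rm M})\), which the paper uses tacitly. The words ``There is a mismatch'' in Step~2 are misleading, because the bound already has exactly the form \(g|v|\) required by Proposition~\ref{prop:ext_result}.
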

\begin{proof}
Note that for any given element \(v=\sum_{i\in I}\1_{A_i}^\mu\bar v_i\in S_\mu^\infty(\Omega;{\rm M})\) we can estimate
\[
|v|^{1,\mu}=\int|v|\,\d|\mu|=\sum_{i\in I}|\mu|(A_i)|\bar v_i|\leq|v|^{\infty,\mu}\sum_{i\in I}|\mu|(A_i)=|\mu|_{\rm TV}|v|^{\infty,\mu}.
\]
Thanks to the density of \(S_\mu^\infty(\Omega;{\rm M})\) in \(L^\infty_\mu(\Omega;{\rm M})\), the statement follows.
\end{proof}

With an abuse of notation, for any \(v\in L^\infty_\mu(\Omega;{\rm M})\) we write
\(\int v\cdot\,\d\mu\) in place of \(\int\iota(v)\cdot\d\mu\).
\begin{proposition}\label{prop:mult_Linfty_L1}
Fix any \(\mu\in\mathcal M(\Omega;L^0(\mm))\). Then there exists a unique \(L^0(\mm)\)-bilinear map
\[
L^\infty_\mu(\Omega;L^0(\mm))\times L^1_\mu(\Omega;{\rm M})\ni(g,v)\mapsto g\cdot v\in L^1_\mu(\Omega;{\rm M})
\]
such that \(|g\cdot v|^{1,\mu}\leq|g|^{\infty,\mu}|v|^{1,\mu}\) for every
\((g,v)\in L^\infty_\mu(\Omega;L^0(\mm))\times L^1_\mu(\Omega;{\rm M})\) and
\begin{equation}\label{eq:def_mult_Linfty_L1}
\bigg(\sum_{i\in I}\1_{A_i}^\mu\bar g^i\bigg)\cdot\bigg(\sum_{j\in J}\1_{B_j}^\mu\bar v^j\bigg)
=\sum_{(i,j)\in I\times J}\1_{A_i\cap B_j}^\mu(\bar g^i\cdot\bar v^j)\in S_\mu^1(\Omega;{\rm M})
\end{equation}
for every \(\sum_{i\in I}\1_{A_i}^\mu\bar g^i\in S_\mu^\infty(\Omega;L^0(\mm))\) and \(\sum_{j\in J}\1_{B_j}^\mu\bar v^j\in S_\mu^1(\Omega;{\rm M})\).
\end{proposition}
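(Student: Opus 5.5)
The plan is to define the product on simple maps via \eqref{eq:def_mult_Linfty_L1}, prove the bound there, and then extend by density twice: first in \(v\), then in \(g\).

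\textbf{Step 1: simple maps.} Fix \(g=\sum_{i\in I}\1_{A_i}^\mu\bar g^i\in S^\infty_\mu(\Omega;L^0(\mm))\) and \(v=\sum_{j\in J}\1_{B_j}^\mu\bar v^j\in S^1_\mu(\Omega;{\rm M})\). Well-posedness follows as for the module operations on \(S_\mu(\Omega;{\rm M})\), since the right-hand side of \eqref{eq:def_mult_Linfty_L1} only depends on the \(\mu\)-a.e.\ classes. Discard the indices with \(A_i\in\mathcal N_\mu\), which does not change the class. Then \(|\bar g^i|\leq|g|^{\infty,\mu}\) for every remaining \(i\), and \(|\mu|(A_i\cap B_j)=0\) for the discarded ones. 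Hence
\[
\sum_{(i,j)}|\mu|(A_i\cap B_j)|\bar g^i\cdot\bar v^j|\leq|g|^{\infty,\mu}\sum_{j}\sum_i|\mu|(A_i\cap B_j)|\bar v^j|=|g|^{\infty,\mu}\sum_j|\mu|(B_j)|\bar v^j|.
\]
This uses the countable additivity of \(|\mu|\) together with Remark \ref{rmk:fact_about_summability} iv) to regroup sums of non-negative terms. The estimate shows both that the product lies in \(S^1_\mu(\Omega;{\rm M})\) and that \(|g\cdot v|^{1,\mu}\leq|g|^{\infty,\mu}|v|^{1,\mu}\). \(L^0(\mm)\)-bilinearity on simple maps is immediate from the formula.

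\textbf{Step 2: extension in \(v\).} For fixed \(g\in S^\infty_\mu\), the map \(v\mapsto g\cdot v\) is \(L^0\)-linear on \(S^1_\mu\) with \(|g\cdot v|^{1,\mu}\leq|g|^{\infty,\mu}|v|^{1,\mu}\). By Corollary \ref{cor:ext_hom_mod} it extends uniquely to a homomorphism \(L^1_\mu(\Omega;{\rm M})\to L^1_\mu(\Omega;{\rm M})\) with the same bound.

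\textbf{Step 3: extension in \(g\).} For fixed \(v\in L^1_\mu(\Omega;{\rm M})\), the map \(g\mapsto g\cdot v\) is \(L^0\)-linear on \(S^\infty_\mu(\Omega;L^0(\mm))\). By the previous step it satisfies \(|g\cdot v|^{1,\mu}\leq|v|^{1,\mu}|g|^{\infty,\mu}\), so it is a homomorphism of random normed modules. Since \(S^\infty_\mu\) is dense in \(L^\infty_\mu(\Omega;L^0(\mm))\) by definition, and \(L^\infty_\mu\) is the \(L^0\)-completion of \(S^\infty_\mu\) (uniqueness in Theorem \ref{thm:L0-complet}), Corollary \ref{cor:ext_hom_mod} extends it uniquely to \(L^\infty_\mu\), again with \(|g\cdot v|^{1,\mu}\leq|g|^{\infty,\mu}|v|^{1,\mu}\).

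\textbf{Consistency, bilinearity and uniqueness.} The two extensions agree on \(S^\infty_\mu\times L^1_\mu\), so the resulting map is well defined on \(L^\infty_\mu\times L^1_\mu\). Bilinearity passes to the limit by continuity: the bound gives joint continuity, since \(|g_nv_n-gv|^{1,\mu}\leq|g_n-g|^{\infty,\mu}|v_n|^{1,\mu}+|g|^{\infty,\mu}|v_n-v|^{1,\mu}\). Uniqueness holds because any bilinear map with this bound is jointly continuous and therefore determined by its values on the dense set \(S^\infty_\mu\times S^1_\mu\).

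The only delicate point is Step 1, namely the regrouping of the countable sums with values in \(L^0\) and the control \(|\bar g^i|\leq|g|^{\infty,\mu}\), which fails on null pieces and is handled by discarding the indices with \(A_i\in\mathcal N_\mu\).
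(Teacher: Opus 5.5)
Your proposal is correct and follows the paper's proof: your Step 1 estimate on simple maps is exactly the paper's key computation. The paper then just says the map extends uniquely by density of \(S^\infty_\mu\) and \(S^1_\mu\); your two successive applications of Corollary \ref{cor:ext_hom_mod}, together with the continuity argument for bilinearity and uniqueness, spell this out. Your treatment of the pieces with \(A_i\in\mathcal N_\mu\) also makes explicit a point the paper leaves implicit when it uses \(|\bar g^i|\leq|g|^{\infty,\mu}\).
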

\begin{proof}
For \(g=\sum_{i\in I}\1_{A_i}^\mu\bar g^i\in S_\mu^\infty(\Omega;L^0(\mm))\) and \(v=\sum_{j\in J}\1_{B_j}^\mu\bar v^j\in S_\mu^1(\Omega;{\rm M})\),
we can estimate
\[\begin{split}
\bigg|\sum_{(i,j)\in I\times J}\1_{A_i\cap B_j}^\mu(\bar g^i\cdot\bar v^j)\bigg|^{1,\mu}
&=\sum_{(i,j)\in I\times J}|\mu|(A_i\cap B_j)|\bar g^i\cdot\bar v^j|
=\sum_{(i,j)\in I\times J}|\mu|(A_i\cap B_j)|\bar g^i||\bar v^j|\\
&\leq|g|^{\infty,\mu}\sum_{(i,j)\in I\times J}|\mu|(A_i\cap B_j)|\bar v^j|
=|g|^{\infty,\mu}|v|^{1,\mu}.
\end{split}\]
This shows that the definition in \eqref{eq:def_mult_Linfty_L1} is well posed and that the resulting map
\begin{equation}\label{eq:mult_Linfty_L1_aux}
S_\mu^\infty(\Omega;L^0(\mm))\times S_\mu^1(\Omega;{\rm M})\ni(g,v)\mapsto g\cdot v\in S_\mu^1(\Omega;{\rm M}),
\end{equation}
which is \(L^0(\mm)\)-bilinear by construction, satisfies \(|g\cdot v|^{1,\mu}\leq|g|^{\infty,\mu}|v|^{1,\mu}\)
for all \(v\in S_\mu^1(\Omega;{\rm M})\) and \(g\in S_\mu^\infty(\Omega;L^0(\mm))\). Since \(S_\mu^\infty(\Omega;L^0(\mm))\)
and \(S_\mu^1(\Omega;{\rm M})\) are dense in \(L^\infty_\mu(\Omega;L^0(\mm))\) and \(L^1_\mu(\Omega;{\rm M})\),
respectively, we conclude that the map in \eqref{eq:mult_Linfty_L1_aux} can be uniquely extended to a map
\(L^\infty_\mu(\Omega;L^0(\mm))\times L^1_\mu(\Omega;{\rm M})\ni(g,v)\mapsto g\cdot v\in L^1_\mu(\Omega;{\rm M})\)
having the sought properties.
\end{proof}

When \({\rm M}=L^0(\mm)\) and \((f,g)\in L^\infty_\mu(\Omega;L^0(\mm))\times L^1_\mu(\Omega;L^0(\mm))\), we write
\(fg\in L^1_\mu(\Omega;L^0(\mm))\) instead of \(f\cdot g\). In view of Proposition \ref{prop:mult_Linfty_L1}, it also
makes sense to define
\[
\int_A v\cdot\d\mu\coloneqq\int((\1_A^\mu\1_\X^\mm)\cdot v)\cdot\d\mu\in{\rm M}\quad\text{ for every }A\in\mathcal A
\text{ and }v\in L^1_\mu(\Omega;{\rm M}).
\]
\begin{lemma}\label{lem:formula_norm_1_mu}
Let \(\mu\in\mathcal M_+(\Omega;L^0(\mm))\) be given. Then for any \(g\in L^1_\mu(\Omega;L^0(\mm))\) it holds that
\begin{equation}\label{eq:formula_norm_1_mu}
|g|^{1,\mu}=\bigvee\bigg\{\int fg\,\d\mu\;\bigg|\;f\in S_{\mu,f}(\Omega;L^0(\mm)),\,|f|^{\infty,\mu}\leq\1_\X^\mm\bigg\}.
\end{equation}
\end{lemma}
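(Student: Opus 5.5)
We prove the two inequalities separately. First we treat the case $g\in S_{\mu,f}(\Omega;L^0(\mm))$, and then we pass to general $g$ by density (Remark \ref{rmk:density_finite_simple_maps}). Throughout, $\mu\geq0$, so $|\mu|=\mu$ and $|g|^{1,\mu}=\int|g|\,\d\mu$.

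\textbf{Upper bound.} Let $f\in S_{\mu,f}(\Omega;L^0(\mm))$ with $|f|^{\infty,\mu}\leq\1_\X^\mm$. For simple $g$, refine the two partitions to a common finite partition $(C_k)_k$, so that $f=\sum_k\1^\mu_{C_k}\bar f^k$ and $g=\sum_k\1^\mu_{C_k}\bar g^k$. On every $C_k$ that is not $\mu$-null we have $|\bar f^k|\leq1$. Hence
\[
\int fg\,\d\mu=\sum_k\mu(C_k)\bar f^k\bar g^k\leq\sum_k\mu(C_k)|\bar g^k|=|g|^{1,\mu}.
\]
For general $g$, the map $g\mapsto\int fg\,\d\mu$ is a homomorphism, and by Proposition \ref{prop:mult_Linfty_L1} it satisfies $|\int fg\,\d\mu-\int fg_n\,\d\mu|\leq|g-g_n|^{1,\mu}$. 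Since $g\mapsto|g|^{1,\mu}$ is also continuous and the inequality $\leq$ is closed in $L^0(\mm)$, the bound passes to the limit. Taking the supremum over $f$ gives $\bigvee\{\dots\}\leq|g|^{1,\mu}$.

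\textbf{Lower bound, simple case.} For $g=\sum_{k\in F}\1^\mu_{C_k}\bar g^k$, choose $\bar f^k\coloneqq\mathrm{sgn}(\bar g^k)$, meaning the $\mm$-a.e.\ class of the sign of a representative. Then $f\coloneqq\sum_k\1^\mu_{C_k}\bar f^k$ lies in $S_{\mu,f}(\Omega;L^0(\mm))$ and satisfies $|f|^{\infty,\mu}\leq\1_\X^\mm$. Moreover $\int fg\,\d\mu=\sum_k\mu(C_k)|\bar g^k|=|g|^{1,\mu}$, so the supremum is attained.

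\textbf{Lower bound, general case.} This is the main step. Pick simple $g_n\to g$ in $L^1_\mu$, and let $f_n$ be the sign functions from the simple case. Then
\[
\int f_ng\,\d\mu\geq\int f_ng_n\,\d\mu-|g-g_n|^{1,\mu}=|g_n|^{1,\mu}-|g-g_n|^{1,\mu}\geq|g|^{1,\mu}-2|g-g_n|^{1,\mu}.
\]
Call the supremum $S$. Then $S\geq|g|^{1,\mu}-2|g-g_n|^{1,\mu}$ for every $n$. Since $|g-g_n|^{1,\mu}\to0$ in $L^0(\mm)$, we may pass to a subsequence converging $\mm$-a.e. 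Letting $n\to\infty$ then yields $S\geq|g|^{1,\mu}$.

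The only point to check is that the supremum makes sense in $L^0(\mm)$. This holds because the family is bounded above by $|g|^{1,\mu}$, by the upper bound already proved.
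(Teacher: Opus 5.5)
Your proof is correct and follows essentially the same route as the paper. The upper bound comes from \(|\int fg\,\d\mu|\leq|f|^{\infty,\mu}|g|^{1,\mu}\); the paper applies this directly to general \(g\), so your simple-case detour is unnecessary but harmless. The lower bound uses the sign test functions \(f=\sum_i\1_{A_i}^\mu{\rm sgn}(\bar h^i)\) for a simple approximant \(h\) of \(g\). The only cosmetic difference is how you close the lower bound: you use the order inequality \(S\geq|g|^{1,\mu}-2|g-g_n|^{1,\mu}\) together with an a.e.-convergent subsequence, whereas the paper estimates \(\sfd_{L^0(\mm)}(|g|^{1,\mu},\int fg\,\d\mu)\leq 2\varepsilon\).
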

\begin{proof}
Given any element \(f\in L^\infty_\mu(\Omega;L^0(\mm))\) with \(|f|^{\infty,\mu}\leq\1_\X^\mm\), we can estimate
\[
\int fg\,\d\mu\leq\int|fg|\,\d\mu=|fg|^{1,\mu}\leq|f|^{\infty,\mu}|g|^{1,\mu}\leq|g|^{1,\mu},
\]
which implies the inequality \(\geq\) in \eqref{eq:formula_norm_1_mu}. To prove the converse inequality, fix \(\varepsilon>0\)
and take an element \(h=\sum_{i\in F}\1_{A_i}^\mu\bar h^i\in S_{\mu,f}(\Omega;L^0(\mm))\) such that
\(\sfd_{L^0(\mm)}(|g-h|^{1,\mu},0)\leq\varepsilon\). Let us then define
\[
f\coloneqq\sum_{i\in F}\1_{A_i}^\mu{\rm sgn}(\bar h^i)\in S_{\mu,f}(\Omega;L^0(\mm)),
\]
where \({\rm sgn}\colon\R\to\{-1,0,1\}\) is defined as \({\rm sgn}(t)\coloneqq-1\) if \(t<0\),
\({\rm sgn}(t)\coloneqq 1\) if \(t>0\), and \({\rm sgn}(0)\coloneqq 0\). Since
\(fh=\sum_{i\in F}\1_{A_i}^\mu{\rm sgn}(\bar h^i)\bar h^i=\sum_{i\in F}\1_{A_i}^\mu|\bar h^i|\) by \eqref{eq:def_mult_Linfty_L1},
we have \(\int fh\,\d\mu=|h|^{1,\mu}\). Hence,
\[\begin{split}
\sfd_{L^0(\mm)}\bigg(|g|^{1,\mu},\int fg\,\d\mu\bigg)&\leq\sfd_{L^0(\mm)}(|g|^{1,\mu},|h|^{1,\mu})
+\sfd_{L^0(\mm)}\bigg(\int fh\,\d\mu,\int fg\,\d\mu\bigg)\\
&\leq\sfd_{L^0(\mm)}(|g-h|^{1,\mu},0)+\sfd_{L^0(\mm)}\bigg(\int f(h-g)\,\d\mu,0\bigg)\\
&\leq\varepsilon+\sfd_{L^0(\mm)}(|f(h-g)|^{1,\mu},0)
\leq\varepsilon+\sfd_{L^0(\mm)}(|f|^{\infty,\mu}|h-g|^{1,\mu},0)\\
&\leq\varepsilon+\sfd_{L^0(\mm)}(|h-g|^{1,\mu},0)\leq 2\varepsilon.
\end{split}\]
Thanks to the arbitrariness of \(\varepsilon>0\), we conclude that \eqref{eq:formula_norm_1_mu} holds.
\end{proof}
\subsection{The spaces \texorpdfstring{\(L^p_\mu(\Omega;{\rm M})\)}{Lpmu} for \texorpdfstring{\(1<p<\infty\)}{1<p<infty}}\label{s:Lp_to_M}
Given any \(p\in(1,\infty)\) and \(\mu\in\mathcal P(\Omega;L^0(\mm))\), we define the map
\(|\cdot|^{p,\mu}\colon S_{\mu,f}(\Omega;{\rm M})\to L^0_+(\mm)\) as
\[
|v|^{p,\mu}\coloneqq\bigg(\sum_{i\in F}|\mu|(A_i)|\bar v^i|^p\bigg)^{1/p}\in L^0_+(\mm)
\quad\text{ for every }v=\sum_{i\in F}\1_{A_i}^\mu\bar v^i\in S_{\mu,f}(\Omega;{\rm M}).
\]
It can be readily checked that \((S_{\mu,f}(\Omega;{\rm M}),|\cdot|^{p,\mu})\) is a random normed module.
We denote by
\begin{equation}\label{eq:def_Lp_mu}
(L^p_\mu(\Omega;{\rm M}),|\cdot|^{p,\mu})
\end{equation}
its random-normed-module completion.
\begin{remark}\label{rmk:def_theta_p}{\rm
Given any \(v=\sum_{i\in F}\1_{A_i}^\mu\bar v^i\in S_{\mu,f}(\Omega;{\rm M})\), we can estimate
\[\begin{split}
|v|^{1,\mu}&=\sum_{i\in F}|\mu|(A_i)|\bar v^i|=\sum_{i\in F}|\mu|(A_i)^{(p-1)/p}(|\mu|(A_i)|\bar v^i|^p)^{1/p}\\
&\leq\bigg(\sum_{i\in F}|\mu|(A_i)\bigg)^{(p-1)/p}\bigg(\sum_{i\in F}|\mu|(A_i)|\bar v^i|^p\bigg)^{1/p}
=|\mu|(\Omega)^{(p-1)/p}|v|^{p,\mu}=|v|^{p,\mu}.
\end{split}\]
Therefore, the identity map \((S_{\mu,f}(\Omega;{\rm M}),|\cdot|^{p,\mu})\to(S_{\mu,f}(\Omega;{\rm M}),|\cdot|^{1,\mu})\)
can be uniquely extended to a homomorphism of random normed modules
\(\theta_p\colon L^p_\mu(\Omega;{\rm M})\to L^1_\mu(\Omega;{\rm M})\) satisfying
\begin{equation}\label{eq:ineq_theta_p}
|\theta_p(v)|^{1,\mu}\leq|v|^{p,\mu}\quad\text{ for every }v\in L^p_\mu(\Omega;{\rm M}),
\end{equation}
as it follows from Corollary \ref{cor:ext_hom_mod}. We point out that similar arguments would give analogous
homomorphisms \(L^p_\mu(\Omega;{\rm M})\to L^q_\mu(\Omega;{\rm M})\) for any choice of \(1<q\leq p<\infty\).
\fr}\end{remark}

Let us now focus on the case \(p=2\). First, we recall that a random normed module \(({\rm H},|\cdot|)\)
with base \((\X,\Sigma,\mm)\) is said to be \textbf{Hilbertian} provided it is complete and it satisfies
\[
|v+w|^2+|v-w|^2=2|v|^2+2|w|^2\quad\text{ for every }v,w\in{\rm H}.
\]
See \cite{guo1999} and \cite[Definition 1.2.20]{gigli2018nonsmooth}.
The space \(L^0(\mm)\) is, clearly, an example of Hilbertian random normed module.
The \textbf{\(L^0\)-scalar product} \(\langle\cdot,\cdot\rangle\colon{\rm H}\times{\rm H}\to L^0(\mm)\)
is the \(L^0(\mm)\)-bilinear operator defined as
\[
\langle v,w\rangle\coloneqq\frac{|v+w|^2-|v|^2-|w|^2}{2}\in L^0(\mm)\quad\text{ for every }v,w\in{\rm H}.
\]
For any closed \(L^0(\mm)\)-submodule \({\rm K}\) of \({\rm H}\), there exists a unique homomorphism of
random normed modules \({\rm p}\colon{\rm H}\to{\rm K}\), called the \textbf{orthogonal projection}
onto \({\rm K}\), such that
\begin{equation}\label{eq:char_orth_proj}
\langle v-{\rm p}(v),w\rangle=0\quad\text{ for every }v\in{\rm H}\text{ and }w\in{\rm K}.
\end{equation}
Moreover, it holds that \({\rm p}\circ{\rm p}={\rm p}\) and \(|{\rm p}(v)|\leq|v|\) for every \(v\in{\rm H}\).
We also recall the random version of the Riesz representation theorem:
the operator sending an element \(v\in{\rm H}\) to
\[
{\rm H}\ni w\mapsto\langle v,w\rangle\in L^0(\mm)
\]
is an isometric isomorphism of random normed modules between \({\rm H}\) and \({\rm H}^*\); in particular,
\({\rm H}^*\) is Hilbertian. See \cite{GuoYou96} and \cite[Theorem 1.2.24]{gigli2018nonsmooth}.
\begin{remark}{\rm
If \(({\rm H},|\cdot|)\) is a Hilbertian random normed module with base \((\X,\Sigma,\mm)\), then
\[
L^2_\mu(\Omega;{\rm H})\quad\text{ is a Hilbertian random normed module.}
\]
Indeed, for any \(v=\sum_{i\in F}\1_{A_i}^\mu\bar v^i\in S_{\mu,f}(\Omega;{\rm H})\) and
\(w=\sum_{j\in G}\1_{B_j}^\mu\bar w^j\in S_{\mu,f}(\Omega;{\rm H})\), we can compute
\[\begin{split}
&\big(|v+w|^{2,\mu}\big)^2+\big(|v-w|^{2,\mu}\big)^2\\
=\,&\sum_{(i,j)\in F\times G}|\mu|(A_i\cap B_j)\big(|\bar v^i+\bar w^j|^2+|\bar v^i-\bar w^j|^2\big)
=\sum_{(i,j)\in F\times G}|\mu|(A_i\cap B_j)\big(2|\bar v^i|^2+2|\bar w^j|^2\big)\\
=\,&2\sum_{i\in F}|\mu|(A_i)|\bar v^i|^2+2\sum_{j\in G}|\mu|(B_j)|\bar w^j|^2
=2\big(|v|^{2,\mu}\big)^2+2\big(|w|^{2,\mu}\big)^2.
\end{split}\]
Since \(S_{\mu,f}(\Omega;{\rm H})\) is dense in \(L^2_\mu(\Omega;{\rm H})\), we deduce by approximation that
\[
\big(|v+w|^{2,\mu}\big)^2+\big(|v-w|^{2,\mu}\big)^2=2\big(|v|^{2,\mu}\big)^2+2\big(|w|^{2,\mu}\big)^2
\quad\text{ for every }v,w\in L^2_\mu(\Omega;{\rm H}),
\]
thus accordingly \(L^2_\mu(\Omega;{\rm H})\) is a Hilbertian random normed module, as we claimed.
\fr}\end{remark}

We make another easy observation: if \(\mu\in\mathcal M_+(\Omega;L^0(\mm))\) and \(h,g\in S_{\mu,f}(\Omega;L^0(\mm))\), then
\begin{equation}\label{eq:compat_mult_Lp}
\langle h,g\rangle=\int hg\,\d\mu,
\end{equation}
where \(\langle h,g\rangle\in L^0(\mm)\) denotes the \(L^0\)-scalar product in \(L^2_\mu(\Omega;L^0(\mm))\),
while \(hg\in L^1_\mu(\Omega;L^0(\mm))\) is given by Proposition \ref{prop:mult_Linfty_L1}.
Indeed, writing \(h=\sum_{i\in F}\1_{A_i}^\mu\bar h^i\) and \(g=\sum_{j\in G}\1_{B_j}^\mu\bar g^j\), we can compute
\[\begin{split}
\langle h,g\rangle&=\frac{\big(|h+g|^{2,\mu}\big)^2-\big(|h|^{2,\mu}\big)^2-\big(|g|^{2,\mu}\big)^2}{2}\\
&=\frac{1}{2}\bigg(\sum_{(i,j)\in F\times G}\mu(A_i\cap B_j)|\bar h^i+\bar g^j|^2-\sum_{i\in F}\mu(A_i)|\bar h^i|^2
-\sum_{j\in G}\mu(B_j)|\bar g^j|^2\bigg)\\
&=\frac{1}{2}\sum_{(i,j)\in F\times G}\mu(A_i\cap B_j)\big(|\bar h^i+\bar g^j|^2-|\bar h^i|^2-|\bar g^j|^2\big)\\
&=\sum_{(i,j)\in F\times G}\mu(A_i\cap B_j)(\bar h^i\bar g^j)=\int hg\,\d\mu,
\end{split}\]
which gives \eqref{eq:compat_mult_Lp}.
\subsection{\texorpdfstring{\(L^1_\mu(\Omega;{\rm M})\)}{L1mu(Omega;M)} as a projective tensor product}\label{s:L1_tensor_prod}
It is well known that, given a finite signed measure \(\nu\) on \((\Omega,\mathcal A)\) and a Banach space
\(\mathbb B\), the Lebesgue--Bochner space \(L^1_\nu(\Omega;\mathbb B)\) can be identified with the projective
tensor product of Banach spaces \(L^1(\nu)\hat\otimes_\pi\mathbb B\); see e.g.\ \cite[Section 2.3]{Ryan02}.
As we shall prove below, this result can be extended to random normed modules.
\begin{remark}\label{rmk:double_simple_dense_L1}{\rm
For any \(\mu\in\mathcal M(\Omega;L^0(\mm))\), the \(\R\)-linear span of
\(\{\1_A^\mu\1_E^\mm:A\in\mathcal A,E\in\Sigma\}\) is dense in \(L^1_\mu(\Omega;L^0(\mm))\).
Indeed, given any \(A\in\mathcal A\) and \(\bar f\in L^0(\mm)\), we can find a sequence \((\bar f^k)_{k\in\N}\subseteq L^0(\mm)\)
of simple functions \(\bar f^k=\sum_{i=1}^{n_k}\lambda^k_i\1_{E^k_i}^\mm\) such that \(\bar f^k\to\bar f\) in \(L^0(\mm)\),
thus accordingly
\[
|\1_A^\mu\bar f^k-\1_A^\mu\bar f|^{1,\mu}=|\mu|(A)|\bar f^k-\bar f|\to 0\quad\text{ in }L^0(\mm)\text{ as }k\to\infty.
\]
Since the \(\R\)-linear span of the elements of \(L^1_\mu(\Omega;L^0(\mm))\) of the form \(\1_A^\mu\bar f\) is
\(S_{\mu,f}(\Omega;L^0(\mm))\), which is dense in \(L^1_\mu(\Omega;L^0(\mm))\) by Remark \ref{rmk:density_finite_simple_maps},
we have proven the claim.
\fr}\end{remark}

Let us now recall the notion of projective tensor product of complete random normed modules, which was
introduced in \cite{Pas23}. Given complete random normed modules \(({\rm M},|\cdot|)\) and \(({\rm N},|\cdot|)\)
with base \((\X,\Sigma,\mm)\), we denote by \({\rm M}\otimes{\rm N}\) their algebraic tensor product as modules
over \(L^0(\mm)\). We remind that each tensor \(\alpha\in{\rm M}\otimes{\rm N}\) is a finite sum of elementary
tensors \(v\otimes w\), with \(v\in{\rm M}\) and \(w\in{\rm N}\). In particular, it makes sense to define the map
\(|\cdot|_\pi\colon{\rm M}\otimes{\rm N}\to L^0_+(\mm)\) as
\[
|\alpha|_\pi\coloneqq\bigwedge\bigg\{\sum_{i=1}^n|v_i||w_i|\;\bigg|\;n\in\N,\,(v_i)_{i=1}^n\subseteq{\rm M},\,
(w_i)_{i=1}^n\subseteq{\rm N},\,\alpha=\sum_{i=1}^n v_i\otimes w_i\bigg\}
\]
for every \(\alpha\in{\rm M}\otimes{\rm N}\).
It was shown in \cite[Theorem 4.1]{Pas23} that \(({\rm M}\otimes{\rm N},|\cdot|_\pi)\) is a random normed module.
Consistently with \cite[Definition 4.2]{Pas23}, we then define the \textbf{projective tensor product}
\(({\rm M}\hat\otimes_\pi{\rm N},|\cdot|_\pi)\) of \({\rm M}\) and \({\rm N}\) as the \(L^0\)-completion of
\(({\rm M}\otimes{\rm N},|\cdot|_\pi)\). The following result was proved in \cite[Theorem 4.10]{Pas23}:
\begin{theorem}[Universal property of the projective tensor product]\label{thm:univ_prop_proj_tens}
Let \(({\rm M},|\cdot|)\), \(({\rm N},|\cdot|)\) and \(({\rm Q},|\cdot|)\) be complete random normed modules with base
\((\X,\Sigma,\mm)\). Let \(b\colon{\rm M}\times{\rm N}\to{\rm Q}\) be an \(L^0(\mm)\)-bilinear map for which there exists
\(g\in L^0_+(\mm)\) such that
\[
|b(v,w)|\leq g|v||w|\quad\text{ for every }(v,w)\in{\rm M}\times{\rm N}.
\]
Then there exists a unique homomorphism of random normed modules \(\hat b\colon{\rm M}\hat\otimes_\pi{\rm N}\to{\rm Q}\)
such that the diagram
\[\begin{tikzcd}
{\rm M}\times{\rm N} \arrow[r,"b"] \arrow[d,swap,"\otimes"] & {\rm Q} \\
{\rm M}\hat\otimes_\pi{\rm N} \arrow[ur,swap,"\hat b"] &
\end{tikzcd}\]
commutes. Moreover, it holds that \(|\hat b|\leq g\). We say that \(\hat b\) is the \textbf{\(L^0\)-linearisation} of \(b\).
\end{theorem}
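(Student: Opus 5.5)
The plan is to build $\hat b$ in two stages: first on the algebraic tensor product ${\rm M}\otimes{\rm N}$, where it is forced by bilinearity, and then on its $L^0$-completion ${\rm M}\hat\otimes_\pi{\rm N}$ by continuity.

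\textbf{Step 1: the algebraic map.} Since $b$ is $L^0(\mm)$-bilinear, the universal property of the algebraic tensor product of $L^0(\mm)$-modules gives a unique $L^0(\mm)$-linear map $\tilde b\colon{\rm M}\otimes{\rm N}\to{\rm Q}$ with $\tilde b(v\otimes w)=b(v,w)$. Well-posedness, meaning independence of the chosen representation of a tensor, is part of that universal property, so nothing needs to be checked by hand.

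\textbf{Step 2: the bound $|\tilde b(\alpha)|\leq g|\alpha|_\pi$.} Fix $\alpha\in{\rm M}\otimes{\rm N}$ and any representation $\alpha=\sum_{i=1}^n v_i\otimes w_i$. The triangle inequality for the $L^0$-norm of ${\rm Q}$ and the hypothesis on $b$ give
\[
|\tilde b(\alpha)|\leq\sum_{i=1}^n|b(v_i,w_i)|\leq g\sum_{i=1}^n|v_i||w_i|.
\]
Thus $|\tilde b(\alpha)|$ is a lower bound for $g\cdot S$, where $S\subseteq L^0_+(\mm)$ is the set whose infimum defines $|\alpha|_\pi$. To pass to the infimum, I would show that $\bigwedge(g\cdot S)=g\bigwedge S$ for every $g\in L^0_+(\mm)$.
\begin{itemize}
\item By the countable inf property, pick a sequence $(h_k)_k\subseteq S$ with $\bigwedge_k h_k=\bigwedge S$.
\item Compute $\mm$-a.e.: on $\{g=0\}$ both sides vanish, while on $\{g>0\}$ multiplication by $g$ is an order isomorphism, so it commutes with countable infima of representatives.
\end{itemize}
This yields $|\tilde b(\alpha)|\leq g|\alpha|_\pi$. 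This lattice manipulation is the only genuinely non-formal point, and I expect it to be the main (though mild) obstacle.

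\textbf{Step 3: extension to the completion.} By Step 2, $\tilde b$ is a homomorphism of random normed modules from $({\rm M}\otimes{\rm N},|\cdot|_\pi)$ into the complete module ${\rm Q}$, with $|\tilde b|\leq g$. Corollary~\ref{cor:ext_hom_mod} (equivalently Proposition~\ref{prop:ext_result}) then gives a unique homomorphism $\hat b\colon{\rm M}\hat\otimes_\pi{\rm N}\to{\rm Q}$ extending $\tilde b$, with $|\hat b|=|\tilde b|\leq g$. The diagram commutes because $\hat b(v\otimes w)=\tilde b(v\otimes w)=b(v,w)$.

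\textbf{Step 4: uniqueness.} Let $\hat b'$ be any homomorphism with $\hat b'(v\otimes w)=b(v,w)$ for all $v\in{\rm M}$, $w\in{\rm N}$. By $L^0(\mm)$-linearity, $\hat b'$ agrees with $\tilde b$ on all finite sums of elementary tensors, that is, on ${\rm M}\otimes{\rm N}$. Homomorphisms are continuous and ${\rm M}\otimes{\rm N}$ is dense in ${\rm M}\hat\otimes_\pi{\rm N}$ by Theorem~\ref{thm:L0-complet}, so $\hat b'=\hat b$.
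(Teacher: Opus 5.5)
Your proof is correct. It is the standard argument: linearise on the algebraic tensor product, bound against $|\cdot|_\pi$, then extend by density. The paper does not reprove this statement; it imports it from \cite[Theorem 4.10]{Pas23}, where I believe it is obtained in essentially this way.

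The only external input you use tacitly is that $|\cdot|_\pi$ is a genuine $L^0$-norm on ${\rm M}\otimes{\rm N}$, and not merely a seminorm (\cite[Theorem 4.1]{Pas23}, recalled in the paper). This is what makes Corollary~\ref{cor:ext_hom_mod} applicable in Step 3. Your handling of $\bigwedge(g\cdot S)=g\bigwedge S$ via the countable inf property is sound.
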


We are now in a position to obtain an alternative description of \(L^1_\mu(\Omega;{\rm M})\): 
\begin{theorem}\label{thm:L1_M_as_tens_prod}
Let \(\mu\in\mathcal M(\Omega;L^0(\mm))\) be given. Then there exists a unique isometric isomorphism
of random normed modules
\[
J\colon L^1_\mu(\Omega;L^0(\mm))\hat\otimes_\pi{\rm M}\to L^1_\mu(\Omega;{\rm M})
\]
such that \(J((\1_A^\mu\1_E^\mm)\otimes v)=\1_A^\mu(\1_E^\mm\cdot v)\) for every \(A\in\mathcal A\),
\(E\in\Sigma\) and \(v\in{\rm M}\).
\end{theorem}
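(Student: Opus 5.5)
The map \(J\) comes from the universal property in Theorem \ref{thm:univ_prop_proj_tens}, and it is then shown to be isometric and surjective.

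\textbf{Construction of \(J\).} Define
\[
b\colon L^1_\mu(\Omega;L^0(\mm))\times{\rm M}\to L^1_\mu(\Omega;{\rm M}).
\]
On finite-range simple functions \(f=\sum_{i\in F}\1_{A_i}^\mu\bar f^i\) set \(b(f,v)\coloneqq\sum_{i\in F}\1_{A_i}^\mu(\bar f^i\cdot v)\). Then
\[
|b(f,v)|^{1,\mu}=\sum_{i\in F}|\mu|(A_i)|\bar f^i||v|=|f|^{1,\mu}|v|.
\]
Hence for fixed \(v\) the map \(f\mapsto b(f,v)\) is \(L^0\)-linear with \(|b(f,v)|^{1,\mu}=|v||f|^{1,\mu}\). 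It therefore extends to \(L^1_\mu(\Omega;L^0(\mm))\) by Corollary \ref{cor:ext_hom_mod}, using the density in Remark \ref{rmk:density_finite_simple_maps}. The extension is \(L^0\)-bilinear and satisfies the identity \(|b(f,v)|^{1,\mu}=|f|^{1,\mu}|v|\), the identity passing to the limit by continuity. Theorem \ref{thm:univ_prop_proj_tens} then yields a homomorphism \(J=\hat b\) with \(|J|\le 1\) and the prescribed values on \((\1_A^\mu\1_E^\mm)\otimes v\). Uniqueness of \(J\) holds because the \(\R\)-span of the elements \(\1_A^\mu\1_E^\mm\) is dense in \(L^1_\mu(\Omega;L^0(\mm))\) (Remark \ref{rmk:double_simple_dense_L1}). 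Consequently the tensors \((\1_A^\mu\1_E^\mm)\otimes v\) generate the projective tensor product, using continuity of \(\otimes\) together with \(|f\otimes v|_\pi\le|f|^{1,\mu}|v|\).

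\textbf{Isometry.} Since \(|J(\alpha)|^{1,\mu}\le|\alpha|_\pi\) already holds, it remains to prove the reverse inequality on a dense set, and then conclude by continuity. Consider \(\alpha\) in the algebraic tensor product of \(S_{\mu,f}(\Omega;L^0(\mm))\) with \({\rm M}\). After refining to a common finite partition \((A_i)_{i\in F}\), such an \(\alpha\) can be written as
\[
\alpha=\sum_{i\in F}\1_{A_i}^\mu\otimes w_i,\qquad w_i\in{\rm M},
\]
where we have used \(L^0\)-bilinearity to move the coefficients \(\bar f^i\) into \({\rm M}\). Then
\[
|\alpha|_\pi\le\sum_{i\in F}|\mu|(A_i)|w_i|=\Big|\sum_{i\in F}\1_{A_i}^\mu w_i\Big|^{1,\mu}=|J(\alpha)|^{1,\mu}.
\]
This step is the delicate point. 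One must check that this dense set of tensors is dense in \(L^1_\mu(\Omega;L^0(\mm))\hat\otimes_\pi{\rm M}\); this follows from the density of \(S_{\mu,f}\) together with \(|f\otimes v|_\pi\le|f|^{1,\mu}|v|\). One must also check that an isometry \(|J\alpha|^{1,\mu}=|\alpha|_\pi\) valid on a dense subset extends to the whole space; this follows from continuity of both random norms in \(\sfd\).

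\textbf{Surjectivity.} \(J\) is an isometry from a complete space, so its image is closed. The image contains every element \(\sum_{i\in F}\1_{A_i}^\mu\bar v^i\), namely \(J\big(\sum_{i\in F}\1_{A_i}^\mu\otimes\bar v^i\big)\). These elements form \(S_{\mu,f}(\Omega;{\rm M})\), which is dense in \(L^1_\mu(\Omega;{\rm M})\), so \(J\) is onto. Since \(J\) is \(L^0\)-linear, bijective and preserves the random norm, it is an isometric isomorphism.
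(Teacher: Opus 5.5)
Your proposal is correct and follows the paper's route. You build the bilinear map on simple functions and apply the universal property to get $|J(\alpha)|^{1,\mu}\le|\alpha|_\pi$. You then obtain the reverse inequality on tensors $\sum_{i}(\1_{A_i}^\mu\1_\X^\mm)\otimes\bar v^i$ and conclude via density of $S_{\mu,f}(\Omega;{\rm M})$. The differences are minor: you define $b$ on $S_{\mu,f}(\Omega;L^0(\mm))\times{\rm M}$ rather than on the real span ${\rm V}$ of the $\1_A^\mu\1_E^\mm$, and you state explicitly that such tensors are dense in the projective tensor product, which the paper uses tacitly.
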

\begin{proof}
We define the subspace \({\rm V}\) of \(L^1_\mu(\Omega;L^0(\mm))\) as
\[
{\rm V}\coloneqq\bigg\{\sum_{i=1}^n\sum_{j=1}^k\lambda_{ij}\1_{A_i}^\mu\1_{E_j}^\mm\;\bigg|\;
(\lambda_{ij})_{i,j}\subseteq\R,\,(A_i)_{i=1}^n\subseteq\mathcal A\text{ and }
(E_j)_{j=1}^k\subseteq\Sigma\text{ pairwise disjoint}\bigg\}.
\]
We then define the map \(\tilde B\colon{\rm V}\times{\rm M}\to L^1_\mu(\Omega;{\rm M})\) as
\[
\tilde B\bigg(\sum_{i=1}^n\sum_{j=1}^k\lambda_{ij}\1_{A_i}^\mu\1_{E_j}^\mm,v\bigg)
\coloneqq\sum_{i=1}^n\sum_{j=1}^k\lambda_{ij}\1_{A_i}^\mu(\1_{E_j}^\mm\cdot v)
\]
for every \(\big(\sum_{i=1}^n\sum_{j=1}^k\lambda_{ij}\1_{A_i}^\mu\1_{E_j}^\mm,v\big)\in{\rm V}\times{\rm M}\).
From the validity of the identities
\[\begin{split}
\bigg|\sum_{i=1}^n\sum_{j=1}^k\lambda_{ij}\1_{A_i}^\mu(\1_{E_j}^\mm\cdot v)\bigg|^{1,\mu}
&=\sum_{i=1}^n|\mu|(A_i)\bigg|\sum_{j=1}^k\lambda_{ij}\1_{E_j}^\mm\cdot v\bigg|
=|v|\sum_{i=1}^n\sum_{j=1}^k|\lambda_{ij}||\mu|(A_i)\1_{E_j}^\mm\\
&=|v|\sum_{i=1}^n|\mu|(A_i)\bigg|\sum_{j=1}^k\lambda_{ij}\1_{E_j}^\mm\bigg|
=\bigg|\sum_{i=1}^n\sum_{j=1}^k\lambda_{ij}\1_{A_i}^\mu\1_{E_j}^\mm\bigg|^{1,\mu}|v|,
\end{split}\]
it follows that \(\tilde B\) is well defined and satisfies \(|\tilde B(f,v)|^{1,\mu}=|f|^{1,\mu}|v|\)
for all \((f,v)\in{\rm V}\times{\rm M}\), thus it is \(L^0(\mm)\)-bilinear.
Since \({\rm V}\) coincides with the \(\R\)-linear span of \(\{\1_A^\mu\1_E^\mm:A\in\mathcal A,E\in\Sigma\}\),
which is dense in \(L^1_\mu(\Omega;L^0(\mm))\) by Remark \ref{rmk:double_simple_dense_L1}, we deduce that
\({\rm V}\times{\rm M}\) is dense in \(L^1_\mu(\Omega;L^0(\mm))\times{\rm M}\), thus \(\tilde B\) can be uniquely
extended to an \(L^0(\mm)\)-bilinear map \(B\colon L^1_\mu(\Omega;L^0(\mm))\times{\rm M}\to L^1_\mu(\Omega;{\rm M})\)
satisfying \(|B(f,v)|^{1,\mu}=|f|^{1,\mu}|v|\) for every \((f,v)\in L^1_\mu(\Omega;L^0(\mm))\times{\rm M}\).
Consequently, there exists a unique homomorphism of random normed modules
\(J\colon L^1_\mu(\Omega;L^0(\mm))\hat\otimes_\pi{\rm M}\to L^1_\mu(\Omega;{\rm M})\) such that
\(J((\1_A^\mu\1_E^\mm)\otimes v)=B(\1_A^\mu\1_E^\mm,v)=\1_A^\mu(\1_E^\mm\cdot v)\) for every
\((A,E,v)\in\mathcal A\times\Sigma\times{\rm M}\) and \(|J(\alpha)|^{1,\mu}\leq|\alpha|_\pi\)
for all \(\alpha\in L^1_\mu(\Omega;L^0(\mm))\hat\otimes_\pi{\rm M}\). Finally, given
\(v=\sum_{i\in F}\1_{A_i}^\mu\bar v^i\in S_{\mu,f}(\Omega;{\rm M})\), we have that
\(J\big(\sum_{i\in F}(\1_{A_i}^\mu\1_\X^\mm)\otimes\bar v^i\big)=\sum_{i\in F}J((\1_{A_i}^\mu\1_\X^\mm)\otimes\bar v^i)=v\) and
\[
\bigg|\sum_{i\in F}(\1_{A_i}^\mu\1_\X^\mm)\otimes\bar v^i\bigg|_\pi\leq
\sum_{i\in F}|\1_{A_i}^\mu\1_\X^\mm|^{1,\mu}|\bar v^i|=\sum_{i\in F}|\mu|(A_i)|\bar v^i|=|v|^{1,\mu}
=\bigg|J\bigg(\sum_{i\in F}(\1_{A_i}^\mu\1_\X^\mm)\otimes\bar v^i\bigg)\bigg|^{1,\mu}.
\]
Since \(S_{\mu,f}(\Omega;{\rm M})\) is dense in \(L^1_\mu(\Omega;{\rm M})\) by Remark \ref{rmk:density_finite_simple_maps},
we conclude that the homomorphism \(J\) is surjective and \(|J(\alpha)|^{1,\mu}=|\alpha|_\pi\) for all
\(\alpha\in L^1_\mu(\Omega;L^0(\mm))\hat\otimes_\pi{\rm M}\). This yields the statement.
\end{proof}

In Section \ref{s:random_martingale} we will also need the projective tensor product of homomorphisms, which we
are going to remind. Let \({\rm M}\), \(\tilde{\rm M}\), \({\rm N}\) and \(\tilde{\rm N}\) be complete random
normed modules with base \((\X,\Sigma,\mm)\). Let \(\varphi\colon{\rm M}\to\tilde{\rm M}\) and
\(\psi\colon{\rm N}\to\tilde{\rm N}\) be homomorphisms of random normed modules. Then there is a unique
homomorphism of random normed modules \(\varphi\otimes_\pi\psi\colon{\rm M}\hat\otimes_\pi{\rm N}
\to\tilde{\rm M}\hat\otimes_\pi\tilde{\rm N}\) such that
\[
(\varphi\otimes_\pi\psi)(v\otimes w)=\varphi(v)\otimes\psi(w)\quad\text{ for every }(v,w)\in{\rm M}\times{\rm N}.
\]
Moreover, it holds that \(|\varphi\otimes_\pi\psi|=|\varphi||\psi|\). See \cite[Proposition 4.3]{Pas23} for
a proof of these claims.
\subsection{Pointwise description of \texorpdfstring{\(L^1_\mu(\Omega;L^0(\mm))\)}{L1mu} when \texorpdfstring{\(\mu\)}{mu} can be foliated}\label{s:ptwse_descr_L_1}
Let us assume that
\[
\mu\in\mathcal P(\Omega;L^0(\mm))\quad\text{ has a foliation }(\mu_x)_{x\in\X}\subseteq\mathcal P(\Omega).
\]
In this setting, we can show that the elements of \(L^1_\mu(\Omega;L^0(\mm))\) can be described `pointwise',
in the sense that we are going to discuss. To this aim, we introduce the following definition:
\begin{definition}[The space \(\tilde L^1_\mu(\Omega;L^0(\mm))\)]\label{def:tildeL1}
Let us define \(\tilde{\mathcal L}^1_\mu(\Omega;\mathcal L^0(\bar\Sigma_\mm))\subseteq\mathcal L^0(\mathcal A\otimes\bar\Sigma_\mm)\) as
\[
\tilde{\mathcal L}^1_\mu(\Omega;\mathcal L^0(\bar\Sigma_\mm))\coloneqq\big\{F\in\mathcal L^0(\mathcal A\otimes\bar\Sigma_\mm)\;\big|\;
F(\cdot,x)\in\mathcal L^1(\mu_x)\text{ for }\mm\text{-a.e.\ }x\in\X\big\}.
\]
Given any \(F,\tilde F\in\tilde{\mathcal L}^1_\mu(\Omega;\mathcal L^0(\bar\Sigma_\mm))\), we declare that \(F\sim\tilde F\) if and only if
\[
[F(\cdot,x)]_{\mu_x}=[\tilde F(\cdot,x)]_{\mu_x}\quad\text{ for }\mm\text{-a.e.\ }x\in\X.
\]
Then \(\sim\) is an equivalence relation on \(\tilde{\mathcal L}^1_\mu(\Omega;\mathcal L^0(\bar\Sigma_\mm))\), whose quotient space we denote by
\[
\tilde L^1_\mu(\Omega;L^0(\mm))\coloneqq\tilde{\mathcal L}^1_\mu(\Omega;\mathcal L^0(\bar\Sigma_\mm))/\sim.
\]
\end{definition}

We denote by \(\pi\colon\tilde{\mathcal L}^1_\mu(\Omega;\mathcal L^0(\bar\Sigma_\mm))\to\tilde L^1_\mu(\Omega;L^0(\mm))\) the canonical
projection map. It is straightforward to check that \(\tilde L^1_\mu(\Omega;L^0(\mm))\) is a module over \(L^0(\mm)\)
with respect to the following operations:
\[\begin{split}
F_1+F_2\coloneqq\pi(\bar F_1+\bar F_2)&\quad\text{ for every }F_1,F_2\in\tilde L^1_\mu(\Omega;L^0(\mm)),\\
f\cdot F\coloneqq\pi(\bar f\cdot\bar F)&\quad\text{ for every }f\in L^0(\mm)\text{ and }F\in\tilde L^1_\mu(\Omega;L^0(\mm)),
\end{split}\]
where \(\bar F_1,\bar F_2,\bar F\in\tilde{\mathcal L}^1_\mu(\Omega;\mathcal L^0(\bar\Sigma_\mm))\) and \(\bar f\in\mathcal L^0(\bar\Sigma_\mm)\)
are chosen so that \(\pi(\bar F_1)=F_1\), \(\pi(\bar F_2)=F_2\), \(\pi(\bar F)=F\) and \([\bar f]_\mm=f\), and we set
\[
(\bar F_1+\bar F_2)(p,x)\coloneqq\bar F_1(p,x)+\bar F_2(p,x),\quad(\bar f\cdot\bar F)(p,x)\coloneqq\bar f(x)\bar F(p,x)
\quad\text{ for all }(p,x)\in\Omega\times\X.
\]
It can be readily checked that the above definitions are well posed, meaning that they do not depend on the chosen
representatives. We also define \(|\cdot|^{1,\mu}_\sim\colon\tilde L^1_\mu(\Omega;L^0(\mm))\to L^0_+(\mm)\) as
\[
|F|^{1,\mu}_\sim\coloneqq[|\bar F|^{1,\mu}_\sim]_\mm,\quad\text{ where we set }
\X\ni x\mapsto|\bar F|^{1,\mu}_\sim(x)\coloneqq\int|\bar F(\cdot,x)|\,\d\mu_x,
\]
for every \(F\in\tilde L^1_\mu(\Omega;L^0(\mm))\) and \(\bar F\in\tilde{\mathcal L}^1_\mu(\Omega;\mathcal L^0(\bar\Sigma_\mm))\)
with \(\pi(\bar F)=F\). This definition is meaningful thanks to Lemma \ref{lem:meas_by_mon_class_thm},
and clearly independent of the chosen representative \(\bar F\).
\begin{lemma}\label{lem:tilde_L1_complete}
The space \(\big(\tilde L^1_\mu(\Omega;L^0(\mm)),|\cdot|^{1,\mu}_\sim\big)\) is a complete random normed module.
\end{lemma}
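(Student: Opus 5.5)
The plan has two parts. First, the random normed module axioms: these are routine checks on representatives. Second, completeness, which is the real content and which I would reduce to the completeness of the classical space $L^1(\hat\mu)$ by a localisation-in-$x$ argument.

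For the axioms, the triangle inequality and the identity $|f\cdot F|^{1,\mu}_\sim=|f||F|^{1,\mu}_\sim$ hold $x$ by $x$ for the representatives. Measurability of $x\mapsto\int|\bar F(\cdot,x)|\,\d\mu_x$ is Lemma \ref{lem:meas_by_mon_class_thm}, and finiteness a.e.\ comes from the definition of $\tilde{\mathcal L}^1_\mu$. If $|F|^{1,\mu}_\sim=0$, then $\bar F(\cdot,x)=0$ $\mu_x$-a.e.\ for $\mm$-a.e.\ $x$, so $F=0$.

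For completeness, take a Cauchy sequence $(F_n)$ with representatives $\bar F_n$ and set $g_{n,m}:=|F_n-F_m|^{1,\mu}_\sim$. Since $g_{n,m}\to0$ in $L^0(\mm)$, I pass to a subsequence with $\sfd_{L^0(\mm)}(g_{k,k+1},0)\le 4^{-k}$. Borel--Cantelli then gives $G:=\sum_k g_{k,k+1}<\infty$ $\mm$-a.e. Next I localise. I choose a partition $(E_j)_j\subseteq\Sigma$ with $G+|\bar F_1|^{1,\mu}_\sim\le j$ on $E_j$, and put $\bar F_k^j:=\1_{E_j}(x)\bar F_k(p,x)$. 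By Lemma \ref{lem:meas_by_mon_class_thm},
\[
\int|\bar F_k^j-\bar F_{k+1}^j|\,\d\hat\mu=\int_{E_j}g_{k,k+1}\,\d\mm,
\]
which is summable in $k$, and each $\bar F^j_k$ lies in $\mathcal L^1(\hat\mu)$ because $|\bar F_k^j|^{1,\mu}_\sim\le j$. So $\sum_k|\bar F^j_{k+1}-\bar F^j_k|\in\mathcal L^1(\hat\mu)$, and this series is finite $\hat\mu$-a.e.

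Applying Lemma \ref{lem:meas_by_mon_class_thm} to the nonnegative measurable function $\sum_k|\bar F^j_{k+1}-\bar F^j_k|$ (extended-valued, truncated as usual), I get that for $\mm$-a.e.\ $x\in E_j$ the series $\sum_k|\bar F_{k+1}(\cdot,x)-\bar F_k(\cdot,x)|$ is finite $\mu_x$-a.e. This is the main subtle step: it shows that the set where the pointwise limit fails has $\mu_x$-measure zero for a.e.\ $x$, and not merely $\hat\mu$-measure zero. That is exactly why I use the function form of the lemma rather than a statement about null sets of $\hat\mu$. I then define $\bar F(p,x):=\lim_k\bar F_k(p,x)$ where the limit exists and $0$ elsewhere. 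This is $\mathcal A\otimes\bar\Sigma_\mm$-measurable, real-valued, and satisfies
\[
\int|\bar F(\cdot,x)-\bar F_k(\cdot,x)|\,\d\mu_x\le\sum_{i\ge k}g_{i,i+1}(x)
\]
by Fatou's lemma in $\mu_x$.

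This estimate shows $\bar F(\cdot,x)\in\mathcal L^1(\mu_x)$ for a.e.\ $x$ and $|F-F_k|^{1,\mu}_\sim\le\sum_{i\ge k}g_{i,i+1}\to0$ a.e., hence in $L^0(\mm)$. A Cauchy sequence with a convergent subsequence converges, which concludes the proof.
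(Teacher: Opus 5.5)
Your proof is correct and follows essentially the same fiberwise Riesz--Fischer argument as the paper: a fast Cauchy subsequence, a pointwise limit of $\bar F_k(\cdot,x)$ off a $\mu_x$-null set for $\mm$-a.e.\ $x$, and then Fatou in $\mu_x$ for the tail bound, with Borel--Cantelli replacing the paper's $\wedge 1$/Jensen device for obtaining $G<\infty$ $\mm$-a.e. The localisation on the sets $E_j$ and the passage through $\hat\mu$, which you flag as the subtle step, are not needed: once $G(x)<\infty$, monotone convergence in $\mu_x$ for that fixed $x$ already gives $\int\sum_k|\bar F_{k+1}(\cdot,x)-\bar F_k(\cdot,x)|\,\d\mu_x=G(x)<\infty$.
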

\begin{proof}
It is straightforward to verify that \((\tilde L^1_\mu(\Omega;L^0(\mm)),|\cdot|^{1,\mu}_\sim)\) is a random normed module,
thus we focus on the proof of its completeness. Let \((F_n)_{n\in\N}\subseteq\tilde L^1_\mu(\Omega;L^0(\mm))\)
be a Cauchy sequence. Without loss of generality, we can assume that \(\sfd_{L^0(\mm)}(|F_{n+1}-F_n|^{1,\mu}_\sim,0)\leq 2^{-n}\)
for every \(n\in\N\). For any \(n\in\N\), take \(\bar F_n\in\tilde{\mathcal L}^1_\mu(\Omega;\mathcal L^0(\bar\Sigma_\mm))\) so
that \(\pi(\bar F_n)=F_n\). We define the function \(\bar F\in\mathcal L^0(\mathcal A\otimes\bar\Sigma_\mm)\) as
\[
\bar F(p,x)\coloneqq\1_{\{-\infty<\tilde F<+\infty\}}(p,x)\tilde F(p,x)\quad\text{ for every }(p,x)\in\Omega\times\X,
\]
where the auxiliary function \(\tilde F\in\bar{\mathcal L}^0(\mathcal A\otimes\bar\Sigma_\mm)\) is given by
\[
\tilde F(p,x)\coloneqq\varlimsup_{n\to\infty}\bar F_n(p,x)\quad\text{ for every }(p,x)\in\Omega\times\X.
\]
Applying the monotone convergence theorem and Jensen's inequality, we obtain that
\[\begin{split}
&\int\!\!\!\int\sum_{n\in\N}\big|\bar F_{n+1}(\cdot,x)-\bar F_n(\cdot,x)\big|\wedge 1\,\d\mu_x\,\d\mm(x)\\
=\,&\int\sum_{n\in\N}\bigg(\int\big|\bar F_{n+1}(\cdot,x)-\bar F_n(\cdot,x)\big|\wedge 1\,\d\mu_x\bigg)\,\d\mm(x)\\
\leq\,&\int\sum_{n\in\N}\bigg(\int\big|\bar F_{n+1}(\cdot,x)-\bar F_n(\cdot,x)\big|\,\d\mu_x\bigg)\wedge 1\,\d\mm(x)\\
=\,&\sum_{n\in\N}\int\bigg(\int\big|\bar F_{n+1}(\cdot,x)-\bar F_n(\cdot,x)\big|\,\d\mu_x\bigg)\wedge 1\,\d\mm(x)\\
=\,&\sum_{n\in\N}\int|F_{n+1}-F_n|^{1,\mu}_\sim\wedge 1\,\d\mm=\sum_{n\in\N}\sfd_{L^0(\mm)}(|F_{n+1}-F_n|^{1,\mu}_\sim,0)\leq 1.
\end{split}\]
In particular, for \(\mm\)-a.e.\ \(x\in\X\) we have that
\[
\sum_{n\in\N}\big|\bar F_{n+1}(p,x)-\bar F_n(p,x)\big|\wedge 1<+\infty\quad\text{ for }\mu_x\text{-a.e.\ }p\in\Omega,
\]
which implies \(\bar F(p,x)=\lim_n\bar F_n(p,x)\) for \(\mu_x\)-a.e.\ \(p\in\Omega\). The same estimates show also that
\[
\sum_{n\in\N}\bigg(\int\big|\bar F_{n+1}(\cdot,x)-\bar F_n(\cdot,x)\big|\,\d\mu_x\bigg)\wedge 1<+\infty
\quad\text{ for }\mm\text{-a.e.\ }x\in\X,
\]
thus accordingly \(\bar F(\cdot,x)\in\mathcal L^1(\mu_x)\) and \(\lim_n\big\|[\bar F_n(\cdot,x)-\bar F(\cdot,x)]_{\mu_x}\big\|_{L^1(\mu_x)}=0\)
for \(\mm\)-a.e.\ \(x\in\X\). In particular, it holds that \(\bar F\in\tilde{\mathcal L}^1_\mu(\Omega;\mathcal L^0(\bar\Sigma_\mm))\)
and \(\lim_n|F_n-F|^{1,\mu}_\sim(x)=0\) for \(\mm\)-a.e.\ \(x\in\X\), where we set
\(F\coloneqq\pi(\bar F)\in\tilde L^1_\mu(\Omega;L^0(\mm))\). This implies that
\(\sfd_{L^0(\mm)}(|F_n-F|^{1,\mu}_\sim,0)\to 0\) as \(n\to\infty\), thus showing that
\(\big(\tilde L^1_\mu(\Omega;L^0(\mm)),|\cdot|^{1,\mu}_\sim\big)\) is a complete random normed module.
\end{proof}
\begin{theorem}[The identification \(L^1_\mu(\Omega;L^0(\mm))=\tilde L^1_\mu(\Omega;L^0(\mm))\)]\label{thm:L1_tilde_L1}
There exists a unique isometric isomorphism of random normed modules
\[
I\colon L^1_\mu(\Omega;L^0(\mm))\to\tilde L^1_\mu(\Omega;L^0(\mm))
\]
such that \(I(\1_A^\mu\1_\X^\mm)=\pi(\bar I(\1_A^\mu\1_\X^\mm))\) for all \(A\in\mathcal A\),
where \(\bar I(\1_A^\mu\1_\X^\mm)\in\tilde{\mathcal L}^1_\mu(\Omega;\mathcal L^0(\bar\Sigma_\mm))\) is given by
\[
\bar I(\1_A^\mu\1_\X^\mm)(p,x)\coloneqq\1_A(p)\quad\text{ for every }(p,x)\in\Omega\times\X.
\]
\end{theorem}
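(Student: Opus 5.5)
We first define \(I\) on the dense subspace \(S_{\mu,f}(\Omega;L^0(\mm))\) and extend it by density. Fix \(f=\sum_{i\in F}\1_{A_i}^\mu\bar f^i\in S_{\mu,f}(\Omega;L^0(\mm))\), where \((A_i)_{i\in F}\) is a finite partition of \(\Omega\). Choose representatives \(\tilde f^i\in\mathcal L^0(\bar\Sigma_\mm)\) of the \(\bar f^i\), and set
\[
\bar I(f)(p,x)\coloneqq\sum_{i\in F}\1_{A_i}(p)\tilde f^i(x).
\]
This function is \(\mathcal A\otimes\bar\Sigma_\mm\)-measurable. For \(\mm\)-a.e.\ \(x\) it is bounded in \(p\), hence \(\mu_x\)-integrable, since \(\mu_x\) is a probability measure. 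We then put \(I(f)\coloneqq\pi(\bar I(f))\).

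To check that this is well posed, suppose two representations define the same element of \(S_\mu\). They differ only on sets \(N\in\mathcal N_\mu\) and through \(\mm\)-null changes of the \(\tilde f^i\). For \(N\in\mathcal N_\mu\) we have \(\mu(N)=0\), so \(\mu_x(N)=0\) for \(\mm\)-a.e.\ \(x\). Since there are only finitely (or countably) many such \(N\), the two functions agree \(\mu_x\)-a.e.\ for \(\mm\)-a.e.\ \(x\). The map \(I\) is clearly \(L^0(\mm)\)-linear on \(S_{\mu,f}\).

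For the isometry, \(\mu\) is non-negative, so \(|\mu|=\mu\) and
\[
|I(f)|^{1,\mu}_\sim(x)=\sum_{i\in F}\mu_x(A_i)|\tilde f^i(x)|\quad\text{for }\mm\text{-a.e.\ }x.
\]
This is a representative of \(\sum_{i\in F}\mu(A_i)|\bar f^i|=|f|^{1,\mu}\). Since \(S_{\mu,f}\) is dense in \(L^1_\mu(\Omega;L^0(\mm))\) (Remark \ref{rmk:density_finite_simple_maps}) and \(\tilde L^1_\mu\) is complete (Lemma \ref{lem:tilde_L1_complete}), Proposition \ref{prop:ext_result}, applied with \(g=\1_\X^\mm\), extends \(I\) uniquely to a homomorphism that preserves the \(L^0\)-norm, and hence is injective. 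Uniqueness of \(I\) given only its values on \(\1_A^\mu\1_\X^\mm\) follows from \(L^0\)-linearity and continuity, because these elements generate \(S_{\mu,f}\) as an \(L^0(\mm)\)-module.

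The main obstacle is surjectivity. Since \(I\) is isometric and \(L^1_\mu\) is complete, the image is closed, so it suffices to show that it is dense in \(\tilde L^1_\mu(\Omega;L^0(\mm))\). We carry this out in three steps.

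\textbf{Truncation.} Given \(\bar F\in\tilde{\mathcal L}^1_\mu\), the truncations \(\bar F_k\coloneqq(\bar F\wedge k)\vee(-k)\) converge to \(\bar F\) in \(L^1(\mu_x)\) for \(\mm\)-a.e.\ \(x\), by dominated convergence. Hence \(|\bar F_k-\bar F|^{1,\mu}_\sim\to 0\) \(\mm\)-a.e., and therefore in \(L^0(\mm)\). So it suffices to approximate bounded functions \(G\in\mathcal L^\infty(\mathcal A\otimes\bar\Sigma_\mm)\).

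\textbf{Reduction to \(L^1(\hat\mu)\).} By Lemma \ref{lem:meas_by_mon_class_thm}, for bounded \(G,H\) we have
\[
\int|I(G)-I(H)|^{1,\mu}_\sim\,\d\mm=\int|G-H|\,\d\hat\mu .
\]
Moreover \(\sfd_{L^0(\mm)}(h,0)\leq\int h\,\d\mm\) for \(h\geq 0\). Hence approximation in \(L^1(\hat\mu)\) implies approximation in \(\tilde L^1_\mu\).

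\textbf{Density of rectangles.} By Lemma \ref{lem:rect_dense_L1_hat_mu}, the linear span of indicators \(\1_{A\times E}\) is dense in \(L^1(\hat\mu)\). That lemma is stated for \(\Sigma\), whereas here we need \(\bar\Sigma_\mm\). The same monotone class argument applies verbatim with \(\bar\Sigma_\mm\); alternatively, one can modify \(G\) on a \(\hat\mu\)-null set. Finally, each \(\1_{A\times E}\) equals \(\bar I(\1_A^\mu\1_E^\mm)\), so it lies in the image of \(I\). This yields density of the image, hence surjectivity, and completes the proof.
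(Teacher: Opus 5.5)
Your proposal is correct and follows the paper's proof essentially step by step: define \(I=\pi\circ\bar I\) on finite simple maps, extend by density, and get surjectivity from the closedness of the isometric image, combined with truncation, Lemma \ref{lem:meas_by_mon_class_thm} and the density of rectangles in \(L^1(\hat\mu)\) (Lemma \ref{lem:rect_dense_L1_hat_mu}). The differences are only streamlinings: you obtain the isometry directly on \(S_{\mu,f}(\Omega;L^0(\mm))\) using partition representations, you truncate two-sidedly instead of splitting \(\bar F=\bar F^+-\bar F^-\), and you use \(\sfd_{L^0(\mm)}(h,0)\leq\int h\,\d\mm\) in place of the paper's summable-error, pointwise-a.e.\ argument (note also that \(I(G)\), \(I(H)\) in your reduction step should read \(\pi(G)\), \(\pi(H)\)).
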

\begin{proof}
By \(L^0(\mm)\)-linearity, we are forced to set \(I\coloneqq\pi\circ\bar I\colon S_{\mu,f}(\Omega;L^0(\mm))\to\tilde L^1_\mu(\Omega;L^0(\mm))\), where
\[
\bar I(f)(p,x)\coloneqq\sum_{i\in F}\1_{A_i}(p)\bar f^i(x)
\]
for every \(f=\sum_{i\in F}\1_{A_i}^\mu[\bar f^i]_\mm\in S_{\mu,f}(\Omega;L^0(\mm))\) and \((p,x)\in\Omega\times\X\).
Note that
\[
|I(f)|^{1,\mu}_\sim(x)=\int\bigg|\sum_{i\in F}\1_{A_i}(p)\bar f^i(x)\bigg|\,\d\mu_x(p)
\leq\sum_{i\in F}|\bar f^i(x)|\mu_x(A_i)\quad\text{ for }\mm\text{-a.e.\ }x\in\X,
\]
thus \(|I(f)|^{1,\mu}_\sim\leq\sum_{i\in F}\mu(A_i)|[\bar f^i]_\mm|=|f|^{1,\mu}\). Hence, \(I\) can be uniquely extended
to a homomorphism of random normed modules \(I\colon L^1_\mu(\Omega;L^0(\mm))\to\tilde L^1_\mu(\Omega;L^0(\mm))\)
with \(|I(f)|^{1,\mu}_\sim\leq|f|^{1,\mu}\) for every \(f\in L^1_\mu(\Omega;L^0(\mm))\). Let ${\rm V}$ be the \(\R\)-linear span of the set \(\{\1_A^\mu\1_E^\mm:A\in\mathcal A,E\in\Sigma\}\).
We recall from Remark \ref{rmk:double_simple_dense_L1} that ${\rm V}$ is dense in \(L^1_\mu(\Omega;L^0(\mm))\). Fix any element \(f\in{\rm V}\). We can express \(f\) as
\(\sum_{i\in F}\sum_{j=1}^k\lambda_{i,j}\1_{A_j}^\mu\1_{E_i}^\mm\), where \((E_i)_{i\in F}\subseteq\Sigma\)
is a partition of \(\X\), \((A_j)_{j=1}^k\subseteq\mathcal A\) is a partition of \(\Omega\) and
\(\{\lambda_{i,j}:i\in F,j=1,\ldots,k\}\subseteq\R\). Then
\[
|I(f)|^{1,\mu}_\sim=\sum_{i\in F}\1_{E_i}^\mm\sum_{j=1}^k|\lambda_{i,j}|\mu(A_j)
=\sum_{j=1}^k\mu(A_j)\bigg|\sum_{i\in F}\lambda_{i,j}\1_{E_i}^\mm\bigg|=\int|f|\,\d\mu=|f|^{1,\mu}.
\]
By approximation, this implies that \(|I(f)|^{1,\mu}_\sim=|f|^{1,\mu}\) for every \(f\in L^1_\mu(\Omega;L^0(\mm))\).
To conclude, it remains to check that the homomorphism \(I\colon L^1_\mu(\Omega;L^0(\mm))\to\tilde L^1_\mu(\Omega;L^0(\mm))\)
is surjective. To this aim, fix any \(\bar F\in\tilde{\mathcal L}^1_\mu(\Omega;\mathcal L^0(\bar\Sigma_\mm))\)
with \(\bar F\geq 0\). For any \(n\in\N\), we consider the truncated function
\[
\bar F_n\coloneqq\bar F\wedge n\in\mathcal L^\infty(\mathcal A\otimes\bar\Sigma_\mm)
\subseteq\tilde{\mathcal L}^1_\mu(\Omega;\mathcal L^0(\bar\Sigma_\mm)).
\]
Fix \(\varepsilon>0\). Since \(\bar F_n(p,x)\nearrow \bar F(p,x)\) for every \((p,x)\in\Omega\times\X\), using the monotone
convergence theorem we obtain that
\(|\bar F_n-\bar F|^{1,p}_\sim(x)=\int|\bar F_n(\cdot,x)-\bar F(\cdot,x)|\,\d\mu_x\to 0\) as \(n\to\infty\)
for every \(x\in\X\). In particular, letting \(F\coloneqq\pi(\bar F)\) and \(F_n\coloneqq\pi(\bar F_n)\), we have
\(\sfd_{L^0(\mm)}(|F_{n_\varepsilon}-F|^{1,\mu}_\sim,0)\leq\varepsilon/2\) for some \(n_\varepsilon\in\N\).
Next, by applying Lemma \ref{lem:rect_dense_L1_hat_mu} we obtain a sequence \((\bar S_k)_{k\in\N}\subseteq \mathcal L^\infty(\mathcal A\otimes\bar\Sigma_\mm)\)
such that \(\|S_k-F_{n_\varepsilon}\|_{L^1(\hat\mu)}\leq 2^{-k}\) for all \(k\in\N\), where \(S_k\coloneqq[\bar S_k]_{\hat\mu}\)
and each \(\bar S_k\) is of the form
\[
\bar S_k=\sum_{i=1}^{i_k}\sum_{j=1}^{j_k}\lambda^k_{i,j}\1_{A^k_i\times E^k_j}
\]
with \(i_k,j_k\in\N\) and \(\{\lambda^k_{i,j}:i=1,\ldots,i_k,j=1,\ldots,j_k\}\subseteq[0,n_\varepsilon]\), while
\((A^k_i)_{i=1}^{i_k}\subseteq\mathcal A\) is a partition of \(\Omega\) and \((E^k_j)_{j=1}^{j_k}\subseteq\bar\Sigma_\mm\)
is a partition of \(\X\). By virtue of the monotone convergence theorem and Lemma \ref{lem:meas_by_mon_class_thm}, we have that
\[\begin{split}
\int\bigg(\sum_{k\in\N}\int\big|\bar S_k(\cdot,x)-\bar F_{n_\varepsilon}(\cdot,x)\big|\,\d\mu_x\bigg)\,\d\mm(x)
&=\sum_{k\in\N}\int\!\!\!\int\big|\bar S_k(\cdot,x)-\bar F_{n_\varepsilon}(\cdot,x)\big|\,\d\mu_x\,\d\mm(x)\\
&=\sum_{k\in\N}\int\big|\bar S_k(p,x)-\bar F_{n_\varepsilon}(p,x)\big|\,\d\hat\mu(p,x)\\
&=\sum_{k\in\N}\|S_k-F_{n_\varepsilon}\|_{L^1(\hat\mu)}\leq 1.
\end{split}\]
Hence, \(\sum_{k\in\N}\int|\bar S_k(\cdot,x)-\bar F_{n_\varepsilon}(\cdot,x)|\,\d\mu_x<+\infty\)
for \(\mm\)-a.e.\ \(x\in\X\), so that \(|\pi(S_k)-F_{n_\varepsilon}|^{1,\mu}_\sim(x)\to 0\) as \(k\to\infty\) for
\(\mm\)-a.e.\ \(x\in\X\) and thus \(\sfd_{L^0(\mm)}(|\pi(S_{k_\varepsilon})-F_{n_\varepsilon}|^{1,\mu}_\sim,0)\leq\varepsilon/2\)
for some \(k_\varepsilon\in\N\). Letting
\[
f\coloneqq\sum_{i=1}^{i_{k_\varepsilon}}\1_{A^{k_\varepsilon}_i}^\mu\bigg(\sum_{j=1}^{j_{k_\varepsilon}}
\lambda^{k_\varepsilon}_{i,j}\1_{E^{k_\varepsilon}_j}^\mm\bigg)\in S_{\mu,f}(\Omega;L^0(\mm)),
\]
we finally have \(I(f)=\pi(S_{k_\varepsilon})\) and accordingly \(\sfd_{L^0(\mm)}(|I(f)-F|^{1,\mu}_\sim,0)\leq\varepsilon\).
All in all, we have shown that if \(\bar F\in\tilde{\mathcal L}^1_\mu(\Omega;\mathcal L^0(\bar\Sigma_\mm))\) and \(\bar F\geq 0\),
then \(\pi(\bar F)\) belongs to the closure of \(I\big(S_{\mu,f}(\Omega;L^0(\mm))\big)\) in \(\tilde L^1_\mu(\Omega;L^0(\mm))\).
Since \(I\) preserves the \(L^0\)-norm, \(I\big(L^1_\mu(\Omega;L^0(\mm))\big)\) is closed in
\(\tilde L^1_\mu(\Omega;L^0(\mm))\) and thus
\[
{\rm cl}_{\tilde L^1_\mu(\Omega;L^0(\mm))}\big(I\big(S_{\mu,f}(\Omega;L^0(\mm))\big)\big)\subseteq I\big(L^1_\mu(\Omega;L^0(\mm))\big).
\]
To conclude, just observe that if \(\bar F\in\tilde{\mathcal L}^1_\mu(\Omega;\mathcal L^0(\bar\Sigma_\mm))\), then both
\(\bar F^+\coloneqq\bar F\vee 0\) and \(\bar F^-\coloneqq(-\bar F)^+\) belong to \(\tilde{\mathcal L}^1_\mu(\Omega;\mathcal L^0(\bar\Sigma_\mm))\), thus accordingly
\[
\pi(\bar F)=\pi(\bar F^+)-\pi(\bar F^-)\in I\big(L^1_\mu(\Omega;L^0(\mm))\big)-I\big(L^1_\mu(\Omega;L^0(\mm))\big)
=I\big(L^1_\mu(\Omega;L^0(\mm))\big).
\]
This shows that \(I\colon L^1_\mu(\Omega;L^0(\mm))\to\tilde L^1_\mu(\Omega;L^0(\mm))\) is surjective, thus completing the proof.
\end{proof}

In light of Theorem \ref{thm:L1_tilde_L1}, we can unambiguously write
\[
\int F\,\d\mu\coloneqq\int I^{-1}(F)\,\d\mu\in L^0(\mm)\quad\text{ for every }F\in\tilde L^1_\mu(\Omega;L^0(\mm)).
\]
Similarly, we write \(\int_A F\,\d\mu\coloneqq\int_A I^{-1}(F)\,\d\mu\) for every \(A\in\mathcal A\)
and \(F\in\tilde L^1_\mu(\Omega;L^0(\mm))\). Note also that, given \(F\in\tilde L^1_\mu(\Omega;L^0(\mm))\),
it makes sense to write `\(F(\cdot,x)\in L^1(\mu_x)\) for \(\mm\)-a.e.\ \(x\in\X\)'.
\begin{proposition}\label{prop:ptwse_formula_int_of_tilde_L1}
Let \(F\in\tilde L^1_\mu(\Omega;L^0(\mm))\) be given. Then it holds that
\begin{equation}\label{eq:ptwse_formula_int_of_tilde_L1_cl}
\bigg(\int F\,\d\mu\bigg)(x)=\int F(\cdot,x)\,\d\mu_x\quad\text{ for }\mm\text{-a.e.\ }x\in\X.
\end{equation}
\end{proposition}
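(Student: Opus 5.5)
We argue by density and continuity: first verify \eqref{eq:ptwse_formula_int_of_tilde_L1_cl} on a dense class, then show that both sides depend continuously on \(F\) in \(L^0(\mm)\), in a way compatible with \(\mm\)-a.e.\ convergence along subsequences.

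Define the map \(T\colon\tilde L^1_\mu(\Omega;L^0(\mm))\to L^0(\mm)\) by
\[
T(F)\coloneqq\big[x\mapsto\textstyle\int\bar F(\cdot,x)\,\d\mu_x\big]_\mm,
\]
where \(\bar F\) is any representative of \(F\). This map is well defined, since \(\bar F(\cdot,x)\in\mathcal L^1(\mu_x)\) for \(\mm\)-a.e.\ \(x\). It is measurable: split \(\bar F=\bar F^+-\bar F^-\) and apply Lemma \ref{lem:meas_by_mon_class_thm} to each part; both pieces are finite \(\mm\)-a.e. It does not depend on the representative, by the definition of \(\sim\). It is \(L^0(\mm)\)-linear, since \(\bar f(x)\) factors out of the \(\mu_x\)-integral. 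Finally, \(|T(F)|\leq|F|^{1,\mu}_\sim\) holds pointwise, because \(|\int\bar F(\cdot,x)\,\d\mu_x|\leq\int|\bar F(\cdot,x)|\,\d\mu_x\). Hence \(T\) is a homomorphism of random normed modules.

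Next we compare \(T\) with the integral. The map \(S\coloneqq\int I^{-1}(\cdot)\,\d\mu\) is also a homomorphism of random normed modules, and it satisfies \(|S(F)|\leq|I^{-1}F|^{1,\mu}=|F|^{1,\mu}_\sim\) because \(I\) is an isometry (Theorem \ref{thm:L1_tilde_L1}). So it suffices to check \(S=T\) on \(I(S_{\mu,f}(\Omega;L^0(\mm)))\), which is dense in \(\tilde L^1_\mu(\Omega;L^0(\mm))\) since \(I\) is a surjective isometry and \(S_{\mu,f}\) is dense in \(L^1_\mu\). Take \(f=\sum_{i\in F}\1_{A_i}^\mu[\bar f^i]_\mm\). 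By construction \(\bar I(f)(p,x)=\sum_i\1_{A_i}(p)\bar f^i(x)\), so
\[
T(I(f))(x)=\sum_i\bar f^i(x)\mu_x(A_i)\quad\text{for }\mm\text{-a.e.\ }x.
\]
On the other side, \(S(I(f))=\int f\,\d\mu=\sum_i\mu(A_i)[\bar f^i]_\mm\), and this agrees with the above because \(\mu(A_i)=[\mu_\cdot(A_i)]_\mm\) by the definition of foliation. Two continuous maps into \(L^0(\mm)\) that agree on a dense set agree everywhere, which proves the claim.

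The only delicate point is that \(T\) is well defined and measurable for signed, merely \(\mu_x\)-integrable representatives. This is handled by the positive/negative splitting and Lemma \ref{lem:meas_by_mon_class_thm} applied to the completed \(\sigma\)-algebra \(\bar\Sigma_\mm\). The remaining steps are routine.
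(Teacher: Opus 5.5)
Your proof is correct and follows essentially the paper's argument. You verify the identity on \(I\big(S_{\mu,f}(\Omega;L^0(\mm))\big)\) using \(\mu(A_i)=[\mu_\cdot(A_i)]_\mm\), and then extend by density, using that both sides are controlled by \(|\cdot|^{1,\mu}_\sim\). The paper phrases the extension step with an approximating sequence satisfying \(|F_n-F|^{1,\mu}_\sim\to 0\) \(\mm\)-a.e.\ along a subsequence, whereas you package the pointwise integral as a homomorphism \(T\) and use uniqueness of continuous extensions; this also makes explicit the well-posedness and measurability of the right-hand side, which the paper leaves implicit.
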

\begin{proof}
If \(f=\sum_{i\in F}\1_{A_i}^\mu\bar f^i\in S_{\mu,f}(\Omega;L^0(\mm))\) is given, then for \(\mm\)-a.e.\ \(x\in\X\) we have that
\[\begin{split}
\bigg(\int I(f)\,\d\mu\bigg)(x)&=\bigg(\int f\,\d\mu\bigg)(x)=\sum_{i\in F}\mu(A_i)(x)\bar f^i(x)
=\sum_{i\in F}\mu_x(A_i)\bar f^i(x)\\
&=\sum_{i\in F}\bar f^i(x)\int\1_{A_i}\,\d\mu_x
=\sum_{i\in F}\int I(\1_{A_i}^\mu\bar f^i)(\cdot,x)\,\d\mu_x=\int I(f)(\cdot,x)\,\d\mu_x,
\end{split}\]
proving that \eqref{eq:ptwse_formula_int_of_tilde_L1_cl} holds for every \(F\in I\big(S_{\mu,f}(\Omega;L^0(\mm))\big)\).
Now, fix any \(F\in\tilde L^1_\mu(\Omega;L^0(\mm))\) and take a sequence \((F_n)_{n\in\N}\subseteq I\big(S_{\mu,f}(\Omega;L^0(\mm))\big)\)
such that \(F_n\to F\) in the topology of \(\tilde L^1_\mu(\Omega;L^0(\mm))\). Up to passing to a subsequence,
we can also assume that \(|F_n-F|^{1,\mu}_\sim\to 0\) holds \(\mm\)-a.e.\ on \(\X\). Then
\[
\bigg|\int F_n\,\d\mu-\int F\,\d\mu\bigg|\leq|F_n-F|^{1,\mu}_\sim\to 0\quad\text{ in the }\mm\text{-a.e.\ sense as }n\to\infty,
\]
as well as
\[
\bigg|\int F_n(\cdot,x)\,\d\mu_x-\int F(\cdot,x)\,\d\mu_x\bigg|\leq\int|F_n(\cdot,x)-F(\cdot,x)|\,\d\mu_x
=|F_n-F|^{1,\mu}_\sim(x)\to 0
\]
for \(\mm\)-a.e.\ \(x\in\X\) as \(n\to\infty\). Taking into account the first part of the proof, we conclude that
\[
\bigg(\int F\,\d\mu\bigg)(x)=\lim_{n\to\infty}\bigg(\int F_n\,\d\mu\bigg)(x)=\lim_{n\to\infty}\int F_n(\cdot,x)\,\d\mu_x
=\int F(\cdot,x)\,\d\mu_x
\]
for \(\mm\)-a.e.\ \(x\in\X\), thus obtaining the statement.
\end{proof}

We define the subset \(\tilde L^1_\mu(\Omega;L^0_+(\mm))\) of \(\tilde L^1_\mu(\Omega;L^0(\mm))\) as
\begin{equation}\label{eq:non-neg_tildeL1}
\tilde L^1_\mu(\Omega;L^0_+(\mm))\coloneqq\big\{F\in\tilde L^1_\mu(\Omega;L^0(\mm))\;\big|\;
F(\cdot,x)\geq 0\text{ for }\mm\text{-a.e.\ }x\in\X\big\}.
\end{equation}
It can be readily checked that
\[
\tilde L^1_\mu(\Omega;L^0_+(\mm))=\big\{\pi(\bar F)\;\big|\;
\bar F\in\tilde{\mathcal L}^1_\mu(\Omega;\mathcal L^0(\bar\Sigma_\mm))\cap\mathcal L^0_+(\mathcal A\otimes\bar\Sigma_\mm)\big\}
=I\big(L^1_\mu(\Omega;L^0_+(\mm))\big),
\]
where \(I\colon L^1_\mu(\Omega;L^0(\mm))\to\tilde L^1_\mu(\Omega;L^0(\mm))\) is the isometric isomorphism
given by Theorem \ref{thm:L1_tilde_L1}.
\begin{corollary}\label{cor:conseq_uniq_tilde_L1}
Assume that \(\mathcal A\) is countably generated and that \(f,g\in L^1_\mu(\Omega;L^0(\mm))\) satisfy
\[
\int_A f\,\d\mu=\int_A g\,\d\mu\quad\text{ for every }A\in\mathcal A.
\]
Then it holds that \(f=g\).
\end{corollary}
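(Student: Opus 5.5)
We reduce to the pointwise picture of Theorem \ref{thm:L1_tilde_L1}. By linearity it suffices to treat \(h\coloneqq f-g\in L^1_\mu(\Omega;L^0(\mm))\) and show that \(\int_A h\,\d\mu=0\) for all \(A\in\mathcal A\) forces \(h=0\). We set \(H\coloneqq I(h)\in\tilde L^1_\mu(\Omega;L^0(\mm))\) and fix a representative \(\bar H\in\tilde{\mathcal L}^1_\mu(\Omega;\mathcal L^0(\bar\Sigma_\mm))\).

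The first step is to identify \(\int_A h\,\d\mu\) pointwise. By definition, \(\int_A h\,\d\mu=\int(\1_A^\mu\1_\X^\mm)h\,\d\mu\). On simple maps, the multiplication of Proposition \ref{prop:mult_Linfty_L1} is intertwined by \(I\) with pointwise multiplication by \(\1_A(p)\). This is immediate from the formula for \(\bar I\). Multiplication by \(\1_A\) is \(1\)-Lipschitz for both \(|\cdot|^{1,\mu}\) and \(|\cdot|^{1,\mu}_\sim\), so by density we get \(I((\1_A^\mu\1_\X^\mm)h)=\pi(\1_A\bar H)\). Proposition \ref{prop:ptwse_formula_int_of_tilde_L1} then gives
\[
\bigg(\int_A h\,\d\mu\bigg)(x)=\int_A\bar H(\cdot,x)\,\d\mu_x\quad\text{ for }\mm\text{-a.e.\ }x\in\X.
\]
Hence for each \(A\in\mathcal A\) there is an \(\mm\)-null set \(N_A\) outside of which \(\int_A\bar H(\cdot,x)\,\d\mu_x=0\).

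The second step uses countable generation, as in Proposition \ref{prop:unique_foliation}. Let \({\rm P}\) be the countable \(\pi\)-system defined in \eqref{eq:def_pi-system_P}, and let \(N\coloneqq\bigcup_{A\in{\rm P}}N_A\), which is \(\mm\)-null. Also discard the null set of \(x\) for which \(\bar H(\cdot,x)\notin\mathcal L^1(\mu_x)\). Fix \(x\) outside these null sets. The two finite signed measures \(A\mapsto\int_A\bar H(\cdot,x)^\pm\,\d\mu_x\) agree on \({\rm P}\), which contains \(\Omega\). By the Dynkin \(\pi\)-\(\lambda\) theorem they agree on \(\mathcal A\), so \(\int_A\bar H(\cdot,x)\,\d\mu_x=0\) for every \(A\in\mathcal A\). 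Choosing \(A=\{\bar H(\cdot,x)>0\}\) and \(A=\{\bar H(\cdot,x)<0\}\), both of which belong to \(\mathcal A\) by measurability of sections, yields \(\bar H(\cdot,x)=0\) \(\mu_x\)-a.e.

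Therefore \(\bar H\sim 0\), so \(H=0\), and since \(I\) is an isometric isomorphism we conclude \(h=0\).

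The main obstacle is the first step: checking that restriction to \(A\) commutes with \(I\). This needs the density argument to be done carefully, using \(|\1_A\bar F|^{1,\mu}_\sim\leq|\bar F|^{1,\mu}_\sim\) together with continuity of the multiplication in \(L^1_\mu\). The remaining steps are standard measure theory once the pointwise formula is available.
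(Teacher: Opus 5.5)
Your proof is correct and follows the same route as the paper. You represent \(I(f-g)\) pointwise, use Proposition \ref{prop:ptwse_formula_int_of_tilde_L1} on the countable \(\pi\)-system \({\rm P}\) to obtain a single \(\mm\)-null exceptional set, extend to all of \(\mathcal A\) by the \(\pi\)-\(\lambda\) theorem, and conclude by injectivity of \(I\). The only addition is your explicit check that \(I\) intertwines restriction to \(A\) with multiplication by \(\1_A\); the paper uses this implicitly when it applies Proposition \ref{prop:ptwse_formula_int_of_tilde_L1} to \(\int_A\).
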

\begin{proof}
Take \(\bar F,\bar G\in\tilde{\mathcal L}^1_\mu(\Omega;\mathcal L^0(\bar\Sigma_\mm))\) with \(\pi(\bar F)=I(f)\)
and \(\pi(\bar G)=I(g)\). Fix a countable family \({\rm C}\subseteq\mathcal A\) that generates \(\mathcal A\), and
define the \(\pi\)-system \({\rm P}\) as in \eqref{eq:def_pi-system_P}. Using the assumptions, the fact that \({\rm P}\)
is countable and Proposition \ref{prop:ptwse_formula_int_of_tilde_L1}, we find a set \(N\in\bar\Sigma_\mm\) with
\(\mm(N)=0\) such that
\[
\int_A\bar F(\cdot,x)\,\d\mu_x=\int_A\bar G(\cdot,x)\,\d\mu_x\quad\text{ for every }x\in\X\setminus N\text{ and }A\in{\rm P}.
\]
Arguing as in the proof of Proposition \ref{prop:unique_foliation} (and using of the Sierpi\'{n}ski--Dynkin
\(\pi\)-\(\lambda\) theorem), we deduce that \(\bar F(\cdot,x)\mu_x=\bar G(\cdot,x)\mu_x\) for every \(x\in\X\setminus N\).
This implies that \(|\bar F-\bar G|^{1,\mu}_\sim(x)=0\) for \(\mm\)-a.e.\ \(x\in\X\), so that accordingly
\(I(f)=I(g)\) and thus \(f=g\), proving the statement.
\end{proof}
\subsection{The Radon--Nikod\'{y}m theorem for \texorpdfstring{\(L^0\)}{L0}-valued measures}\label{s:Radon-Nikodym_thm}
Let us assume that
\[\begin{split}
\mu\in\mathcal P(\Omega;L^0(\mm))&\quad\text{ has a foliation }(\mu_x)_{x\in\X}\subseteq\mathcal P(\Omega),\\
\nu\in\mathcal P(\Omega;L^0(\mm))&\quad\text{ has a foliation }(\nu_x)_{x\in\X}\subseteq\mathcal P(\Omega).
\end{split}\]
Given any exponent \(p\in(1,\infty)\), we define
\begin{equation}\label{eq:def_tildecalLp}
\tilde{\mathcal L}^p_\mu(\Omega;\mathcal L^0(\bar\Sigma_\mm))\coloneqq\bigg\{F\in\mathcal L^0(\mathcal A\otimes\bar\Sigma_\mm)\;
\bigg|\;F(\cdot,x)\in\mathcal L^p(\mu_x)\text{ for }\mm\text{-a.e.\ }x\in\X\bigg\}.
\end{equation}
For any \(F\in\tilde{\mathcal L}^p_\mu(\Omega;\mathcal L^0(\bar\Sigma_\mm))\), we define \(|F|^{p,\mu}_\sim\in\mathcal L^0_+(\bar\Sigma_\mm)\) as
\[
|F|^{p,\mu}_\sim(x)\coloneqq\bigg(\int|F(\cdot,x)|^p\,\d\mu_x\bigg)^{1/p}\quad\text{ for every }x\in\X.
\]
Applying H\"{o}lder's inequality, we obtain that \(\tilde{\mathcal L}^p_\mu(\Omega;\mathcal L^0(\bar\Sigma_\mm))\subseteq
\tilde{\mathcal L}^1_\mu(\Omega;\mathcal L^0(\bar\Sigma_\mm))\) and
\begin{equation}\label{eq:ineq_p_mu_sim}
|F|^{1,\mu}_\sim\leq|F|^{p,\mu}_\sim\quad\text{ for every }F\in\tilde{\mathcal L}^p_\mu(\Omega;\mathcal L^0(\bar\Sigma_\mm)).
\end{equation}
We then define the space \(\tilde L^p_\mu(\Omega;L^0(\mm))\subseteq\tilde L^1_\mu(\Omega;L^0(\mm))\) as
\begin{equation}\label{eq:def_tildeLp}
\tilde L^p_\mu(\Omega;L^0(\mm))\coloneqq\big\{\pi(F)\;\big|\;F\in\tilde{\mathcal L}^p_\mu(\Omega;\mathcal L^0(\bar\Sigma_\mm))\big\},
\end{equation}
as well as
\begin{equation}\label{eq:def_tildeLp_norm}
|F|^{p,\mu}_\sim\coloneqq[|\bar F|^{p,\mu}_\sim]_\mm\quad\text{ for every }F=\pi(\bar F)\in\tilde L^p_\mu(\Omega;L^0(\mm)).
\end{equation}
By adapting the proof of Lemma \ref{lem:tilde_L1_complete}, or by combining the completeness of
\(\tilde L^1_\mu(\Omega;L^0(\mm))\) with \eqref{eq:ineq_p_mu_sim}, it is possible to prove that
\[
\big(\tilde L^p_\mu(\Omega;L^0(\mm)),|\cdot|^{p,\mu}_\sim\big)\quad\text{ is a complete random normed module.}
\]
It can be readily checked that \((\tilde L^2_\mu(\Omega;L^0(\mm)),|\cdot|^{2,\mu}_\sim)\)
is a Hilbertian random normed module. Moreover, if \(F=\pi(\bar F)\in\tilde L^2_\mu(\Omega;L^0(\mm))\) and
\(G=\pi(\bar G)\in\tilde L^2_\mu(\Omega;L^0(\mm))\) are given, then it holds that
\(FG\coloneqq\pi(\bar F\bar G)\in\tilde L^1_\mu(\Omega;L^0(\mm))\), and the \(L^0\)-scalar product of \(F\)
and \(G\) in \(\tilde L^2_\mu(\Omega;L^0(\mm))\) is given by
\begin{equation}\label{eq:scalar_prod_tilde_L2}
\langle F,G\rangle=\int FG\,\d\mu\in L^0(\mm).
\end{equation}
\begin{theorem}[The random Radon--Nikod\'{y}m theorem]\label{thm:RN}
Assume that \(\hat\mu\ll\hat\nu\). Then there exists an element \(\delta\in L^1_\nu(\Omega;L^0_+(\mm))\) such that
\begin{equation}\label{eq:RN_claim}
\mu(A)=\int_A\delta\,\d\nu\quad\text{ for every }A\in\mathcal A.
\end{equation}
If in addition the \(\sigma\)-algebra \(\mathcal A\) is countably generated, then the element \(\delta\) is unique
and we call it the \textbf{Radon--Nikod\'{y}m derivative} of \(\mu\) with respect to \(\nu\).
\end{theorem}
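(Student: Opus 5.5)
The plan is to apply the classical Radon--Nikod\'{y}m theorem on the product space \((\Omega\times\X,\mathcal A\otimes\Sigma)\) to \(\hat\mu\ll\hat\nu\). This gives a density \(\bar D\in\mathcal L^1_+(\hat\nu)\), measurable with respect to \(\mathcal A\otimes\Sigma\) (hence also \(\mathcal A\otimes\bar\Sigma_\mm\)), such that \(\hat\mu(R)=\int_R\bar D\,\d\hat\nu\) for every \(R\in\mathcal A\otimes\Sigma\). Modifying \(\bar D\) on a \(\hat\nu\)-null set, we may assume it is finite-valued. The candidate is \(\delta\coloneqq I^{-1}(\pi(\bar D))\), where \(I\) is the isometric isomorphism of Theorem~\ref{thm:L1_tilde_L1} associated with \(\nu\) and its foliation \((\nu_x)_{x\in\X}\).

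The first step is to check that \(\bar D\in\tilde{\mathcal L}^1_\nu(\Omega;\mathcal L^0(\bar\Sigma_\mm))\). Lemma~\ref{lem:meas_by_mon_class_thm} applied to \(\nu\) gives
\[
\int\!\!\!\int\bar D(\cdot,x)\,\d\nu_x\,\d\mm(x)=\int\bar D\,\d\hat\nu=\hat\mu(\Omega\times\X)=1<+\infty,
\]
so \(\bar D(\cdot,x)\in\mathcal L^1(\nu_x)\) for \(\mm\)-a.e.\ \(x\). Since \(\bar D\geq0\), the characterisation of \(\tilde L^1_\nu(\Omega;L^0_+(\mm))=I(L^1_\nu(\Omega;L^0_+(\mm)))\) stated after \eqref{eq:non-neg_tildeL1} gives \(\delta\in L^1_\nu(\Omega;L^0_+(\mm))\).

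The second step proves \eqref{eq:RN_claim}. Fix \(A\in\mathcal A\). Since \((\1_A^\nu\1_\X^\mm)\cdot\delta\) corresponds under \(I\) to \(\pi(\1_A\bar D)\) (checked on simple maps and then passed to the limit by continuity of the multiplication in Proposition~\ref{prop:mult_Linfty_L1}), Proposition~\ref{prop:ptwse_formula_int_of_tilde_L1} gives
\[
\Big(\int_A\delta\,\d\nu\Big)(x)=\int_A\bar D(\cdot,x)\,\d\nu_x\quad\text{for }\mm\text{-a.e.\ }x.
\]
Now fix \(E\in\Sigma\) and integrate over \(E\). Using Lemma~\ref{lem:meas_by_mon_class_thm} for \(\nu\) and then the defining property \eqref{eq:def_hat_mu} of \(\hat\mu\),
\[
\int_E\Big(\int_A\delta\,\d\nu\Big)\d\mm=\int_{A\times E}\bar D\,\d\hat\nu=\hat\mu(A\times E)=\int_E\mu(A)\,\d\mm .
\]
Both \(\int_A\delta\,\d\nu\) and \(\mu(A)\) are integrable functions (the first is dominated by \(\int\bar D(\cdot,x)\,\d\nu_x\in L^1(\mm)\)). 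Since their integrals agree on every \(E\in\Sigma\), they coincide \(\mm\)-a.e.

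Uniqueness when \(\mathcal A\) is countably generated is immediate: if \(\delta,\delta'\) both satisfy \eqref{eq:RN_claim}, then \(\int_A\delta\,\d\nu=\int_A\delta'\,\d\nu\) for all \(A\in\mathcal A\), and Corollary~\ref{cor:conseq_uniq_tilde_L1} gives \(\delta=\delta'\).

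The main obstacle is the identification of \(\int_A\delta\,\d\nu\) with the pointwise integral \(\int_A\bar D(\cdot,x)\,\d\nu_x\), i.e.\ the compatibility between \(I\) and restriction to \(A\). I would first verify it on \(I(S_{\nu,f})\), where it is immediate from the definitions. I would then extend it by density, using that multiplication by \(\1_A\) is \(1\)-Lipschitz for both \(|\cdot|^{1,\nu}\) and \(|\cdot|^{1,\nu}_\sim\). The remaining points are measurability: \(\bar D\) is jointly \(\mathcal A\otimes\Sigma\)-measurable, and Lemma~\ref{lem:meas_by_mon_class_thm} works with \(\mathcal A\otimes\bar\Sigma_\mm\), so this step should be routine.
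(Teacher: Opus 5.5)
Your proof is correct, and it takes a genuinely different route from the paper. The paper re-runs von Neumann's Hilbert-space proof inside the random framework:
\begin{itemize}
\item it represents \(F\mapsto\int F\,\d\mu\) on the Hilbertian module \(\tilde L^2_{\mu+\nu}(\Omega;L^0(\mm))\), via the random Riesz theorem, as \(\int FH\,\d(\mu+\nu)\);
\item it analyses the level sets of a representative \(\bar H\) slice by slice, using Proposition~\ref{prop:ptwse_formula_int_of_tilde_L1} for both \(\mu\) and \(\nu\);
\item it sets \(\bar D=\bar H\sum_n\1_{L^n}/(1-\bar H)\).
\end{itemize}
In the paper, the hypothesis \(\hat\mu\ll\hat\nu\) enters only to show that \(\hat\mu\) is concentrated on \(\{0\le\bar H<1\}\).

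You instead notice that \(\hat\mu\ll\hat\nu\) is exactly the hypothesis of the classical Radon--Nikod\'{y}m theorem on \(\Omega\times\X\). You take the density there, transfer it to \(L^1_\nu\) through \(I\), and verify the identity by testing against rectangles \(A\times E\), using Lemma~\ref{lem:meas_by_mon_class_thm} and \eqref{eq:def_hat_mu}.

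What your route buys:
\begin{itemize}
\item It is shorter and bypasses the random Riesz representation theorem and the spaces \(\tilde L^2\) entirely.
\item It never uses the foliation of \(\mu\), only the defining property of \(\hat\mu\).
\item It shows a posteriori that \(x\mapsto\bar D(\cdot,x)\,\nu_x\) is a foliation of \(\mu\), so you actually prove a slightly stronger statement.
\end{itemize}

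What the paper's route buys: it stays intrinsic to the random-module machinery. It also obtains \(\mu_x(A\cap L_x)=\int_A\bar D(\cdot,x)\,\d\nu_x\) before the absolute-continuity hypothesis is used, which is a slice-wise Lebesgue-type decomposition.

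The compatibility of \(I\) with restriction to \(A\), which you flag and justify by density, is also used in the last display of the paper's proof, where it is left implicit; your justification fills that in.
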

\begin{proof}
First of all, we define the operator \(T\colon\tilde L^2_{\mu+\nu}(\Omega;L^0(\mm))\to L^0(\mm)\) as
\[
T(F)\coloneqq\int F\,\d\mu\quad\text{ for every }F\in\tilde L^2_{\mu+\nu}(\Omega;L^0(\mm)).
\]
Note that \(T\) is \(L^0(\mm)\)-linear and \(|T(F)|\leq\int|F|\,\d\mu\leq|F|^{1,\mu+\nu}_\sim\)
for every \(F\in\tilde L^2_{\mu+\nu}(\Omega;L^0(\mm))\), so that \(T\in\tilde L^2_{\mu+\nu}(\Omega;L^0(\mm))^*\). Applying the
random Riesz representation theorem and recalling \eqref{eq:scalar_prod_tilde_L2}, we deduce that there exists a unique element
\(H\in\tilde L^2_{\mu+\nu}(\Omega;L^0(\mm))\) such that
\[
T(F)=\int FH\,\d(\mu+\nu)\quad\text{ for every }F\in\tilde L^2_{\mu+\nu}(\Omega;L^0(\mm)).
\]
It follows that
\begin{equation}\label{eq:char_RN_pre-deriv}
\int F(1-H)\,\d\mu=\int FH\,\d\nu\quad\text{ for every }F\in\tilde L^2_{\mu+\nu}(\Omega;L^0(\mm)).
\end{equation}
Next, fix a representative \(\bar H\in\tilde{\mathcal L}^2_{\mu+\nu}(\Omega;\mathcal L^0(\bar\Sigma_\mm))\) of \(H\).
We consider the sets \(N,B,U\in\mathcal A\otimes\bar\Sigma_\mm\), which we define as
\[\begin{split}
N&\coloneqq\big\{(p,x)\in\Omega\times\X\;\big|\;\bar H(p,x)<0\big\},\\
B&\coloneqq\big\{(p,x)\in\Omega\times\X\;\big|\;\bar H(p,x)>1\big\},\\
U&\coloneqq\big\{(p,x)\in\Omega\times\X\;\big|\;\bar H(p,x)=1\big\}.
\end{split}\]
We denote \(N_x\coloneqq\{p\in\Omega:(p,x)\in N\}\in\mathcal A\) for every \(x\in\X\), and similarly for \(B_x\) and \(U_x\).
Choosing \(F\coloneqq\pi(\1_N)\) in \eqref{eq:char_RN_pre-deriv} and applying Proposition \ref{prop:ptwse_formula_int_of_tilde_L1},
we obtain that for \(\mm\)-a.e.\ \(x\in\X\) it holds that
\[
\int_{\{\bar H(\cdot,x)<0\}}\bar H(\cdot,x)\,\d\nu_x=\int\1_{N_x}\bar H(\cdot,x)\,\d\nu_x
=\int\1_N(\cdot,x)(1-\bar H(\cdot,x))\,\d\mu_x\geq 0,
\]
which implies that \(\nu_x(N_x)=0\) for \(\mm\)-a.e.\ \(x\in\X\). Therefore, Lemma \ref{lem:meas_by_mon_class_thm} ensures that
\[
\hat\nu(N)=\int\1_N\,\d\hat\nu=\int\!\!\!\int\1_N(\cdot,x)\,\d\nu_x\,\d\mm(x)=\int\nu_x(N_x)\,\d\mm(x)=0,
\]
thus \(\hat\mu(N)=0\). Similar arguments give \(\hat\nu(U)=0\) (thus \(\hat\mu(U)=0\)) and \(\hat\mu(B)=0\). Hence,
\[
0\leq\bar H<1\quad\text{ holds }\hat\mu\text{-a.e.\ on }\Omega\times\X.
\]
In other words, \(L\coloneqq\bigsqcup_{n\in\N}L^n\) satisfies \(\hat\mu((\Omega\times\X)\setminus L)=0\), where we define
\[
L^n\coloneqq\bigg\{(p,x)\in\Omega\times\X\;\bigg|\;1-\frac{1}{n}\leq\bar H(p,x)<1-\frac{1}{n+1}\bigg\}\quad\text{ for every }n\in\N.
\]
We denote \(L^n_x\coloneqq\{p\in\Omega:(p,x)\in L^n\}\) and \(L_x\coloneqq\{p\in\Omega:(p,x)\in L\}\) for every \(x\in\X\);
observe that \(L_x=\bigsqcup_{n\in\N}L^n_x\). We also define \(\bar D\coloneqq\bar H\sum_{n\in\N}\bar D_n\colon\Omega\times\X\to[0,+\infty]\), where
\[
\bar D_n\coloneqq\1_{L^n}\frac{1}{1-\bar H}\in\mathcal L^\infty_+(\mathcal A\otimes\bar\Sigma_\mm)\quad\text{ for every }n\in\N.
\]
For any \(A\in\mathcal A\), we can plug \(F=\pi(\1_{A\times\X}\bar D_n)\) in \eqref{eq:char_RN_pre-deriv},
thus (by Proposition \ref{prop:ptwse_formula_int_of_tilde_L1}) we obtain
\[
\mu_x(A\cap L^n_x)=\int_A\bar D_n(\cdot,x)(1-\bar H(\cdot,x))\,\d\mu_x=\int_A\bar D_n(\cdot,x)\bar H(\cdot,x)\,\d\nu_x
\quad\text{ for }\mm\text{-a.e.\ }x\in\X.
\]
Summing over \(n\in\N\) and applying the monotone convergence theorem, we deduce that
\begin{equation}\label{eq:RN_aux_form}
\mu_x(A\cap L_x)=\sum_{n\in\N}\int_A\bar D_n(\cdot,x)\bar H(\cdot,x)\,\d\nu_x
=\int_A\bar D(\cdot,x)\,\d\nu_x\quad\text{ for }\mm\text{-a.e.\ }x\in\X.
\end{equation}
Choosing \(A=\Omega\), we infer that \(\bar D\in\tilde{\mathcal L}^1_\nu(\Omega;\mathcal L^0(\bar\Sigma_\mm))\).
Moreover, it follows from the identities
\[
\int\mu_x(A\cap L_x)\,\d\mm(x)=\hat\mu((A\times\X)\cap L)=\hat\mu(A\times\X)=\int\mu_x(A)\,\d\mm(x)
\]
that \(\mu_x(A\cap L_x)=\mu_x(A)\) for \(\mm\)-a.e.\ \(x\in\X\), which -- in combination with \eqref{eq:RN_aux_form} -- implies that
\[
\mu(A)(x)=\mu_x(A\cap L_x)=\int_A\bar D(\cdot,x)\,\d\nu_x=\bigg(\int_A\pi(\bar D)\,\d\nu\bigg)(x)
\quad\text{ for }\mm\text{-a.e.\ }x\in\X.
\]
Letting \(\delta\coloneqq I^{-1}(\pi(\bar D))\in L^1_\nu(\Omega;L^0_+(\mm))\), where
\(I\colon L^1_\nu(\Omega;L^0(\mm))\to\tilde L^1_\nu(\Omega;L^0(\mm))\) is the isometric isomorphism given by
Theorem \ref{thm:L1_tilde_L1}, we conclude that \eqref{eq:RN_claim} holds. Finally, the uniqueness part of the
statement follows from Corollary \ref{cor:conseq_uniq_tilde_L1}.
\end{proof}

It is not clear to us whether Theorem \ref{thm:RN} holds under the weaker assumption that \(\mu\ll\nu\).
\section{The random Riesz--Markov--Kakutani theorem}\label{s:RMK}
In this section, we prove a version of the \emph{Riesz--Markov--Kakutani theorem} for random normed modules
(see Theorem \ref{thm:RMK}).
Namely, given a compact Hausdorff space \(\Omega=(\Omega,\tau)\), we show that an isometric predual (in the sense
of random normed modules) of \(\mathfrak M(\Omega;L^0(\mm))\) is the space of all \emph{uniformly-order-continuous maps}
from \(\Omega\) to \(L^0(\mm)\), as well as several related results; as before, \((\X,\Sigma,\mm)\) is a given probability
space. We begin with several auxiliary definitions and results.
\subsection{Uniformly-order-continuous maps}\label{s:UC_ord}
Let \(({\rm M},|\cdot|)\) be a complete random normed module with base \((\X,\Sigma,\mm)\).
We say that \(v\colon\Omega\to{\rm M}\) is \textbf{order bounded} if \(\{|v_p|:p\in\Omega\}\subseteq L^0_+(\mm)\)
is order bounded, or equivalently if it holds that \(|v|^\infty\in L^0_+(\mm)\), where we set
\[
|v|^\infty\coloneqq\bigvee_{p\in\Omega}|v_p|\in\bar L^0_+(\mm).
\]
Note that, given any \(\mu\in\mathcal M(\Omega;L^0(\mm))\), we have that
\[
|v|^{\infty,\mu}=\bigwedge\big\{|\bar v|^\infty\;\big|\;\bar v\in\mathcal L^0(\Omega;{\rm M})\text{ is a representative of }v\big\}
\quad\text{ for every }v\in L^0_\mu(\Omega;{\rm M}).
\]
Moreover, by adapting the arguments in the proof of Lemma \ref{lem:big_space_bdd_maps}, it can be readily checked that
\begin{equation}\label{eq:big_space_bdd_maps_non-quot}
\big(\{v\in\mathcal L^0(\Omega;{\rm M}):|v|^\infty\in\mathcal L^\infty_+(\mm)\},|\cdot|^\infty\big)\quad\text{ is a complete random normed module.}
\end{equation}
The following definition is taken from \cite[Definition 3.13]{Pas23}:
\begin{definition}[Uniformly-order-continuous map]\label{def:UC_ord}
Let \((\Omega,\Phi)\) be a uniform space, \(v\colon\Omega\to{\rm M}\) an order-bounded map and
\(\mathcal U\in\Phi\). Then we define the \textbf{variation} of \(v\) on \(\mathcal U\) as
\[
{\rm Var}(v;\mathcal U)\coloneqq\bigvee_{(p,q)\in\mathcal U}|v_p-v_q|\in L^0_+(\mm).
\]
Then we say that \(v\colon\Omega\to{\rm M}\) is \textbf{uniformly order continuous} if it satisfies
\[
\bigwedge_{\mathcal U\in\Phi}{\rm Var}(v;\mathcal U)=0.
\]
We denote by \({\rm UC}_{\rm ord}(\Omega;{\rm M})\) the space of all uniformly-order-continuous
maps from \(\Omega\) to \({\rm M}\).
\end{definition}

It was proved in \cite[Lemma 3.14]{Pas23} that
\[
({\rm UC}_{\rm ord}(\Omega;{\rm M}),|\cdot|^\infty)\quad\text{ is a complete random normed module.}
\]
Note that \({\rm UC}_{\rm ord}(\Omega;{\rm M})\) is an \(L^0(\mm)\)-submodule of the random normed module
that we have considered in \eqref{eq:big_space_bdd_maps_non-quot}. As it follows from \cite[Eq.\ (3.13)]{Pas23}, it holds that
\begin{equation}\label{eq:conseq_UC_d_M}
v\colon(\Omega,\Phi)\to({\rm M},\sfd_{\rm M})\quad\text{ is uniformly continuous for every }v\in{\rm UC}_{\rm ord}(\Omega;{\rm M}),
\end{equation}
so that \({\rm UC}_{\rm ord}(\Omega;{\rm M})\subseteq\mathcal L^0(\Omega;{\rm M})\). It then makes
sense to define \({\rm UC}_{{\rm ord},\mu}(\Omega;{\rm M})\subseteq L^0_\mu(\Omega;{\rm M})\) as the space of all equivalence classes of elements of
\({\rm UC}_{\rm ord}(\Omega;{\rm M})\) with respect to \(\sim_\mu\).
\begin{remark}\label{rmk:two_unif_struct}{\rm
Besides the uniform structure \(\Phi_{\rm M}\) associated to the \((\varepsilon,\lambda)\)-topology
\(\mathcal T_{\varepsilon,\lambda}\) (cf.\ Remark \ref{remarkainounif}),
another uniform structure on random normed modules that has been studied -- starting from Filipovi\'{c}--Kupper--Vogelpoth's paper \cite{FilKupVog09} -- is the one inducing
the \emph{locally \(L^0\)-convex topology} \(\mathcal T_c\), which we are going to recall. Letting
\[
L^0_{++}(\mm)\coloneqq\big\{\epsilon\in L^0_+(\mm)\;\big|\;\epsilon(x)>0\text{ for }\mm\text{-a.e.\ }x\in\X\big\},
\]
we denote by \(\Phi_{\rm M}^c\) the uniformity on \({\rm M}\) that is generated by \(\{\mathcal V(\epsilon):\epsilon\in L^0_{++}(\mm)\}\), where
\[
\mathcal V(\epsilon)\coloneqq\big\{(v,w)\in{\rm M}\times{\rm M}\;\big|\;|v-w|<\epsilon\big\}\quad\text{ for every }\epsilon\in L^0_{++}(\mm).
\]
The topology \(\mathcal T_c\) is typically finer than the \((\varepsilon,\lambda)\)-topology.
Nonetheless, is was proved by Guo in \cite{Guo10} that a random locally convex module
with the \emph{countable concatenation property} (in particular, a random normed module) is
\(\mathcal T_{\varepsilon,\lambda}\)-complete if and only if it is \(\mathcal T_c\)-complete.
\fr}\end{remark}

Next, we compare different notions of uniform continuity for maps from \(\Omega\) to \({\rm M}\):
\begin{proposition}\label{prop:impl_UC}
Let \((\Omega,\Phi)\) be a uniform space. Let \(v\colon\Omega\to{\rm M}\) be an order-bounded map. Then:
\begin{itemize}
\item[\(\rm i)\)] If \(v\in{\rm UC}_{\rm ord}(\Omega;{\rm M})\), then \(v\colon(\Omega,\Phi)\to({\rm M},\Phi_{\rm M})\)
is uniformly continuous.
\item[\(\rm ii)\)] If \(v\colon(\Omega,\Phi)\to({\rm M},\Phi_{\rm M}^c)\) is uniformly continuous, then
\(v\in{\rm UC}_{\rm ord}(\Omega;{\rm M})\).
\end{itemize}
\end{proposition}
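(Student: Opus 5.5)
For i), the plan is to start from a fundamental entourage \(\mathcal U(\varepsilon,\lambda)\) of \(\Phi_{\rm M}\) and produce \(\mathcal U\in\Phi\) such that \((p,q)\in\mathcal U\) implies \(\mm(\{|v_p-v_q|\geq\varepsilon\})<\lambda\). The family \(\{{\rm Var}(v;\mathcal U):\mathcal U\in\Phi\}\) is downward directed, since \({\rm Var}(v;\mathcal U\cap\mathcal U')\leq{\rm Var}(v;\mathcal U)\wedge{\rm Var}(v;\mathcal U')\). Its infimum is \(0\). By the countable inf property of \(L^0(\mm)\), there are \(\mathcal U_n\in\Phi\) with \(\bigwedge_n{\rm Var}(v;\mathcal U_n)=0\). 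Replacing \(\mathcal U_n\) by \(\mathcal U_1\cap\ldots\cap\mathcal U_n\) makes the sequence \({\rm Var}(v;\mathcal U_n)\) non-increasing, so it converges to \(0\) \(\mm\)-a.e. Everything here is finite, because \({\rm Var}(v;\mathcal U_1)\leq 2|v|^\infty\in L^0_+(\mm)\). Hence \({\rm Var}(v;\mathcal U_n)\to 0\) in \(\mm\)-measure. Choose \(n\) with \(\mm(\{{\rm Var}(v;\mathcal U_n)\geq\varepsilon\})<\lambda\). For \((p,q)\in\mathcal U_n\) we have \(|v_p-v_q|\leq{\rm Var}(v;\mathcal U_n)\), so \((v_p,v_q)\in\mathcal U(\varepsilon,\lambda)\). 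The only delicate point is passing from an order infimum over an uncountable family to convergence in measure, and the countable inf property together with directedness handles it. This is essentially the content of \eqref{eq:conseq_UC_d_M}.

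For ii), fix \(\epsilon\in L^0_{++}(\mm)\). Uniform continuity gives \(\mathcal U\in\Phi\) with \(|v_p-v_q|<\epsilon\) for all \((p,q)\in\mathcal U\), hence \({\rm Var}(v;\mathcal U)\leq\epsilon\). The variation is an element of \(L^0_+(\mm)\) because \(v\) is order bounded. Therefore \(\bigwedge_{\mathcal U\in\Phi}{\rm Var}(v;\mathcal U)\leq\epsilon\) for every \(\epsilon\in L^0_{++}(\mm)\). Taking \(\epsilon=\frac1k\1_\X^\mm\) and letting \(k\to\infty\) shows the infimum is \(0\), so \(v\in{\rm UC}_{\rm ord}(\Omega;{\rm M})\). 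Here the main subtlety is only checking that the strict \(\mm\)-a.e. inequality passes to the supremum as \(\leq\), which holds since suprema in \(\bar L^0(\mm)\) preserve the a.e. bound.
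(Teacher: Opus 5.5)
Your proof is correct. Part ii) follows the paper's argument exactly: uniform continuity into \(({\rm M},\Phi_{\rm M}^c)\), applied to \(\epsilon=\varepsilon\1_\X^\mm\in L^0_{++}(\mm)\), gives \({\rm Var}(v;\mathcal U_\varepsilon)\leq\varepsilon\1_\X^\mm\), and then one lets \(\varepsilon\to 0\).

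Part i) takes a different route. The paper uses Remark \ref{remarkainounif} to replace \(\Phi_{\rm M}\) by the metric \(\sfd_{\rm M}\). It then simply invokes \eqref{eq:conseq_UC_d_M}, which is imported from \cite[Eq.\ (3.13)]{Pas23} without proof. You instead prove the needed fact from scratch, working directly with the fundamental entourages \(\mathcal U(\varepsilon,\lambda)\) and never passing through \(\sfd_{\rm M}\). Your ingredients are:
\begin{itemize}
\item downward directedness of \(\mathcal U\mapsto{\rm Var}(v;\mathcal U)\);
\item the countable inf property of \(L^0(\mm)\);
\item the bound \({\rm Var}(v;\mathcal U)\leq 2|v|^\infty\).
\end{itemize}
Together these produce a non-increasing sequence \({\rm Var}(v;\mathcal U_n)\) that converges to \(0\) \(\mm\)-a.e., and hence in measure.

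The paper's version is shorter but rests on an external reference. Yours is self-contained and makes explicit which lattice properties of \(L^0(\mm)\) are actually used. It also yields, as a by-product, that \(\sfd_{L^0(\mm)}({\rm Var}(v;\mathcal U_n),0)\to 0\) for a suitable sequence of entourages. This is precisely the fact the paper uses without comment at the start of the proof of Lemma \ref{lem:UC_is_L1}.
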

\begin{proof}
Taking Remark \ref{remarkainounif} into account, we have that \(v\colon(\Omega,\Phi)\to({\rm M},\Phi_{\rm M})\) is uniformly continuous if and only if
\(v\colon(\Omega,\Phi)\to({\rm M},\sfd_{\rm M})\) is uniformly continuous, thus accordingly i) follows from \eqref{eq:conseq_UC_d_M}.
Let us now prove ii). Assume \(v\colon(\Omega,\Phi)\to({\rm M},\Phi_{\rm M}^c)\) is uniformly continuous and fix \(\varepsilon>0\).
Since \(\varepsilon\1_\X^\mm\in L^0_{++}(\mm)\), there is \(\mathcal U_\varepsilon\in\Phi\) such that \(\{(v_p,v_q):(p,q)\in\mathcal U_\varepsilon\}\subseteq\mathcal V(\varepsilon\1_\X^\mm)\),
which implies that \({\rm Var}(v;\mathcal U_\varepsilon)=\bigvee_{(p,q)\in\mathcal U_\varepsilon}|v_p-v_q|\leq\varepsilon\1_\X^\mm\). Therefore, we conclude that
\[
\bigwedge_{\mathcal U\in\Phi}{\rm Var}(v;\mathcal U)\leq\bigwedge_{\varepsilon>0}{\rm Var}(v;\mathcal U_\varepsilon)\leq\bigwedge_{\varepsilon>0}\varepsilon\1_\X^\mm=0,
\]
so that \(\bigwedge_{\mathcal U\in\Phi}{\rm Var}(v;\mathcal U)=0\), which shows that \(v\in{\rm UC}_{\rm ord}(\Omega;{\rm M})\). This proves ii).
\end{proof}

Interestingly, the uniform order continuity is in general strictly stronger than the uniform continuity with respect to \(\Phi_{\rm M}\),
but also strictly weaker than the uniform continuity with respect to \(\Phi_{\rm M}^c\), as it follows from the two examples that we present below.
In fact, we do not know whether the uniform order continuity can be characterised as an actual uniform continuity with respect to some uniformity
on \({\rm M}\) that is `intermediate' between \(\Phi_{\rm M}\) and \(\Phi_{\rm M}^c\).
\begin{example}\label{ex:UC_1}{\rm
We now construct an example of an order-bounded uniformly-continuous map
\(f\colon([0,1],\sfd_{\rm e})\to(L^0(\mathscr L_1),\Phi_{L^0(\mathscr L_1)})\),
where \([0,1]\) is equipped with the Euclidean distance \(\sfd_{\rm e}\) and \(\mathscr L_1\) denotes the Lebesgue measure
restricted to \([0,1]\), that does not belong to \({\rm UC}_{\rm ord}([0,1];L^0(\mathscr L_1))\).

Let \(Q=[a,a+2\ell]\times[b,b+2\ell]\) be a subcube of \([0,1]^2\). We define \(\Lambda_Q\colon[0,1]^2\to[0,1]\) as
\[
\Lambda_Q(p,x)\coloneqq\left\{\begin{array}{lll}
(p-a)/\ell\\
1-(p-a-\ell)/\ell\\
0\\
\end{array}\quad\begin{array}{lll}
\text{ if }(p,x)\in(a,a+\ell]\times(b,b+2\ell),\\
\text{ if }(p,x)\in(a+\ell,a+2\ell)\times(b,b+2\ell),\\
\text{ otherwise.}
\end{array}\right.
\]
Moreover, we subdivide any such cube \(Q\) into four subcubes in the following way:
\[\begin{split}
Q_1\coloneqq[a,a+\ell]\times[b,b+\ell],&\qquad Q_2\coloneqq[a+\ell,a+2\ell]\times[b,b+\ell],\\
Q_3\coloneqq[a+\ell,a+2\ell]\times[b+\ell,b+2\ell],&\qquad Q_4\coloneqq[a,a+\ell]\times[b+\ell,b+2\ell].
\end{split}\]
Finally, letting \(\mathcal Q_n\coloneqq\{Q_{i_1\ldots i_n}:(i_1,\ldots,i_n)\in\{2,4\}^{n-1}\times\{1,3\}\}\)
for every \(n\in\N\), we define
\[
\Lambda\coloneqq\sum_{n\in\N}\Lambda^n\colon[0,1]^2\to[0,1],
\quad\text{ where we set }\Lambda^n\coloneqq\sum_{Q\in\mathcal Q_n}\Lambda_Q.
\]
\begin{center}\begin{tikzpicture}
  \begin{axis}[title={\emph{Graph of \(\Lambda^1+\Lambda^2\):}},colormap/blackwhite,width=10cm,height=8cm,scale only axis]
    \addplot3[domain=0:0.125,y domain=0.75:1,surf]{0};  
    \addplot3[domain=0:0.125,y domain=0.5:0.75,surf]{8*x};
    \addplot3[domain=0.125:0.25,y domain=0.5:0.75,surf]{2-8*x};
    \addplot3[domain=0.25:0.375,y domain=0.75:1,surf]{-2+8*x};
    \addplot3[domain=0.375:0.5,y domain=0.75:1,surf]{4-8*x};
    \addplot3[domain=0.25:0.5,y domain=0.5:0.75,surf]{0};  
    
    \addplot3[domain=0:0.25,y domain=0:0.5,surf]{4*x};
    \addplot3[domain=0.25:0.5,y domain=0:0.5,surf]{2-4*x};
    \addplot3[domain=0.5:0.75,y domain=0.5:1,surf]{-2+4*x};
    \addplot3[domain=0.75:1,y domain=0.5:1,surf]{4-4*x};
    
    \addplot3[domain=0.5:0.75,y domain=0.25:0.5,surf]{0};  
    \addplot3[domain=0.5:0.625,y domain=0:0.25,surf]{-4+8*x};
    \addplot3[domain=0.625:0.75,y domain=0:0.25,surf]{6-8*x};
    \addplot3[domain=0.75:0.875,y domain=0.25:0.5,surf]{-6+8*x};
    \addplot3[domain=0.875:1,y domain=0.25:0.5,surf]{8-8*x};
    \addplot3[domain=0.75:1,y domain=0:0.25,surf]{0};  
  \end{axis}
\end{tikzpicture}\end{center}
\medskip

We then define the order-bounded map \(f\colon[0,1]\to L^0(\mathscr L_1)\) as
\[
f_p\coloneqq[\Lambda(p,\cdot)]_{\mathscr L_1}\in L^0(\mathscr L_1)\quad\text{ for every }p\in[0,1].
\]
We claim that \(f\colon([0,1],\sfd_{\rm e})\to(L^0(\mathscr L_1),\Phi_{L^0(\mathscr L_1)})\) is uniformly
continuous. To prove it, let us denote \(G_n\coloneqq\bigcup_{i=1}^n\bigcup_{Q\in\mathcal Q_i}Q\) for all \(n\in\N\).
Note that if \(0<r\leq 2^{-n}\) and \(a\in[0,1-r]\) are given, then there exists a closed set \(P_{n,r,a}\subseteq[0,1]\)
with \(\mathscr L_1(P_{n,r,a})=1-2^{-n+1}\) and \([a,a+r]\times P_{n,r,a}\subseteq G_n\). Now, fix any \(\varepsilon>0\)
and \(\lambda\in(0,1)\). Choose first \(n\in\N\) so that \(2^{-n+1}<\lambda\), and then \(r\in(0,2^{-n})\) so that
\(2^{n+1}r<\varepsilon\). Note that the function \(\Lambda(\cdot,x)|_{G_n^x}\) is \(2^{n+1}\)-Lipschitz for every \(x\in[0,1]\),
where we set \(G_n^x\coloneqq\{p\in[0,1]:(p,x)\in G_n\}\). Hence, for every \(a\in[0,1-r]\) we can estimate
\[\begin{split}
\sup_{p,q\in[a,a+r]}\mathscr L_1(\{|f_p-f_q|\geq\varepsilon\})&=1-\inf_{p,q\in[a,a+r]}\mathscr L_1(\{|f_p-f_q|<\varepsilon\})\\
&\leq 1-\mathscr L_1(P_{n,r,a})=\frac{1}{2^{n-1}}<\lambda.
\end{split}\]
This gives \(\{(f_p,f_q):p,q\in[0,1],|p-q|\leq r\}\subseteq\mathcal U(\varepsilon,\lambda)\), thus
\(f\colon([0,1],\sfd_{\rm e})\to(L^0(\mathscr L_1),\Phi_{L^0(\mathscr L_1)})\) is uniformly continuous
by the arbitrariness of \(\varepsilon\), \(\lambda\). We claim that
\(f\notin{\rm UC}_{\rm ord}([0,1];L^0(\mathscr L_1))\). For any \(n\in\N\) and \(i=1,\ldots,2^n\), we have
\(\bigvee_{p,q\in[(i-1)2^{-n},i2^{-n}]}|f_p-f_q|\geq\1_{[1-i 2^{-n},1-(i-1)2^{-n}]}^{\mathscr L_1}\).
Letting \(B_r(\cdot)\coloneqq\{(p,q)\in[0,1]^2:|p-q|<r\}\) for every \(r\in(0,1)\), we thus have that
\[
{\rm Var}(f;B_{2^{-n}}(\cdot))\geq\bigvee_{i=1}^{2^n}\1_{[1-i 2^{-n},1-(i-1)2^{-n}]}^{\mathscr L_1}=\1_{[0,1]}^{\mathscr L_1},
\]
yielding \(\bigwedge_{r>0}{\rm Var}(f;B_r(\cdot))=\1_{[0,1]}^{\mathscr L_1}\neq 0\). Hence, \(f\) is not uniformly order continuous.
\fr}\end{example}
\begin{example}\label{ex:UC_2}{\rm
Next, we construct an example of a map \(g\in{\rm UC}_{\rm ord}([0,1];L^0(\mathscr L_1))\) such that
\(g\colon([0,1],\sfd_{\rm e})\to(L^0(\mathscr L_1),\Phi_{L^0(\mathscr L_1)}^c)\) is not uniformly continuous,
with \(\sfd_{\rm e}\), \(\mathscr L_1\) as in Example \ref{ex:UC_1}.
Borrowing the notation from Example \ref{ex:UC_1}, we define the function \(\Theta\colon[0,1]^2\to[0,1]\) as
\[
\Theta\coloneqq\Lambda_{Q_1}+\Lambda_{Q_{31}}+\Lambda_{Q_{331}}+\Lambda_{Q_{3331}}+\ldots
\]
and the order-bounded map \(g\colon[0,1]\to L^0(\mathscr L_1)\) as \(g_p\coloneqq[\Theta(p,\cdot)]_{\mathscr L_1}\)
for every \(p\in[0,1]\).
\medskip

\begin{center}\begin{tikzpicture}
  \begin{axis}[title={\emph{Graph of \(\Lambda_{Q_1}+\Lambda_{Q_{31}}+\Lambda_{Q_{331}}+\Lambda_{Q_{3331}}\):}},
  colormap/blackwhite,width=10cm,height=8.5cm,scale only axis]
    \addplot3[domain=0:1/2,y domain=1/2:1,surf]{0};
    \addplot3[domain=0:1/4,y domain=0:1/2,surf]{4*x};
    \addplot3[domain=1/4:1/2,y domain=0:1/2,surf]{2-4*x};

    \addplot3[domain=1/2:3/4,y domain=3/4:1,surf]{0};
    \addplot3[domain=1/2:5/8,y domain=1/2:3/4,surf]{-4+8*x};
    \addplot3[domain=5/8:3/4,y domain=1/2:3/4,surf]{6-8*x};

    \addplot3[domain=3/4:7/8,y domain=7/8:1,surf]{0};
    \addplot3[domain=3/4:13/16,y domain=3/4:7/8,surf]{-12+16*x};
    \addplot3[domain=13/16:7/8,y domain=3/4:7/8,surf]{14-16*x};

    \addplot3[domain=7/8:15/16,y domain=15/16:1,surf]{0};
    \addplot3[domain=7/8:29/32,y domain=7/8:15/16,surf]{-28+32*x};
    \addplot3[domain=29/32:15/16,y domain=7/8:15/16,surf]{30-32*x};

    \addplot3[domain=1/2:3/4,y domain=0:1/2,surf]{0};
    \addplot3[domain=3/4:7/8,y domain=0:3/4,surf]{0};
    \addplot3[domain=7/8:15/16,y domain=0:7/8,surf]{0};
    \addplot3[domain=15/16:1,y domain=0:1,surf]{0};
        
  \end{axis}
\end{tikzpicture}\end{center}
\medskip

We claim that \(g\in{\rm UC}_{\rm ord}([0,1];L^0(\mathscr L_1))\). Indeed, it can be readily checked that
\[
{\rm Var}(g;B_{2^{-n}}(\cdot))=\1_{(1-2^{-n+1},1)}^{\mathscr L_1}+\sum_{1\leq i<n}2^{i-n+1}\1_{(1-2^{-i+1},1-2^{-i})}^{\mathscr L_1}
\quad\text{ for every }n\geq 2,
\]
thus \({\rm Var}(g;B_{2^{-2m}}(\cdot))\leq\1_{(1-2^{-m},1)}^{\mathscr L_1}+2^{-m+1}\1_{(0,1-2^{-m})}^{\mathscr L_1}\)
for every \(m\in\N\), which implies that
\[
\bigwedge_{r>0}{\rm Var}(g;B_r(\cdot))\leq\bigwedge_{m\in\N}{\rm Var}(g;B_{2^{-2m}}(\cdot))
\leq\bigwedge_{m\in\N}\big(\1_{(1-2^{-m},1)}^{\mathscr L_1}+2^{-m+1}\1_{(0,1-2^{-m})}^{\mathscr L_1}\big)=0.
\]
This yields \(g\in{\rm UC}_{\rm ord}([0,1];L^0(\mathscr L_1))\). Finally, we claim that
\(g\colon([0,1],\sfd_{\rm e})\to(L^0(\mathscr L_1),\Phi_{L^0(\mathscr L_1)}^c)\) is not uniformly continuous
(in fact, it is not even continuous at \(p=1\)). Indeed, we have that \(g_1=0\) and
\[
g_{1-2^{-2n-1}}=\1_{(1-2^{-n+1},1-2^{-n})}^{\mathscr L_1}\quad\text{ for every }n\in\N,
\]
thus in particular \(\mathscr L_1(\{|g_{1-2^{-2n-1}}-g_1|>2^{-1}\1_{[0,1]}^{\mathscr L_1}\})>0\) for every \(n\in\N\).
This shows that the map \(g\colon([0,1],\sfd_{\rm e})\to(L^0(\mathscr L_1),\Phi_{L^0(\mathscr L_1)}^c)\) is not continuous
at \(1\), as we claimed.
\fr}\end{example}
\begin{lemma}\label{lem:UC_is_L1}
Let \((\Omega,\Phi)\) be a uniform space whose induced topology is Lindel\"{o}f. Then 
\[
{\rm UC}_{{\rm ord},\mu}(\Omega;{\rm M})\subseteq L^\infty_\mu(\Omega;{\rm M})
\quad\text{ for every }\mu\in\mathcal M(\Omega;L^0(\mm)),
\]
where \(L^\infty_\mu(\Omega;{\rm M})\) is defined as in \eqref{defLinfmuOmM}.
In particular, each \(v\in{\rm UC}_{{\rm ord},\mu}(\Omega;{\rm M})\) is \(\mu\)-integrable.
\end{lemma}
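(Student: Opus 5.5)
Given \(v\in{\rm UC}_{\rm ord}(\Omega;{\rm M})\), I will approximate it in \(|\cdot|^{\infty,\mu}\) by elements of \(S^\infty_\mu(\Omega;{\rm M})\). Since \(L^\infty_\mu(\Omega;{\rm M})\) is by definition the closure of \(S^\infty_\mu(\Omega;{\rm M})\) in the complete module of Lemma \ref{lem:big_space_bdd_maps}, this shows that the class of \(v\) belongs to it. First, \(v\) is measurable by \eqref{eq:conseq_UC_d_M}, and \(|v|^{\infty,\mu}\leq|v|^\infty\in L^0_+(\mm)\), so \(v\) lies in the big module. The integrability claim then follows at once from Proposition \ref{prop:L_infty_in_L_1}.

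\textbf{Approximation step.} Fix an entourage \(\mathcal U\in\Phi\), and choose a symmetric \(\mathcal W\in\Phi\) with \(\mathcal W\subseteq\mathcal U\). The neighbourhoods \(\mathcal W[p]\), \(p\in\Omega\), cover \(\Omega\). By the Lindel\"{o}f property (after passing to interiors, which are still neighbourhoods), countably many of them, say \(\mathcal W[p_n]\) for \(n\in\N\), cover \(\Omega\). I then disjointify:
\[
A_1\coloneqq{\rm int}\,\mathcal W[p_1],\qquad A_n\coloneqq{\rm int}\,\mathcal W[p_n]\setminus\bigcup_{k<n}A_k .
\]
These sets are Borel, hence in \(\mathcal A\); here I assume \(\mathcal A\) contains the Borel sets, or more generally that the sets \(\mathcal W[p]\) can be chosen measurable. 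Define the simple map \(w\coloneqq\sum_n\1_{A_n}v_{p_n}\). Its norm satisfies \(|w|^\infty\leq|v|^\infty\), so \(w\in S^\infty_\mu(\Omega;{\rm M})\). For \(p\in A_n\) we have \((p_n,p)\in\mathcal W\subseteq\mathcal U\), which gives
\[
|v_p-w_p|=|v_p-v_{p_n}|\leq{\rm Var}(v;\mathcal U),
\]
and therefore \(|v-w|^{\infty,\mu}\leq|v-w|^\infty\leq{\rm Var}(v;\mathcal U)\).

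\textbf{Convergence step (main obstacle).} From \(\bigwedge_{\mathcal U}{\rm Var}(v;\mathcal U)=0\) I must extract a sequence converging to \(0\) in \(L^0(\mm)\). By the countable inf property there are \(\mathcal U_k\in\Phi\) with \(\bigwedge_k{\rm Var}(v;\mathcal U_k)=0\). Replacing \(\mathcal U_k\) by \(\mathcal U_1\cap\ldots\cap\mathcal U_k\), which is still an entourage, makes the variations nonincreasing in \(k\). They are also bounded by \(2|v|^\infty\in L^0_+(\mm)\). A nonincreasing sequence with infimum \(0\) decreases to \(0\) \(\mm\)-a.e., hence in \(L^0(\mm)\). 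Taking the corresponding simple maps \(w^k\), we get \(|v-w^k|^{\infty,\mu}\to0\) in \(L^0(\mm)\), which concludes the proof.

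The delicate points are the measurability of the pieces \(A_n\) and this monotonization of the variations.
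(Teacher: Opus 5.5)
Your proof is correct and follows essentially the same route as the paper: a countable subcover of the slices \(\mathcal U[p]\) from the Lindel\"{o}f property, disjointification into cells \(A_i\), the simple map \(\sum_i\1_{A_i}v_{p_i}\) with \(|v-w|^{\infty,\mu}\leq{\rm Var}(v;\mathcal U)\), and Proposition \ref{prop:L_infty_in_L_1} for integrability. Your two extra precautions make explicit points the paper takes for granted: passing to interiors so that the cells are Borel, and using the countable inf property together with finite intersections of entourages to get \({\rm Var}(v;\mathcal U_k)\to 0\) in \(L^0(\mm)\).
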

\begin{proof}
Fix any \(v\in{\rm UC}_{{\rm ord},\mu}(\Omega;{\rm M})\), with representative \(\bar v\in{\rm UC}_{\rm ord}(\Omega;{\rm M})\). For
any \(n\in\N\), we can find an entourage \(\mathcal U_n\in\Phi\) such that \(\sfd_{L^0(\mm)}({\rm Var}(\bar v;\mathcal U_n),0)\leq 1/n\).
Since the topology induced by \(\Phi\) is Lindel\"{o}f, we can find points \((p^n_i)_{i\in\N}\subseteq\Omega\) such that
\(\Omega=\bigcup_{i\in\N}\mathcal U_n[p^n_i]\), where
\[
\mathcal U[p]\coloneqq\{q\in\Omega\;|\;(p,q)\in\mathcal U\}\quad\text{ for every }\mathcal U\in\Phi\text{ and }p\in\Omega.
\]
We define the Borel partition \((A^n_i)_{i\in\N}\) of \(\Omega\) as
\(A^n_i\coloneqq\mathcal U_n[p^n_i]\setminus\bigcup_{j<i}\mathcal U_n[p^n_j]\). Now, let us define
\[
v^n\coloneqq\sum_{i\in\N}\1_{A^n_i}^\mu\bar v_{p^n_i}\in S_\mu(\Omega;{\rm M}).
\]
Since \(\bar v\colon\Omega\to{\rm M}\) is order bounded, we actually have
\((v^n)_{n\in\N}\subseteq L^\infty_\mu(\Omega;{\rm M})\). Moreover, we have
\[
|v-v^n|^{\infty,\mu}=\bigvee_{i\in\N}\bigvee_{p\in A^n_i}|\bar v_p-\bar v_{p^n_i}|\leq
\bigvee_{i\in\N}\bigvee_{p\in\mathcal U_n[p^n_i]}|\bar v_p-\bar v_{p^n_i}|\leq{\rm Var}(\bar v;\mathcal U_n)\quad\text{ for every }n\in\N,
\]
which gives that \(|v-v^n|^{\infty,\mu}\to 0\) in \(L^0(\mm)\) as \(n\to\infty\), so that accordingly
\(v\in L^\infty_\mu(\Omega;{\rm M})\). The final claim then follows from Proposition \ref{prop:L_infty_in_L_1}.
\end{proof}

In view of Lemma \ref{lem:UC_is_L1}, it makes sense to write
\[
\int v\cdot\d\mu\in{\rm M}\quad\text{ for every }v\in{\rm UC}_{\rm ord}(\Omega;{\rm M}).
\]
Note also that \({\rm UC}_{\rm ord}(\Omega;{\rm M})\ni v\mapsto\int v\cdot\d\mu\in{\rm M}\) is an \(L^0(\mm)\)-linear operator and that
\begin{equation}\label{eq:fund_ineq_int_UC}
\bigg|\int v\cdot\d\mu\bigg|\leq|\mu|_{\rm TV}|v|^\infty\quad\text{ for every }v\in{\rm UC}_{\rm ord}(\Omega;{\rm M}).
\end{equation}

We equip \({\rm UC}_{\rm ord}(\Omega;L^0(\mm))\) with the following partial order: given
\(f,g\in{\rm UC}_{\rm ord}(\Omega;L^0(\mm))\), we declare that
\[
f\leq g\quad\Longleftrightarrow\quad f_p\leq g_p\;\text{ for every }p\in\Omega.
\]
Note that \(f\vee g,f\wedge g\in{\rm UC}_{\rm ord}(\Omega;L^0(\mm))\) for every \(f,g\in{\rm UC}_{\rm ord}(\Omega;L^0(\mm))\),
where we define
\begin{equation}\label{eq:def_sup_UC}
(f\vee g)_p\coloneqq f_p\vee g_p\in L^0(\mm),\qquad(f\wedge g)_p\coloneqq f_p\wedge g_p\in L^0(\mm)\;\quad\text{ for every }p\in\Omega.
\end{equation}
Indeed, letting \(\theta\) be the \(1\)-Lipschitz function \(\R\ni t\mapsto t\vee 0\),
for any \(a\in{\rm UC}_{\rm ord}(\Omega;L^0(\mm))\) we have that
\[
\bigwedge_{\mathcal U\in\Phi}{\rm Var}(a\vee 0;\mathcal U)
=\bigwedge_{\mathcal U\in\Phi}\bigvee_{(p,q)\in\mathcal U}|\theta\circ a_p-\theta\circ a_q|
\leq\bigwedge_{\mathcal U\in\Phi}\bigvee_{(p,q)\in\mathcal U}|a_p-a_q|
=\bigwedge_{\mathcal U\in\Phi}{\rm Var}(a;\mathcal U)=0,
\]
which shows that \(a\vee 0\in{\rm UC}_{\rm ord}(\Omega;L^0(\mm))\), thus also
\(f\vee g=(f-g)\vee 0+g\in{\rm UC}_{\rm ord}(\Omega;L^0(\mm))\) and \(f\wedge g=-(-f)\vee(-g)\in{\rm UC}_{\rm ord}(\Omega;L^0(\mm))\).
It can also be readily checked that
\[
({\rm UC}_{\rm ord}(\Omega;L^0(\mm)),\leq)\quad\text{ is a Riesz space,}
\]
where suprema and infima are given by the formulas in \eqref{eq:def_sup_UC}.
\begin{definition}[Positive operator]\label{def:posit_operators}
Let \((\Omega,\Phi)\) be a uniform space. Then we say that an \(L^0(\mm)\)-linear operator
\(L\colon{\rm UC}_{\rm ord}(\Omega;L^0(\mm))\to L^0(\mm)\) is \textbf{positive} provided it holds that
\[
L(f)\geq 0\quad\text{ for every }f\in{\rm UC}_{\rm ord}(\Omega;L^0(\mm))\text{ such that }f\geq 0.
\]
We denote by \({\rm UC}_{\rm ord}(\Omega;L^0(\mm))^*_+\) the space of positive operators
\(L\colon{\rm UC}_{\rm ord}(\Omega;L^0(\mm))\to L^0(\mm)\).
\end{definition}

The notation \({\rm UC}_{\rm ord}(\Omega;L^0(\mm))^*_+\) is compatible with the fact that all positive operators
are in fact homomorphisms of random normed modules, as we will discuss in the following remark.
\begin{remark}\label{rmk:posit_is_cont}{\rm
Let \(L\colon{\rm UC}_{\rm ord}(\Omega;L^0(\mm))\to L^0(\mm)\) be a positive \(L^0(\mm)\)-linear operator. Then
\[
L\in{\rm UC}_{\rm ord}(\Omega;L^0(\mm))^*,\qquad|L|=L(\1_\Omega\1_\X^\mm).
\]
Indeed, for any \(f\in{\rm UC}_{\rm ord}(\Omega;L^0(\mm))\) we have that \(\pm f+\1_\Omega|f|^\infty\geq 0\) and thus
\[
\pm L(f)+|f|^\infty L(\1_\Omega\1_\X^\mm)=L(\pm f)+L(\1_\Omega|f|^\infty)=L(\pm f+\1_\Omega|f|^\infty)\geq 0,
\]
which can be rearranged as
\[
|L(f)|\leq L(\1_\Omega\1_\X^\mm)|f|^\infty\quad\text{ for every }f\in{\rm UC}_{\rm ord}(\Omega;L^0(\mm)).
\]
Therefore, \(L\in{\rm UC}_{\rm ord}(\Omega;L^0(\mm))^*\) and \(|L|\leq L(\1_\Omega\1_\X^\mm)\).
On the contrary, since \(|\1_\Omega\1_\X^\mm|^\infty=\1_\X^\mm\), we have that
\[
|L|=\bigvee\big\{L(f)\;\big|\;f\in{\rm UC}_{\rm ord}(\Omega;L^0(\mm)),\,|f|^\infty\leq\1_\X^\mm\big\}\geq L(\1_\Omega\1_\X^\mm),
\]
thus showing that \(|L|=L(\1_\Omega\1_\X^\mm)\). This proves the claim.
\fr}\end{remark}

The proof of the next result follows along the lines of \cite[Proposition 12 of Chapter 21]{royden2010real}.
\begin{proposition}[Decomposition into positive operators]\label{prop:decomp_into_posit}
Let \((\Omega,\Phi)\) be a uniform space. Let \(L\in{\rm UC}_{\rm ord}(\Omega;L^0(\mm))^*\) be given. Then there exist
\(L_\pm\in{\rm UC}_{\rm ord}(\Omega;L^0(\mm))^*_+\) such that
\[
L=L_+ -L_-,\qquad|L|=|L_+|+|L_-|.
\]
\end{proposition}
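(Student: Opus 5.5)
We follow the classical Riesz-space argument of \cite[Proposition 12 of Chapter 21]{royden2010real}: first define \(L_+\) on the positive cone, then extend it by linearity. For \(f\in{\rm UC}_{\rm ord}(\Omega;L^0(\mm))\) with \(f\geq 0\) we set
\[
L_+(f)\coloneqq\bigvee\big\{L(g)\;\big|\;g\in{\rm UC}_{\rm ord}(\Omega;L^0(\mm)),\,0\leq g\leq f\big\}.
\]
This supremum exists in \(L^0(\mm)\) because \(|L(g)|\leq|L||g|^\infty\leq|L||f|^\infty\). Taking \(g=0\) gives \(L_+(f)\geq 0\), and taking \(g=f\) gives \(L_+(f)\geq L(f)\).

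\textbf{Step 1: homogeneity on the positive cone.} For \(h\in L^0_+(\mm)\) we need \(L_+(hf)=hL_+(f)\). The inequality \(\geq\) follows from \(0\leq hg\leq hf\) whenever \(0\leq g\leq f\). For the converse we take \(0\leq g\leq hf\) and let \(E_n\coloneqq\{h\geq 1/n\}\). Then \(\1_{E_n}^\mm h^{-1}g\) lies between \(0\) and \(f\), which gives \(\1_{E_n}^\mm L(g)\leq h\,\1_{E_n}^\mm L_+(f)\). On \(\{h=0\}\) we have \(\1_{\{h=0\}}^\mm g=0\), hence \(\1_{\{h=0\}}^\mm L(g)=0\). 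Letting \(n\to\infty\) yields \(L(g)\leq hL_+(f)\). Here the \(L^0(\mm)\)-linearity of \(L\) is used in place of real scalars.

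\textbf{Step 2: additivity on the positive cone.} We need \(L_+(f_1+f_2)=L_+(f_1)+L_+(f_2)\) for \(f_1,f_2\geq 0\). The inequality \(\geq\) is immediate by adding competitors. For \(\leq\), given \(0\leq g\leq f_1+f_2\), we set \(g_1\coloneqq g\wedge f_1\) and \(g_2\coloneqq g-g_1\). Both are in \({\rm UC}_{\rm ord}\) by the lattice formulas \eqref{eq:def_sup_UC}. Pointwise in \(p\), and \(\mm\)-a.e. in \(x\), we have \(0\leq g_2\leq f_2\), so \(L(g)=L(g_1)+L(g_2)\leq L_+(f_1)+L_+(f_2)\).

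We expect this Riesz decomposition step to be the main technical point. The existence of the supremum and the use of the countable sup property need some care, but everything is pointwise in \(p\), so the real-valued argument transfers once we work with \(\mm\)-a.e. inequalities.

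\textbf{Step 3: extension, positivity and the norm identity.} We extend by \(L_+(f)\coloneqq L_+(f^+)-L_+(f^-)\). This is well defined and \(L^0(\mm)\)-linear by Steps 1--2, using \(f^+-f^-=\tilde f^+-\tilde f^-\Rightarrow f^++\tilde f^-=\tilde f^++f^-\), and by splitting \(h\in L^0(\mm)\) as \(h^+-h^-\). Setting \(L_-\coloneqq L_+-L\), both operators are positive since \(L_+(f)\geq L(f)\) for \(f\geq0\). Remark \ref{rmk:posit_is_cont} then shows that they belong to \({\rm UC}_{\rm ord}(\Omega;L^0(\mm))^*_+\), with \(|L_\pm|=L_\pm(\1_\Omega\1_\X^\mm)\).

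For the identity, \(|L|\leq|L_+|+|L_-|\) is trivial. Conversely, with \(\mathbf 1\coloneqq\1_\Omega\1_\X^\mm\), we have
\[
|L_+|+|L_-|=2L_+(\mathbf 1)-L(\mathbf 1)=\bigvee_{0\leq g\leq\mathbf 1}L(2g-\mathbf 1).
\]
Each \(2g-\mathbf 1\) satisfies \(|2g-\mathbf 1|^\infty\leq\1_\X^\mm\), so every term is at most \(|L|\). This gives \(|L_+|+|L_-|\leq|L|\) and concludes the proof.
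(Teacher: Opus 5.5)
Your proof is correct and follows the paper's argument essentially step for step. It uses the same supremum definition of \(L_+\) on the positive cone, the same splitting \(g=g\wedge f_1+(g-g\wedge f_1)\) for additivity, and the same estimate \(|L_+|+|L_-|=\bigvee_{0\leq g\leq\1_\Omega\1_\X^\mm}L(2g-\1_\Omega\1_\X^\mm)\leq|L|\) via Remark~\ref{rmk:posit_is_cont}. The differences are cosmetic. The paper extends from the positive cone by shifting with constants, \(L_+(f)\coloneqq\tilde L_+(f+\1_\Omega a)-\tilde L_+(\1_\Omega a)\), rather than via \(f^+-f^-\). It also proves homogeneity through the identity \(\{r\leq h\cdot f\}=\{h\cdot s: 0\leq s\leq f\}\) instead of your truncation over \(\{h\geq 1/n\}\), which is, if anything, the more explicit justification.
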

\begin{proof}
Letting \(\mathfrak P\coloneqq\{f\in{\rm UC}_{\rm ord}(\Omega;L^0(\mm)):f\geq 0\}\), we define
\(\tilde L_+\colon\mathfrak P\to L^0_+(\mm)\) as
\[
\tilde L_+(f)\coloneqq\bigvee\big\{L(r)\;\big|\;r\in\mathfrak P,\,r\leq f\big\}\quad\text{ for every }f\in\mathfrak P.
\]
Note that \(0\leq L(r)\leq|L||r|^\infty\leq|L||f|^\infty\) if \(r,f\in\mathfrak P\) and \(r\leq f\), so that
\(0\leq\tilde L_+(f)\leq|L||f|^\infty\), in particular \(\tilde L_+(f)\in L^0_+(\mm)\). Moreover,
\(f+g,h\cdot f\in\mathfrak P\) for every \(f,g\in\mathfrak P\) and \(h\in L^0_+(\mm)\). We claim that
\begin{subequations}\begin{align}
\label{eq:decomp_into_posit_1}
\tilde L_+(f+g)=\tilde L_+(f)+\tilde L_+(g)&\quad\text{ for every }f,g\in\mathfrak P,\\
\label{eq:decomp_into_posit_2}
\tilde L_+(h\cdot f)=h\,\tilde L_+(f)&\quad\text{ for every }f\in\mathfrak P\text{ and }h\in L^0_+(\mm).
\end{align}\end{subequations}
Given \(f,g,r,s\in\mathfrak P\) with \(r\leq f\) and \(s\leq g\), we have that \(r+s\leq f+g\) and thus
\[
L(r)+L(s)=L(r+s)\leq\tilde L_+(f+g).
\]
Taking the supremum over all such \(r\) and \(s\), we deduce that \(\tilde L_+(f)+\tilde L_+(g)\leq\tilde L_+(f+g)\).
Moreover, if \(t\in\mathfrak P\) with \(t\leq f+g\) is given, then \(0\leq t\wedge f\leq f\) and
\(0\leq t-t\wedge f\leq g\), thus
\[
L(t)=L(t-t\wedge f)+L(t\wedge f)\leq\tilde L_+(f)+\tilde L_+(g).
\]
Taking the supremum over all such \(t\), we deduce that \(\tilde L_+(f+g)\leq\tilde L_+(f)+\tilde L_+(g)\).
All in all, \eqref{eq:decomp_into_posit_1} is proved. Next, if \(f\in\mathfrak P\) and \(h\in L^0_+(\mm)\),
then \(\{r\in\mathfrak P:r\leq h\cdot f\}=\{h\cdot s:s\in\mathfrak P,s\leq f\}\) and thus accordingly
\[
\tilde L_+(h\cdot f)=\bigvee\big\{L(h\cdot s)\;\big|\;s\in\mathfrak P,\,s\leq f\big\}
=\bigvee\big\{h\,L(s)\;\big|\;s\in\mathfrak P,\,s\leq f\big\}=h\,\tilde L_+(f),
\]
proving \eqref{eq:decomp_into_posit_2}. Next, observe that if \(f\in{\rm UC}_{\rm ord}(\Omega;L^0(\mm))\)
and \(a,b\in L^0_+(\mm)\) are chosen so that \(f+\1_\Omega a\geq 0\) and \(f+\1_\Omega b\geq 0\), then
it follows from \eqref{eq:decomp_into_posit_1} that
\[\begin{split}
\tilde L_+(f+\1_\Omega a)-\tilde L_+(\1_\Omega a)&=\tilde L_+(f+\1_\Omega a+\1_\Omega b)-\tilde L_+(\1_\Omega b)-\tilde L_+(\1_\Omega a)\\
&=\tilde L_+(f+\1_\Omega b)-\tilde L_+(\1_\Omega b).
\end{split}\]
Therefore, the following definition is well posed: we define \(L_+\colon{\rm UC}_{\rm ord}(\Omega;L^0(\mm))\to L^0(\mm)\) as
\[
L_+(f)\coloneqq\tilde L_+(f+\1_\Omega a)-\tilde L_+(\1_\Omega a)\quad\text{ if }f\in{\rm UC}_{\rm ord}(\Omega;L^0(\mm)),\;
a\in L^0_+(\mm)\text{ and }f+\1_\Omega a\geq 0;
\]
note that, given any \(f\in{\rm UC}_{\rm ord}(\Omega;L^0(\mm))\), the set of \(a\in L^0_+(\mm)\) such that \(f+\1_\Omega a\geq 0\)
is non-empty (as it contains \(|f|^\infty\)). Observe also that
\begin{equation}\label{eq:decomp_into_posit_3}
L_+(f)=\tilde L_+(f)\quad\text{ for every }f\in\mathfrak P.
\end{equation}
Indeed, given \(f\in\mathfrak P\), we can choose \(a=0\), so that \(L_+(f)=\tilde L_+(f+0)-\tilde L_+(f)=\tilde L_+(f)\).

We claim that
\[
L_+\in{\rm UC}_{\rm ord}(\Omega;L^0(\mm))^*_+.
\]
The operator \(L_+\) is positive by \eqref{eq:decomp_into_posit_3}. Let us now show that \(L_+\) is \(L^0(\mm)\)-linear.
Applying \eqref{eq:decomp_into_posit_1} and \eqref{eq:decomp_into_posit_3}, we obtain that
\begin{equation}\label{eq:decomp_into_posit_4}\begin{split}
L_+(f+g)&=\tilde L_+(f+g+\1_\Omega|f|^\infty+\1_\Omega|g|^\infty)-\tilde L_+(\1_\Omega|f|^\infty+\1_\Omega|g|^\infty)\\
&=\tilde L_+(f+\1_\Omega|f|^\infty)+\tilde L_+(g+\1_\Omega|g|^\infty)-\tilde L_+(\1_\Omega|f|^\infty)-\tilde L_+(\1_\Omega|g|^\infty)\\
&=L_+(f)+L_+(g)\quad\text{ for every }f,g\in{\rm UC}_{\rm ord}(\Omega;L^0(\mm)).
\end{split}\end{equation}
In particular, \(L_+(f)+L_+(-f)=L_+(0)=0\) for every \(f\in{\rm UC}_{\rm ord}(\Omega;L^0(\mm))\), or in other words
\begin{equation}\label{eq:decomp_into_posit_5}
L_+(-f)=-L_+(f)\quad\text{ for every }f\in{\rm UC}_{\rm ord}(\Omega;L^0(\mm)).
\end{equation}
Furthermore, it follows from \eqref{eq:decomp_into_posit_2} that
\begin{equation}\label{eq:decomp_into_posit_6}\begin{split}
L_+(h\cdot f)&=\tilde L_+\big(h\cdot(f+\1_\Omega|f|^\infty)\big)-\tilde L_+(\1_\Omega h|f|^\infty)\\
&=h\big(\tilde L_+(f+\1_\Omega|f|^\infty)-\tilde L_+(\1_\Omega|f|^\infty)\big)\\
&=h\,L_+(f)\quad\text{ for every }h\in L^0_+(\mm)\text{ and }f\in{\rm UC}_{\rm ord}(\Omega;L^0(\mm)).
\end{split}\end{equation}
Hence, combining \eqref{eq:decomp_into_posit_4}, \eqref{eq:decomp_into_posit_5} and \eqref{eq:decomp_into_posit_6}, we conclude that
\[\begin{split}
L_+(h\cdot f)&=L_+\big((\1_{\{h\geq 0\}}^\mm h)\cdot f\big)+L_+\big((\1_{\{h<0\}}^\mm h)\cdot f\big)\\
&=L_+\big((\1_{\{h\geq 0\}}^\mm h)\cdot f\big)-L_+\big((-\1_{\{h<0\}}^\mm h)\cdot f\big)\\
&=\1_{\{h\geq 0\}}^\mm h\,L_+(f)-(-\1_{\{h<0\}}^\mm h)L_+(f)\\
&=\1_{\{h\geq 0\}}^\mm h\,L_+(f)+\1_{\{h<0\}}^\mm h\,L_+(f)=h\,L_+(f)
\end{split}\]
for all \(h\in L^0(\mm)\) and \(f\in{\rm UC}_{\rm ord}(\Omega;L^0(\mm))\). All in all, we have shown
that \(L_+\) is \(L^0(\mm)\)-linear. Recalling Remark \ref{rmk:posit_is_cont}, we finally conclude that
\(L_+\in{\rm UC}_{\rm ord}(\Omega;L^0(\mm))^*_+\), proving the claim.

Define \(L_-\coloneqq L_+-L\in{\rm UC}_{\rm ord}(\Omega;L^0(\mm))^*\). Given \(f\in\mathfrak P\), we have
\(L(f)\leq\tilde L_+(f)=L_+(f)\), thus \(L_-(f)=L_+(f)-L(f)\geq 0\). This implies that \(L_+\in{\rm UC}_{\rm ord}(\Omega;L^0(\mm))^*_+\).
Note that \(L=L_+-L_-\), in particular \(|L|\leq|L_+|+|L_-|\). Conversely, given \(f\in\mathfrak P\) with
\(|f|^\infty\leq\1_\X^\mm\), we have \(|2f-\1_\Omega\1_\X^\mm|^\infty\leq\1_\X^\mm\), which (taking also
Remark \ref{rmk:posit_is_cont} into account) implies that
\[\begin{split}
|L_+|+|L_-|&=L_+(\1_\Omega\1_\X^\mm)+L_-(\1_\Omega\1_\X^\mm)=2\,L_+(\1_\Omega\1_\X^\mm)-L(\1_\Omega\1_\X^\mm)\\
&=2\bigvee\big\{L(f)\;\big|\;f\in\mathfrak P,\,|f|^\infty\leq\1_\X^\mm\big\}-L(\1_\Omega\1_\X^\mm)\\
&=\bigvee\big\{L(2f-\1_\Omega\1_\X^\mm)\;\big|\;f\in\mathfrak P,\,|f|^\infty\leq\1_\X^\mm\big\}\leq|L|.
\end{split}\]
This shows that \(|L|=|L_+|+|L_-|\), thus completing the proof of the statement.
\end{proof}
\subsection{Auxiliary results}\label{s:aux_RMK}
We recall that a Hausdorff space \((\Omega,\tau)\) is said to be \textbf{perfectly normal} if any two disjoint closed
subsets of \(\Omega\) can be `precisely separated' by a continuous function, i.e.\ for any \(C_1,C_2\subseteq\Omega\)
closed with \(C_1\cap C_2=\varnothing\) there exists \(\phi\in C(\Omega;[0,1])\) such that \(\{\phi=0\}=C_1\) and
\(\{\phi=1\}=C_2\). For instance, each metrisable space is a perfectly-normal Hausdorff space.
\begin{remark}\label{rmk:cutoff_perf_norm}{\rm
If \((\Omega,\tau)\) is a perfectly-normal Hausdorff space and \(U\subseteq\Omega\) is an open set, then there exists
\((\phi_n)_{n\in\N}\subseteq C(\Omega;[0,1])\) satisfying \(\phi_n(p)\nearrow\1_U(p)\) as \(n\to\infty\) for every \(p\in\Omega\).

To prove it, we first take a function \(\phi\in C(\Omega;[0,1])\) that precisely separates \(\Omega\setminus U\) and \(\varnothing\),
i.e.\ \(\{\phi=0\}=\Omega\setminus U\).  Letting \(C_n\coloneqq\{\phi\geq 1/n\}\) for every \(n\in\N\), we have that each
\(C_n\) is a closed set that does not intersect \(\Omega\setminus U\), and we have \(\bigcup_{n\in\N}C_n=U\) by the
continuity of \(\phi\). Using again the perfect normality of \((\Omega,\tau)\), we find functions
\((\phi_n)_{n\in\N}\subseteq C(\Omega;[0,1])\) satisfying \(\{\phi_n=0\}=\Omega\setminus U\) and \(\{\phi_n=1\}=C_n\)
for every \(n\in\N\). Clearly, the sequence \((\phi_n)_{n\in\N}\) fulfills the sought properties.
\fr}\end{remark}
\begin{lemma}\label{lem:meas_mu_x}
Let \((\Omega,\tau)\) be a perfectly-normal Hausdorff space. Then a given collection of measures
\((\mu_x)_{x\in\X}\subseteq\mathcal M(\Omega)\) satisfies
\begin{equation}\label{eq:meas_mu_x_lemma_hp}
\X\ni x\mapsto\int\phi\,\d\mu_x\in\R\quad\text{ is }\mm\text{-measurable for every }\phi\in C_b(\Omega)\text{ with }\phi\geq 0
\end{equation}
if and only if \((\mu_x)_{x\in\X}\) is an \(\mm\)-measurable collection of measures.
\end{lemma}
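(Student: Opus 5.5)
The two directions are of very different difficulty, and I would treat them separately.

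\emph{The `if' implication.} Assume \((\mu_x)_{x\in\X}\) is an \(\mm\)-measurable collection and fix \(\phi\in C_b(\Omega)\) with \(\phi\geq 0\). The plan is to approximate \(\phi\) uniformly by Borel simple functions. Take
\[
\phi_k\coloneqq\sum_{j\geq 0}j2^{-k}\1_{\{j2^{-k}\leq\phi<(j+1)2^{-k}\}},
\]
which is a finite sum because \(\phi\) is bounded. Each level set is Borel, so each map \(x\mapsto\int\phi_k\,\d\mu_x\) is a finite linear combination of the \(\mm\)-measurable functions \(x\mapsto\mu_x(A)\). Moreover,
\[
\Big|\int\phi_k\,\d\mu_x-\int\phi\,\d\mu_x\Big|\leq 2^{-k}|\mu_x|(\Omega)\to 0\quad\text{ for every }x.
\]
Hence \(x\mapsto\int\phi\,\d\mu_x\) is a pointwise limit of \(\mm\)-measurable functions, and so it is \(\mm\)-measurable.

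\emph{The `only if' implication.} Assume \eqref{eq:meas_mu_x_lemma_hp}. Using linearity (\(\phi=\phi^+-\phi^-\)), the map \(x\mapsto\int\phi\,\d\mu_x\) is \(\mm\)-measurable for every \(\phi\in C_b(\Omega)\). I would proceed in four steps.

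First, open sets. Fix \(U\in\tau\) and take \((\phi_n)_{n\in\N}\subseteq C(\Omega;[0,1])\) with \(\phi_n\nearrow\1_U\), as provided by Remark \ref{rmk:cutoff_perf_norm}. For each \(x\), dominated convergence with respect to \(|\mu_x|\) gives \(\int\phi_n\,\d\mu_x\to\mu_x(U)\). So \(x\mapsto\mu_x(U)\) is a pointwise limit of \(\mm\)-measurable functions, hence \(\mm\)-measurable.

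Second, the good family. Define \(\mathcal D\coloneqq\{B\in\mathscr B(\Omega):x\mapsto\mu_x(B)\text{ is }\mm\text{-measurable}\}\).

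Third, closure properties of \(\mathcal D\). It contains \(\Omega\in\tau\). It is closed under proper differences, since \(\mu_x(B\setminus A)=\mu_x(B)-\mu_x(A)\) for \(A\subseteq B\). It is closed under increasing unions, since \(\mu_x(\bigcup_n A_n)=\lim_n\mu_x(A_n)\) pointwise, by continuity from below of the finite signed measure \(\mu_x\). So \(\mathcal D\) is a Dynkin system.

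Fourth, conclusion. Since \(\tau\) is a \(\pi\)-system contained in \(\mathcal D\), the Sierpi\'{n}ski--Dynkin \(\pi\)-\(\lambda\) theorem gives \(\mathcal D\supseteq\sigma(\tau)=\mathscr B(\Omega)\).

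\emph{Main obstacle.} The only step that is not routine is the approximation of \(\1_U\) by continuous functions for \(U\) open. This is precisely where perfect normality enters, and it is already handled by Remark \ref{rmk:cutoff_perf_norm}. The remaining care is to make sure each pointwise limit is applied for every \(x\). That is fine here because each \(\mu_x\) is a finite signed measure, so the dominated-convergence and continuity-from-below arguments hold at each \(x\) individually, and no null-set bookkeeping is needed.
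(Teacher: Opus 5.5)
Your proof is correct and follows the paper's argument essentially verbatim. For the direction from \eqref{eq:meas_mu_x_lemma_hp}, the paper likewise shows that the family \(\mathcal D\) of Borel sets \(A\) with \(x\mapsto\mu_x(A)\) \(\mm\)-measurable is a Dynkin system containing \(\tau\) (via Remark \ref{rmk:cutoff_perf_norm} and dominated convergence), and then applies the \(\pi\)-\(\lambda\) theorem. For the converse, it approximates \(\phi\) uniformly by Borel simple functions, as you do; the only cosmetic differences are that the paper gets \(\Omega\in\mathcal D\) directly from \(\phi=\1_\Omega\) and does not need your preliminary extension to signed \(\phi\in C_b(\Omega)\).
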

\begin{proof}
Assume \eqref{eq:meas_mu_x_lemma_hp} holds. Let \(\mathcal D\) be the family of all those sets
\(A\in\mathscr B(\Omega)\) for which the function \(x\mapsto\mu_x(A)\) is \(\mm\)-measurable. Then:
\begin{itemize}
\item[\(\rm i)\)] Taking \(\phi=\1_\Omega\), we see that \(\Omega\in\mathcal D\).
\item[\(\rm ii)\)] If \(A,B\in\mathcal D\) with \(A\subseteq B\) are given, then \(x\mapsto\mu_x(B\setminus A)=\mu_x(B)-\mu_x(A)\)
is an \(\mm\)-measurable function, thus accordingly \(B\setminus A\in\mathcal D\).
\item[\(\rm iii)\)] Let \((A_n)_{n\in\N}\subseteq\mathcal D\) be such that \(A_n\subseteq A_{n+1}\) for every \(n\in\N\).
Since \(A_{n+1}\setminus A_n\in\mathcal D\) for all \(n\in\N\) by ii), we deduce that the function
\[
\X\ni x\mapsto\mu_x\bigg(\bigcup_{n\in\N}A_n\bigg)=\mu_x(A_1)+\sum_{n\in\N}\mu_x(A_{n+1}\setminus A_n)\in\R
\]
is \(\mm\)-measurable, so that \(\bigcup_{n\in\N}A_n\in\mathcal D\).
\end{itemize}
All in all, we have shown that \(\mathcal D\) is a Dynkin system. Next, we claim that \(\tau\subseteq\mathcal D\).
To prove it, fix any \(U\in\tau\). By virtue of Remark \ref{rmk:cutoff_perf_norm}, we know that there exists a sequence
\((\phi_n)_{n\in\N}\subseteq C(\Omega;[0,1])\) such that \(\phi_n(p)\nearrow\1_U(p)\) as \(n\to\infty\) for every \(p\in\Omega\).
Using \eqref{eq:meas_mu_x_lemma_hp} and the dominated convergence theorem, we conclude that
\[
\X\ni x\mapsto\mu_x(U)=\lim_{n\to\infty}\int\phi_n\,\d\mu_x\quad\text{ is an }\mm\text{-measurable function,}
\]
which yields \(U\in\mathcal D\). Since \(\mathcal D\) is a Dynkin system containing the topology \(\tau\),
it follows from the Sierpi\'{n}ski--Dynkin \(\pi\)-\(\lambda\) theorem that \(\mathcal D=\mathscr B(\Omega)\),
thus \((\mu_x)_{x\in\X}\) is an \(\mm\)-measurable collection.

Conversely, assume \((\mu_x)_{x\in\X}\) is an \(\mm\)-measurable collection of measures. Given \(\phi\in C_b(\Omega)\) with
\(\phi\geq 0\), and letting \(M\geq 0\) denote its supremum norm, we have that the function
\[
\X\ni x\mapsto\int\phi\,\d\mu_x=\lim_{n\to\infty}\int\sum_{i=1}^n\frac{Mi}{n}\1_{A_{n,i}}\,\d\mu_x
=\lim_{n\to\infty}\sum_{i=1}^n\frac{Mi}{n}\mu_x(A_{n,i})
\]
is \(\mm\)-measurable, where we set \(A_{n,i}\coloneqq\big\{\frac{M(i-1)}{n}<\phi\leq\frac{Mi}{n}\big\}\), thus proving \eqref{eq:meas_mu_x_lemma_hp}.
\end{proof}
\begin{corollary}\label{cor:meas_|mu_x|}
Let \((\Omega,\sfd)\) be a compact metric space. Assume \((\mu_x)_{x\in\X}\subseteq\mathfrak M(\Omega)\)
is the foliation of some \(\mu\in\mathcal M(\Omega;L^0(\mm))\). Then \((|\mu_x|)_{x\in\X}\) is an \(\mm\)-measurable
collection of measures.
\end{corollary}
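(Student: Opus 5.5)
The plan is to reduce to Lemma \ref{lem:meas_mu_x}: a compact metric space is perfectly normal, so it suffices to show that for every \(\phi\in C(\Omega)\) with \(\phi\geq 0\) the function \(x\mapsto\int\phi\,\d|\mu_x|\) is \(\mm\)-measurable. Measurability of the collection \((\mu_x)_x\) itself, applied through the converse direction of Lemma \ref{lem:meas_mu_x}, already gives that \(x\mapsto\int\psi\,\d\mu_x\) is \(\mm\)-measurable for every \(\psi\in C(\Omega)\). Here we split \(\psi=\psi^+-\psi^-\) and use linearity.

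The key step is a countable duality formula for the total variation of a Radon measure on a compact metric space:
\[
\int\phi\,\d|\mu_x|=\sup_{\psi\in\mathcal D,\,|\psi|\leq\phi}\int\psi\,\d\mu_x,
\]
where \(\mathcal D\) is a countable dense subset of \(C(\Omega)\). Such a \(\mathcal D\) exists because \(C(\Omega)\) is separable when \(\Omega\) is compact metric. To establish the formula, first prove it with the supremum over all \(\psi\in C(\Omega)\) with \(|\psi|\leq\phi\).
\begin{itemize}
\item The inequality \(\leq\) for \(\int\phi\,\d|\mu_x|\) is immediate from \(|\int\psi\,\d\mu_x|\leq\int|\psi|\,\d|\mu_x|\).
\item For the reverse, take the Hahn decomposition \(\Omega=P\sqcup N\) of \(\mu_x\), so that \(\int\phi\,\d|\mu_x|=\int\phi(\1_P-\1_N)\,\d\mu_x\). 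Then approximate \(\1_P-\1_N\) in \(L^1(|\mu_x|)\) by continuous functions \(h\) with values in \([-1,1]\), using that continuous functions are dense in \(L^1\) of a finite Radon measure, or Lusin's theorem, followed by truncation. Setting \(\psi=\phi h\) gives \(|\psi|\leq\phi\) and \(\int\psi\,\d\mu_x\to\int\phi\,\d|\mu_x|\).
\end{itemize}

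The main obstacle is passing from all continuous \(\psi\) to the countable family \(\mathcal D\) while keeping the constraint \(|\psi|\leq\phi\). I handle this with a countable, \(x\)-independent family built from \(\phi\). Let \(\mathcal D'\) be a countable dense subset of the unit ball of \(C(\Omega)\), and use the functions \(\psi=\phi\,h\) with \(h\in\mathcal D'\). Any admissible \(\phi h\) with \(h\) continuous and \(|h|\leq 1\) is approximated uniformly by \(\phi h_k\) with \(h_k\in\mathcal D'\), since \(\|\phi h-\phi h_k\|_\infty\leq\|\phi\|_\infty\|h-h_k\|_\infty\). Because \(\mu_x\) is a finite measure, the integrals converge. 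So the supremum is unchanged when taken over the countable family \(\{\phi h:h\in\mathcal D'\}\), and this family does not depend on \(x\).

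It follows that \(x\mapsto\int\phi\,\d|\mu_x|\) is a countable supremum of \(\mm\)-measurable functions, hence \(\mm\)-measurable. Lemma \ref{lem:meas_mu_x} then yields that \((|\mu_x|)_{x\in\X}\) is an \(\mm\)-measurable collection. Radon-ness of each \(\mu_x\) is automatic on a compact metric space, but it is the property actually used in the density step above.
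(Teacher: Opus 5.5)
Your proof is correct and follows essentially the paper's route: you express the total variation of \(\mu_x\) as a countable supremum of integrals \(\int\psi\,\d\mu_x\) over an \(x\)-independent family of continuous functions, using separability of \(C(\Omega)\) and the \(C(\Omega)\)–Radon-measure duality, and then conclude via a Dynkin-system argument. The only difference is packaging: the paper does this for \(|\mu_x|(U)\) with \(U\) open and reruns the \(\pi\)-\(\lambda\) argument by hand, whereas you test against continuous \(\phi\geq 0\) and invoke Lemma \ref{lem:meas_mu_x} directly. Your two-sided constraint \(|\psi|\leq\phi\) is the correct one, since the paper's displayed constraint \(0\leq\phi\leq\1_U\) would only recover \(\mu_x^+(U)\) and should read \(|\phi|\leq\1_U\).
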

\begin{proof}
As \(C(\Omega)\) is separable, there is a countable dense subset \(\mathcal C\)
of \(\{\phi\in C(\Omega):0\leq\phi\leq\1_U\}\).
The classical Riesz--Markov--Kakutani theorem for real-valued Radon measures ensures that
\[
|\mu_x|(U)=\sup_{\phi\in\mathcal C}\int\phi\,\d\mu_x\quad\text{ for every }x\in\X\text{ and }U\in\tau.
\]
Hence, Lemma \ref{lem:meas_mu_x} implies that \(\X\ni x\mapsto|\mu_x|(U)\) is \(\mm\)-measurable for every \(U\in\tau\),
so that the set \(\mathcal D\) of all \(A\in\mathscr B(\Omega)\) for which \(x\mapsto|\mu_x|(A)\) is \(\mm\)-measurable
contains the topology \(\tau\). Using the fact that each \(|\mu_x|\) is a measure it is easy to check that \(\mathcal D\)
is a Dynkin system, thus the Sierpi\'{n}ski--Dynkin \(\pi\)-\(\lambda\) theorem implies \(\mathcal D=\mathscr B(\Omega)\),
which means that \((|\mu_x|)_{x\in\X}\) is an \(\mm\)-measurable collection.
\end{proof}

We remind that a Hausdorff space is \emph{uniformisable} (i.e.\ it admits a uniform structure compatible with the topology)
if and only if it is completely regular. Moreover, any compact Hausdorff space is a complete uniform space with respect to
a unique uniform structure compatible with the topology.
\begin{lemma}\label{lem:int_mu_ptwse_descript}
Let \((\Omega,\tau)\) be a compact, perfectly-normal Hausdorff space. Let \((\mu_x)_{x\in\X}\)
be a foliation of some \(\mu\in\mathcal M(\Omega;L^0(\mm))\). Then for any \(\phi\in C_b(\Omega)\) and \(\bar f\in L^\infty(\mm)\) it holds that
\[
\bigg(\int\phi\bar f\cdot\,\d\mu\bigg)(x)=\bar f(x)\int\phi\,\d\mu_x\quad\text{ for }\mm\text{-a.e.\ }x\in\X.
\]
\end{lemma}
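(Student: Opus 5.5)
Since \(\Omega\) is compact and Hausdorff, it carries a unique compatible uniformity \(\Phi\), and the map \(\phi\bar f\colon\Omega\to L^0(\mm)\), \(p\mapsto\phi(p)\bar f\), is uniformly order continuous. Indeed, \({\rm Var}(\phi\bar f;\mathcal U)\le|\bar f|\sup_{(p,q)\in\mathcal U}|\phi(p)-\phi(q)|\), and the supremum tends to zero because \(\phi\) is uniformly continuous. Hence \(\int\phi\bar f\,\d\mu\) is well defined by Lemma \ref{lem:UC_is_L1}. Since \(\bar f\in L^0(\mm)\) is a constant scalar and the integral is \(L^0(\mm)\)-linear, we have \(\int\phi\bar f\,\d\mu=\bar f\int\phi\,\d\mu\), where \(\phi\) stands for \(p\mapsto\phi(p)\1_\X^\mm\). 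So it suffices to treat \(\bar f=\1_\X^\mm\) and prove
\[
\bigg(\int\phi\,\d\mu\bigg)(x)=\int\phi\,\d\mu_x\quad\text{ for }\mm\text{-a.e.\ }x\in\X .
\]
By linearity we may also assume \(0\le\phi\le M\).

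We approximate \(\phi\) uniformly by finite-range simple maps, as in the proof of Lemma \ref{lem:converse_in_meas_mu_x}. Set \(A_{n,i}\coloneqq\{M(i-1)/n<\phi\le Mi/n\}\) for \(i=1,\ldots,n\), add \(A_{n,0}\coloneqq\{\phi=0\}\), and define
\[
\phi_n\coloneqq\sum_{i}\1_{A_{n,i}}^\mu\frac{Mi}{n}\1_\X^\mm\in S_{\mu,f}(\Omega;L^0(\mm)).
\]
The sets \(A_{n,i}\) are Borel, and \(|\phi-\phi_n|^{\infty,\mu}\le (M/n)\1_\X^\mm\). By Proposition \ref{prop:L_infty_in_L_1}, \(|\phi-\phi_n|^{1,\mu}\le|\mu|_{\rm TV}M/n\to0\) in \(L^0(\mm)\). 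A subtle point here is that \(\phi\), viewed as an element of \(L^\infty_\mu\) through Lemma \ref{lem:UC_is_L1}, coincides with the \(|\cdot|^{\infty,\mu}\)-limit of the \(\phi_n\). This holds because the approximants in that lemma and the \(\phi_n\) both converge to the same representative \(\phi\) in the sup-seminorm of \eqref{eq:big_space_bdd_maps_non-quot}.

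Consequently \(\int\phi_n\,\d\mu\to\int\phi\,\d\mu\) in \(L^0(\mm)\), since \(|\int(\phi-\phi_n)\,\d\mu|\le|\mu|_{\rm TV}M/n\). On the other hand, Definition \ref{def:int-mod-simp-map} and the definition of foliation give
\[
\bigg(\int\phi_n\,\d\mu\bigg)(x)=\sum_i\frac{Mi}{n}\mu(A_{n,i})(x)=\sum_i\frac{Mi}{n}\mu_x(A_{n,i})=\int\phi_n^{\rm pt}\,\d\mu_x
\]
for \(\mm\)-a.e.\ \(x\). Here \(\phi_n^{\rm pt}\) is the pointwise step function, and the null set depends only on the countably many sets \(A_{n,i}\). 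For each \(x\), \(|\int\phi_n^{\rm pt}\,\d\mu_x-\int\phi\,\d\mu_x|\le (M/n)|\mu_x|(\Omega)\to0\), because \(\mu_x\) is a finite signed measure. Since the bound \(|\mu|_{\rm TV}M/n\) already gives \(\mm\)-a.e.\ convergence of the first sequence, the two limits agree \(\mm\)-a.e. Multiplying by \(\bar f\) then yields the claim.

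The main obstacle is only bookkeeping: the identification of the \(L^\infty_\mu\)-element attached to \(\phi\) via Lemma \ref{lem:UC_is_L1} with the limit of the \(\phi_n\), and the \(\mm\)-measurability of the right-hand side. The latter follows from Lemma \ref{lem:meas_mu_x}, or directly as the pointwise limit above. Perfect normality enters only through Lemma \ref{lem:meas_mu_x}, and is not otherwise needed.
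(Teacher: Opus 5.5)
Your proof is correct and follows essentially the same route as the paper: approximate \(\phi\) uniformly by finite-range Borel step functions, use the foliation identity \(\mu(A)(x)=\mu_x(A)\) on the step functions, and control the two error terms by \(|\mu|_{\rm TV}\sup_\Omega|\phi-\phi_n|\) and \(\|\mu_x\|_{\rm TV}\sup_\Omega|\phi-\phi_n|\), respectively. The differences are cosmetic. The paper keeps \(\bar f\) and a signed \(\phi\) throughout, using a symmetric partition and a three-term triangle inequality, rather than first reducing to \(\bar f=\1_\X^\mm\) and \(\phi\geq 0\). Also, the ``identification'' you flag is automatic, since \(L^\infty_\mu(\Omega;L^0(\mm))\) consists of \(\mu\)-a.e.\ classes of maps and \(\iota\) is linear.
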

\begin{proof}
Fix any \(M>\|\phi\|_{C_b(\Omega)}\). For any \(n\in\N\), we define the function \(\phi_n\colon\Omega\to[-M,M]\) as
\[
\phi_n\coloneqq\sum_{i=-n}^{n-1}\frac{Mi}{n}\1_{A^n_i},\quad\text{ where we set }A^n_i\coloneqq\bigg\{\frac{Mi}{n}\leq\phi<\frac{M(i+1)}{n}\bigg\}\in\mathscr B(\Omega).
\]
Then \(\sup_\Omega|\phi-\phi_n|\leq\frac{1}{n}\) and thus \(|\phi\bar f-\phi_n\bar f|^\infty\leq\frac{1}{n}|\bar f|\). Hence,
for \(\mm\)-a.e.\ \(x\in\X\) we have that
\[\begin{split}
&\bigg|\bigg(\int\phi\bar f\cdot\,\d\mu\bigg)(x)-\bar f(x)\int\phi\,\d\mu_x\bigg|\\
\leq\,&\bigg|\bigg(\int(\phi-\phi_n)\bar f\cdot\d\mu\bigg)(x)\bigg|+\bigg|\bigg(\int\phi_n\bar f\cdot\,\d\mu\bigg)(x)-\bar f(x)\int\phi_n\,\d\mu_x\bigg|
+|\bar f|(x)\bigg|\int(\phi_n-\phi)\,\d\mu_x\bigg|\\
\leq\,&|\phi\bar f-\phi_n\bar f|^\infty(x)|\mu|_{\rm TV}(x)+|\bar f|(x)\bigg(\bigg|\sum_{i=-n}^{n-1}\frac{Mi}{n}\big(\mu(A^n_i)(x)-\mu_x(A^n_i)\big)\bigg|+
\sup_\Omega|\phi_n-\phi|\|\mu_x\|_{\rm TV}\bigg)\\
\leq\,&\frac{1}{n}|\bar f|(x)\big(|\mu|_{\rm TV}(x)+\|\mu_x\|_{\rm TV}\big)\to 0\quad\text{ as }n\to\infty,
\end{split}\]
thus proving the statement.
\end{proof}
\begin{lemma}\label{lem:countable_family_for_approx}
Let \((\Omega,\tau)\) be a second-countable Hausdorff space. Let \(\mathscr U\) be a countable base for the topology \(\tau\) that is closed under finite unions.
Then it holds that
\[
\inf_{U\in\mathscr U}\mu(A\Delta U)=0\quad\text{ for every }\mu\in\mathfrak M_+(\Omega)\text{ and }A\in\mathscr B(\Omega).
\]
Moreover, given any compact set \(K\subseteq\Omega\) and any measure \(\mu\in\mathfrak M_+(\Omega)\), we have that
\[
\mu(K)=\inf\big\{\mu(U)\;\big|\;U\in\mathscr U,\,K\subseteq U\big\}.
\]
\end{lemma}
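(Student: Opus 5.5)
The plan is to prove the second claim first and then deduce the first from it and from regularity of Radon measures.

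For the compact set claim, fix a compact \(K\subseteq\Omega\) and \(\mu\in\mathfrak M_+(\Omega)\). By outer regularity of the (finite) Radon measure \(\mu\), for every \(\varepsilon>0\) there is an open set \(V\supseteq K\) with \(\mu(V)\leq\mu(K)+\varepsilon\). Since \(\mathscr U\) is a base, \(V\) is the union of the elements of \(\mathscr U\) it contains. These sets cover \(K\), so compactness yields finitely many \(U_1,\ldots,U_n\in\mathscr U\) with \(K\subseteq U_1\cup\ldots\cup U_n\subseteq V\). Closure of \(\mathscr U\) under finite unions gives \(U\coloneqq U_1\cup\ldots\cup U_n\in\mathscr U\) with \(K\subseteq U\) and \(\mu(U)\leq\mu(K)+\varepsilon\). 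The inequality \(\geq\) is trivial by monotonicity, so the second identity follows.

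For the first claim, fix \(A\in\mathscr B(\Omega)\) and \(\varepsilon>0\). By inner regularity (valid on all Borel sets for finite Radon measures; compare Lemma \ref{lem:inner_reg_self-improv} in the case of a one-point \(\X\)), choose a compact \(K\subseteq A\) with \(\mu(A\setminus K)\leq\varepsilon\). By outer regularity, choose an open \(V\supseteq A\) with \(\mu(V\setminus A)\leq\varepsilon\). Now run the compactness argument of the previous step inside \(V\), covering \(K\subseteq V\). This produces \(U\in\mathscr U\) with \(K\subseteq U\subseteq V\). Then \(A\Delta U\subseteq(A\setminus K)\cup(V\setminus A)\), so \(\mu(A\Delta U)\leq 2\varepsilon\). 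Letting \(\varepsilon\to0\) gives the infimum equal to zero.

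The only delicate point is making sure the chosen \(U\) simultaneously contains \(K\) and stays inside \(V\). This is exactly why we cover \(K\) by basis elements contained in \(V\) rather than arbitrary ones, and it needs no second countability beyond \(\mathscr U\) being a base closed under finite unions. Countability of \(\mathscr U\) plays no role in the argument and is relevant only for later applications.
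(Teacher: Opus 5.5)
Your proof is correct and is essentially the paper's argument: the paper picks a compact \(K\subseteq A\) and an open \(V\supseteq A\) with \(\mu(V\setminus K)\leq\varepsilon\), uses compactness of \(K\) to find \(U\in\mathscr U\) with \(K\subseteq U\subseteq V\), and gets the compact-set identity by taking \(A=K\), so you have only reversed the order of the two claims. As in the paper, the case \(K=\varnothing\) (or \(A=\varnothing\)) tacitly requires reading \emph{closed under finite unions} as including the empty union, i.e.\ \(\varnothing\in\mathscr U\).
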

\begin{proof}
Given any \(A\in\mathscr B(\Omega)\), \(\mu\in\mathfrak M_+(\Omega)\) and \(\varepsilon>0\), we take a compact set \(K\subseteq A\)
and an open set \(V\) containing \(A\) such that \(\mu(V\setminus K)\leq\varepsilon\). By the compactness of \(K\), we find
\(U\in\mathscr U\) such that \(K\subseteq U\subseteq V\), so that \(\mu(A\Delta U)\leq\varepsilon\). Taking \(A=K\), we obtain the final claim.
\end{proof}
\begin{remark}\label{rmk:unique_foliation_top}{\rm
If \((\Omega,\tau)\) is a second-countable Hausdorff space and \((\mu_x)_{x\in\X},(\nu_x)_{x\in\X}\subseteq\mathfrak M(\Omega)\)
are foliations of the same \(L^0\)-valued measure \(\mu\in\mathcal M(\Omega;L^0(\mm))\), then
\[
\mu_x=\nu_x\quad\text{ for }\mm\text{-a.e.\ }x\in\X.
\]
Indeed, the second-countability assumption ensures that \(\mathscr B(\Omega)\)
is countably generated, thus the claim follows from Proposition \ref{prop:unique_foliation}. Alternatively,
one can argue in the following way.

Fix some countable base \(\mathscr U\) for \(\tau\). For any \(U\in\mathscr U\), we have \([\mu_\cdot(U)]_\mm=\mu(U)=[\nu_\cdot(U)]_\mm\),
thus there exists an \(\mm\)-null set \(N_U\subseteq\X\) such that \(\mu_x(U)=\nu_x(U)\) for every \(x\in\X\setminus N_U\). Denote by
\(N\subseteq\X\) the \(\mm\)-null set \(\bigcup_{U\in\mathscr U}N_U\). For any \(A\in\mathscr B(\Omega)\), \(x\in\X\setminus N\)
and \(U\in\mathscr U\), we can estimate
\[
|\mu_x(A)-\nu_x(A)|\leq|\mu_x(A)-\mu_x(U)|+|\nu_x(U)-\nu_x(A)|\leq|\mu_x|(A\Delta U)+|\nu_x|(A\Delta U).
\]
Taking the infimum over all \(U\in\mathscr U\) and applying Lemma \ref{lem:countable_family_for_approx}, we conclude that
\[
\mu_x(A)=\nu_x(A)\quad\text{ for every }A\in\mathscr B(\Omega)\text{ and }x\in\X\setminus N,
\]
which gives the claim.
\fr}\end{remark}
\begin{lemma}\label{lem:dense_in_UC_ord}
Let \((\Omega,\tau)\) be a compact Hausdorff space and let \({\rm W}\) be a dense vector subspace of \(C(\Omega)\).
Then it holds that
\[
{\rm V}\coloneqq\bigg\{\sum_{i=1}^n\phi_i\1_{E_i}^\mm\;\bigg|\;n\in\N,\,(\phi_i)_{i=1}^n\subseteq{\rm W},
\,(E_i)_{i=1}^n\subseteq\Sigma\bigg\}
\]
is a dense vector subspace of \({\rm UC}_{\rm ord}(\Omega;L^0(\mm))\).
\end{lemma}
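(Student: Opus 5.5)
The plan is to approximate a given \(f\in{\rm UC}_{\rm ord}(\Omega;L^0(\mm))\) in three stages: first by a finite sum \(\sum_i\phi_i\cdot h_i\) with \(\phi_i\in C(\Omega)\) and \(h_i\in L^0(\mm)\), built from a partition of unity; then by replacing each \(h_i\) with a simple function; and finally by replacing each continuous coefficient with an element of \({\rm W}\).

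First I check that \({\rm V}\subseteq{\rm UC}_{\rm ord}(\Omega;L^0(\mm))\). Since \({\rm UC}_{\rm ord}(\Omega;L^0(\mm))\) is an \(L^0(\mm)\)-module, it suffices to show that \(\phi\,h\in{\rm UC}_{\rm ord}(\Omega;L^0(\mm))\) for every \(\phi\in C(\Omega)\) and \(h\in L^0(\mm)\). The map \(\phi h\) is order bounded, since \(|\phi h|^\infty\leq\|\phi\|_{C(\Omega)}|h|\). Moreover \(\phi\) is uniformly continuous for the unique uniformity \(\Phi\) of the compact space \(\Omega\). Hence, for every \(\varepsilon>0\) there is \(\mathcal U\in\Phi\) with \({\rm Var}(\phi h;\mathcal U)\leq\varepsilon|h|\), and letting \(\varepsilon\searrow 0\) gives \(\bigwedge_{\mathcal U}{\rm Var}(\phi h;\mathcal U)=0\). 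Clearly \({\rm V}\) is a vector subspace.

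For density, fix \(f\in{\rm UC}_{\rm ord}(\Omega;L^0(\mm))\). The family \(\{{\rm Var}(f;\mathcal U):\mathcal U\in\Phi\}\) is downward directed, because \({\rm Var}(f;\mathcal U\cap\mathcal U')\leq{\rm Var}(f;\mathcal U)\wedge{\rm Var}(f;\mathcal U')\). Its infimum is \(0\), so by the countable inf property there are entourages \(\mathcal U_n\) with \({\rm Var}(f;\mathcal U_n)\searrow 0\) \(\mm\)-a.e., hence in \(L^0(\mm)\).

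Fix \(n\) and pick a symmetric open entourage \(\mathcal V\subseteq\mathcal U_n\). By compactness, \(\Omega=\bigcup_{i=1}^k\mathcal V[p_i]\) for finitely many points \(p_i\). Since \(\Omega\) is compact Hausdorff, hence normal, there is a continuous partition of unity \((\phi_i)_{i=1}^k\) with \(\phi_i\geq 0\), \(\sum_i\phi_i=1\) and \(\phi_i=0\) outside \(\mathcal V[p_i]\). Set \(g\coloneqq\sum_i\phi_i f_{p_i}\). For each \(p\), every \(i\) with \(\phi_i(p)\neq 0\) satisfies \((p,p_i)\in\mathcal U_n\), so
\[
|f_p-g_p|=\Big|\sum_i\phi_i(p)(f_p-f_{p_i})\Big|\leq{\rm Var}(f;\mathcal U_n).
\]
Taking the supremum over \(p\) gives \(|f-g|^\infty\leq{\rm Var}(f;\mathcal U_n)\).

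Next, choose simple functions \(s_i=\sum_j\lambda_{ij}\1_{E_{ij}}^\mm\) with \(s_i\to f_{p_i}\) in \(L^0(\mm)\). Then
\[
\Big|g-\sum_i\phi_i s_i\Big|^\infty\leq\sum_i|f_{p_i}-s_i|,
\]
which is small in \(L^0(\mm)\). Finally, choose \(\psi_{i}\in{\rm W}\) with \(\|\phi_i-\psi_i\|_{C(\Omega)}\leq\delta\). Taking the \(E_{ij}\) pairwise disjoint in \(j\), this gives
\[
\Big|\sum_i(\phi_i-\psi_i)s_i\Big|^\infty\leq\delta\sum_i|s_i|\to 0\quad\text{as }\delta\to 0,
\]
and \(\sum_{i,j}\lambda_{ij}\psi_i\1_{E_{ij}}^\mm\in{\rm V}\). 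A diagonal choice of \(n\), the simple approximations and \(\delta\) then yields elements of \({\rm V}\) converging to \(f\) in \(\sfd\).

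The main point requiring care is the pointwise bound \(|f-g|^\infty\leq{\rm Var}(f;\mathcal U_n)\). It relies on the partition of unity having supports inside the entourage neighbourhoods \(\mathcal V[p_i]\), which is exactly where the uniform order continuity of \(f\) enters, rather than mere \((\varepsilon,\lambda)\)-continuity. The remaining two approximations are routine.
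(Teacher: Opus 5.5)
Your proof is correct, and its route differs from the paper's in where the main work is done.

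The paper does not prove the key density step itself. It quotes \cite[Remark 5.10]{Pas23} to get that finite sums $\sum_i\phi_i\bar f_i$, with $\phi_i\in{\rm W}$ and $\bar f_i\in L^0(\mm)$, are dense in ${\rm UC}_{\rm ord}(\Omega;L^0(\mm))$. It then carries out only your second stage: in a single product $\phi\bar f$ it replaces $\bar f$ by a simple function, using $|\phi(\bar f-s)|^\infty\leq\|\phi\|_{C(\Omega)}|\bar f-s|$.

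You instead prove the density from scratch, in three steps.
\begin{itemize}
\item Directedness together with the countable inf property gives entourages with ${\rm Var}(f;\mathcal U_n)\searrow 0$.
\item A partition of unity subordinate to a finite cover by the sections $\mathcal V[p_i]$ gives $|f-\sum_i\phi_i f_{p_i}|^\infty\leq{\rm Var}(f;\mathcal U_n)$.
\item Passing from $C(\Omega)$ to ${\rm W}$ is a separate uniform-norm perturbation, controlled by $\delta\sum_i|s_i|$.
\end{itemize}
You also check explicitly that ${\rm V}\subseteq{\rm UC}_{\rm ord}(\Omega;L^0(\mm))$, which the paper leaves implicit.

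The paper's version is shorter but depends on the external result. Yours is self-contained and isolates the one place where uniform order continuity is used, rather than mere $(\varepsilon,\lambda)$-uniform continuity: the pointwise bound by ${\rm Var}(f;\mathcal U_n)$.

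A minor remark: the disjointness of the $E_{ij}$ in $j$ plays no role. It is not needed for the estimate $|\sum_i(\phi_i-\psi_i)s_i|^\infty\leq\delta\sum_i|s_i|$, nor for showing that $\sum_{i,j}\lambda_{ij}\psi_i\1_{E_{ij}}^\mm\in{\rm V}$, so you can drop it.
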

\begin{proof}
It readily follows from \cite[Remark 5.10]{Pas23} that
\[
\bigg\{\sum_{i=1}^n\phi_i\bar f_i\;\bigg|\;n\in\N,\,(\phi_i)_{i=1}^n\subseteq{\rm W},\,(\bar f_i)_{i=1}^n\subseteq L^0(\mm)\bigg\}
\]
is a dense \(L^0(\mm)\)-submodule of \({\rm UC}_{\rm ord}(\Omega;L^0(\mm))\). Thus, it remains to show that for any \(\phi\in{\rm W}\),
\(\bar f\in L^0(\mm)\) and \(\varepsilon>0\) there exists \(g\in{\rm V}\) such that \(\sfd_{L^0(\mm)}(|g-\phi\bar f|^\infty,0)\leq\varepsilon\).
We can find a partition \((E_i)_{i=1}^n\subseteq\Sigma\) of \(\X\) and numbers \((\lambda_i)_{i=1}^n\subseteq\R\) such that
\(\|\phi\|_{C(\Omega)}\sfd_{L^0(\mm)}\big(\bar f,\sum_{i=1}^n\lambda_i\1_{E_i}^\mm\big)\leq\varepsilon\). Then the element
\(g\coloneqq\phi\sum_{i=1}^n\lambda_i\1_{E_i}^\mm\in{\rm V}\) satisfies \(|g-\phi\bar f|^\infty\leq\|\phi\|_{C(\Omega)}\big|\bar f-\sum_{i=1}^n\lambda_i\1_{E_i}^\mm\big|\),
thus accordingly we have that \(\sfd_{L^0(\mm)}(|g-\phi\bar f|^\infty,0)\leq\|\phi\|_{C(\Omega)}\sfd_{L^0(\mm)}\big(\bar f,\sum_{i=1}^n\lambda_i\1_{E_i}^\mm\big)\leq\varepsilon\).
\end{proof}

As a consequence of Lemma \ref{lem:dense_in_UC_ord}, when \((\Omega,\sfd)\) is a compact metric space
there is a natural alternative characterisation of \({\rm UC}_{\rm ord}(\Omega;L^0(\mm))\), which we
are going to discuss. First, we remind that for any separable Banach space \(\mathbb B\) we can consider
the \emph{\(L^0\)-Lebesgue--Bochner space} \(L^0(\mm;\mathbb B)\), which is the complete random normed
module with base \((\X,\Sigma,\mm)\) of all \(\mm\)-measurable maps \(v\colon\X\to\mathbb B\) quotiented
up to \(\mm\)-a.e.\ equality. The \(L^0\)-norm of \(v\in L^0(\mm;\mathbb B)\) is given by
\[
|v|(x)\coloneqq\|v(x)\|_{\mathbb B}\quad\text{ for }\mm\text{-a.e.\ }x\in\X.
\]
See \cite[Definition 2.9]{Pas23}. Recall also that \(C(\Omega)\) is separable if \((\Omega,\sfd)\)
is a compact metric space.
\begin{proposition}\label{prop:charact_UC_ord}
Let \((\Omega,\sfd)\) be a compact metric space. Given any \(f\in L^0(\mm;C(\Omega))\), we fix an
\(\mm\)-measurable representative \(\bar f\colon\X\to C(\Omega)\) of \(f\) and we define
\(\sigma(f)\colon\Omega\to L^0(\mm)\) as
\[
\sigma(f)_p\coloneqq[\bar f(\cdot)(p)]_\mm\quad\text{ for every }p\in\Omega.
\]
Then \(\sigma(f)\in{\rm UC}_{\rm ord}(\Omega;L^0(\mm))\) and the resulting map
\(\sigma\colon L^0(\mm;C(\Omega))\to{\rm UC}_{\rm ord}(\Omega;L^0(\mm))\) is an isometric isomorphism
of random normed modules.
\end{proposition}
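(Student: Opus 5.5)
We first check that \(\sigma\) is well defined and lands in \({\rm UC}_{\rm ord}(\Omega;L^0(\mm))\). If \(\bar f,\bar g\) are two representatives of \(f\), they agree outside an \(\mm\)-null set \(N\). Hence \(\bar f(x)(p)=\bar g(x)(p)\) for all \(p\) and all \(x\notin N\), so \(\sigma(f)_p\) does not depend on the representative. Measurability of \(x\mapsto\bar f(x)(p)\) follows because evaluation at \(p\) is continuous on \(C(\Omega)\).

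For order boundedness, \(|\bar f(x)(p)|\leq\|\bar f(x)\|_{C(\Omega)}\) for every \(x\). Since \(x\mapsto\|\bar f(x)\|_{C(\Omega)}\) is \(\mm\)-measurable and represents \(|f|\), we get \(|\sigma(f)|^\infty\leq|f|\). Conversely, \(\Omega\) has a countable dense set \(D\), and by continuity \(\|\bar f(x)\|_{C(\Omega)}=\sup_{p\in D}|\bar f(x)(p)|\). This yields \(|f|\leq\bigvee_{p\in D}|\sigma(f)_p|\leq|\sigma(f)|^\infty\), so \(|\sigma(f)|^\infty=|f|\).

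For uniform order continuity, take the entourages \(\mathcal U_r=\{\sfd<r\}\). Pick a countable dense set \(D_r\subseteq\mathcal U_r\) of pairs; it is dense because \(\Omega\times\Omega\) is separable metric. Then \({\rm Var}(\sigma(f);\mathcal U_r)\) is represented by \(x\mapsto\omega_{\bar f(x)}(r)\), where \(\omega_\phi(r)\coloneqq\sup_{\sfd(p,q)<r}|\phi(p)-\phi(q)|\) is the modulus of continuity. Indeed, the sup over \(\mathcal U_r\) equals the sup over \(D_r\) by continuity of \(\bar f(x)\), and the pointwise bound in the other direction gives the inequality. Each \(\bar f(x)\) is uniformly continuous, so \(\omega_{\bar f(x)}(1/n)\searrow0\) for every \(x\). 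Therefore \(\bigwedge_n{\rm Var}(\sigma(f);\mathcal U_{1/n})=0\).

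The map \(\sigma\) is clearly \(L^0(\mm)\)-linear, and the previous step shows it preserves the \(L^0\)-norm. It is therefore injective, and its image is closed, since both modules are complete.

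Surjectivity is the main point, and we plan to get it from Lemma \ref{lem:dense_in_UC_ord} with \({\rm W}=C(\Omega)\). The elements \(\sum_i\phi_i\1_{E_i}^\mm\) are exactly \(\sigma\) of the simple maps \(x\mapsto\sum_i\1_{E_i}(x)\phi_i\in L^0(\mm;C(\Omega))\). Hence the image of \(\sigma\) contains a dense subspace of \({\rm UC}_{\rm ord}(\Omega;L^0(\mm))\), and being closed it is everything.

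We expect the care to lie in the measurability and representative issues: the suprema over uncountable families in \(L^0(\mm)\) must be identified with the pointwise suprema of the representatives. The separability reduction to countable sets handles this, using the completeness of \(L^0(\mm)\) as a lattice only through countable suprema.
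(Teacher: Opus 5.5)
Your proposal is correct and follows essentially the paper's route: measurability of the evaluations, the modulus of continuity bounding ${\rm Var}(\sigma(f);\cdot)$, $L^0(\mm)$-linearity plus isometry, and surjectivity from Lemma \ref{lem:dense_in_UC_ord} together with closedness of the isometric image. The only local difference is in the isometry step: you get $|\sigma(f)|^\infty=|f|$ directly from a countable dense subset of $\Omega$, whereas the paper proves the bound $\leq$, checks equality on simple maps, and extends by density.
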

\begin{proof}
Clearly, \(\X\times\Omega\ni(x,p)\mapsto\bar f(x)(p)\in\R\) is a Carath\'{e}odory function, thus it is measurable
if its domain is equipped with the product \(\sigma\)-algebra \(\bar\Sigma_\mm\otimes\mathscr B(\Omega)\).
In particular, \(\bar f(\cdot)(p)\) is \(\mm\)-measurable for every \(p\in\Omega\) and thus
\(\sigma(f)_p=[\bar f(\cdot)(p)]_\mm\) is a well-defined element of \(L^0(\mm)\). Let us next check that
\(\sigma(f)\in{\rm UC}_{\rm ord}(\Omega;L^0(\mm))\). Since \(\sup_{p\in\Omega}|\bar f(x)(p)|=\|\bar f(x)\|_{C(\Omega)}<+\infty\)
for every \(x\in\X\), it follows from Remark \ref{rmk:suff_cond_ord-bdd} that \(\sigma(f)\colon\Omega\to L^0(\mm)\)
is order bounded. Letting
\[
v_r(x)\coloneqq\sup\big\{|\bar f(x)(p)-\bar f(x)(q)|\;\big|\;p,q\in\Omega,\,\sfd(p,q)<r\big\}
\quad\text{ for every }r>0\text{ and }x\in\X,
\]
it is easy to show that we obtain an \(\mm\)-measurable function \(v_r\colon\X\to[0,+\infty)\), which satisfies
\(v_r(x)\searrow 0\) as \(r\searrow 0\) for every \(x\in\X\) by the uniform continuity of \(\bar f(x)\colon\Omega\to\R\). Given that
\[
{\rm Var}(\sigma(f);B_r(\cdot))\leq v_r\quad\text{ holds }\mm\text{-a.e.\ on }\X,
\]
we deduce that \({\rm Var}(\sigma(f);B_r(\cdot))\to 0\) in \(L^0(\mm)\) as \(r\searrow 0\), thus proving
that \(\sigma(f)\colon\Omega\to L^0(\mm)\) is uniformly order continuous. All in all, we have shown that
\(\sigma(f)\in{\rm UC}_{\rm ord}(\Omega;L^0(\mm))\).

It is straightforward to check that the resulting map \(\sigma\colon L^0(\mm;C(\Omega))\to{\rm UC}_{\rm ord}(\Omega;L^0(\mm))\)
is \(L^0(\mm)\)-linear. Moreover, for any \(f\in L^0(\mm;C(\Omega))\) we can estimate
\[
|\sigma(f)|^\infty=\bigvee_{p\in\Omega}|\sigma(f)_p|\leq\big[\|\bar f(\cdot)\|_{C(\Omega)}\big]_\mm=|f|
\quad\text{ in the }\mm\text{-a.e.\ sense,}
\]
thus in particular \(\sigma\) is a homomorphism of random normed modules. Given an \(\mm\)-measurable simple
map \(f=\sum_{i=1}^n\1_{E_i}^\mm\phi_i\colon\X\to C(\Omega)\), where \((E_i)_{i=1}^n\subseteq\bar\Sigma_\mm\)
is a partition of the space \(\X\) and \((\phi_i)_{i=1}^n\subseteq C(\Omega)\), we have that
\[
|\sigma(f)|^\infty=\sum_{i=1}^n\1_{E_i}^\mm\|\phi_i\|_{C(\Omega)}=|f|.
\]
Thanks to the density of \(\mm\)-measurable simple maps \(f\colon\X\to C(\Omega)\) in \(L^0(\mm;C(\Omega))\),
we deduce that \(|\sigma(f)|^\infty=|f|\) for every \(f\in L^0(\mm;C(\Omega))\). Therefore, to conclude it remains
to check that \(\sigma\) is surjective. To this aim, note that the image of the set of all \(\mm\)-measurable simple
maps \(f\colon\X\to C(\Omega)\) under the operator \(\sigma\) is given by the set
\[
{\rm V}\coloneqq\bigg\{\sum_{i=1}^n\phi_i\1_{E_i}^\mm\;\bigg|\;n\in\N,\,(\phi_i)_{i=1}^n\subseteq C(\Omega),
\,(E_i)_{i=1}^n\subseteq\bar\Sigma_\mm\text{ partition of }\X\bigg\},
\]
which is dense in \({\rm UC}_{\rm ord}(\Omega;L^0(\mm))\) by Lemma \ref{lem:dense_in_UC_ord}.
We conclude that \(\sigma\) is surjective.
\end{proof}

We remind that a topological space \((K,\sigma)\) is said to be \textbf{extremally disconnected} if the closure of
any open subset of \(K\) is open.
\begin{remark}\label{rmk:properties_ext_disconn}{\rm
We now report on some useful properties of extremally-disconnected spaces:
\begin{itemize}
\item[\(\rm i)\)] If \((K,\sigma)\) is an extremally-disconnected compact Hausdorff space, then the Baire \(\sigma\)-algebra
\(\mathscr B_0(K)\) is generated by the algebra of all \emph{clopen} subsets of \(K\) (i.e.\ of those subsets of
\(K\) that are both open and closed).
\item[\(\rm ii)\)] Under the same assumptions, it holds that
\[
\bigg\{\sum_{i=1}^n\lambda_i\1_{A_i}\;\bigg|\;n\in\N,\,(\lambda_i)_{i=1}^n\subseteq\R,\,(A_i)_{i=1}^n\text{ clopen subsets of }K\bigg\}
\]
is a dense vector subspace of \(C(K)\).
\item[\(\rm iii)\)] Given any compact Hausdorff space \((\Omega,\tau)\), there exist an extremally-disconnected compact Hausdorff
space \((K,\sigma)\) and a surjective continuous map \(\alpha\colon K\to\Omega\).
\end{itemize}
For a proof of i) and ii) we refer to \cite[Lemma 1]{Hartig83}; iii) was observed in \cite[Lemma 3]{Hartig83}.
\fr}\end{remark}

Let \((K,\sigma)\), \((\Omega,\tau)\) be compact Hausdorff spaces and \(\alpha\colon K\to\Omega\) a continuous map.
Then \(\alpha\) is uniformly continuous (by the compactness of \(K\)), whence it easily follows that
\[
f\circ\alpha\in{\rm UC}_{\rm ord}(K;L^0(\mm))\quad\text{ for every }f\in{\rm UC}_{\rm ord}(\Omega;L^0(\mm))
\]
and that \({\rm UC}_{\rm ord}(\Omega;L^0(\mm))\ni f\mapsto f\circ\alpha\in{\rm UC}_{\rm ord}(K;L^0(\mm))\) is a homomorphism
of random normed modules satisfying \(|f\circ\alpha|^\infty\leq|f|^\infty\) for every \(f\in{\rm UC}_{\rm ord}(\Omega;L^0(\mm))\).
If in addition \(\alpha\) is surjective, then we have that \(|f\circ\alpha|^\infty=|f|^\infty\) for every
\(f\in{\rm UC}_{\rm ord}(\Omega;L^0(\mm))\).

\begin{remark}\label{rmk:adj_operator}{\rm
Letting \((K,\sigma)\), \((\Omega,\tau)\) and \(\alpha\) be as above, we can consider the \textbf{adjoint} map
\[
\alpha^\#\colon{\rm UC}_{\rm ord}(K;L^0(\mm))^*\to{\rm UC}_{\rm ord}(\Omega;L^0(\mm))^*
\]
of the homomorphism of random normed modules \(f\mapsto f\circ\alpha\). The map \(\alpha^\#\), characterised by
\begin{equation}\label{eq:def_alpha_sharp}
(\alpha^\#T)(f)=T(f\circ\alpha)\quad\text{ for every }T\in{\rm UC}_{\rm ord}(K;L^0(\mm))^*\text{ and }f\in{\rm UC}_{\rm ord}(\Omega;L^0(\mm)),
\end{equation}
is a homomorphism of random normed modules, and \(|\alpha^\#|\leq\1_\X^\mm\) (see \cite[Proposition 2.4]{Iko:Pas:Sou:22}).

We claim that if \(\alpha\colon K\to\Omega\) is surjective, then
\[
\forall L\in{\rm UC}_{\rm ord}(\Omega;L^0(\mm))^*\quad\exists\,T\in{\rm UC}_{\rm ord}(K;L^0(\mm))^*
:\quad\alpha^\# T=L,\quad|T|=|L|,
\]
thus in particular \(\alpha^\#\) is a \emph{submetry}. To prove it, fix any \(L\in{\rm UC}_{\rm ord}(\Omega;L^0(\mm))^*\). Note that
\[
{\rm UC}_{\rm ord}(\Omega;L^0(\mm))\ni f\mapsto f\circ\alpha\in{\rm F}\coloneqq
\big\{\tilde f\circ\alpha\;\big|\;\tilde f\in{\rm UC}_{\rm ord}(\Omega;L^0(\mm))\big\}\subseteq{\rm UC}_{\rm ord}(K;L^0(\mm))
\]
is an isometric isomorphism of random normed modules. Let \(\psi\colon{\rm F}\to{\rm UC}_{\rm ord}(\Omega;L^0(\mm))\)
denote its inverse.
Letting \(\tilde T\colon{\rm F}\to L^0(\mm)\) be defined as \(\tilde T(g)\coloneqq L(\psi(g))\) for every \(g\in{\rm F}\),
it is easy to check that \(\tilde T\in{\rm F}^*\) and \(|\tilde T|\leq|L|\). By applying the `random version' of the
\emph{Hahn--Banach extension theorem} (which was originally proved in \cite{Guo-1995}, see also \cite[Corollary 1.2.16]{gigli2018nonsmooth}
and \cite[Theorem 3.30]{LP24}), we obtain the existence of an element \(T\in{\rm UC}_{\rm ord}(K;L^0(\mm))^*\) such that \(T|_{\rm F}=\tilde T\)
and \(|T|=|\tilde T|\leq|L|\). Hence, we have that
\[
T(f\circ\alpha)=\tilde T(f\circ\alpha)=(L\circ\psi)(f\circ\alpha)=L(f)\quad\text{ for every }f\in{\rm UC}_{\rm ord}(\Omega;L^0(\mm)),
\]
which gives that \(\alpha^\#T=L\). Since \(|L|=|\alpha^\#T|\leq|\alpha^\#||T|\leq|T|\), we conclude that \(|T|=|L|\),
thus proving the claim.
\fr}\end{remark}
\subsection{Proof of the random Riesz--Markov--Kakutani theorem}\label{s:proof_RMK}
We are now ready to prove the following statement:
\begin{theorem}[Riesz--Markov--Kakutani theorem for \(L^0\)-valued measures]\label{thm:RMK}
Let \((\Omega,\tau)\) be a compact Hausdorff space. Let us define
\({\rm I}={\rm I}_\Omega\colon\mathfrak M(\Omega;L^0(\mm))\to{\rm UC}_{\rm ord}(\Omega;L^0(\mm))^*\) as
\[
{\rm I}(\mu)(f)\coloneqq\int f\,\d\mu\in L^0(\mm)\quad\text{ for every }\mu\in\mathfrak M(\Omega;L^0(\mm))\text{ and }f\in{\rm UC}_{\rm ord}(\Omega;L^0(\mm)).
\]
Then \({\rm I}\) is an isometric isomorphism of complete random normed modules.
\end{theorem}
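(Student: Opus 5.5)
The plan follows Garling's reduction, as the introduction announces: first treat extremally-disconnected compact spaces, then pass to a general compact Hausdorff \(\Omega\) through a surjection \(\alpha\colon K\to\Omega\) from Remark \ref{rmk:properties_ext_disconn} iii).

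\textbf{Step 1 (well-posedness and contractivity).} By Lemma \ref{lem:UC_is_L1} and \eqref{eq:fund_ineq_int_UC}, \({\rm I}(\mu)\) is \(L^0(\mm)\)-linear with \(|{\rm I}(\mu)|\leq|\mu|_{\rm TV}\). It is also clear that \({\rm I}\) is \(L^0(\mm)\)-linear in \(\mu\).

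\textbf{Step 2 (reverse isometry inequality).} I would show \(|{\rm I}(\mu)|\geq|\mu|_{\rm TV}\) as follows.
\begin{itemize}
\item Fix \(\varepsilon>0\) and a finite Borel partition \((A_i)\) with \(\sum_i|\mu(A_i)|\) close to \(|\mu|_{\rm TV}\) in \(L^0(\mm)\).
\item Split \(\X\) finely into sets \(E_{ij}\) on which the sign of \(\mu(A_i)\) is constant.
\item Using inner regularity (Lemma \ref{lem:inner_reg_self-improv}), choose disjoint compacts \(K_i\subseteq A_i\) carrying almost all of \(|\mu|(A_i)\).
\item By outer regularity, choose disjoint opens \(U_i\supseteq K_i\) with \(|\mu|(U_i\setminus K_i)\) small.
\item Take Urysohn functions \(\phi_i\) with \(\1_{K_i}\leq\phi_i\leq\1_{U_i}\).
\end{itemize}
Then \(f=\sum_i\phi_i\sum_j{\rm sgn}_{ij}\1_{E_{ij}}^\mm\) lies in \({\rm UC}_{\rm ord}\), with \(|f|^\infty\leq\1_\X^\mm\). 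Its integral approximates \(\sum_i|\mu(A_i)|\) up to an \(L^0\)-small error, whence \(|{\rm I}(\mu)|\geq|\mu|_{\rm TV}\). Injectivity follows, and together with Step 1 this gives isometry. Completeness of both sides is already known from \eqref{MesurL0mcompleto}.

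\textbf{Step 3 (surjectivity on extremally-disconnected \(K\)).} Let \(L\in{\rm UC}_{\rm ord}(K;L^0(\mm))^*\). By Proposition \ref{prop:decomp_into_posit} it suffices to treat positive \(L\).
\begin{itemize}
\item Define \(\mu(A)\coloneqq L(\1_A\1_\X^\mm)\) for clopen \(A\). Indicators of clopen sets are continuous, hence belong to \({\rm UC}_{\rm ord}\). By positivity and linearity, \(\mu\) is a finitely additive \(L^0_+\)-valued set function on the clopen algebra.
\item Countable additivity on this algebra is automatic by compactness: a clopen set that is a countable disjoint union of clopens is a finite such union.
\item Extend \(\mu\) to \(\mathscr B_0(K)\), which the clopen algebra generates by Remark \ref{rmk:properties_ext_disconn} i). I would do this with the outer \(L^0\)-measure of Proposition \ref{prop:induced_outer_meas}, applying Carathéodory's Theorem \ref{thm:Carath} to \(\tau^*\) built on clopens. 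I would check that clopens are Carathéodory-measurable and that \(\tau^*\) agrees with \(\mu\) on them, using \(\sigma\)-additivity on the algebra as in Hahn--Kolmogorov.
\item Extend the resulting Baire measure to a Radon one via Theorem \ref{thm:ext_Baire_to_Radon}.
\end{itemize}
Then \({\rm I}(\bar\mu)\) and \(L\) agree on \(\sum\lambda_i\1_{A_i}\1_{E_j}^\mm\). These span a dense subspace by Lemma \ref{lem:dense_in_UC_ord} combined with Remark \ref{rmk:properties_ext_disconn} ii), so \({\rm I}(\bar\mu)=L\) by continuity.

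\textbf{Step 4 (transfer to general \(\Omega\)).} Given \(L\in{\rm UC}_{\rm ord}(\Omega;L^0(\mm))^*\), Remark \ref{rmk:adj_operator} yields \(T\) on \(K\) with \(\alpha^\#T=L\). By Step 3, \(T={\rm I}_K(\nu)\) for some Radon \(\nu\). Set \(\mu\coloneqq\alpha_\#\nu\), which is Radon by Remark \ref{rmk:pushforward_Radon}. Lemma \ref{lem:full_formula_pushforward} then gives \(\int f\,\d\mu=\int f\circ\alpha\,\d\nu=T(f\circ\alpha)=L(f)\). One small check is needed here: \(\alpha^*f\) coincides with \(f\circ\alpha\) in \(L^1_\nu\). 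This follows by approximating \(f\) with the simple maps of Lemma \ref{lem:UC_is_L1}, whose pullbacks are simple maps approximating \(f\circ\alpha\) in \(|\cdot|^{\infty,\nu}\).

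\textbf{Main obstacle.} I expect the hardest part to be Step 2, specifically the need for \(L^0\)-valued (not uniform) sign choices and the interplay with the \(L^0\)-convergence of regularity approximations. The same difficulty recurs in Step 3 in checking that the Carathéodory extension reproduces \(\mu\) on clopens.
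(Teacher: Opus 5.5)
Your proposal is correct. For surjectivity it coincides with the paper's general-case proof. Your Step 3 is the paper's Step 1: positive decomposition, outer \(L^0\)-measure on the clopen algebra, Carath\'{e}odory, then Theorem \ref{thm:ext_Baire_to_Radon}. Your Step 4 is its Step 2; you are right to check that \(\alpha^*f=f\circ\alpha\) in \(L^1_\nu\), which the paper only asserts.

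The genuine difference is the isometry. The paper never bounds \(|\mu|_{\rm TV}\) by \(|{\rm I}(\mu)|\) for a given \(\mu\). Instead it reads a norm bound off the construction:
\begin{itemize}
\item on \(K\) it uses \(|T|=|T_+|+|T_-|\) and Remark \ref{rmk:posit_is_cont};
\item it then uses the norm-preserving lift \(|T|=|L|\) of Remark \ref{rmk:adj_operator} together with \eqref{eq:ineq_pushforward}.
\end{itemize}
This produces, for each \(L\), some preimage \(\mu\) with \(|\mu|_{\rm TV}\leq|L|\). That yields the isometry only once \({\rm I}\) is known to be injective, which the general-case argument leaves implicit; injectivity is proved only in the metrisable case.

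Your Step 2 proves \(|{\rm I}(\mu)|\geq|\mu|_{\rm TV}\) directly for every Radon \(\mu\) on any compact Hausdorff space. It uses regularity, Urysohn functions with disjoint supports, and \(L^\infty\)-valued signs. Injectivity therefore comes for free, and you need only the existence parts of Proposition \ref{prop:decomp_into_posit} and Remark \ref{rmk:adj_operator}, not their norm identities.

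The cost is an \(L^0\)-approximation argument the paper avoids. You should state explicitly why a single finite partition, and single sets \(K_i\) and \(U_i\), can be chosen with \(L^0\)-small error:
\begin{itemize}
\item partition sums increase under common refinement;
\item compact subsets of a set form an upward directed family, and open supersets form a downward directed one.
\end{itemize}
By the countable sup property, each such supremum or infimum is therefore the \(L^0\)-limit of a monotone sequence.
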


We will present two different proofs of Theorem \ref{thm:RMK}: the first one works only in the particular case where \((\Omega,\tau)\)
is metrisable, but it has the advantage of showing that in this case every Radon \(L^0\)-valued measure can be foliated (cf.\ Corollary
\ref{corolmainthm}); the second one does not require any additional assumption on \((\Omega,\tau)\)
and is a generalisation of the proof of the classical Riesz--Markov--Kakutani theorem that was presented in \cite{Hartig83}.
\begin{remark}\label{rmk:RMK_easy_ineq}{\rm
The part of Theorem \ref{thm:RMK} that is easy to prove is the following one:
\[
{\rm I}\colon\mathfrak M(\Omega;L^0(\mm))\to{\rm UC}_{\rm ord}(\Omega;L^0(\mm))^*\quad\text{ is a homomorphism of random normed modules}
\]
and \(|{\rm I}|\leq\1_\X^\mm\). Indeed, for any \(\mu\in\mathfrak M(\Omega;L^0(\mm))\) we have that
\({\rm I}(\mu)\colon{\rm UC}_{\rm ord}(\Omega;L^0(\mm))\to L^0(\mm)\) is an element of the random conjugate
space \({\rm UC}_{\rm ord}(\Omega;L^0(\mm))^*\) with \(|{\rm I}(\mu)|\leq|\mu|_{\rm TV}\), as a consequence of
\eqref{eq:fund_ineq_int_UC}. It is then easy to check that \({\rm I}\colon\mathfrak M(\Omega;L^0(\mm))\to{\rm UC}_{\rm ord}(\Omega;L^0(\mm))^*\)
is a homomorphism of random normed modules satisfying \(|{\rm I}|\leq\1_\X^\mm\). Therefore, to complete the
proof of Theorem \ref{thm:RMK}, it remains to show that \({\rm I}\) is surjective and that \(|\mu|_{\rm TV}\leq|{\rm I}(\mu)|\)
for every \(\mu\in\mathfrak M(\Omega;L^0(\mm))\).
\fr}\end{remark}
\begin{proof}[Proof of Theorem \ref{thm:RMK} -- Metrisable case]
Assume in addition that \((\Omega,\tau)\) is metrisable. Fix a distance \(\sfd\) on \(\Omega\) that induces
the topology \(\tau\). In view of Remark \ref{rmk:RMK_easy_ineq}, we only have to show that \({\rm I}\) is surjective
and \(|\mu|_{\rm TV}\leq|{\rm I}(\mu)|\) for every \(\mu\in\mathfrak M(\Omega;L^0(\mm))\).
We subdivide the proof into three steps.
\smallskip

\textbf{Step 1.} Let us prove that \({\rm I}\) is surjective. To this aim, fix any
\(L\in{\rm UC}_{\rm ord}(\Omega;L^0(\mm))^*\). Take a partition \((E_j)_{j\in\N}\subseteq\Sigma\) of \(\X\) such that
\(\1_{E_j}^\mm|L|\in L^\infty(\mm)\) for every \(j\in\N\). Fix a von Neumann lifting \(\ell\) of the measure \(\bar\mm\).
Now, fix any \(j\in\N\) and \(\phi\in C(\Omega)\). Then it is easy to check that the map \(\phi\1_{E_j}^\mm\colon\Omega\to L^0(\mm)\)
belongs to \({\rm UC}_{\rm ord}(\Omega;L^0(\mm))\) and satisfies
\(|\phi\1_{E_j}^\mm|^\infty\leq\|\phi\|_{C(\Omega)}\1_{E_j}^\mm\). It follows that \(|L(\phi\1_{E_j}^\mm)|\leq\|\phi\|_{C(\Omega)}\1_{E_j}^\mm|L|\),
thus in particular \(L(\phi\1_{E_j}^\mm)\in L^\infty(\mm)\). Therefore, given any point \(x\in\X\), it makes sense to define the functional
\(L_x\colon C(\Omega)\to\R\) as
\[
L_x(\phi)\coloneqq\left\{\begin{array}{ll}
\ell(L(\phi\1_{E_j}^\mm))(x)\\
0
\end{array}\quad\begin{array}{ll}
\text{ if }x\in\ell(E_j)\text{ for some }j\in\N,\\
\text{ if }x\in\X\setminus\bigcup_{j\in\N}\ell(E_j).
\end{array}\right.
\]
The properties of von Neumann liftings ensure that \(L_x\) is linear. For any \(x\in\ell(E_j)\), we have
\[
|L_x(\phi)|=\ell(|L(\phi\1_{E_j}^\mm)|)(x)\leq\|\phi\|_{C(\Omega)}\ell(\1_{E_j}^\mm|L|)(x)\leq\|\phi\|_{C(\Omega)}G_L(x)
\quad\text{ for every }\phi\in C(\Omega),
\]
where \(G_L\coloneqq\sum_{j\in\N}\ell(\1_{E_j}^\mm|L|)\in\mathcal L^0_+(\bar\Sigma_\mm)\). This implies
\(L_x\in C(\Omega)^*\) and \(\|L_x\|_{C(\Omega)^*}\leq G_L(x)\) for all \(x\in\X\). By the Riesz--Markov--Kakutani theorem,
there exists a unique \(\mu^L_x\in\mathfrak M(\Omega)\) such that
\begin{equation}\label{eq:def_mu_x}
\int\phi\,\d\mu^L_x=L_x(\phi)\quad\text{ for every }\phi\in C(\Omega)
\end{equation}
and \(\|\mu_x^L\|_{\rm TV}=\|L_x\|_{C(\Omega)^*}\). For any \(\phi\in C(\Omega)\), we have that the function
\[
\X\ni x\mapsto\int\phi\,\d\mu^L_x=\sum_{j\in\N}\1_{\ell(E_j)}(x)\ell(L(\phi\1_{E_j}^\mm))(x)\in\R\quad\text{ is }\mm\text{-measurable,}
\]
so that \((\mu^L_x)_{x\in\X}\) is an \(\mm\)-measurable collection of measures by Lemma \ref{lem:meas_mu_x}. Consequently, we denote by
\(\mu^L\in\mathcal M(\Omega;L^0(\mm))\) the \(L^0\)-valued measure whose foliation is given by \((\mu^L_x)_{x\in\X}\).
Corollary \ref{cor:every_mu_Radon_Polish} ensures that \(\mu^L\in\mathfrak M(\Omega;L^0(\mm))\). We claim that
\begin{equation}\label{eq:I_surj}
{\rm I}(\mu^L)=L,
\end{equation}
which yields the surjectivity of \({\rm I}\). To prove it, fix \(f=\sum_{i=1}^n\phi_i\1_{F_i}^\mm\in{\rm V}\), where \({\rm V}\) is
defined as in Lemma \ref{lem:dense_in_UC_ord}. Hence, for any \(j\in\N\) and \(\mm\)-a.e.\ \(x\in E_j\) we have, by Lemma
\ref{lem:int_mu_ptwse_descript}, that
\[\begin{split}
{\rm I}(\mu^L)(f)(x)&=\sum_{i=1}^n\bigg(\int\phi_i\1_{F_i}^\mm\,\d\mu^L\bigg)(x)
=\sum_{i=1}^n\1_{F_i}^\mm(x)\int\phi_i\,\d\mu^L_x=\sum_{i=1}^n\1_{F_i}^\mm(x)L_x(\phi_i)\\
&=\sum_{i=1}^n\1_{F_i}^\mm(x)\ell(L(\phi_i\1_{E_j}^\mm))(x)=\1_{E_j}^\mm(x)\sum_{i=1}^n L(\phi_i\1_{F_i}^\mm)(x)=L(f)(x).
\end{split}\]
Since \(\bigcup_{j\in\N}E_j=\X\), we deduce that \({\rm I}(\mu^L)(f)=L(f)\) for every \(f\in{\rm V}\). Given that \({\rm V}\) is dense
in \({\rm UC}_{\rm ord}(\Omega;L^0(\mm))\) by Lemma \ref{lem:dense_in_UC_ord}, we conclude that \eqref{eq:I_surj} holds, thus \({\rm I}\)
is surjective.
\smallskip

\textbf{Step 2.} Next, we prove that \({\rm I}\) is injective. To this aim, assume \(\mu,\nu\in\mathfrak M(\Omega;L^0(\mm))\) are chosen so
that \({\rm I}(\mu)={\rm I}(\nu)\). Fix a compact set \(K\subseteq\Omega\). Set \(\eta_n\coloneqq\big(1-n\sfd(\cdot,K)\big)\vee 0\in C(\Omega;[0,1])\)
and \(A_n\coloneqq\{p\in\Omega\setminus K:\sfd(x,K)\leq 1/n\}\in\mathscr B(\Omega)\) for all \(n\in\N\). Since \(|\1_K-\eta_n|\leq\1_{A_n}\), we get
\[\begin{split}
|\mu(K)-\nu(K)|&\leq\bigg|\int(\1_K-\eta_n)\1_\X^\mm\,\d(\mu-\nu)\bigg|+\big|{\rm I}(\mu)(\eta_n\1_\X^\mm)-{\rm I}(\nu)(\eta_n\1_\X^\mm)\big|\\
&\leq\int|\1_K-\eta_n|\1_\X^\mm\,\d|\mu-\nu|\leq\int\1_{A_n}\1_\X^\mm\,\d|\mu-\nu|\\
&=|\mu-\nu|(A_n)\quad\text{ for every }n\in\N.
\end{split}\]
Due to the fact that \(A_{n+1}\subseteq A_n\) for every \(n\in\N\) and \(\bigcap_{n\in\N}A_n=\varnothing\), we know from Remark \ref{rmk:cont_above_L0_meas}
that \(|\mu-\nu|(A_n)\to|\mu-\nu|(\varnothing)=0\) in \(L^0(\mm)\) as \(n\to\infty\). Therefore, \(\mu(K)=\nu(K)\) for every \(K\subseteq\Omega\) compact.
Finally, by using the inner regularity of \(|\mu-\nu|\) (see Lemma \ref{lem:inner_reg_self-improv}) we conclude that
\[\begin{split}
|\mu(B)-\nu(B)|&\leq\bigwedge\bigg\{|(\mu-\nu)(B\setminus K)|+|(\mu-\nu)(K)|\;\bigg|\;K\subseteq B\text{ compact}\bigg\}\\
&\leq\bigwedge\bigg\{|\mu-\nu|(B\setminus K)\;\bigg|\;K\subseteq B\text{ compact}\bigg\}=0\quad\text{ for every }B\in\mathscr B(\Omega),
\end{split}\]
which means that \(\mu=\nu\). Consequently, we have proven that the map \({\rm I}\) is injective.
\smallskip

\textbf{Step 3.} Let us now prove that \(|{\rm I}(\mu)|\geq|\mu|_{\rm TV}\) for every \(\mu\in\mathfrak M(\Omega;L^0(\mm))\), whence it follows
that \({\rm I}\) is an isometric isomorphism of complete random normed modules.
\textbf{Step 1} implies that
\begin{equation}\label{eq:RMK_aux}
|\mu^{{\rm I}(\mu)}_x|(\Omega)=\|\mu^{{\rm I}(\mu)}_x\|_{\rm TV}=\|{\rm I}(\mu)_x\|_{C(\Omega)^*}\leq G_{{\rm I}(\mu)}(x)=|{\rm I}(\mu)|(x)
\quad\text{ for }\mm\text{-a.e.\ }x\in\X.
\end{equation}
Moreover, we know from Corollary \ref{cor:meas_|mu_x|} that \((|\mu^{{\rm I}(\mu)}_x|)_{x\in\X}\) is an \(\mm\)-measurable collection of measures,
and the injectivity of \({\rm I}\) implies that \(\mu^{{\rm I}(\mu)}=\mu\). Hence, Remark \ref{rmk:integr_mu_x} and \eqref{eq:RMK_aux} guarantee that
\[
|\mu|_{\rm TV}(x)=|\mu^{{\rm I}(\mu)}|_{\rm TV}(x)=|\mu^{{\rm I}(\mu)}|(\Omega)(x)\leq|\mu^{{\rm I}(\mu)}_x|(\Omega)\leq|{\rm I}(\mu)|(x)
\quad\text{ for }\mm\text{-a.e.\ }x\in\X,
\]
thus proving that \(|\mu|_{\rm TV}\leq|{\rm I}(\mu)|\). Therefore, the statement is achieved.
\end{proof}
\begin{corollary}\label{corolmainthm}
Let \((\Omega,\sfd)\) be a compact metric space. Then each \(\mu\in\mathfrak M(\Omega;L^0(\mm))\) admits an \(\mm\)-a.e.\ unique foliation
\((\mu_x)_{x\in\X}\subseteq\mathfrak M(\Omega)\). Moreover, \((|\mu_x|)_{x\in\X}\) is the \(\mm\)-a.e.\ unique foliation of \(|\mu|\).
\end{corollary}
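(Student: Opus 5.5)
The plan is to read existence off the metrisable proof of Theorem \ref{thm:RMK}, take uniqueness from the remark on second-countable spaces, and identify the foliation of \(|\mu|\) by squeezing it between two bounds.

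\textbf{Existence.} Fix \(\mu\in\mathfrak M(\Omega;L^0(\mm))\) and set \(L\coloneqq{\rm I}(\mu)\in{\rm UC}_{\rm ord}(\Omega;L^0(\mm))^*\). Step 1 of the metrisable proof builds from \(L\) an \(\mm\)-measurable collection \((\mu^L_x)_{x\in\X}\subseteq\mathfrak M(\Omega)\). It foliates a Radon \(L^0\)-valued measure \(\mu^L\) with \({\rm I}(\mu^L)=L\). Injectivity of \({\rm I}\) (Step 2) gives \(\mu^L=\mu\), so \((\mu^L_x)_x\) is a foliation of \(\mu\).

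\textbf{Uniqueness.} A compact metric space is second countable, so Remark \ref{rmk:unique_foliation_top} (or Proposition \ref{prop:unique_foliation}) gives \(\mm\)-a.e.\ uniqueness. This applies to any two foliations by Radon measures, and every finite Borel measure on a compact metric space is Radon.

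\textbf{Foliation of \(|\mu|\).} By Corollary \ref{cor:meas_|mu_x|}, \((|\mu_x|)_{x\in\X}\) is an \(\mm\)-measurable collection, so it foliates some \(\nu\in\mathcal M_+(\Omega;L^0(\mm))\).
\begin{itemize}
\item[\(\rm i)\)] Remark \ref{rmk:integr_mu_x} gives \(|\mu|\leq\nu\).
\item[\(\rm ii)\)] For the total masses, Step 3 of the metrisable proof gives \(\nu(\Omega)(x)=|\mu_x|(\Omega)\leq|{\rm I}(\mu)|(x)=|\mu|_{\rm TV}(x)\) for \(\mm\)-a.e.\ \(x\), using \({\rm I}\) isometric. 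Thus \(\nu(\Omega)\leq|\mu|(\Omega)\).
\item[\(\rm iii)\)] Now \(\nu-|\mu|\) is a non-negative \(L^0\)-valued measure with total mass \(\leq0\), hence zero, so \(\nu=|\mu|\).
\end{itemize}

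\textbf{Where care is needed.} For the estimate \(|\mu_x|(\Omega)\leq G_L(x)\), the inequality \eqref{eq:RMK_aux} is only stated for the specific constructed foliation. By uniqueness it holds \(\mm\)-a.e.\ for any foliation, so this is harmless. Uniqueness of the foliation of \(|\mu|\) then follows again from Remark \ref{rmk:unique_foliation_top}.
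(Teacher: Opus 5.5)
Your proposal is correct and follows the paper's own proof. You get existence from Steps 1--2 of the metrisable proof of Theorem~\ref{thm:RMK}, uniqueness from Remark~\ref{rmk:unique_foliation_top}, and you identify the foliation of $|\mu|$ by squeezing it between Remark~\ref{rmk:integr_mu_x} and the total-mass identity from Step~3; your phrasing via ``$\nu-|\mu|$ is non-negative with non-positive total mass'' is just a repackaging of the paper's pointwise argument with $A$ and $\Omega\setminus A$.
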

\begin{proof}
The fact that each \(\mu\in\mathfrak M(\Omega;L^0(\mm))\) has a foliation \((\mu_x)_{x\in\X}\subseteq\mathfrak M(\Omega)\)
is a byproduct of \textbf{Steps 1} and \textbf{2} of the proof of Theorem \ref{thm:RMK}. Corollary \ref{cor:meas_|mu_x|} ensures that
\((|\mu_x|)_{x\in\X}\) is an \(\mm\)-measurable collection of measures. Moreover, given \(A\in\mathscr B(\Omega)\),
we have \(|\mu|(A)(x)\leq|\mu_x|(A)\) and \(|\mu|(\Omega\setminus A)(x)\leq|\mu_x|(\Omega\setminus A)\) for \(\mm\)-a.e.\ \(x\in\X\)
by Remark \ref{rmk:integr_mu_x}, while in \textbf{Step 3} of the proof of Theorem \ref{thm:RMK} it is shown that
\(|\mu|_{\rm TV}(x)=\|\mu_x\|_{\rm TV}\) for \(\mm\)-a.e.\ \(x\in\X\). Combining these facts together, for \(\mm\)-a.e.\ \(x\in\X\) we obtain that
\[
|\mu|_{\rm TV}(x)=|\mu|(A)(x)+|\mu|(\Omega\setminus A)(x)\leq|\mu_x|(A)+|\mu_x|(\Omega\setminus A)=\|\mu_x\|_{\rm TV}=|\mu|_{\rm TV}(x),
\]
which forces the validity of the \(\mm\)-a.e.\ equality \(|\mu|(A)(x)=|\mu_x|(A)\). This shows that \((|\mu_x|)_{x\in\X}\) is a foliation of
\(|\mu|\). Finally, the \(\mm\)-a.e.\ uniqueness of the foliations of \(\mu\) and \(|\mu|\) is guaranteed by Remark \ref{rmk:unique_foliation_top}.
All in all, the statement is achieved.
\end{proof}

The above result suggests another characterisation of \(\mathfrak M(\Omega;L^0(\mm))\) when \((\Omega,\sfd)\)
is a compact metric space, as we are going to discuss. Given a separable Banach space \(\mathbb B\), we denote
by \(L^0_{w^*}(\mm;\mathbb B^*)\) the space of all those maps \(\omega\colon\X\to\mathbb B^*\) that are
\emph{weakly\(^*\) \(\mm\)-measurable}, i.e.
\[
\X\ni x\mapsto\omega(x)(v)\in\R\quad\text{ is }\mm\text{-measurable for every }v\in\mathbb B,
\]
quotiented up to \(\mm\)-a.e.\ equality. Then \(L^0_{w^*}(\mm;\mathbb B^*)\) is a complete random normed module
with base \((\X,\Sigma,\mm)\), where the \(L^0\)-norm of \(\omega\in L^0_{w^*}(\mm;\mathbb B^*)\) is
given by \(|\omega|(x)\coloneqq\|\omega(x)\|_{\mathbb B^*}\) for \(\mm\)-a.e.\ \(x\in\X\). The \(L^0\)-Lebesgue--Bochner
space \(L^0(\mm;\mathbb B^*)\) can be identified with an \(L^0(\mm)\)-submodule of \(L^0_{w^*}(\mm;\mathbb B^*)\).
Furthermore, it holds that
\begin{equation}\label{eq:char_dual_LebBoch}
L^0_{w^*}(\mm;\mathbb B^*)\text{ and }L^0(\mm;\mathbb B)^*\text{ are isometrically isomorphic.}
\end{equation}
The above statements follow from the results of \cite{LPV22,GuoMuTu} (see also \cite{Guo1996,GLP}).
\begin{theorem}\label{thm:charact_foliation_cpt_metric}
Let \((\Omega,\sfd)\) be a compact metric space. The map sending \(\mu\in\mathfrak M(\Omega;L^0(\mm))\)
to its \(\mm\)-a.e.\ unique foliation \((\mu_x)_{x\in\X}\) is an isometric isomorphism of random normed modules
\[
\mathfrak M(\Omega;L^0(\mm))\to L^0_{w^*}(\mm;\mathfrak M(\Omega)).
\]
\end{theorem}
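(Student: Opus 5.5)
The cleanest route is to factor the map through Theorem \ref{thm:RMK}, Proposition \ref{prop:charact_UC_ord} and the duality \eqref{eq:char_dual_LebBoch}. Since \((\Omega,\sfd)\) is compact metric, \(C(\Omega)\) is a separable Banach space and \(C(\Omega)^*=\mathfrak M(\Omega)\) by the classical Riesz--Markov--Kakutani theorem. We therefore have a chain of isometric isomorphisms of random normed modules:
\[
\mathfrak M(\Omega;L^0(\mm))\xrightarrow{\ {\rm I}\ }{\rm UC}_{\rm ord}(\Omega;L^0(\mm))^*\xrightarrow{\ \sigma^{\rm ad}\ }L^0(\mm;C(\Omega))^*\cong L^0_{w^*}(\mm;\mathfrak M(\Omega)).
\]
Here \(\sigma^{\rm ad}(L)\coloneqq L\circ\sigma\). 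Because \(\sigma\) is an isometric isomorphism, its adjoint is again an isometric isomorphism, which one checks directly from \eqref{eq:ptwse_norm_hom}. The last identification is \eqref{eq:char_dual_LebBoch}, where \(\omega\in L^0_{w^*}(\mm;\mathfrak M(\Omega))\) acts on \(f\in L^0(\mm;C(\Omega))\) through \(x\mapsto\int\bar f(x)\,\d\omega(x)\). The composition is therefore an isometric isomorphism, and what remains is to identify it with the foliation map.

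To do this, take \(\mu\in\mathfrak M(\Omega;L^0(\mm))\) with its foliation \((\mu_x)_{x\in\X}\) from Corollary \ref{corolmainthm}. The first observation is that \(x\mapsto\mu_x\) is weakly\(^*\) \(\mm\)-measurable as a map into \(C(\Omega)^*\): by Lemma \ref{lem:meas_mu_x}, applied to \(\phi^\pm\), the function \(x\mapsto\int\phi\,\d\mu_x\) is measurable for every \(\phi\in C(\Omega)\). Hence \((\mu_x)_x\) defines an element of \(L^0_{w^*}(\mm;\mathfrak M(\Omega))\).

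Next, we evaluate the composite functional at a simple map \(f=\sum_i\1_{E_i}^\mm\phi_i\), where \((E_i)\) is a partition of \(\X\). Its value is \({\rm I}(\mu)(\sigma(f))=\sum_i\int\phi_i\1_{E_i}^\mm\,\d\mu\). By Lemma \ref{lem:int_mu_ptwse_descript}, this equals \(\sum_i\1_{E_i}(x)\int\phi_i\,\d\mu_x=\int\bar f(x)\,\d\mu_x\) for \(\mm\)-a.e.\ \(x\). This is exactly the pairing of \((\mu_x)_x\) with \(f\) under \eqref{eq:char_dual_LebBoch}. Simple maps are dense in \(L^0(\mm;C(\Omega))\) and both sides are homomorphisms, so the two functionals agree. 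Consequently, the composite isomorphism sends \(\mu\) to \([x\mapsto\mu_x]\), which shows that the foliation map is well defined (independent of the \(\mm\)-a.e.\ representative, by Remark \ref{rmk:unique_foliation_top}), isometric, \(L^0\)-linear and bijective.

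As a consistency check on the isometry, Corollary \ref{corolmainthm} already gives \(|\mu|_{\rm TV}(x)=\|\mu_x\|_{\rm TV}=|(\mu_x)_x|(x)\), which matches the \(L^0\)-norm on \(L^0_{w^*}(\mm;\mathfrak M(\Omega))\).

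The main obstacle is making sure that the abstract duality \eqref{eq:char_dual_LebBoch} is realised by precisely the pairing \(\int\bar f(x)\,\d\omega(x)\), so that the identification above is legitimate. If that cannot be taken from \cite{LPV22,GuoMuTu}, surjectivity can instead be proved directly. Given a weakly\(^*\) measurable \(\omega\), Lemma \ref{lem:meas_mu_x} shows that \((\omega(x))_x\) is an \(\mm\)-measurable collection, so Lemma \ref{lem:foliation} produces an \(L^0\)-valued measure \(\mu\), which is Radon by Corollary \ref{cor:every_mu_Radon_Polish}. The foliation of this \(\mu\) is \(\omega\) by construction.
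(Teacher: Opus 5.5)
Your proof is correct. Your main route differs from the paper's, while your fallback together with your ``consistency check'' is essentially the paper's own proof.

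The paper argues directly:
\begin{itemize}
\item the foliation exists and is $\mm$-a.e.\ unique by Corollary \ref{corolmainthm};
\item it lies in $L^0_{w^*}(\mm;\mathfrak M(\Omega))$ by Lemma \ref{lem:meas_mu_x};
\item bijectivity and $L^0(\mm)$-linearity are left as easy, with surjectivity being exactly your argument via Lemma \ref{lem:foliation} and Corollary \ref{cor:every_mu_Radon_Polish};
\item the isometry is the identity $|\mu|_{\rm TV}(x)=\|\mu_x\|_{\rm TV}$ from Corollary \ref{corolmainthm}, which you use only as a check.
\end{itemize}

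Your main route instead writes the foliation map $\Phi$ as the composite of ${\rm I}$, the adjoint $\sigma^{\rm ad}$ and the duality \eqref{eq:char_dual_LebBoch}, matched on simple maps via Lemma \ref{lem:int_mu_ptwse_descript}. The paper runs this comparison in the reverse direction, in the remark after the theorem. Your approach gives a conceptual explanation: $\Phi$ is the transpose of $\sigma$ under Theorem \ref{thm:RMK}. The price is invoking the full random Riesz--Markov--Kakutani theorem and an external duality result. You also still need Corollary \ref{corolmainthm} to have the foliation at hand, and that corollary already contains the isometry.

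The obstacle you flag needs far less than \eqref{eq:char_dual_LebBoch}. Let $\kappa\colon L^0_{w^*}(\mm;\mathfrak M(\Omega))\to L^0(\mm;C(\Omega))^*$ be the natural pairing map. You have shown $\kappa(\Phi(\mu))=\sigma^{\rm ad}({\rm I}(\mu))$. Injectivity of $\kappa$ is elementary: test against the constant maps $\1_\X^\mm\phi$ with $\phi$ ranging over a countable dense subset of $C(\Omega)$. Now, given $\omega$, the bijectivity of $\sigma^{\rm ad}\circ{\rm I}$ provides $\mu$ with $\sigma^{\rm ad}({\rm I}(\mu))=\kappa(\omega)$. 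Then $\kappa(\Phi(\mu))=\kappa(\omega)$, hence $\Phi(\mu)=\omega$. The isometry again comes from Corollary \ref{corolmainthm}. So surjectivity of the natural pairing for $\mathbb B=C(\Omega)$ is a byproduct of your argument rather than an input.
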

\begin{proof}
Given \(\mu\in\mathfrak M(\Omega;L^0(\mm))\), we denote by \(\Phi(\mu)\) the \(\mm\)-a.e.\ unique foliation
\(x\mapsto\mu_x\in\mathfrak M(\Omega)\) of \(\mu\) considered up to \(\mm\)-a.e.\ equality. Thanks to Lemma
\ref{lem:meas_mu_x}, the fact that \(x\mapsto\mu_x\) is an \(\mm\)-measurable collection of measures is
equivalent to the requirement that \(\X\ni x\mapsto\int\phi\,\d\mu_x\) is \(\mm\)-measurable for every
\(\phi\in C(\Omega)\). Since \(\mathfrak M(\Omega)\) is the Banach space dual of \(C(\Omega)\), this exactly
means that \(\Phi(\mu)\in L^0_{w^*}(\mm;\mathfrak M(\Omega))\). It is easy to check that
\(\Phi\colon\mathfrak M(\Omega;L^0(\mm))\to L^0_{w^*}(\mm;\mathfrak M(\Omega))\) is a bijective
\(L^0(\mm)\)-linear operator. Finally, the last statement of Corollary \ref{corolmainthm} implies that
\[
|\Phi(\mu)|(x)=|\mu_x|(\Omega)=|\mu|_{\rm TV}(x)\quad\text{ for }\mm\text{-a.e.\ }x\in\X,
\]
thus accordingly \(\Phi\) is an isometric isomorphism of random normed modules.
\end{proof}
\begin{remark}{\rm
It follows from \eqref{eq:char_dual_LebBoch} that \(L^0_{w^*}(\mm;\mathfrak M(\Omega))\) and
\(L^0(\mm;C(\Omega))^*\) are isometrically isomorphic. Therefore, combining Theorems \ref{thm:RMK}
and \ref{thm:charact_foliation_cpt_metric} we obtain that \({\rm UC}_{\rm ord}(\Omega;L^0(\mm))^*\)
and \(L^0(\mm;C(\Omega))^*\) are isometrically isomorphic when \((\Omega,\sfd)\) is a compact
metric space, consistently with Proposition \ref{prop:charact_UC_ord}.
\fr}\end{remark}

We present another proof of Theorem \ref{thm:RMK} (inspired by \cite{Hartig83}) that works unconditionally:
\begin{proof}[Proof of Theorem \ref{thm:RMK} -- General case]
Remark \ref{rmk:properties_ext_disconn} iii) yields an extremally-disconnected compact
Hausdorff space \((K,\sigma)\) and a continuous map \(\alpha\colon K\to\Omega\) such that \(\alpha(K)=\Omega\).
We proceed as follows: first we prove the statement for \((K,\sigma)\), then we deduce it for \((\Omega,\tau)\).
\smallskip

\textbf{Step 1.} We aim to show that \({\rm I}_K\colon\mathfrak M(K;L^0(\mm))\to{\rm UC}_{\rm ord}(K;L^0(\mm))^*\)
is an isometric isomorphism of random normed modules. By Remark \ref{rmk:RMK_easy_ineq}, it suffices
to prove that \({\rm I}_K\) is surjective and \(|\nu|_{\rm TV}\leq|{\rm I}_K(\nu)|\) for all \(\nu\in\mathfrak M(K;L^0(\mm))\).
To this aim, fix \(T\in{\rm UC}_{\rm ord}(K;L^0(\mm))^*\). Proposition \ref{prop:decomp_into_posit} provides us with
two elements \(T_\pm\in{\rm UC}_{\rm ord}(K;L^0(\mm))^*_+\) such that \(T=T_+-T_-\) and \(|T|=|T_+|+|T_-|\).
Let \(\mathcal C\) be the algebra of clopen subsets of \(K\). We set \(\tau\colon\mathcal C\to L^0_+(\mm)\) as
\[
\tau_\pm(A)\coloneqq T_\pm(\1_A\1_\X^\mm)\quad\text{ for every }A\in\mathcal C.
\]
Clearly, we have that \(\tau_\pm(\varnothing)=T_\pm(0)=0\). Let us denote by \(\tau_\pm^*\colon 2^K\to L^0_+(\mm)\) the outer
\(L^0\)-valued measure associated to \(\tau_\pm\) as in Proposition \ref{prop:induced_outer_meas}. We claim that
\begin{equation}\label{eq:RMK_ext_disconn_1}
\tau_\pm^*(A)=\tau_\pm(A)\quad\text{ for every }A\in\mathcal C.
\end{equation}
Indeed, the inequality \(\tau_\pm^*(A)\leq\tau_\pm(A)\) trivially holds, while to show the converse inequality we argue as follows.
Take \((A_n)_{n\in\N}\subseteq\mathcal C\) so that \(A\subseteq\bigcup_{n\in\N}A_n\). Since \(A\) is compact, we can find
\(n_0\in\N\) such that \(A_n=\varnothing\) for every \(n>n_0\). Define \(B_n\coloneqq(A\cap A_n)\setminus\bigcup_{i<n}A_i\subseteq A_n\)
for every \(n=1,\ldots,n_0\). Note that \(B_1,\ldots,B_{n_0}\in\mathcal C\) are pairwise disjoint and \(A=\bigcup_{n=1}^{n_0}B_n\).
Using the linearity and the positivity of \(T_\pm\), we thus obtain that
\[
\sum_{n=1}^\infty\tau_\pm(A_n)=\sum_{n=1}^{n_0}\tau_\pm(A_n)=\sum_{n=1}^{n_0}T_\pm(\1_{A_n}\1_\X^\mm)
\geq\sum_{n=1}^{n_0}T_\pm(\1_{B_n}\1_\X^\mm)=T_\pm(\1_A\1_\X^\mm)=\tau_\pm(A).
\]
Taking the infimum over all such \((A_n)_{n\in\N}\), we conclude that \(\tau_\pm^*(A)\geq\tau_\pm(A)\), yielding \eqref{eq:RMK_ext_disconn_1}.

Next, we claim that \(\mathscr B_0(K)\subseteq\Gamma(\tau_\pm^*)\). Let us first fix \(A\in\mathcal C\). Take an arbitrary set
\(S\in 2^K\) and a sequence \((A_n)_{n\in\N}\subseteq\mathcal C\) such that \(S\subseteq\bigcup_{n\in\N}A_n\). In particular,
\(S\cap A\subseteq\bigcup_{n\in\N}A_n\cap A\) and \(S\setminus A\subseteq\bigcup_{n\in\N}A_n\setminus A\). Since
\(A_n\cap A,A_n\setminus A\in\mathcal C\) for all \(n\in\N\), it follows from \eqref{eq:RMK_ext_disconn_1} that
\[
\tau_\pm^*(S\cap A)+\tau_\pm^*(S\setminus A)\leq\sum_{n\in\N}\tau_\pm(A_n\cap A)+\sum_{n\in\N}\tau_\pm(A_n\setminus A)
=\sum_{n\in\N}\tau_\pm(A_n).
\]
Taking the infimum over all such \((A_n)_{n\in\N}\), we obtain that \(\tau_\pm^*(S\cap A)+\tau_\pm^*(S\setminus A)\leq\tau_\pm^*(S)\),
which means that \(A\in\Gamma(\tau_\pm^*)\). Thanks to Theorem \ref{thm:Carath} and to the fact that the Baire \(\sigma\)-algebra
\(\mathscr B_0(K)\) is generated by \(\mathcal C\) (cf.\ item i) of Remark \ref{rmk:properties_ext_disconn}), we conclude that
\(\mathscr B_0(K)\subseteq\Gamma(\tau_\pm^*)\), as we claimed. Therefore, Theorem \ref{thm:Carath} ensures that
\[
\nu_\pm\coloneqq\tau_\pm^*|_{\mathscr B_0(K)}\in\mathcal M_+(K,\mathscr B_0(K);L^0(\mm)).
\]
We then denote by \(\bar\nu_\pm\in\mathfrak M_+(K;L^0(\mm))\) the unique non-negative Radon \(L^0\)-valued measure that extends
\(\nu_\pm\), whose existence is guaranteed by Theorem \ref{thm:ext_Baire_to_Radon}. Finally, let us define
\[
\nu\coloneqq\bar\nu_+-\bar\nu_-\in\mathfrak M(K;L^0(\mm)).
\]
We claim that
\begin{equation}\label{eq:RMK_ext_disconn_2}
{\rm I}_K(\nu)=T,\qquad|\nu|_{\rm TV}\leq|T|,
\end{equation}
whence it follows that \({\rm I}_K\) is an isometric isomorphism of complete random normed modules. Consider
the vector space \({\rm V}\) of all \(f\in{\rm UC}_{\rm ord}(K;L^0(\mm))\) of the form
\(f=\sum_{i=1}^n\lambda_i\1_{A_i}\1_{E_i}^\mm\), where \(n\in\N\), \((\lambda_i)_{i=1}^n\subseteq\R\),
\((A_i)_{i=1}^n\subseteq\mathcal C\) and \((E_i)_{i=1}^n\subseteq\Sigma\). It follows from Remark \ref{rmk:properties_ext_disconn} ii)
and Lemma \ref{lem:dense_in_UC_ord} that \({\rm V}\) is a dense vector subspace of \({\rm UC}_{\rm ord}(K;L^0(\mm))\).
For any \(f=\sum_{i=1}^n\lambda_i\1_{A_i}\1_{E_i}^\mm\in{\rm V}\), we have that
\[\begin{split}
{\rm I}_K(\nu)(f)&=\sum_{i=1}^n\lambda_i\int\1_{A_i}\1_{E_i}^\mm\,\d\nu=\sum_{i=1}^n\lambda_i\,\nu(A_i)\1_{E_i}^\mm
=\sum_{i=1}^n\lambda_i\big(\nu_+(A_i)-\nu_-(A_i)\big)\1_{E_i}^\mm\\
&=\sum_{i=1}^n\lambda_i\big(T_+(\1_{A_i}\1_\X^\mm)-T_-(\1_{A_i}\1_\X^\mm)\big)\1_{E_i}^\mm
=\sum_{i=1}^n T(\lambda_i\1_{A_i}\1_{E_i}^\mm)=T(f).
\end{split}\]
Given that \({\rm I}_K(\nu)\) and \(T\) are homomorphisms of random normed modules, and the space \({\rm V}\) generates
\({\rm UC}_{\rm ord}(K;L^0(\mm))\), we get that \({\rm I}_K(\nu)=T\). Finally, by Remark \ref{rmk:posit_is_cont}
we can estimate
\[\begin{split}
|\nu|_{\rm TV}&=|\bar\nu_+-\bar\nu_-|_{\rm TV}\leq|\bar\nu_+|_{\rm TV}+|\bar\nu_-|_{\rm TV}
=\nu_+(\Omega)+\nu_-(\Omega)=T_+(\1_\Omega\1_\X^\mm)+T_-(\1_\Omega\1_\X^\mm)\\
&=|T_+|+|T_-|=|T|,
\end{split}\]
thus proving that \eqref{eq:RMK_ext_disconn_2} holds. This completes the proof of the first step.
\smallskip

\textbf{Step 2.} We pass to the verification of the fact that
\({\rm I}_\Omega\colon\mathfrak M(\Omega;L^0(\mm))\to{\rm UC}_{\rm ord}(\Omega;L^0(\mm))^*\) is an isometric isomorphism
of complete random normed modules. Again by Remark \ref{rmk:RMK_easy_ineq}, it suffices to show that \({\rm I}_\Omega\)
is surjective and \(|\mu|_{\rm TV}\leq|{\rm I}_\Omega(\mu)|\) for every \(\mu\in\mathfrak M(\Omega;L^0(\mm))\).
Let \(\alpha^\#\) and \(\alpha_\#\) be as in Remark \ref{rmk:adj_operator} and Definition \ref{def:pushforward_meas},
respectively; recall that \(\alpha_\#\) maps \(\mathfrak M(K;L^0(\mm))\) to \(\mathfrak M(\Omega;L^0(\mm))\), as we
have observed in Remark \ref{rmk:pushforward_Radon}. Applying Lemma \ref{lem:full_formula_pushforward} and noticing that
\(\alpha^\#f\) coincides with \(f\circ\alpha\) when \(f\in{\rm UC}_{\rm ord}(\Omega;L^0(\mm))\), we obtain that
\[
\alpha^\#({\rm I}_K(\nu))(f)={\rm I}_K(\nu)(f\circ\alpha)=\int f\circ\alpha\,\d\nu=\int f\,\d(\alpha_\#\nu)
={\rm I}_\Omega(\alpha_\#\nu)(f)
\]
for every \(\nu\in\mathfrak M(K;L^0(\mm))\) and \(f\in{\rm UC}_{\rm ord}(\Omega;L^0(\mm))\). In other words, we have shown that
\[\begin{tikzcd}
\mathfrak M(K;L^0(\mm)) \arrow[r,"{\rm I}_K"] \arrow[d,swap,"\alpha_\#"] & {\rm UC}_{\rm ord}(K;L^0(\mm))^* \arrow[d,"\alpha^\#"] \\
\mathfrak M(\Omega;L^0(\mm)) \arrow[r,swap,"{\rm I}_\Omega"] & {\rm UC}_{\rm ord}(\Omega;L^0(\mm))^*
\end{tikzcd}\]
is a commutative diagram. Now, fix any \(L\in{\rm UC}_{\rm ord}(\Omega;L^0(\mm))^*\). Remark \ref{rmk:adj_operator} ensures
the existence of an element \(T\in{\rm UC}_{\rm ord}(K;L^0(\mm))^*\) such that \(\alpha^\#T=L\) and \(|T|=|L|\). Moreover,
the first step guarantees that \(T={\rm I}_K(\nu)\) and \(|\nu|_{\rm TV}=|T|\) for some (unique)
\(\nu\in\mathfrak M(K;L^0(\mm))\). Therefore, \(\mu\coloneqq\alpha_\#\nu\in\mathfrak M(\Omega;L^0(\mm))\)
satisfies \({\rm I}_\Omega(\mu)={\rm I}_\Omega(\alpha_\#\nu)=\alpha^\#({\rm I}_K(\nu))=\alpha^\#T=L\) and
\[
|\mu|_{\rm TV}=|\alpha_\#\nu|_{\rm TV}\leq|\nu|_{\rm TV}=|T|=|L|,
\]
where the inequality can be justified by applying \eqref{eq:ineq_pushforward}. This yields the sought conclusion.
\end{proof}
\section{Module-valued measures}\label{s:M-val_meas}
Let us now introduce measures of bounded variation taking values into a random normed module.
In Section \ref{s:M-val_basic} we discuss some basic definitions and properties, which can be easily
generalised from the special case of \(L^0\)-valued measures. A more subtle task is to study the
foliation of a module-valued measure; notice that a priori the elements of a random normed module
are not defined `pointwise', thus it is not even clear where the leaves of a foliation should be defined.
To deal with this issue, we first recall in Section \ref{s:mBb} notions and results from the paper
\cite{DMLP25}, where it is shown that a \emph{countably-generated} complete random normed module \({\rm M}\)
is (isometrically isomorphic to) the section space \(\Gamma(E_{\rm M})\) of a \emph{measurable Banach bundle}
\(E_{\rm M}\) whose fibers \(\X\ni x\mapsto E_{\rm M}(x)\) are separable Banach spaces. We then prove in Theorem
\ref{thm:M-val_foliat_exist} that every \({\rm M}\)-valued measure \(\mu\) defined on a compact metric space \((\Omega,\sfd)\)
can be foliated. A foliation of \(\mu\) is a collection of Banach-space-valued
measures \(\X\ni x\mapsto\mu_x\in\mathfrak M(\Omega;E_{\rm M}(x))\) satisfying suitable conditions (see Definition
\ref{def:M-val_foliat}).
\medskip

We assume that \((\X,\Sigma,\mm)\) is a probability space, \((\Omega,\mathcal A)\) is
a measurable space and $({\rm M},|\cdot|)$ is a complete random normed module with base \((\X,\Sigma,\mm)\).
\subsection{Definitions and basic properties}\label{s:M-val_basic}
In analogy (and consistently) with our definition of \(L^0\)-valued measure, we propose the following notion
of \({\rm M}\)-valued measure:
\begin{definition}[\({\rm M}\)-valued measure]\label{def:M_meas}
We say that \(\mu\colon\mathcal A\to {\rm M}\) is an \textbf{\({\rm M}\)-valued measure} if:
\begin{itemize}
\item[\(\rm i)\)] \(\mu(\varnothing)=0\).
\item[\(\rm ii)\)] If \((A_n)_{n\in\N}\subseteq\mathcal A\) are pairwise disjoint, then \(\{|\mu(A_n)|:n\in\N\}\) is summable and
\[
\mu\bigg(\bigcup_{n\in\N}A_n\bigg)=\sum_{n\in\N}\mu(A_n).
\]
\end{itemize}
\end{definition}

The summability of \(\{\mu(A_n):n\in\N\}\subseteq {\rm M}\) is guaranteed by Remark \ref{rmk:fact_about_summability}
vi). We define the \textbf{\(L^0\)-total variation} of \(\mu\) on a set \(A\in\mathcal A\) as
\[
|\mu|(A)\coloneqq\bigvee\bigg\{\sum_{n=1}^\infty|\mu(A_n)|\;\bigg|\;(A_n)_{n\in\N}\subseteq\mathcal A\text{ partition of }A\bigg\}\in\bar L^0_+(\mm).
\]
We say that the \({\rm M}\)-valued measure \(\mu\) is \textbf{of bounded variation} if
\(|\mu|_{\rm TV}\coloneqq|\mu|(\Omega)\in L^0_+(\mm)\). We denote by \(\mathcal M(\Omega;{\rm M})\)
the space of \({\rm M}\)-valued measures \(\mu\colon\mathcal A\to {\rm M}\) of bounded variation.
\medskip

Many concepts and results concerning Banach-space-valued measures of bounded variation can be extended
to \({\rm M}\)-valued measures of bounded variation. In this regard, we collect below some definitions, along
with some statements that can be achieved either by applying our results on \(L^0\)-valued measures or
by adapting the proof arguments of the latter.
\begin{itemize}
\item \((\mathcal M(\Omega;{\rm M}),|\cdot|_{\rm TV})\) is a complete random normed module with base \((\X,\Sigma,\mm)\).
\item It holds that \(|\mu|\in\mathcal M_+(\Omega;L^0(\mm))\) for every \(\mu\in\mathcal M(\Omega;{\rm M})\).
\item If \((\Omega,\tau)\) is a topological space and \(\mathcal A=\mathscr B(\Omega)\), we call
\textbf{Borel \({\rm M}\)-valued measures} the elements of \(\mathcal M(\Omega;{\rm M})\).
\item If \((\Omega,\tau)\) is Hausdorff, then we say that \(\mu\in\mathcal M(\Omega;{\rm M})\) is \textbf{Radon}
if \(|\mu|\in\mathfrak M_+(\Omega;L^0(\mm))\). We denote by \(\mathfrak M(\Omega;{\rm M})\) the space of all
Radon \({\rm M}\)-valued measures on \(\Omega\). It holds that \((\mathfrak M(\Omega;{\rm M}),|\cdot|_{\rm TV})\)
is a complete random normed module with base \((\X,\Sigma,\mm)\).
\item If \((\Omega,\sfd)\) is a complete separable metric space, then
\(\mathcal M(\Omega;{\rm M})=\mathfrak M(\Omega;{\rm M})\).
\end{itemize}
\subsection{Measurable Banach bundles}\label{s:mBb}
Let us fix a \emph{universal separable Banach space} \(\mathbb U\), i.e.\ a separable Banach space where every
separable Banach space can be embedded linearly and isometrically. Universal separable Banach spaces do exist:
the \emph{Banach--Mazur theorem} (see e.g.\ \cite[Proposition 1.5]{BP75}) ensures that \(C([0,1])\) is a universal
separable Banach space.
\medskip

Consistently with \cite[Definition 4.1]{DMLP25}, we say that a given multivalued map \(E\colon\X\twoheadrightarrow\mathbb U\)
is a \textbf{measurable Banach \(\mathbb U\)-bundle} on \(\X\) provided:
\begin{itemize}
\item[\(\rm i)\)] \(E(x)\) is a closed vector subspace of \(\mathbb U\) for every \(x\in\X\).
\item[\(\rm ii)\)] \(E\) is \emph{weakly measurable}, i.e.
\[
\big\{x\in\X\;\big|\;E(x)\cap U\neq\varnothing\big\}\in\Sigma\quad\text{ for every open set }U\subseteq\mathbb U.
\]
\end{itemize}
We denote by \(\bar\Gamma(E)\) the space of all \textbf{measurable sections} of the bundle \(E\), i.e.\ of
those measurable maps \(v\colon\X\to\mathbb U\) satisfying
\[
v(x)\in E(x)\quad\text{ for every }x\in\X.
\]
We then define the \textbf{section space} \(\Gamma(E)\) of \(E\) as the quotient space of \(\bar\Gamma(E)\)
with respect to the equivalence relation that identifies two measurable sections if they agree in the
\(\mm\)-a.e.\ sense. It is easy to check that \(\Gamma(E)\) is an \(L^0(\mm)\)-module with respect to the
usual pointwise \(\mm\)-a.e.\ operations. Moreover, given any \(v\in\Gamma(E)\), we denote by
\(|v|\in L^0_+(\mm)\) the \(\mm\)-a.e.\ equivalence class of the function \(\X\ni x\mapsto\|\bar v(x)\|_{E(x)}\),
where \(\bar v\in\bar\Gamma(E)\) is an arbitrary representative of \(v\). It is straightforward to check that
\[
(\Gamma(E),|\cdot|)\quad\text{ is a complete random normed module with base }(\X,\Sigma,\mm).
\]
For example, if \(\mathbb V\) is a closed vector subspace of \(\mathbb U\), then the section space of the
constant bundle with fibers \(\mathbb V\) coincides with the `\(L^0\)-Lebesgue--Bochner space'
\(L^0(\mm;\mathbb V)\). In particular, we have that \(L^0(\mm)\) is the section space of the constant
bundle with fibers \(\R\). More generally, \(L^0(\mm)^n\) can be identified with the section space
of the constant bundle with fibers \(\R^n\).
\medskip

We recall that a complete random normed module \({\rm M}\) with base \((\X,\Sigma,\mm)\) is said
to be \textbf{countably generated} if it has a countable subset \(C\) such that
the \(L^0(\mm)\)-submodule of \({\rm M}\) generated by \(C\) is dense in \({\rm M}\). For example,
the random normed module \(L^0(\mm)\) is countably generated (since it is generated, as an algebraic
\(L^0(\mm)\)-module, by \(\1_\X^\mm\)). Many random normed modules that are considered in nonsmooth
differential geometry are countably generated, cf.\ \cite[Appendix B]{DMLP25}. We also point out that
if \(L^0(\mm)\) is separable (which is the case e.g.\ if \(\mm\) is a Radon measure over some Hausdorff space
\(\X\)), then \({\rm M}\) is countably generated if and only if it is separable (while sufficiency
holds unconditionally).
\medskip

Given any measurable Banach \(\mathbb U\)-bundle \(E\), it holds that \(\Gamma(E)\) is countably generated.
The main result of \cite{DMLP25} states that the converse holds as well: given any countably-generated
complete random normed module \({\rm M}\), there exists a measurable Banach \(\mathbb U\)-bundle \(E_{\rm M}\)
on \(\X\) such that \(\Gamma(E_{\rm M})\) is isometrically isomorphic as a random normed module to \({\rm M}\).
Moreover, the bundle \(E_{\rm M}\) associated to \({\rm M}\) is \emph{essentially} unique,
cf.\ \cite[Section 4.3]{DMLP25}. Let us then define
\begin{equation}\label{eq:fibers_of_M}
({\rm M}_x,\|\cdot\|_x)\coloneqq(E_{\rm M}(x),\|\cdot\|_{E_{\rm M}(x)})\quad\text{ for every }x\in\X.
\end{equation}
\subsection{Foliation of a module-valued measure}\label{s:M-val_foliat}
In view of the results that we have reported in Section \ref{s:mBb}, we can now formulate a definition of
foliation for module-valued measures whose target is a countably-generated complete random normed module:
\begin{definition}[Foliation of an \({\rm M}\)-valued measure]\label{def:M-val_foliat}
Let \({\rm M}\) be a countably-generated complete random normed module with base \((\X,\Sigma,\mm)\).
Let \(\mu\in\mathcal M(\Omega;{\rm M})\) be given. Then we say that a family \((\mu_x)_{x\in\X}\) is a
\textbf{foliation} of \(\mu\) provided the following conditions hold:
\begin{itemize}
\item[\(\rm i)\)] \(\mu_x\in\mathcal M(\Omega;{\rm M}_x)\) for every \(x\in\X\),
i.e.\ each \(\mu_x\colon\mathscr B(\Omega)\to{\rm M}_x\) is a vector measure of bounded variation. 
\item[\(\rm ii)\)] \(x\mapsto\mu_x\) is \emph{\(\mm\)-measurable}, i.e.\ \(\X\ni x\mapsto\mu_x(A)\in\mathbb U\)
is \(\mm\)-measurable for every \(A\in\mathcal A\).
\item[\(\rm iii)\)] Given any \(A\in\mathcal A\), the element \(\mu(A)\in{\rm M}\cong\Gamma(E_{\rm M})\)
is the \(\mm\)-a.e.\ equivalence class of the section \(\X\ni x\mapsto\mu_x(A)\in{\rm M}_x=E_{\rm M}(x)\)
of the measurable Banach \(\mathbb U\)-bundle \(E_{\rm M}\).
\end{itemize}
\end{definition}

Under suitable assumptions, we can prove existence of a foliation of an \({\rm M}\)-valued measure:
\begin{theorem}\label{thm:M-val_foliat_exist}
Let \((\Omega,\sfd)\) be a compact metric space. Let \({\rm M}\) be a countably-generated complete random
normed module with base \((\X,\Sigma,\mm)\). Let \(\mu\in\mathfrak M(\Omega;{\rm M})\) be given. Then
\(\mu\) admits a foliation \((\mu_x)_{x\in\X}\). Moreover, letting \((|\mu|_x)_{x\in\X}\) be the
\(\mm\)-a.e.\ unique foliation of \(|\mu|\in\mathfrak M_+(\Omega;L^0(\mm))\) (whose existence and
essential uniqueness are guaranteed by Corollary \ref{corolmainthm}), it holds that
\begin{equation}\label{eq:foliat_M-valued}
\|\mu_x\|_x\leq|\mu|_x\quad\text{ for }\mm\text{-a.e.\ }x\in\X,
\end{equation}
where \(\|\mu_x\|_x\in\mathfrak M_+(\Omega)\) denotes the total variation measure of \(\mu_x\).
\end{theorem}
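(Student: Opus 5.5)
To build the foliation I will reduce everything to the scalar case of Corollary \ref{corolmainthm}, working through a countable family of functionals. Identify \({\rm M}\) with \(\Gamma(E_{\rm M})\), with fibers \({\rm M}_x\subseteq\mathbb U\).

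\textbf{Countable norming family.} Fix a countable set \((v_k)_{k\in\N}\subseteq\bar\Gamma(E_{\rm M})\) such that \(\{v_k(x)\}_k\) is dense in \({\rm M}_x\) for every \(x\). Such a set exists by the construction in \cite{DMLP25} (a Castaing-type representation). By the random Hahn--Banach theorem, or directly fiberwise, choose countably many \(\omega_k\in{\rm M}^*\) with \(|\omega_k|\le\1_\X^\mm\). Since \(({\rm M}^*)\) is again a section space of a bundle whose fibers act on \({\rm M}_x\), I will represent each \(\omega_k\) by fiberwise functionals \(\omega_k(x)\in{\rm M}_x^*\) of norm \(\le1\), chosen so that \(\|w\|_x=\sup_k|\omega_k(x)(w)|\) for all \(w\in{\rm M}_x\) and all \(x\) outside an \(\mm\)-null set. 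I also take a countable family \((\eta_j)\) separating points of each \({\rm M}_x\).

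\textbf{Scalar foliations.} For every \(\omega\in{\rm M}^*\), the set function \(\omega\circ\mu\colon A\mapsto\omega(\mu(A))\) is an \(L^0\)-valued measure, since \(\omega\) is continuous and \(L^0\)-linear and therefore respects summable series. It satisfies \(|\omega\circ\mu|\le|\omega|\,|\mu|\), so it is Radon because \(|\mu|\) is. Corollary \ref{corolmainthm} then gives foliations \((\lambda^k_x)\) of \(\omega_k\circ\mu\) with \(|\lambda^k_x|\le|\mu|_x\) for a.e. \(x\). The inequality follows from Remark \ref{ossmintotvar} combined with the uniqueness part of Corollary \ref{corolmainthm}, comparing \(|\omega_k\circ\mu|\le|\mu|\) on a countable base as in Remark \ref{rmk:unique_foliation_top}.

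\textbf{Defining \(\mu_x\).} Let \(\mathscr U\) be a countable base closed under finite unions, and let \(\mathcal R\) be the countable algebra it generates. For \(A\in\mathcal R\), fix a representative \(\bar\mu_A\in\bar\Gamma(E_{\rm M})\) of \(\mu(A)\). Discarding a null set \(N\), I will arrange the following for all \(x\notin N\):
\begin{itemize}
\item \(A\mapsto\bar\mu_A(x)\) is finitely additive on \(\mathcal R\);
\item \(\omega_k(x)(\bar\mu_A(x))=\lambda^k_x(A)\) for all \(k\) and all \(A\in\mathcal R\);
\item \(\|\bar\mu_A(x)\|_x\le|\mu|_x(A)\), which follows from the norming property and \(|\lambda^k_x|\le|\mu|_x\).
\end{itemize}
I then extend \(A\mapsto\bar\mu_A(x)\) from \(\mathcal R\) to \(\mathscr B(\Omega)\) using the finite Radon measure \(|\mu|_x\) as a control. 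The finitely additive map is \(|\mu|_x\)-Lipschitz in the \(L^1(|\mu|_x)\) pseudometric on sets, because \(\|\bar\mu_A(x)-\bar\mu_B(x)\|_x\le|\mu|_x(A\Delta B)\). Since \(\mathcal R\) is dense in \(\mathscr B(\Omega)\) for this pseudometric by Lemma \ref{lem:countable_family_for_approx}, and \({\rm M}_x\) is complete, the map extends uniquely to a countably additive \(\mu_x\colon\mathscr B(\Omega)\to{\rm M}_x\) with \(\|\mu_x(A)\|_x\le|\mu|_x(A)\). This inequality gives \eqref{eq:foliat_M-valued}, and in particular \(\mu_x\) has bounded variation. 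On \(N\) I set \(\mu_x=0\).

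\textbf{Measurability and consistency.} The Dynkin-class argument of Lemma \ref{lem:meas_mu_x} and Corollary \ref{cor:meas_|mu_x|} gives \(\mm\)-measurability of \(x\mapsto\mu_x(A)\) for all Borel \(A\). Indeed, the family of good sets contains \(\mathcal R\), and it is closed under the relevant limits because \(\mu_x(A_n)\to\mu_x(A)\) pointwise, while measurable maps into \(\mathbb U\) are closed under pointwise limits.

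To get item iii) I also use a density argument. Both \(A\mapsto\mu(A)\) and \(A\mapsto[\mu_\cdot(A)]\) are \({\rm M}\)-valued measures dominated by \(|\mu|\), and they agree on \(\mathcal R\). Approximating \(A\) by \(U\in\mathcal R\) with \(|\mu|(A\Delta U)\to0\) in \(L^0\) works by Lemma \ref{lem:inner_reg_self-improv} together with compactness, as in Lemma \ref{lem:countable_family_for_approx}. The pointwise bound \(\|\mu_x(A)-\mu_x(U)\|_x\le|\mu|_x(A\Delta U)\) then yields \(\mu(A)=[\mu_\cdot(A)]\).

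\textbf{Main obstacle.} The delicate step is obtaining fiberwise norming functionals \(\omega_k(x)\) from elements of \({\rm M}^*\), that is, \emph{measurably} choosing a countable norming set in \({\rm M}_x^*\). If this is awkward, I would fall back on proving the norm bound \(\|\bar\mu_A(x)\|_x\le|\mu|_x(A)\) directly. The argument would be that \(|\mu(A)|\le|\mu|(A)\) holds in \(L^0\), and \(|\mu|(A)\) has representative \(|\mu|_x(A)\) by Corollary \ref{corolmainthm}. Over the countable \(\mathcal R\) this gives the bound off a single null set, and that bound is all the extension step actually requires. So the \(\omega_k\) could be bypassed entirely, with finite additivity again holding on \(\mathcal R\) off a null set.
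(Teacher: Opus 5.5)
Your fallback argument is correct and is essentially the paper's proof. You fix representatives of \(\mu\) on a countable family of open sets, obtain off a single \(\mm\)-null set the fiberwise bounds \(\|\theta_U(x)\|_x\le|\mu|_x(U)\) and the additivity/Lipschitz estimates from the corresponding \(L^0\)-inequalities, extend to all Borel sets by \(|\mu|_x\)-density, and then check measurability and consistency with \(\mu\). Two differences are only cosmetic: you use the algebra generated by the base instead of the paper's defect estimate \(\|\theta_{U\cup V}(x)-\theta_U(x)-\theta_V(x)\|_x\le|\mu|_x(U\cap V)\), and a Dynkin-class argument instead of the paper's explicit approximating sections \(\sum_{U}\1_{E^U_n}\theta_U\).

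You should drop the detour through the functionals \(\omega_k\) and the scalar foliations of \(\omega_k\circ\mu\). It is not needed, and the step of representing elements of \({\rm M}^*\) by \(\mm\)-measurable fiberwise norming functionals is not justified by anything available in the paper.
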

\begin{proof}
Fix a countable base \(\mathcal C\) for the topology of \(\Omega\) that is closed under finite unions
and contains \(\varnothing\). For any \(U\in\mathcal C\), we fix a representative
\(\theta_U\in\bar\Gamma(E_{\rm M})\) of \(\mu(U)\in{\rm M}\cong\Gamma(E_{\rm M})\).
Given that \(|\mu(U)-\mu(V)|\leq|\mu|(U\Delta V)\) and \(|\mu(U\cup V)-\mu(U)-\mu(V)|\leq|\mu|(U\cap V)\),
we can find a set \(N\in\Sigma\) with \(\mm(N)=0\) such that for any \(x\in\X\setminus N\)
the following conditions hold:
\begin{subequations}\begin{align}
\label{eq:foliat_vect_meas_hp1}
\theta_\varnothing(x)=0\in{\rm M}_x&,\\
\label{eq:foliat_vect_meas_hp2}
\|\theta_U(x)\|_x\leq|\mu|_x(U)&\quad\text{ for every }U\in\mathcal C,\\
\label{eq:foliat_vect_meas_hp3}
\|\theta_U(x)-\theta_V(x)\|_x\leq|\mu|_x(U\Delta V)&\quad\text{ for every }U,V\in\mathcal C,\\
\label{eq:foliat_vect_meas_hp4}
\|\theta_{U\cup V}(x)-\theta_U(x)-\theta_V(x)\|_x\leq|\mu|_x(U\cap V)&\quad\text{ for every }U,V\in\mathcal C.
\end{align}\end{subequations}
Let \(x\in\X\setminus N\) be fixed. For any set \(A\in\mathscr B(\Omega)\), we define
\[
\mathcal F_{A,x}\coloneqq\big\{(U_n)_{n\in\N}\subseteq\mathcal C\;\big|\;|\mu|_x(A\Delta U_n)\to 0\text{ as }n\to\infty\big\}.
\]
We claim that
\begin{subequations}\begin{align}
\label{eq:foliat_vect_meas_aux1}
\mathcal F_{A,x}\neq\varnothing&,\\
\label{eq:foliat_vect_meas_aux2}
(\theta_{U_n}(x))_{n\in\N}\subseteq{\rm M}_x&\quad\text{ is Cauchy for every }(U_n)_{n\in\N}\in\mathcal F_{A,x},\\
\label{eq:foliat_vect_meas_aux3}
\lim_{n\to\infty}\theta_{U_n}(x)=\lim_{n\to\infty}\theta_{V_n}(x)&\quad\text{ for every }
(U_n)_{n\in\N},(V_n)_{n\in\N}\in\mathcal F_{A,x}.
\end{align}\end{subequations}
To prove \eqref{eq:foliat_vect_meas_aux1}, note that the inner regularity of \(|\mu|_x\) yields the existence of
a sequence \((K_n)_{n\in\N}\) of compact subsets of \(A\) such that \(|\mu|_x(A\setminus K_n)\to 0\). By the outer
regularity of \(|\mu|_x\) and the compactness of \(K_n\), for any \(n\in\N\) we can find \(U_n\in\mathcal C\)
such that \(K_n\subseteq U_n\) and \(|\mu|_x(U_n\setminus K_n)\to 0\). In particular, we have
\(|\mu|_x(A\Delta U_n)\to 0\), which shows that \((U_n)_{n\in\N}\in\mathcal F_{A,x}\) and thus
\(\mathcal F_{A,x}\neq\varnothing\). Next, given any \((U_n)_{n\in\N}\in\mathcal F_{A,x}\), it follows from
\eqref{eq:foliat_vect_meas_hp3} that
\[
\|\theta_{U_n}(x)-\theta_{U_m}(x)\|_x\leq|\mu|_x(U_n\Delta U_m)\leq|\mu|_x(A\Delta U_n)+|\mu|_x(A\Delta U_m)\to 0
\quad\text{ as }n,m\to\infty,
\]
proving \eqref{eq:foliat_vect_meas_aux2}. Moreover, if \((U_n)_{n\in\N},(V_n)_{n\in\N}\in\mathcal F_{A,x}\)
are given, then the sequence \((W_k)_{k\in\N}\) defined as \(U_1,V_1,U_2,V_2,\ldots\) clearly belongs to
\(\mathcal F_{A,x}\), so that accordingly \eqref{eq:foliat_vect_meas_aux2} ensures that
\[
\lim_{n\to\infty}\theta_{U_n}(x)=\lim_{n\to\infty}\theta_{W_{2n+1}}(x)=\lim_{k\to\infty}\theta_{W_k}(x)
=\lim_{n\to\infty}\theta_{W_{2n}}(x)=\lim_{n\to\infty}\theta_{V_n}(x),
\]
proving \eqref{eq:foliat_vect_meas_aux3}. In view of \eqref{eq:foliat_vect_meas_aux1}, \eqref{eq:foliat_vect_meas_aux2},
\eqref{eq:foliat_vect_meas_aux3}, it makes sense to define \(\mu_x\colon\mathscr B(\Omega)\to{\rm M}_x\) as
\[
\mu_x(A)\coloneqq\lim_{n\to\infty}\theta_{U_n}(x)\quad\text{ whenever }A\in\mathscr B(\Omega)\text{ and }
(U_n)_{n\in\N}\in\mathcal F_{A,x}.
\]
Note that \(\mu_x(U)=\theta_U(x)\) for all \(U\in\mathcal C\). To conclude, it remains to prove the following claims:
\begin{itemize}
\item[\(\rm a)\)] \(\mu_x\in\mathfrak M(\Omega;{\rm M}_x)\) for every \(x\in\X\setminus N\) and \eqref{eq:foliat_M-valued} holds.
\item[\(\rm b)\)] Letting \(\mu_x\coloneqq 0\in\mathfrak M(\Omega;{\rm M}_x)\) for every \(x\in N\),
we have that \(x\mapsto\mu_x\) is \(\mm\)-measurable.
\item[\(\rm c)\)] \((\mu_x)_{x\in\X}\) is a foliation of \(\mu\).
\end{itemize}
Let us pass to the verification of the above claims:
\smallskip

{\bf a)} It follows from \eqref{eq:foliat_vect_meas_hp1} that \(\mu_x(\varnothing)=\theta_\varnothing(x)=0\).
Given \(A,B\in\mathscr B(\Omega)\) with \(A\cap B=\varnothing\), \((U_n)_{n\in\N}\in\mathcal F_{A,x}\)
and \((V_n)_{n\in\N}\in\mathcal F_{B,x}\), we have \((U_n\cup V_n)_{n\in\N}\in\mathcal F_{A\cup B,x}\),
and \eqref{eq:foliat_vect_meas_hp4} ensures that
\[
\|\theta_{U_n\cup V_n}(x)-\theta_{U_n}(x)-\theta_{V_n}(x)\|_x\leq|\mu|_x(U_n\cap V_n)
\leq|\mu|_x(U_n\Delta A)+|\mu|_x(V_n\Delta B)\to 0
\]
as \(n\to\infty\), which implies that
\[
\mu_x(A\cup B)=\lim_{n\to\infty}\theta_{U_n\cup V_n}(x)
=\lim_{n\to\infty}\theta_{U_n}(x)+\lim_{n\to\infty}\theta_{V_n}(x)=\mu_x(A)+\mu_x(B).
\]
This shows that \(\mu_x\) is finitely additive. Moreover, in view of \eqref{eq:foliat_vect_meas_hp2} we can estimate
\[
\|\mu_x(A)\|_x=\lim_{n\to\infty}\|\theta_{U_n}(x)\|_x\leq\lim_{n\to\infty}|\mu|_x(U_n)=|\mu|_x(A).
\]
In particular, if \((A_n)_{n\in\N}\subseteq\mathscr B(\Omega)\) are pairwise disjoint, then we have that
\[
\sum_{n=1}^\infty\|\mu_x(A_n)\|_x\leq\sum_{n=1}^\infty|\mu|_x(A_n)=|\mu|_x\bigg(\bigcup_{n\in\N}A_n\bigg)<+\infty.
\]
In combination with the finite additivity of \(\mu_x\), this implies that \(\mu_x\) is an \({\rm M}_x\)-valued measure.
The above estimates show also that the measure \(\mu_x\) is of bounded variation and \(\|\mu_x\|_x\leq|\mu|_x\), so that
\(\mu_x\in\mathfrak M(\Omega;{\rm M}_x)\). This completes the proof of a).
\smallskip

{\bf b)} Fix \(A\in\mathscr B(\Omega)\). For any \(n\in\N\), it follows from the \(\mm\)-measurability of
\(x\mapsto|\mu|_x\) that there exists a partition \((E^U_n)_{U\in\mathcal C}\subseteq\bar\Sigma_\mm\) of \(\X\) such that
\[
|\mu|_x(U\Delta A)\leq\frac{1}{n}\quad\text{ for every }U\in\mathcal C\text{ and }x\in E^U_n.
\]
Letting \(v_n(x)\coloneqq\sum_{U\in\mathcal C}\1_{E^U_n}(x)\theta_U(x)\in{\rm M}_x\) for every \(x\in\X\), we thus
have that \(v_n\colon\X\to\mathbb U\) is \(\mm\)-measurable, so that also
\(\X\ni x\mapsto\mu_x(A)=\lim_n v_n(x)\in\mathbb U\) is \(\mm\)-measurable, proving b).
\smallskip

{\bf c)} Fix \(A\in\mathscr B(\Omega)\). Taking \(v_n\) as in the proof of b), we have
\(v_n(x)=\sum_{U\in\mathcal C}\1_{E^U_n}^\mm(x)\mu(U)(x)\) and
\(\big|\mu(A)-\sum_{U\in\mathcal C}\1_{E^U_n}^\mm\mu(U)\big|(x)\leq\sum_{U\in\mathcal C}\1_{E^U_n}^\mm(x)|\mu|_x(U\Delta A)\leq 1/n\) for \(\mm\)-a.e.\ \(x\in\X\). Hence,
\[
\mu_x(A)=\lim_{n\to\infty}v_n(x)=\lim_{n\to\infty}\sum_{U\in\mathcal C}\1_{E^U_n}^\mm(x)\mu(U)(x)
=\mu(A)(x)\quad\text{ for }\mm\text{-a.e.\ }x\in\X.
\]
This shows that \(x\mapsto\mu_x(A)\) is a representative of \(\mu(A)\), thus proving c).
\end{proof}
\section{Applications}\label{s:appl}
\subsection{Module-valued martingales}\label{s:random_martingale}
By making use of the machinery that we have developed in this paper, we now propose a notion of
discrete-time martingale taking values into a complete random normed module, see Definition \ref{def:RNM_martingale};
we refer e.g.\ to the monograph \cite{Pis16} for a detailed account of the theory of Banach-space-valued martingales
(see also \cite{sousi2013advanced}).
In order to introduce our definition of martingale, we first need to prove the existence of the conditional expectation
for maps taking values into a complete random normed module (Proposition \ref{prop:E_L0} and Theorem \ref{thm:E_M}).
\medskip

As usual, we fix a probability space \((\X,\Sigma,\mm)\). Let us also fix a measurable space \((\Omega,\mathcal A)\) together
with a \(\sigma\)-subalgebra \(\mathcal B\) of \(\mathcal A\). Since we are considering two different
\(\sigma\)-algebras on the same set \(\Omega\), in this section we adopt the notations \(\mathcal P(\Omega,\mathcal A;L^0(\mm))\),
\(\mathcal P(\Omega,\mathcal B;L^0(\mm))\) and so on.
As \(\mathcal B\subseteq\mathcal A\), for any \(\mu\in\mathcal P(\Omega,\mathcal A;L^0(\mm))\)
we have that \(S_{\mu,f}(\Omega,\mathcal B;L^0(\mm))\subseteq S_{\mu,f}(\Omega,\mathcal A;L^0(\mm))\),
thus accordingly \(L^2_\mu(\Omega,\mathcal B;L^0(\mm))\) can be canonically identified with a closed
\(L^0(\mm)\)-submodule of \(L^2_\mu(\Omega,\mathcal A;L^0(\mm))\). We denote by
\[
{\rm p}_{\mathcal B}\colon L^2_\mu(\Omega,\mathcal A;L^0(\mm))\to L^2_\mu(\Omega,\mathcal B;L^0(\mm))
\]
the orthogonal projection map. Likewise, \(L^1_\mu(\Omega,\mathcal B;L^0(\mm))\) can be canonically identified with a closed
\(L^0(\mm)\)-submodule of \(L^1_\mu(\Omega,\mathcal A;L^0(\mm))\).
\medskip

First of all, let us prove the existence of the conditional expectation for \(L^0(\mm)\)-valued maps:
\begin{proposition}[Conditional expectation for \(L^0\)-valued maps]\label{prop:E_L0}
Fix \(\mu\in\mathcal P(\Omega,\mathcal A;L^0(\mm))\). Then there exists a unique homomorphism of random normed modules
\begin{equation}\label{eq:def_E_L0}
\mathbb E(\,\cdot\,|\mathcal B)\colon L^1_\mu(\Omega,\mathcal A;L^0(\mm))\to L^1_\mu(\Omega,\mathcal B;L^0(\mm))
\end{equation}
such that \(\mathbb E(\1_A^\mu\1_\X^\mm|\mathcal B)={\rm p}_{\mathcal B}(\1_A^\mu\1_\X^\mm)\) for every \(A\in\mathcal A\),
and we have that
\begin{equation}\label{eq:ineq_E_L0}
|\mathbb E(g|\mathcal B)|^{1,\mu}\leq|g|^{1,\mu}\quad\text{ for every }g\in L^1_\mu(\Omega,\mathcal A;L^0(\mm)).
\end{equation}
Moreover, it holds that
\begin{equation}\label{eq:mult_E_L0}
\mathbb E(fg|\mathcal B)=f\,\mathbb E(g|\mathcal B)\quad\text{ for every }f\in L^\infty_\mu(\Omega,\mathcal B;L^0(\mm))
\text{ and }g\in L^1_\mu(\Omega,\mathcal A;L^0(\mm)).
\end{equation}
We say that \(\mathbb E(\,\cdot\,|\mathcal B)\) is the \textbf{\(L^0(\mm)\)-valued conditional expectation} induced by \(\mathcal B\).
\end{proposition}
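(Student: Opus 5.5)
The plan is to mimic the classical construction of conditional expectation: first build it on finite-range simple maps through the orthogonal projection in $L^2$, then prove the $L^1$-contractivity \eqref{eq:ineq_E_L0} on that dense set, and finally extend by density. For $g=\sum_{i\in F}\1_{A_i}^\mu\bar g^i\in S_{\mu,f}(\Omega,\mathcal A;L^0(\mm))$ (with $(A_i)$ a finite partition) I would set $\mathbb E(g|\mathcal B)\coloneqq{\rm p}_{\mathcal B}(g)$. This makes sense because $S_{\mu,f}$ sits inside both $L^2_\mu$ and $L^1_\mu$, via the map $\theta_2$ of Remark \ref{rmk:def_theta_p}, and ${\rm p}_{\mathcal B}$ is $L^0(\mm)$-linear. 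Uniqueness is then immediate: the prescribed values on $\1_A^\mu\1_\X^\mm$, together with $L^0(\mm)$-linearity, determine the map on $S_{\mu,f}$, which is dense in $L^1_\mu$ by Remark \ref{rmk:density_finite_simple_maps}.

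The key step, and the one I expect to be the main obstacle, is to show $|{\rm p}_{\mathcal B}(g)|^{1,\mu}\le|g|^{1,\mu}$ for finite simple $g$. Here ${\rm p}_{\mathcal B}(g)$ is only an element of $L^2_\mu(\Omega,\mathcal B;L^0(\mm))$, which we read in $L^1_\mu$ through $\theta_2$. I would use the duality formula of Lemma \ref{lem:formula_norm_1_mu} in $L^1_\mu(\Omega,\mathcal B;L^0(\mm))$:
\[
|{\rm p}_{\mathcal B}(g)|^{1,\mu}=\bigvee\Big\{\int f\,{\rm p}_{\mathcal B}(g)\,\d\mu\;\Big|\;f\in S_{\mu,f}(\Omega,\mathcal B;L^0(\mm)),\ |f|^{\infty,\mu}\le\1_\X^\mm\Big\}.
\]
For each such $f$, which is $\mathcal B$-measurable and lies in $L^2$, the identity \eqref{eq:compat_mult_Lp}, extended by approximation to the pair $(f,h)$ with $h\in L^2_\mu$, gives
\[
\int f\,{\rm p}_{\mathcal B}(g)\,\d\mu=\langle f,{\rm p}_{\mathcal B}(g)\rangle=\langle f,g\rangle=\int fg\,\d\mu\le|f|^{\infty,\mu}|g|^{1,\mu}\le|g|^{1,\mu},
\]
where the middle equality follows from \eqref{eq:char_orth_proj}. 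The delicate points are two. First, the extension of \eqref{eq:compat_mult_Lp} to $\langle f,h\rangle=\int f\,\theta_2(h)\,\d\mu$ for $h\in L^2_\mu$; I would get this by continuity, since both sides are $L^0$-linear in $h$ and bounded by $|f|^{2,\mu}|h|^{2,\mu}$ and $|f|^{\infty,\mu}|h|^{2,\mu}$ respectively. Second, the compatibility of the embeddings $L^p_\mu(\Omega,\mathcal B)\hookrightarrow L^p_\mu(\Omega,\mathcal A)$ with $\theta_2$. Note also that Lemma \ref{lem:formula_norm_1_mu} requires $\mu\ge0$ on $\mathcal B$, which holds.

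Once the inequality is established, Corollary \ref{cor:ext_hom_mod} (or Proposition \ref{prop:ext_result}) extends $\mathbb E(\cdot|\mathcal B)$ uniquely to a homomorphism on $L^1_\mu(\Omega,\mathcal A;L^0(\mm))$ satisfying \eqref{eq:ineq_E_L0}.

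For \eqref{eq:mult_E_L0} I would argue in two stages. First take $f=\1_B^\mu\1_\X^\mm$ with $B\in\mathcal B$ and $g\in S_{\mu,f}$. The identity ${\rm p}_{\mathcal B}(fg)=f\,{\rm p}_{\mathcal B}(g)$ follows from the characterisation \eqref{eq:char_orth_proj}: $f{\rm p}_{\mathcal B}(g)$ is $\mathcal B$-measurable, and $\langle fg-f{\rm p}_{\mathcal B}(g),w\rangle=\langle g-{\rm p}_{\mathcal B}(g),fw\rangle=0$ for all $w$ in $L^2_\mu(\Omega,\mathcal B)$. This uses that multiplication by such $f$ is self-adjoint, which I would check on simple maps. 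By $L^0(\mm)$-linearity the identity then holds for all $f\in S_{\mu,f}(\Omega,\mathcal B;L^0(\mm))$. Finally, both sides of \eqref{eq:mult_E_L0} are continuous in $g$ in $L^1$ and in $f$ in $L^\infty$, by Proposition \ref{prop:mult_Linfty_L1} and \eqref{eq:ineq_E_L0}. Since $S_{\mu,f}$ is dense in $S^\infty_\mu$, hence in $L^\infty_\mu$, via a truncation of countable simple maps, the general case follows by approximation.
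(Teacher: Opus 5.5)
Up to your last sentence, your argument is the paper's proof. It uses the same map $\theta_2\circ{\rm p}_{\mathcal B}$ on $S_{\mu,f}(\Omega,\mathcal A;L^0(\mm))$, the same duality chain through Lemma \ref{lem:formula_norm_1_mu}, \eqref{eq:compat_mult_Lp} and \eqref{eq:char_orth_proj}, and the same reduction of \eqref{eq:mult_E_L0} to $f=\1_B^\mu\1_\X^\mm$ with $B\in\mathcal B$. You are more careful than the paper about applying \eqref{eq:compat_mult_Lp} when one entry is ${\rm p}_{\mathcal B}(g)$, which is not a simple map.

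The gap is your final claim that $S_{\mu,f}(\Omega,\mathcal B;L^0(\mm))$ is dense in $S^\infty_\mu(\Omega,\mathcal B;L^0(\mm))$, hence in $L^\infty_\mu$, for $|\cdot|^{\infty,\mu}$. This is false, so continuity in $f$ for the $L^\infty$-topology does not take you beyond finite-range $f$. Truncation fails even classically: $|f-\sum_{i\le n}\1_{B_i}^\mu\bar f^i|^{\infty,\mu}=\bigvee\{|\bar f^i|:i>n,\,B_i\notin\mathcal N_\mu\}$ need not tend to $0$ (take all $\bar f^i=\1_\X^\mm$). Classically one quantises the values instead, but for $L^0(\mm)$-valued maps this is impossible, because the finite partition of $\Omega$ cannot depend on $x\in\X$.

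Here is a concrete counterexample. Take $\X=[0,1]$ with Lebesgue measure, $\Omega=\N$, $\mathcal B=\mathcal A=2^\N$, $\mu(\{i\})\coloneqq 2^{-i}\1_\X^\mm$ (so $\mathcal N_\mu=\{\varnothing\}$), and $f\coloneqq\sum_{i\in\N}\1_{\{i\}}^\mu\1_{E_i}^\mm$, with $(E_i)_{i\in\N}$ independent sets of measure $\frac12$. Every $w=\sum_{j=1}^k\1_{B_j}^\mu\bar w^j\in S_{\mu,f}$ has some infinite $B_j$. By Borel--Cantelli, a.e.\ $x$ lies in $E_i$ and outside $E_{i'}$ for some $i,i'\in B_j$. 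Hence $|f-w|^{\infty,\mu}\geq\bigvee_{i\in B_j}|\1_{E_i}^\mm-\bar w^j|\geq\frac12\1_\X^\mm$, so $\sfd_{L^0(\mm)}(|f-w|^{\infty,\mu},0)\geq\frac12$.

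What you need is convergence of the products rather than of the multipliers. For $f=\sum_{i\in\N}\1_{B_i}^\mu\bar f^i\in S^\infty_\mu(\Omega,\mathcal B;L^0(\mm))$, put $f_n\coloneqq\sum_{i\le n}\1_{B_i}^\mu\bar f^i$ and $C_n\coloneqq\bigcup_{i>n}B_i\searrow\varnothing$.
\begin{itemize}
\item A direct computation on $k\in S_{\mu,f}$, extended by continuity, gives $|(f-f_n)k|^{1,\mu}\leq|f|^{\infty,\mu}\,|(\1_{C_n}^\mu\1_\X^\mm)k|^{1,\mu}$ for all $k\in L^1_\mu$.
\item The right-hand side tends to $0$ in $L^0(\mm)$. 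For $k\in S_{\mu,f}$ this is Remark \ref{rmk:cont_above_L0_meas} ii). In general it follows by density, since multiplication by $\1_{C_n}^\mu\1_\X^\mm$ is $1$-Lipschitz for $|\cdot|^{1,\mu}$.
\item Hence $f_nk\to fk$ in $L^1_\mu$ for every fixed $k$, even though $f_n$ need not converge to $f$ in $L^\infty_\mu$.
\end{itemize}
Apply this with $k=g$ (together with \eqref{eq:ineq_E_L0}) and with $k=\mathbb E(g|\mathcal B)$. This yields \eqref{eq:mult_E_L0} for all $f\in S^\infty_\mu(\Omega,\mathcal B;L^0(\mm))$. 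Only then pass to $L^\infty_\mu$, using that $S^\infty_\mu$ is dense in $L^\infty_\mu$ by the definition \eqref{defLinfmuOmM} and that $|fg|^{1,\mu}\leq|f|^{\infty,\mu}|g|^{1,\mu}$. The paper compresses this step into ``by an approximation argument''.
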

\begin{proof}
Let \(\theta_2\colon L^2_\mu(\Omega,\mathcal B;L^0(\mm))\to L^1_\mu(\Omega,\mathcal B;L^0(\mm))\)
be given by Remark \ref{rmk:def_theta_p}. We then define the operator
\(\varphi\colon S_{\mu,f}(\Omega,\mathcal A;L^0(\mm))\to L^1_\mu(\Omega,\mathcal B;L^0(\mm))\) as
\[
\varphi(g)\coloneqq(\theta_2\circ{\rm p}_{\mathcal B})(g)\quad\text{ for every }g\in S_{\mu,f}(\Omega,\mathcal A;L^0(\mm)).
\]
Applying Lemma \ref{lem:formula_norm_1_mu}, \eqref{eq:compat_mult_Lp}, \eqref{eq:char_orth_proj}
and Proposition \ref{prop:mult_Linfty_L1}, we obtain that
\[\begin{split}
|\varphi(g)|^{1,\mu}&=
\bigvee\bigg\{\int f\,\varphi(g)\,\d\mu\;\bigg|\;f\in S_{\mu,f}(\Omega,\mathcal B;L^0(\mm)),\,|f|^{\infty,\mu}\leq\1_\X^\mm\bigg\}\\
&=\bigvee\big\{\langle f,{\rm p}_{\mathcal B}(g)\rangle\;\big|\;f\in S_{\mu,f}(\Omega,\mathcal B;L^0(\mm)),\,|f|^{\infty,\mu}\leq\1_\X^\mm\big\}\\
&=\bigvee\big\{\langle f,g\rangle\;\big|\;f\in S_{\mu,f}(\Omega,\mathcal B;L^0(\mm)),\,|f|^{\infty,\mu}\leq\1_\X^\mm\big\}\\
&=\bigvee\bigg\{\int fg\,\d\mu\;\bigg|\;f\in S_{\mu,f}(\Omega,\mathcal B;L^0(\mm)),\,|f|^{\infty,\mu}\leq\1_\X^\mm\bigg\}\\
&\leq\bigvee\big\{|f|^{\infty,\mu}|g|^{1,\mu}\;\big|\;f\in S_{\mu,f}(\Omega,\mathcal B;L^0(\mm)),\,|f|^{\infty,\mu}\leq\1_\X^\mm\big\}
\leq|g|^{1,\mu},
\end{split}\]
thus in particular \(\varphi\) is well defined. Note also that \(\varphi\) is a homomorphism of random normed modules
satisfying \(|\varphi(g)|^{1,\mu}\leq|g|^{1,\mu}\) for every \(g\in S_{\mu,f}(\Omega,\mathcal A;L^0(\mm))\). Moreover, we have that
\[
\varphi\bigg(\sum_{i\in F}\1_{A_i}^\mu\bar g^i\bigg)=\sum_{i\in F}\bar g^i\,\varphi(\1_{A_i}^\mu\1_\X^\mm)
=\sum_{i\in F}\bar g^i\,{\rm p}_{\mathcal B}(\1_{A_i}^\mu\1_\X^\mm)
\quad\text{ for all }\sum_{i\in F}\1_{A_i}^\mu\bar g^i\in S_{\mu,f}(\Omega,\mathcal A;L^0(\mm)),
\]
which shows that \(\varphi\) is the unique \(L^0(\mm)\)-linear map from \(S_{\mu,f}(\Omega,\mathcal A;L^0(\mm))\)
to \(L^1_\mu(\Omega,\mathcal B;L^0(\mm))\) such that \(\varphi(\1_A^\mu\1_\X^\mm)={\rm p}_{\mathcal B}(\1_A^\mu\1_\X^\mm)\)
for every \(A\in\mathcal A\). Given that \(S_{\mu,f}(\Omega,\mathcal A;L^0(\mm))\) is dense in \(L^1_\mu(\Omega,\mathcal A;L^0(\mm))\),
the map \(\varphi\) can be uniquely extended to a homomorphism of random normed modules as in \eqref{eq:def_E_L0}
satisfying \eqref{eq:ineq_E_L0}. Finally, \({\rm p}_{\mathcal B}(\1_A^\mu\1_B^\mu\1_\X^\mm)
=(\1_B^\mu\1_\X^\mm)\,{\rm p}_{\mathcal B}(\1_A^\mu\1_\X^\mm)\) for every \(A\in\mathcal A\) and \(B\in\mathcal B\),
whence it follows that
\[\begin{split}
\mathbb E(fg|\mathcal B)&=\varphi(fg)=\sum_{(i,j)\in F\times G}\bar f^i\bar g^j\varphi(\1_{B_i\cap A_j}^\mu\1_\X^\mm)
=\sum_{(i,j)\in F\times G}(\1_{B_i}^\mu\bar f^i)\big(\bar g^j\varphi(\1_{A_j}^\mm\1_\X^\mm)\big)=f\,\mathbb E(g|\mathcal B)
\end{split}\]
for every \(f=\sum_{i\in F}\1_{B_i}^\mu\bar f^i\in S_{\mu,f}(\Omega,\mathcal B;L^0(\mm))\)
and \(g=\sum_{j\in G}\1_{A_j}^\mu\bar g^j\in S_{\mu,f}(\Omega,\mathcal A;L^0(\mm))\). By an approximation argument,
one can thus conclude that \eqref{eq:mult_E_L0} holds.
\end{proof}

As a corollary of Proposition \ref{prop:E_L0}, we can prove the existence of the conditional expectation
for random-normed-module-valued maps:
\begin{theorem}[Conditional expectation for \({\rm M}\)-valued maps]\label{thm:E_M}
Let \(({\rm M},|\cdot|)\) be a complete random normed module with base \((\X,\Sigma,\mm)\).
Let \(\mu\in\mathcal P(\Omega,\mathcal A;L^0(\mm))\) be given. Then there exists a unique homomorphism of random normed modules
\begin{equation}\label{eq:def_E_M}
\mathbb E(\,\cdot\,|\mathcal B)\colon L^1_\mu(\Omega,\mathcal A;{\rm M})\to L^1_\mu(\Omega,\mathcal B;{\rm M})
\end{equation}
such that \(|\mathbb E(v|\mathcal B)|^{1,\mu}\leq|v|^{1,\mu}\) for every \(v\in L^1_\mu(\Omega,\mathcal A;{\rm M})\) and
\begin{equation}\label{eq:char_E_M}
\mathbb E(f\cdot v|\mathcal B)=f\cdot\mathbb E(v|\mathcal B)\quad
\text{ for every }f\in L^\infty_\mu(\Omega,\mathcal B;L^0(\mm))\text{ and }v\in L^1_\mu(\Omega,\mathcal A;{\rm M}).
\end{equation}
We say that \(\mathbb E(\,\cdot\,|\mathcal B)\) is the \textbf{\({\rm M}\)-valued conditional expectation} induced by \(\mathcal B\).
\end{theorem}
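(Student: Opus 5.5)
The plan is to deduce the statement from Proposition \ref{prop:E_L0} via the tensor-product description of Theorem \ref{thm:L1_M_as_tens_prod}. Let \(J_{\mathcal A}\colon L^1_\mu(\Omega,\mathcal A;L^0(\mm))\hat\otimes_\pi{\rm M}\to L^1_\mu(\Omega,\mathcal A;{\rm M})\) and \(J_{\mathcal B}\) be the isometric isomorphisms given there, for the \(\sigma\)-algebras \(\mathcal A\) and \(\mathcal B\). The scalar conditional expectation \(\mathbb E_0\coloneqq\mathbb E(\,\cdot\,|\mathcal B)\) from Proposition \ref{prop:E_L0} has \(|\mathbb E_0|\leq\1_\X^\mm\). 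The projective tensor product of homomorphisms, \(\mathbb E_0\otimes_\pi{\rm id}_{\rm M}\), is therefore a homomorphism of random normed modules with \(L^0\)-norm at most \(\1_\X^\mm\). I then define
\[
\mathbb E(\,\cdot\,|\mathcal B)\coloneqq J_{\mathcal B}\circ(\mathbb E_0\otimes_\pi{\rm id}_{\rm M})\circ J_{\mathcal A}^{-1}.
\]
This is a homomorphism, and the contraction estimate \(|\mathbb E(v|\mathcal B)|^{1,\mu}\leq|v|^{1,\mu}\) follows immediately because \(J_{\mathcal A}\) and \(J_{\mathcal B}\) are isometric.

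For \eqref{eq:char_E_M}, I first check the identity on elementary tensors. For \(g\in L^1_\mu(\Omega,\mathcal A;L^0(\mm))\) and \(\bar v\in{\rm M}\), write \(B(g,\bar v)\) for the bilinear map in the proof of Theorem \ref{thm:L1_M_as_tens_prod}; it satisfies \(J_{\mathcal A}(g\otimes\bar v)=B(g,\bar v)\). Then \(\mathbb E(B(g,\bar v)|\mathcal B)=B(\mathbb E_0(g),\bar v)\). Next I need the compatibility \(f\cdot B(g,\bar v)=B(fg,\bar v)\) for \(f\in L^\infty_\mu(\Omega,\mathcal B;L^0(\mm))\). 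This holds by \eqref{eq:def_mult_Linfty_L1} on simple \(f\) and \(g\), and extends by density and continuity. Combining it with \eqref{eq:mult_E_L0} gives
\[
\mathbb E(f\cdot B(g,\bar v)|\mathcal B)=B(f\,\mathbb E_0(g),\bar v)=f\cdot\mathbb E(B(g,\bar v)|\mathcal B).
\]
Both sides of \eqref{eq:char_E_M} are \(L^0(\mm)\)-linear and continuous in \(v\); continuity of \(v\mapsto f\cdot v\) comes from \(|f\cdot v|^{1,\mu}\leq|f|^{\infty,\mu}|v|^{1,\mu}\). Since finite sums of such \(B(g,\bar v)\) are dense (they contain \(S_{\mu,f}\)), the identity extends to all \(v\).

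For uniqueness, suppose \(\Psi\) is another homomorphism with the two stated properties. Applying \eqref{eq:char_E_M} with \(f=\1_B^\mu\1_\X^\mm\) for \(B\in\mathcal B\) shows that \(\Psi\) commutes with multiplication by \(\mathcal B\)-indicators. The difficulty is that the stated properties alone do not pin down \(\Psi(\1_A^\mu\bar v)\) for \(A\in\mathcal A\setminus\mathcal B\). I expect this to be the main obstacle. I would resolve it by reading the uniqueness as relative to the normalisation \(\mathbb E(\1_A^\mu\bar v|\mathcal B)=B({\rm p}_{\mathcal B}(\1_A^\mu\1_\X^\mm),\bar v)\), inherited from Proposition \ref{prop:E_L0}. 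Under that normalisation, \(L^0\)-linearity and continuity, together with the density of \(S_{\mu,f}(\Omega,\mathcal A;{\rm M})\), force \(\Psi=\mathbb E(\,\cdot\,|\mathcal B)\).

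A secondary technical point is the identification of \(L^1_\mu(\Omega,\mathcal B;{\rm M})\) as a closed submodule of \(L^1_\mu(\Omega,\mathcal A;{\rm M})\). This works as in the scalar case: the inclusion of simple maps is isometric, so it extends isometrically to the completions.
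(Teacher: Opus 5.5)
Your proof is the paper's. It defines \(\mathbb E(\,\cdot\,|\mathcal B)\) as \(J_{\mathcal B}\circ(\mathbb E_0\otimes_\pi{\rm id}_{\rm M})\circ J_{\mathcal A}^{-1}\), where \(\mathbb E_0\) is the paper's \(\varphi\) from Proposition \ref{prop:E_L0}. The contraction comes from \(|\mathbb E_0\otimes_\pi{\rm id}_{\rm M}|\leq|\mathbb E_0|\) and the isometries, and \eqref{eq:char_E_M} is checked on elements \(\1_A^\mu\bar v\). Your compatibility \(f\cdot B(g,\bar v)=B(fg,\bar v)\) and the density extension are exactly what the paper uses tacitly at that step.

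On uniqueness you are right, and the problem is sharper than you say: as literally stated, the uniqueness claim is false.

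\begin{itemize}
\item The zero map and \(\frac12\,\mathbb E(\,\cdot\,|\mathcal B)\) are both homomorphisms satisfying the contraction bound and \eqref{eq:char_E_M}.
\item \(\frac12\,\mathbb E(\,\cdot\,|\mathcal B)\) differs from \(\mathbb E(\,\cdot\,|\mathcal B)\) on \(\1_\Omega^\mu\bar v\) whenever \(\bar v\neq 0\), since \(\mathbb E(\1_\Omega^\mu\bar v|\mathcal B)=\1_\Omega^\mu\bar v\).
\item So the two listed properties do not even force \(\Psi\) to fix \(\1_C^\mu\bar v\) for \(C\in\mathcal B\). The indeterminacy is not confined to \(A\in\mathcal A\setminus\mathcal B\).
\item Consequently, the paper's remark that the computation on \(f\cdot(\1_A^\mu\bar v)\) ``implies'' uniqueness does not hold up.
\end{itemize}

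Imposing a normalisation as you do, namely \(\mathbb E(\1_A^\mu\bar v|\mathcal B)=B(\mathbb E_0(\1_A^\mu\1_\X^\mm),\bar v)\) in line with Proposition \ref{prop:E_L0}, is therefore necessary rather than just a convenient reading. With it, your \(L^0\)-linearity plus density argument does give uniqueness.
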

\begin{proof}
Let \(\varphi\coloneqq\mathbb E(\,\cdot\,|\mathcal B)\colon L^1_\mu(\Omega,\mathcal A;L^0(\mm))\to L^1_\mu(\Omega,\mathcal B;L^0(\mm))\)
be given by Proposition \ref{prop:E_L0}.
Let \(J_{\mathcal A}\colon L^1_\mu(\Omega,\mathcal A;L^0(\mm))\hat\otimes_\pi{\rm M}\to L^1_\mu(\Omega,\mathcal A;{\rm M})\)
and \(J_{\mathcal B}\colon L^1_\mu(\Omega,\mathcal B;L^0(\mm))\hat\otimes_\pi{\rm M}\to L^1_\mu(\Omega,\mathcal B;{\rm M})\)
be the isometric isomorphisms of random normed modules given by Theorem \ref{thm:L1_M_as_tens_prod}. We define
\[
\mathbb E(\,\cdot\,|\mathcal B)\coloneqq J_{\mathcal B}\circ(\varphi\otimes_\pi{\rm id}_{\rm M})\circ J_{\mathcal A}^{-1}
\colon L^1_\mu(\Omega,\mathcal A;{\rm M})\to L^1_\mu(\Omega,\mathcal B;{\rm M}).
\]
One can check that for any \(f\in L^\infty_\mu(\Omega,\mathcal B;L^0(\mm))\), \(A\in\mathcal A\) and \(\bar v\in{\rm M}\) it holds that
\[\begin{split}
\mathbb E(f\cdot(\1_A^\mu\bar v)|\mathcal B)&=(J_{\mathcal B}\circ(\varphi\otimes_\pi{\rm id}_{\rm M}))((f\1_A^\mu)\otimes\bar v)
=J_{\mathcal B}(\varphi(f\1_A^\mu)\otimes\bar v)=\varphi(f\1_A^\mu)\cdot\bar v\\
&=f\cdot(\varphi(\1_A^\mu)\cdot\bar v)=f\cdot J_{\mathcal B}(\varphi(\1_A^\mu)\otimes\bar v)
=f\cdot(J_{\mathcal B}\circ(\varphi\otimes_\pi{\rm id}_{\rm M}))(\1_A^\mu\otimes\bar v)\\
&=f\cdot\mathbb E(\1_A^\mu\bar v|\mathcal B),
\end{split}\]
which implies that \(\mathbb E(\,\cdot\,|\mathcal B)\) is the unique homomorphism satisfying \eqref{eq:char_E_M}.
Moreover, we have
\[
|\mathbb E(v|\mathcal B)|^{1,\mu}\leq\big|(\varphi\otimes_\pi{\rm id}_{\rm M})(J_{\mathcal A}^{-1}(v))\big|_\pi
\leq|\varphi||{\rm id}_{\rm M}||J_{\mathcal A}^{-1}(v)|_\pi\leq|v|^{1,\mu},
\]
thus accordingly the proof is complete.
\end{proof}

Having Theorem \ref{thm:E_M} at our disposal, we can now formulate our definition of a random-normed-module-valued martingale:
\begin{definition}[Random-normed-module-valued martingale]\label{def:RNM_martingale}
Let \(({\rm M},|\cdot|)\) be a complete random normed module with base \((\X,\Sigma,\mm)\).
Let \(\mu\in\mathcal P(\Omega,\mathcal A;L^0(\mm))\) be given. Let \((\mathcal A_n)_{n\in\N}\) be a given \textbf{filtration}
of \(\mathcal A\), i.e.\ each \(\mathcal A_n\) is a \(\sigma\)-subalgebra of \(\mathcal A\) and it holds that
\(\mathcal A_1\subseteq\mathcal A_2\subseteq\ldots\). Then we say that a sequence
\((v_n)_{n\in\N}\subseteq L^1_\mu(\Omega,\mathcal A;{\rm M})\) is an \textbf{\({\rm M}\)-valued martingale}
provided it holds that \(v_n\in L^1_\mu(\Omega,\mathcal A_n;L^0(\mm))\) for every \(n\in\N\) and
\[
v_n=\mathbb E(v_{n+1}|\mathcal A_n)\quad\text{ for every }n\in\N.
\]
\end{definition}

We expect that many results for Banach-space-valued martingales can be extended to the setting of
random-normed-module-valued martingales, but in this paper we do not investigate further in that direction.
\subsection{The random Radon--Nikod\'{y}m property}\label{s:rRNP}
As usual, \((\X,\Sigma,\mm)\) is a given probability space. In light of Theorem \ref{thm:RN},
we propose the following notion of Radon--Nikod\'{y}m property for complete random normed modules
with base \((\X,\Sigma,\mm)\):
\begin{definition}[Random Radon--Nikod\'{y}m property]\label{def:RNP}
Let \(({\rm M},|\cdot|)\) be a complete random normed module with base \((\X,\Sigma,\mm)\). Then
we say that \({\rm M}\) has the \textbf{random Radon--Nikod\'{y}m property} if the following holds:
given \(\mu\in\mathcal M([0,1];{\rm M})\) such that \(\nu\coloneqq|\mu|\in\mathcal M_+([0,1];L^0(\mm))\)
satisfies \(\hat\nu\ll\mathscr L_1\otimes\mm\), there exists \(\delta\in L^1_{\mathscr L_1^c}([0,1];{\rm M})\)
such that
\[
\mu(A)=\int_A \delta\cdot\d\mathscr L_1^c\quad\text{ for every }A\in\mathscr B([0,1]),
\]
where \(\mathscr L_1^c\) is the \(L^0\)-valued measure defined as in Remark \ref{rmk:lambda^c}.
\end{definition}

We chose this axiomatisation of random Radon--Nikod\'{y}m property due to the next result:
\begin{proposition}
The space \(L^0(\mm)\) has the random Radon--Nikod\'{y}m property.
\end{proposition}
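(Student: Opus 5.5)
We reduce the statement to the random Radon--Nikod\'{y}m Theorem \ref{thm:RN}, applied with \(\Omega=[0,1]\) and \(\nu\) replaced by the probability \(L^0\)-valued measure \(\mathscr L_1^c\), which has the constant foliation \(x\mapsto\mathscr L_1\). By \eqref{eq:hat_lambda^c} we have \(\hat{\mathscr L_1^c}=\mathscr L_1\otimes\mm\). So fix \(\mu\in\mathcal M([0,1];L^0(\mm))\) with \(\sigma\coloneqq|\mu|\) satisfying \(\hat\sigma\ll\mathscr L_1\otimes\mm\).

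\textbf{Step 1: split into non-negative parts.} Write \(\mu=\mu^+-\mu^-\) as in \eqref{eq:pos_neg_parts}, with \(0\leq\mu^\pm\leq\sigma\). Theorem \ref{thm:RN} requires probability measures with foliations, so we normalise. Let \(g\coloneqq\sigma([0,1])=|\mu|_{\rm TV}\in L^0_+(\mm)\), and split \(\X\) into \(\{g=0\}\), where \(\mu=0\) and we may take \(\delta=0\), and \(\{g>0\}\). On the latter set consider \(g^{-1}\cdot\sigma\), extended by \(\mathscr L_1^c\) on \(\{g=0\}\); similarly define normalisations of \(\mu^\pm\), adding a multiple of \(\mathscr L_1^c\) where necessary to make them probability \(L^0\)-valued measures.

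\textbf{Step 2: foliations.} By Corollary \ref{cor:every_mu_Radon_Polish} every Borel \(L^0\)-valued measure on \([0,1]\) is Radon. Corollary \ref{corolmainthm} then gives foliations of \(\mu\) and \(|\mu|\), hence of \(\mu^\pm\) and of the normalised measures. After normalisation these foliations are a.e.\ probability measures, and we redefine them on a null set.

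\textbf{Step 3: absolute continuity of the hats.} For the normalised measures \(\rho\), we must check \(\hat\rho\ll\mathscr L_1\otimes\mm\). Using the foliation formula in Lemma \ref{lem:meas_by_mon_class_thm}, the hypothesis \(\hat\sigma\ll\mathscr L_1\otimes\mm\) says that \(\sigma_x\ll\mathscr L_1\) for a.e.\ \(x\) with \(g(x)>0\). Indeed, \(\hat\sigma\) is equivalent to \(\int\sigma_x\,\d\mm\)-type integration up to positive weights, as in \eqref{def[mu](A)}. Since \(\mu^\pm_x\leq\sigma_x\), the same conclusion holds for \(\mu^\pm_x\), and scaling by \(g^{-1}(x)\) preserves it. The criterion \eqref{eq:impl_ac_2} then yields the required absolute continuity of the hats.

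\textbf{Step 4: apply Theorem \ref{thm:RN} and reassemble.} Theorem \ref{thm:RN} gives \(\delta^\pm\in L^1_{\mathscr L_1^c}([0,1];L^0_+(\mm))\) with \(\rho^\pm(A)=\int_A\delta^\pm\,\d\mathscr L_1^c\). Multiplying by \(g\) (\(L^0\)-linearity), subtracting the added \(\mathscr L_1^c\) pieces, and setting \(\delta\coloneqq g(\delta^+-\delta^-)\) restricted to \(\{g>0\}\), we obtain \(\mu(A)=\int_A\delta\,\d\mathscr L_1^c\).

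Alternatively, one can bypass the normalisation: by Theorem \ref{thm:L1_tilde_L1}, take \(\delta\) to be the class of \((p,x)\mapsto\frac{\d\mu_x}{\d\mathscr L_1}(p)\). Joint measurability of this density is obtained as a limit of difference quotients on dyadic intervals, via Lebesgue differentiation. The identity then follows from Proposition \ref{prop:ptwse_formula_int_of_tilde_L1} and the fact that the foliation represents \(\mu\).

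\textbf{Main obstacle.} The delicate step is the normalisation and unboundedness bookkeeping: \(g\) may be unbounded, and the extra hypotheses \(\mu(\Omega)\in L^\infty\) in \eqref{eq:impl_ac_2} must be met. The plan is to localise on the sets \(\{k-1\leq g<k\}\), where everything is bounded, and glue the resulting local densities by countable concatenation. I expect the direct route through \(\tilde L^1\) to be the cleaner of the two.
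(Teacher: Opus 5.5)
Your main route is essentially the paper's. You split \(\mu=\mu^+-\mu^-\), normalise to probability \(L^0\)-valued measures, and obtain foliations from Corollaries \ref{cor:every_mu_Radon_Polish} and \ref{corolmainthm}. You then apply Theorem \ref{thm:RN} against \(\mathscr L_1^c\), whose hat is \(\mathscr L_1\otimes\mm\), and recombine by \(L^0(\mm)\)-linearity. Your padding with multiples of \(\mathscr L_1^c\) is more careful than the paper's \(\xi_\pm\). The total mass of \(\xi_\pm\) is \(\1_{\{\mu^\pm([0,1])>0\}}^\mm\), not \(\1_\X^\mm\), so \(\xi_\pm\) need not be a probability measure.

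\textbf{Step 3 is not justified as written.} The foliation formula of Lemma \ref{lem:meas_by_mon_class_thm} only gives the following: for each fixed \(N\) with \((\mathscr L_1\otimes\mm)(N)=0\), one has \(\sigma_x(N_x)=0\) for \(\mm\)-a.e.\ \(x\in\{g>0\}\). The exceptional set depends on \(N\). Concluding that \(\sigma_x\ll\mathscr L_1\) for a.e.\ \(x\) swaps quantifiers over uncountably many null sets. It is an instance of the converse of \eqref{eq:impl_ac_2}, which the paper explicitly leaves open in general.

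Here the claim is true, but it needs a jointly measurable singular set. For example, take \(S\coloneqq\{(p,x):\sup_n 2^n\sigma_x(I_n(p))=+\infty\}\), where \(I_n(p)\) is the generation-\(n\) dyadic interval containing \(p\). Each \(S_x\) is \(\mathscr L_1\)-null by Doob's maximal inequality and carries the singular part of \(\sigma_x\), so \(\hat\sigma(S)=0\) kills that singular part.

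For your main route you can avoid this entirely by fixing \(N\) first. Since \(\rho^\pm_x\le\1_{\{g>0\}}(x)g(x)^{-1}\sigma_x+\mathscr L_1\) for a.e.\ \(x\), you get \(\rho^\pm_x(N_x)=0\) for a.e.\ \(x\). Hence \(\widehat{\rho^\pm}(N)=0\) by Lemma \ref{lem:meas_by_mon_class_thm}. This is what the paper does directly on hats via \(\mu^\pm\le|\mu|\).

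Your alternative \(\tilde L^1\)-route, by contrast, genuinely needs \(\mu_x\ll\mathscr L_1\) a.e. Without it the dyadic density only reproduces \(\mu_x^{\rm ac}(A)\), not \(\mu_x(A)\). So that route is complete only once the singular-set argument above is added. With it, the route is valid and bypasses Theorem \ref{thm:RN} and the random Riesz representation entirely.

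\textbf{Two bookkeeping points.}
\begin{itemize}
\item After normalisation all measures are probability ones. The \(L^\infty\) hypothesis in \eqref{eq:impl_ac_2} then holds automatically, and no localisation on \(\{k-1\le g<k\}\) is needed.
\item With your padding, the recombined density is \(\1_{\{g>0\}}^\mm g(\delta^+-\delta^-)+\mu([0,1])\), the last term being constant in \(p\), not \(g(\delta^+-\delta^-)\). For \(\mu=\mathscr L_1^c\) you get \(\rho^+=\rho^-=\mathscr L_1^c\), so \(g(\delta^+-\delta^-)=0\), which is wrong. Alternatively, normalise \(\mu^\pm\) by \(\mu^\pm([0,1])\) and pad with \(\1_{\{\mu^\pm([0,1])=0\}}^\mm\mathscr L_1^c\). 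This removes the correction term and gives the paper's formula \(\delta=\mu^+([0,1])\delta_+-\mu^-([0,1])\delta_-\) verbatim.
\end{itemize}
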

\begin{proof}
Fix any \(\mu\in\mathcal M([0,1];L^0(\mm))\) such that \(\nu\coloneqq|\mu|\) satisfies \(\hat\nu\ll\mathscr L_1\otimes\mm\).
Define
\[
\xi_\pm\coloneqq\frac{\1_{\{\mu^\pm([0,1])>0\}}^\mm}{\mu^\pm([0,1])}\cdot\mu^\pm\in\mathcal P([0,1];L^0(\mm))
\quad\text{ if }\mu^\pm\neq 0,
\]
or \(\xi_\pm\coloneqq 0\in\mathcal M_+([0,1];L^0(\mm))\) otherwise. Since \(\mu^\pm\leq\nu\), we have that
\(\hat{\mu^\pm}\leq\hat\nu\ll\mathscr L_1\otimes\mm\) and thus \(\xi_\pm\ll\mathscr L_1\otimes\mm\). Given
that \(\mathscr L_1\otimes\mm=\hat{\mathscr L_1^c}\) by \eqref{eq:hat_lambda^c} and \(\xi_\pm\) has a foliation
in \(\mathcal P([0,1])\) by Corollary \ref{corolmainthm} (if \(\mu^\pm\neq 0\), or the null foliation otherwise),
it follows from Theorem \ref{thm:RN} that there exists a (unique) element
\(\delta_\pm\in L^1_{\mathscr L_1^c}([0,1];L^0_+(\mm))\) such that
\(\xi_\pm(A)=\int_A\delta_\pm\,\d\mathscr L_1^c\) for all \(A\in\mathscr B([0,1])\). Therefore,
\(\delta\coloneqq\mu^+([0,1])\delta_+ -\mu^-([0,1])\delta_-\in L^1_{\mathscr L_1^c}([0,1];L^0(\mm))\) satisfies
\[\begin{split}
\mu(A)&=\mu^+([0,1])\cdot\xi_+(A)-\mu^-([0,1])\cdot\xi_-(A)\\
&=\mu^+([0,1])\int_A\delta_+(A)\,\d\mathscr L_1^c-\mu^-([0,1])\int_A\delta_-(A)\,\d\mathscr L_1^c
=\int_A\delta\,\d\mathscr L_1^c
\end{split}\]
for all \(A\in\mathscr B([0,1])\). This proves that \(L^0(\mm)\) has the random Radon--Nikod\'{y}m property.
\end{proof}

In view of the fundamental importance of the Radon--Nikod\'{y}m property for Banach spaces, we believe that
it would be very interesting to conduct an in-depth investigation of the random Radon--Nikod\'{y}m property,
but we leave it for future research. It would interesting to understand whether the class of complete random
normed modules having the random Radon--Nikod\'{y}m property includes, for example, those modules that are
`random reflexive', or those that are countably generated and admits an isometric predual. Furthermore, it is
well known that the Radon--Nikod\'{y}m property for Banach spaces can be characterised in many different ways.
To mention a few, for a Banach space \(\mathbb B\) the following are equivalent:
\begin{itemize}
\item[\(\rm i)\)] \(\mathbb B\) has the Radon--Nikod\'{y}m property.
\item[\(\rm ii)\)] Every \(\mathbb B\)-valued martingale bounded in \(L^1([0,1];\mathbb B)\) converges
almost surely.
\item[\(\rm iii)\)] Every Lipschitz curve \(\gamma\colon[0,1]\to\mathbb B\) is almost everywhere differentiable.
\end{itemize}
It would be interesting to understand if the above equivalences can be generalised in some way to the setting
of random normed modules. The equivalence \({\rm i)}\Longleftrightarrow{\rm ii)}\) might be useful for studying
further the theory of random-normed-module-valued martingales that we have introduced in Section
\ref{s:random_martingale}, whereas the equivalence \({\rm i)}\Longleftrightarrow{\rm iii)}\) is likely to be
useful in nonsmooth differential geometry. Conjecturally, the random Radon--Nikod\`{y}m property that we have
proposed in Definition \ref{def:RNP} (or some variant of it) is equivalent to the almost everywhere
diffentiability of \emph{random Lipschitz curves} taking values into the random normed module under consideration.
Namely, given a complete random normed module \(({\rm M},|\cdot|)\) with base \((\X,\Sigma,\mm)\), we say that
a map \(\gamma\colon[0,1]\to{\rm M}\) is a random Lipschitz curve if there is \(L\in L^0_+(\mm)\) such that
\[
|\gamma_t-\gamma_s|\leq|t-s|L\quad\text{ for every }s,t\in[0,1].
\]
By saying that such a curve \(\gamma\colon[0,1]\to{\rm M}\) is a.e.\ differentiable, we mean that
\[
\exists\,\dot\gamma_t\coloneqq\lim_{[0,1]\ni s\to t}\frac{\gamma_s-\gamma_t}{s-t}\in{\rm M}\quad\text{ for a.e.\ }t\in[0,1].
\]
\subsection{Random sets of finite perimeter}\label{s:random_FP_sets}
In this section, we introduce a notion of \emph{random set of finite perimeter} in \(\R^N\). For an in-depth
treatment of the classical theory of sets of finite perimeter in the Euclidean space, we refer to
\cite{Amb:Fus:Pal:00,maggi2012sets}. Let us fix a probability space \((\X,\Sigma,\mm)\) and a bounded
open subset \(\Omega\neq\varnothing\) of \(\R^N\). For brevity, we denote
\[
\lambda_\Omega\coloneqq(\mathscr L^N\mrestr\Omega)^c\in\mathfrak M_+(\Omega;L^0(\mm)),
\]
where \(\mathscr L^N\mrestr\Omega\) stands for the restriction of the Lebesgue measure \(\mathscr L^N\)
to \(\Omega\), while \((\mathscr L^N\mrestr\Omega)^c\) is defined as in Remark \ref{rmk:lambda^c}.
We recall from \eqref{eq:hat_lambda^c} that \(\hat{\lambda_\Omega}=(\mathscr L^N\mrestr\Omega)\otimes\mm\).
\begin{definition}[Random set of finite perimeter]\label{def:random_set_FP}
Let \(\nchi\in L^1_{\lambda_\Omega}(\Omega;L^0(\mm))\) be given. Then we say that \(\nchi\) is a
\textbf{random set of finite perimeter} provided the following conditions hold:
\begin{itemize}
\item[\(\rm i)\)] \(I(\nchi)(p,x)\in\{0,1\}\) for \((\mathscr L^N\mrestr\Omega)\otimes\mm\)-a.e.\ \((p,x)\in\Omega\times\X\),
where the isometric isomorphism \(I\colon L^1_{\lambda_\Omega}(\Omega;L^0(\mm))\to\tilde L^1_{\lambda_\Omega}(\Omega;L^0(\mm))\)
is given by Theorem \ref{thm:L1_tilde_L1}.
\item[\(\rm ii)\)] There exists a Radon \(L^0(\mm)^N\)-valued measure
\(\mu^\Omega_\nchi\in\mathfrak M(\Omega;L^0(\mm)^N)\) such that
\begin{equation}\label{eq:def_random_FP}
\int\nchi\,{\rm div}(T)\,\d\lambda_\Omega=\int\1_\X^\mm T\cdot\d\mu^\Omega_\nchi
\quad\text{ for every }T\in C^1_c(\Omega;\R^N).
\end{equation}
\end{itemize}
Moreover, we define the \textbf{random perimeter} of \(\nchi\) as
\[
P(\nchi)\coloneqq|\mu^\Omega_\nchi|_{\rm TV}\in L^0_+(\mm).
\]
\end{definition}

We consider random sets of finite perimeter inside a bounded domain \(\Omega\), and not on
the whole \(\R^N\) or on an unbounded domain, for the sake of presentation. Such assumption
is made only to guarantee that \(\mathscr L^N\mrestr\Omega\) is a finite measure and thus
\(\lambda_\Omega\) in an \(L^0(\mm)\)-valued measure. However, it would be easy to adapt the
definitions and the related results to arbitrary Euclidean domains. Likewise, one could define
random sets of locally finite perimeter.
\begin{lemma}\label{lem:slices_random_FP}
Let \(\nchi\in L^1_{\lambda_\Omega}(\Omega;L^0(\mm))\) be a random set of finite perimeter.
Then for \(\mm\)-a.e.\ \(x\in\X\) there exists a Borel set \(E_x\subseteq\Omega\) having finite
perimeter in \(\Omega\) such that \(I(\nchi)(\cdot,x)=\1_{E_x}\) holds in the \((\mathscr L^N\mrestr\Omega)\)-a.e.\ sense.
\end{lemma}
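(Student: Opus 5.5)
The plan is to slice the defining identity \eqref{eq:def_random_FP} pointwise in \(x\), using the foliation machinery. First, \(\lambda_\Omega\) has the constant foliation \(\mu_x=\mathscr L^N\mrestr\Omega\) (normalise by \(\mathscr L^N(\Omega)\) if one wants a probability foliation, which only rescales everything), so Theorem \ref{thm:L1_tilde_L1} and Proposition \ref{prop:ptwse_formula_int_of_tilde_L1} apply. Fix a representative \(\bar F\) of \(I(\nchi)\). By condition i) and Fubini (Lemma \ref{lem:meas_by_mon_class_thm} with \(\hat{\lambda_\Omega}=(\mathscr L^N\mrestr\Omega)\otimes\mm\)), for \(\mm\)-a.e.\ \(x\) the function \(\bar F(\cdot,x)\) takes values in \(\{0,1\}\) a.e., so it equals \(\1_{E_x}\) with \(E_x\coloneqq\{\bar F(\cdot,x)=1\}\), which is Borel after modifying on a null set (measurability in \(p\) for fixed \(x\) follows from product measurability).

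Next, the measure side. Each component of \(\mu^\Omega_\nchi\in\mathfrak M(\Omega;L^0(\mm)^N)\) is a Radon \(L^0\)-valued measure on \(\Omega\). Since \(\Omega\) is not compact, I would push forward to (or restrict to) the compact metric space \(\bar\Omega\) via the inclusion, which preserves Radonness by Remark \ref{rmk:pushforward_Radon}, and then apply Corollary \ref{corolmainthm} (or Theorem \ref{thm:M-val_foliat_exist} with \({\rm M}=L^0(\mm)^N\), fibers \(\R^N\)) to get a foliation \((\mu_x)_x\subseteq\mathfrak M(\bar\Omega;\R^N)\) concentrated on \(\Omega\) for a.e.\ \(x\), with \(\|\mu_x\|_{\rm TV}\le |\mu^\Omega_\nchi|_x(\Omega)<\infty\) a.e.

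Then slice the identity: for fixed \(T\in C^1_c(\Omega;\R^N)\), the left side evaluated at \(x\) equals \(\int \1_{E_x}{\rm div}T\,\d\mathscr L^N\) for a.e.\ \(x\) by Proposition \ref{prop:ptwse_formula_int_of_tilde_L1}, and the right side equals \(\int T\cdot\d\mu_x\) for a.e.\ \(x\) by Lemma \ref{lem:int_mu_ptwse_descript} applied componentwise with \(\bar f=\1_\X\). The main obstacle is the exceptional null set depending on \(T\). I would handle it by choosing a countable family \(\mathcal D\subseteq C^1_c(\Omega;\R^N)\) which is dense in the sense that for every \(T\in C^1_c\) there are \(T_k\in\mathcal D\) with \(T_k\to T\) uniformly, \({\rm div}T_k\to{\rm div}T\) uniformly, supports in a common compact (e.g.\ mollified rational polynomial fields times cutoffs on an exhaustion). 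Taking the union of the countably many null sets, for a.e.\ \(x\) we get \(\int_{E_x}{\rm div}T\,\d\mathscr L^N=\int T\cdot\d\mu_x\) for all \(T\in\mathcal D\), and by dominated convergence (finite \(\mathscr L^N(\Omega)\), finite \(\|\mu_x\|_{\rm TV}\)) for all \(T\in C^1_c\). Hence \(D\1_{E_x}=-\mu_x\) is a finite vector Radon measure on \(\Omega\) (up to the sign convention), i.e.\ \(E_x\) has finite perimeter in \(\Omega\), with \(P(E_x,\Omega)\le P(\nchi)(x)\).
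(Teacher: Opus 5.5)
Your proposal is correct and follows essentially the same route as the paper. Both arguments use Fubini to produce the slices \(E_x\), take a foliation of \(\mu^\Omega_\nchi\), slice \eqref{eq:def_random_FP} on a countable family of fields that is dense for \((T,{\rm div}(T))\) in the supremum norm, and pass to the limit for each fixed \(x\); the only difference is that you build the foliation by pushing forward to the compact metric space \(\bar\Omega\) and using Corollary \ref{corolmainthm} together with Lemma \ref{lem:int_mu_ptwse_descript}, which is a concrete way of carrying out the \emph{patching argument} that the paper only alludes to when invoking Theorem \ref{thm:M-val_foliat_exist}.
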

\begin{proof}
Fubini's theorem ensures that \(I(\nchi)(\cdot,x)\in\{0,1\}\) holds \((\mathscr L^N\mrestr\Omega)\)-a.e.\ for
\(\mm\)-a.e.\ \(x\in\X\), thus accordingly for \(\mm\)-a.e.\ \(x\in\X\) there exists a Borel set \(E_x\subseteq\Omega\)
such that \(I(\nchi)(\cdot,x)=\1_{E_x}\) in the \((\mathscr L^N\mrestr\Omega)\)-a.e.\ sense. It readily follows
from Theorem \ref{thm:M-val_foliat_exist} via an easy `patching argument' that \(\mu^\Omega_\nchi\) has a foliation
\(\big((\mu^\Omega_\nchi)_x\big)_{x\in\X}\subseteq\mathfrak M(\Omega;\R^N)\).
Given that \(C_c(\Omega;\R^N)\times C_c(\Omega)\) is separable with respect to the supremum norm, we can find
a countable set \(\mathcal C\subseteq C^1_c(\Omega;\R^N)\) such that \(\{(T,{\rm div}(T)):T\in\mathcal C\}\)
is dense in \(\{(T,{\rm div}(T)):T\in C^1_c(\Omega;\R^N)\}\subseteq C_c(\Omega;\R^N)\times C_c(\Omega)\).
Therefore, Proposition \ref{prop:ptwse_formula_int_of_tilde_L1} and \eqref{eq:def_random_FP} imply that
for \(\mm\)-a.e.\ \(x\in\X\) it holds that
\[
\int_{E_x}{\rm div}(T)\,\d\mathcal L^N=\int T\cdot\d(\mu^\Omega_\nchi)_x\quad\text{ for every }T\in\mathcal C.
\]
By approximation it follows that \(\int_{E_x}{\rm div}(T)\,\d\mathcal L^N=\int T\cdot\d(\mu^\Omega_\nchi)_x\)
for every \(T\in C^1_c(\Omega;\R^N)\), which guarantees that \(E_x\) has finite perimeter in \(\Omega\) for
\(\mm\)-a.e.\ \(x\in\X\), as desired.
\end{proof}

Many well-known properties of standard sets of finite perimeter can be extended to random sets of finite
perimeter, e.g.\ the ensuing lower semicontinuity property of random perimeters:
\begin{proposition}
Let \((\nchi_n)_{n\in\N}\subseteq L^1_{\lambda_\Omega}(\Omega;L^0(\mm))\) and
\(\nchi\in L^1_{\lambda_\Omega}(\Omega;L^0(\mm))\) be random sets of finite perimeter satisfying
\(\nchi_n\to\nchi\) in \(L^1_{\lambda_\Omega}(\Omega;L^0(\mm))\) as \(n\to\infty\).
Then we can extract a subsequence \((\nchi_{n_k})_{k\in\N}\) such that
\begin{equation}\label{eq:lsc_random_FP}
P(\nchi)\leq\varliminf_{k\to\infty}P(\nchi_{n_k})\quad\text{ holds }\mm\text{-a.e.\ on }\X.
\end{equation}
\end{proposition}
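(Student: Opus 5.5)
The plan is to reduce the statement to the classical lower semicontinuity of the perimeter, applied fiberwise via Lemma \ref{lem:slices_random_FP}. First pass to the pointwise description. By Theorem \ref{thm:L1_tilde_L1}, the convergence \(\nchi_n\to\nchi\) in \(L^1_{\lambda_\Omega}(\Omega;L^0(\mm))\) is equivalent to \(|I(\nchi_n)-I(\nchi)|^{1,\lambda_\Omega}_\sim\to 0\) in \(L^0(\mm)\). The foliation of \(\lambda_\Omega\) is the constant family \(x\mapsto\mathscr L^N\mrestr\Omega\), so this means
\[
\int_\Omega|I(\nchi_n)(\cdot,x)-I(\nchi)(\cdot,x)|\,\d\mathscr L^N\to 0\quad\text{ in }\mm\text{-measure.}
\]
Convergence in measure yields a subsequence \((n_k)_k\) along which the convergence holds for \(\mm\)-a.e.\ \(x\in\X\). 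Lemma \ref{lem:slices_random_FP} supplies, for \(\mm\)-a.e.\ \(x\), finite-perimeter sets \(E_x\) and \(E^k_x\) with \(I(\nchi)(\cdot,x)=\1_{E_x}\) and \(I(\nchi_{n_k})(\cdot,x)=\1_{E^k_x}\). Hence \(\1_{E^{k}_x}\to\1_{E_x}\) in \(L^1(\Omega)\) for \(\mm\)-a.e.\ \(x\).

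Next, identify the random perimeter pointwise. The proof of Lemma \ref{lem:slices_random_FP} gives a foliation \(((\mu^\Omega_\nchi)_x)_x\) of \(\mu^\Omega_\nchi\), obtained through Theorem \ref{thm:M-val_foliat_exist} after patching to compact sets. For \(\mm\)-a.e.\ \(x\) this foliation satisfies \((\mu^\Omega_\nchi)_x=D\1_{E_x}\), up to the sign convention in \eqref{eq:def_random_FP}. The remaining task, which I expect to be the main obstacle, is to show that
\[
P(\nchi)(x)=|\mu^\Omega_\nchi|_{\rm TV}(x)=|D\1_{E_x}|(\Omega)\quad\text{ for }\mm\text{-a.e.\ }x.
\]
The available tools are the total-variation part of Corollary \ref{corolmainthm} and Theorem \ref{thm:charact_foliation_cpt_metric}, but both are stated for a compact metric \(\Omega\) and for \(L^0\)-valued rather than \(L^0(\mm)^N\)-valued measures. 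Two adaptations are therefore needed.

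To handle \(\Omega\), which is a bounded open set, I would push the measures forward to the compact metric space \(\bar\Omega\). Since \(|\cdot|_{\rm TV}\) is unchanged by this pushforward, nothing is lost.

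For the \(\R^N\)-valued case, the inequality \(|\mu|(A)(x)\le\|\mu_x\|(A)\) follows as in Remark \ref{rmk:integr_mu_x}, because \(|\mu(A)|(x)=|\mu_x(A)|\le\|\mu_x\|(A)\). The reverse inequality needs more work. Fix a countable base \(\mathcal C\) closed under finite unions. For each \(x\), \(\|\mu_x\|(\Omega)\) is a supremum over finite disjoint families of Borel sets; by regularity these can be approximated by differences of sets from \(\mathcal C\), which reduces the supremum to a countable one. Each of the countably many sums \(\sum_i|\mu(A_i)|\) is bounded above by \(|\mu|_{\rm TV}\) \(\mm\)-a.e., so the countable supremum is as well, giving \(\|\mu_x\|(\Omega)\le|\mu|_{\rm TV}(x)\). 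An alternative would be a duality formula with a countable dense set of \(T\in C_c(\Omega;\R^N)\) with \(|T|\le1\).

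With both identifications in hand, the classical lower semicontinuity of the perimeter under \(L^1_{\rm loc}\) convergence gives, for \(\mm\)-a.e.\ \(x\),
\[
P(\nchi)(x)=|D\1_{E_x}|(\Omega)\le\varliminf_{k}|D\1_{E^k_x}|(\Omega)=\varliminf_k P(\nchi_{n_k})(x),
\]
which is \eqref{eq:lsc_random_FP}. The countably many \(\mm\)-null exceptional sets, one for each \(k\) and for \(\nchi\), are discarded together.
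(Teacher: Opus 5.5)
Your proposal is correct and follows the paper's proof: you slice via Lemma \ref{lem:slices_random_FP}, extract a subsequence along which \(\1_{E^k_x}\to\1_{E_x}\) in \(L^1(\mathscr L^N\mrestr\Omega)\) for \(\mm\)-a.e.\ \(x\), and then apply the classical lower semicontinuity of the perimeter. The paper tacitly uses the identification \(P(\nchi)(x)=|D\1_{E_x}|(\Omega)\), which you make explicit. Note that, after pushing forward to \(\bar\Omega\), the harder inequality \(|D\1_{E_x}|(\Omega)\leq P(\nchi)(x)\) is already contained in \eqref{eq:foliat_M-valued} of Theorem \ref{thm:M-val_foliat_exist}; your countable-supremum argument for it is valid but not strictly needed.
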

\begin{proof}
Let \((E^n_x)_{x\in\X}\) and \((E_x)_{x\in\X}\) be the sets of finite perimeter associated to
\(\nchi_n\) and \(\nchi\), respectively, as in Lemma \ref{lem:slices_random_FP}. Proposition
\ref{prop:ptwse_formula_int_of_tilde_L1} guarantees that we can find a subsequence
\((\nchi_{n_k})_{k\in\N}\) such that \(\nchi_{E^{n_k}_x}\to\nchi_{E_x}\) in \(L^1(\mathscr L^N\mrestr\Omega)\)
as \(k\to\infty\) for \(\mm\)-a.e.\ \(x\in\X\). The lower semicontinuity property of standard perimeters
then implies that \eqref{eq:lsc_random_FP} holds.
\end{proof}

We believe that the introduction of a notion of random set of finite perimeter makes it possible to develop,
as in the classical deterministic setting, an extensive theory including compactness results and related properties. 
Moreover, we expect that this framework can be employed to implement a minimising-movements scheme (see \cite{atw})
in order to model the stochastic mean curvature flow, in the spirit of the work \cite{yip1998stochastic}.
\section{Open problems}
In this conclusive section we collect several specific questions that are left open after this paper.
In order to develop further the theory that we have presented, it would be interesting to prove or disprove the following statements:
\begin{itemize}
\item[1)] \emph{Every \(L^0\)-valued measure \(\mu\colon\mathcal A\to L^0(\mm)\) is of bounded variation.}

Originally we were expecting that the above claim is true, by analogy with the classical Banach space case:
every vector measure whose target is a finite-dimensional Banach space is of bounded variation, and \(L^0(\mm)\)
has dimension 1 as an \(L^0(\mm)\)-module (since it is generated by \(\1_\X^\mm\)). However, our attempts to
prove the above claim failed, due to the fact that \(L^0\)-valued measures do not admit a Hahn-type decomposition.
Therefore, it could well be that the above claim is in fact false.
\item[2)] \emph{There exist \(L^0\)-valued measures \(\mu\in\mathcal M(\Omega;L^0(\mm))\) that cannot be foliated.}

We expect there there do exist, possibly even for \(\mu\) Radon \(L^0\)-valued measure over some
compact Hausdorff space \((\Omega,\tau)\) that is not metrisable (since every Radon \(L^0\)-valued measure over
a compact metric space can be foliated, cf.\ Corollary \ref{corolmainthm}).

\item[3)] \emph{The elements of \(L^1_\mu(\Omega;{\rm M})\) are \(\mu\)-a.e.\ equivalence classes of maps
from \(\Omega\) to \({\rm M}\).}

See the discussion preceding Lemma \ref{lem:full_formula_pushforward}.
\item[4)] \emph{The random Radon--Nikod\'{y}m theorem (i.e.\ Theorem \ref{thm:RN}) holds assuming that \(\mu\ll\nu\).}

Note that the answer is `yes' if the next statement is correct.
\item[5)] \emph{The conditions \(\mu\ll\nu\) and \(\hat\mu\ll\hat\nu\) are equivalent.}

The fact that \(\hat\mu\ll\hat\nu\) implies \(\mu\ll\nu\) is known to be true; recall Remark \ref{rmk:implication_ac}.
\item[6)] \emph{The uniform-order-continuity is exactly the uniform continuity with respect to some uniform
structure on complete random normed modules.}

See the paragraph after Proposition \ref{prop:impl_UC}.
\end{itemize}
\section*{Acknowledgements}
The authors are grateful to Hannah Geiss, Stefan Geiss and Timo Schultz for the useful discussions on
the topics of this paper, as well as to Tiexin Guo for the frequent academic exchanges.
A.\ Kubin was supported by the Academy of Finland grant 347550. 
E.\ Pasqualetto was supported by the Research Council of Finland grant 362898.
%
%
%
%

\begin{thebibliography}{10}

\bibitem{atw}
{\sc F.~Almgren, J.~E. Taylor, and L.~Wang}, {\em Curvature-driven flows: a variational approach}, SIAM Journal on Control and Optimization, 31 (1993), pp.~387--438.

\bibitem{Amb:Fus:Pal:00}
{\sc L.~Ambrosio, N.~Fusco, and D.~Pallara}, {\em Functions of bounded variation and free discontinuity problems}, Clarendon Press, Oxford New York, 2000.

\bibitem{Applebaum}
{\sc D.~Applebaum}, {\em Martingale-Valued Measures, {O}rnstein-{U}hlenbeck Processes with Jumps and Operator Self-Decomposability in {H}ilbert Space}, Springer Berlin Heidelberg, Berlin, Heidelberg, 2006, pp.~171--196.

\bibitem{BP75}
{\sc C.~Bessaga and A.~Pe{\l}czy\'{n}ski}, {\em Selected topics in infinite-dimensional topology}, Monografie matematyczne, Polska Akademia Nauk. Instytut Matematyczny, 1975.

\bibitem{Bog:07}
{\sc V.~I. Bogachev}, {\em Measure theory. {V}ol. {I}, {II}}, Springer-Verlag, Berlin, 2007.

\bibitem{BreGig23}
{\sc C.~Brena and N.~Gigli}, {\em Calculus and fine properties of functions of bounded variation on {R}{C}{D} spaces}, The Journal of Geometric Analysis, 34 (2023), p.~11.

\bibitem{BreGig}
{\sc C.~Brena and N.~Gigli}, {\em Local vector measures}, Journal of Functional Analysis, 286 (2024), p.~110202.

\bibitem{BruePasqualettoSemola22}
{\sc E.~Bru\`{e}, E.~Pasqualetto, and D.~Semola}, {\em Rectifiability of the reduced boundary for sets of finite perimeter over ${{\mathrm{RCD}}(K,N)}$ spaces}, J. Eur. Math. Soc., 25 (2022), pp.~413--465.

\bibitem{CLP}
{\sc M.~Cakovi{\'c}, D.~Lu{\v{c}}i{\'c}, and E.~Pasqualetto}, {\em Tensor products of measurable {B}anach bundles}, Ann. Funct. Anal., 17 (2026).

\bibitem{CGP}
{\sc E.~Caputo, N.~Gigli, and E.~Pasqualetto}, {\em Parallel transport on non-collapsed ${{\mathrm{RCD}}(K,N)}$ spaces}, J. Reine Angew. Math., 2025 (2025), pp.~135--204.

\bibitem{CerreiaVioglioKupperMaccheroniMarinacciVogelpoth16}
{\sc S.~Cerreia-Vioglio, M.~Kupper, F.~Maccheroni, M.~Marinacci, and N.~Vogelpoth}, {\em Conditional ${L}^p$-spaces and the duality of modules over $f$-algebras}, J. Math. Anal. Appl., 444 (2016), pp.~1045--1070.

\bibitem{CerreiaVioglioMaccheroniMarinacci17}
{\sc S.~Cerreia-Vioglio, F.~Maccheroni, and M.~Marinacci}, {\em Hilbert ${A}$-modules}, J. Math. Anal. Appl., 446 (2017), pp.~970--1017.

\bibitem{DMLP25}
{\sc S.~Di~Marino, D.~Lu\v{c}i\'{c}, and E.~Pasqualetto}, {\em Representation theorems for normed modules}, Rev. Real Acad. Cienc. Exactas Fis. Nat. Ser. A-Mat., 119 (2025).

\bibitem{DiestelUhl}
{\sc J.~Diestel and J.~Uhl}, {\em Vector measures}, vol.~15, (AMS) Mathematical Surveys, 1977.

\bibitem{evans2015measure}
{\sc L.~Evans and R.~Gariepy}, {\em Measure Theory and Fine Properties of Functions, Revised Edition}, Textbooks in Mathematics, CRC Press, 2015.

\bibitem{FilKupVog09}
{\sc D.~Filipovi{\'c}, M.~Kupper, and N.~Vogelpoth}, {\em Separation and duality in locally ${L}^0$-convex modules}, J. Funct. Anal., 256 (2009), pp.~3996--4029.

\bibitem{fre200}
{\sc D.~H. Fremlin}, {\em Measure theory}, vol.~4, Torres Fremlin, 2000.

\bibitem{gigli2017}
{\sc N.~Gigli}, {\em Lecture notes on differential calculus on {R}{C}{D} spaces}, Publ. RIMS Kyoto Univ., 54 (2018), pp.~855--918.

\bibitem{gigli2018nonsmooth}
{\sc N.~Gigli}, {\em Nonsmooth differential geometry--an approach tailored for spaces with {R}icci curvature bounded from below}, Memoirs of the American Mathematical Society, 251 (2018).

\bibitem{GLP}
{\sc N.~Gigli, D.~Lu\v{c}i\'{c}, and E.~Pasqualetto}, {\em Duals and pullbacks of normed modules}, Isr. J. Math., 267 (2025), pp.~821--866.

\bibitem{guo1989theory}
{\sc T.~Guo}, {\em The theory of probabilistic metric spaces with applications to random functional analysis}, Master's thesis, Xi'an Jiaotong University (China), 1989.

\bibitem{guo1992}
\leavevmode\vrule height 2pt depth -1.6pt width 23pt, {\em Random metric theory and its applications}, PhD thesis, Xi'an Jiaotong University (China), 1992.

\bibitem{guo1993}
\leavevmode\vrule height 2pt depth -1.6pt width 23pt, {\em A new approach to probabilistic functional analysis (in {C}hinese)}, in Proceedings of the first China Postdoctoral Academic Conference, The China National Defense and Industry Press, Beijing, 1993, pp.~1150--1154.

\bibitem{Guo-1995}
\leavevmode\vrule height 2pt depth -1.6pt width 23pt, {\em Extension theorems of continuous random linear operators on random domains}, Journal of Mathematical Analysis and Applications, 193 (1995), pp.~15--27.

\bibitem{Guo1996}
\leavevmode\vrule height 2pt depth -1.6pt width 23pt, {\em The {R}adon-{N}ikodym property of conjugate {B}anach spaces and \(w^*\)-equivalence theorems of \(w^*\)-measurable functions}, Sci. China Ser. A, 39 (1996), pp.~1034--1041.

\bibitem{guo1999}
\leavevmode\vrule height 2pt depth -1.6pt width 23pt, {\em Some basic theories of random normed linear spaces and random inner product spaces}, Acta Anal. Funct. Appl., 1 (1999), pp.~160--184.

\bibitem{Guo2000}
\leavevmode\vrule height 2pt depth -1.6pt width 23pt, {\em Representation theorems of the dual of {L}ebesgue-{B}ochner function spaces}, Sci. China Ser. A, 43 (2000), pp.~234--243.

\bibitem{Guo2008}
\leavevmode\vrule height 2pt depth -1.6pt width 23pt, {\em The relation of {B}anach--{A}laoglu theorem and {B}anach-{B}ourbaki-{K}akutani-{\v S}mulian theorem in complete random normed modules to stratification structure}, Sci. China Ser. A, 51 (2008), pp.~1651--1663.

\bibitem{Guo10}
\leavevmode\vrule height 2pt depth -1.6pt width 23pt, {\em Relations between some basic results derived form two kinds of topologies for a random locally convex module}, J. Funct. Anal., 258 (2010), pp.~3024--3047.

\bibitem{Guo-2011}
\leavevmode\vrule height 2pt depth -1.6pt width 23pt, {\em Recent progress in random metric theory and its applications to conditional risk measures}, Science China Mathematics, 54 (2011), pp.~633--660.

\bibitem{guo2013}
\leavevmode\vrule height 2pt depth -1.6pt width 23pt, {\em On some basic theorems of continuous module homomorphisms between random normed modules}, J. Func. Spaces,  (2013), p.~989102.

\bibitem{Guo24}
\leavevmode\vrule height 2pt depth -1.6pt width 23pt, {\em Optimization of conditional convex risk measures}, in Proceedings of the 8th International Congress of Chinese Mathematicians (Beijing, 2019), vol.~1, International Press of Boston, Somerville, 2024, pp.~347--371.

\bibitem{GuoLi05}
{\sc T.~Guo and S.~Li}, {\em The {J}ames theorem in complete random normed modules}, J. Math. Anal. Appl., 308 (2005), pp.~257--265.

\bibitem{GuoMuTu}
{\sc T.~Guo, X.~Mu, and Q.~Tu}, {\em Relations among the notions of various kinds of stability and applications}, Banach Journal of Mathematical Analysis, 18 (2024).

\bibitem{GuoTuMuSun26}
{\sc T.~Guo, Q.~Tu, X.~Mu, and Y.~Sun}, {\em Survey of {M}etric fixed point theory in random functional analysis}.
\newblock Preprint arXiv:2603.27963, 2026.

\bibitem{GuoWangTang23}
{\sc T.~Guo, Y.~Wang, and Y.~Tang}, {\em The {K}rein-{M}ilman theorem in random locally convex modules and its applications (in {C}hinese)}, Sci. Sinica Math, 53 (2023), pp.~1667--1684.

\bibitem{GuoWangXuYuanChen25}
{\sc T.~Guo, Y.~Wang, H.~Xu, G.~Yuan, and G.~Chen}, {\em A noncompact {S}chauder fixed point theorem in random normed modules and its applications}, Math. Ann., 391 (2025), pp.~3863--3911.

\bibitem{GuoWangYangZhang20}
{\sc T.~Guo, W.~Y.C., B.~Yang, and E.~Zhang}, {\em On $d$-$\sigma$-stability in random metric spaces and its applications}, J. Nonlinear Convex Anal., 21 (2020), pp.~1297--1316.

\bibitem{GuoYou96}
{\sc T.~Guo and Z.~You}, {\em The {R}iesz's representation theorem in complete random inner product modules and its applications}, Chin. Ann. Math. Ser. A, 17 (1997), pp.~361--364.

\bibitem{GuoZeng10}
{\sc T.~Guo and X.~Zeng}, {\em Random strict convexity and random uniform convexity in random normed modules}, Nonlinear Anal., 73 (2010), pp.~1239--1263.

\bibitem{GuoZhangWuYangYuanZeng17}
{\sc T.~Guo, E.~Zhang, W.~M.Z., B.~Yang, G.~Yuan, and X.~Zeng}, {\em On random convex analysis}, J. Nonlinear Convex Anal., 18 (2017), pp.~1967--1996.

\bibitem{GuoZhangWangGuo20}
{\sc T.~Guo, E.~Zhang, Y.~Wang, and Z.~Guo}, {\em Two fixed point theorems in complete random normed modules and their applications to backward stochastic equations}, J. Math. Anal. Appl., 483 (2020), p.~123644.

\bibitem{GuoZhaoZeng14}
{\sc T.~Guo, S.~Zhao, and X.~Zeng}, {\em The relations among the three kinds of conditional risk measures}, Sci. China Math., 57 (2014), pp.~1753--1764.

\bibitem{GuoZhaoZeng15}
\leavevmode\vrule height 2pt depth -1.6pt width 23pt, {\em Random convex analysis ({I}): separation and {F}enchel-{M}oreau duality in random locally convex modules (in {C}hinese)}, Sci. Sinica Math., 45 (2015), pp.~1961--1980.

\bibitem{Hartig83}
{\sc D.~G. Hartig}, {\em The {R}iesz {R}epresentation {T}heorem {R}evisited}, The American Mathematical Monthly, 90 (1983), pp.~277--280.

\bibitem{HLR91}
{\sc R.~Haydon, M.~Levy, and Y.~Raynaud}, {\em Randomly normed spaces}, vol.~41 of Travaux en Cours [Works in Progress], Hermann, Paris, 1991.

\bibitem{HKST:15}
{\sc J.~Heinonen, P.~Koskela, N.~Shanmugalingam, and J.~T. Tyson}, {\em Sobolev spaces on metric measure spaces. An approach based on upper gradients}, vol.~27 of New Mathematical Monographs, Cambridge University Press, Cambridge, 2015.

\bibitem{HinzRocknerTeplyaev13}
{\sc M.~Hinz, M.~R\"{o}ckner, and A.~Teplyaev}, {\em Vector analysis for {D}irichlet forms and quasilinear {P}{D}{E} and {S}{P}{D}{E} on metric measure spaces}, Stochastic Processes and their Applications, 123 (2013), pp.~4373--4406.

\bibitem{Iko:Pas:Sou:22}
{\sc T.~Ikonen, E.~Pasqualetto, and E.~Soultanis}, {\em Abstract and concrete tangent modules on {L}ipschitz differentiability spaces}, Proc. Amer. Math. Soc., 150 (2022), pp.~327--343.

\bibitem{IonescuRogersTeplayev12}
{\sc M.~Ionescu, L.~G. Rogers, and A.~Teplyaev}, {\em Derivations and {D}irichlet forms on fractals}, Journal of Functional Analysis, 263 (2012), pp.~2141--2169.

\bibitem{luvcic2024axiomatic}
{\sc D.~Lu{\v{c}}i{\'c} and E.~Pasqualetto}, {\em An axiomatic theory of normed modules via riesz spaces}, The Quarterly Journal of Mathematics, 75 (2024), pp.~1429--1479.

\bibitem{LP19}
{\sc D.~Lu\v{c}i\'{c} and E.~Pasqualetto}, {\em The {S}erre-{S}wan theorem for normed modules}, Rend. Circ. Mat. Palermo (2), 68 (2019), pp.~385--404.

\bibitem{LP24}
\leavevmode\vrule height 2pt depth -1.6pt width 23pt, {\em An axiomatic theory of normed modules via {R}iesz spaces}, The Quarterly Journal of Mathematics, 75 (2024), pp.~1429--1479.

\bibitem{LPV22}
{\sc M.~Lu\v{c}i\'{c}, E.~Pasqualetto, and I.~Vojnovi\'{c}}, {\em On the reflexivity properties of {B}anach bundles and {B}anach modules}, Banach J. Math. Anal., 18 (2024).

\bibitem{maggi2012sets}
{\sc F.~Maggi}, {\em Sets of finite perimeter and geometric variational problems: an introduction to Geometric Measure Theory}, vol.~135, Cambridge University Press, 2012.

\bibitem{Pas23}
{\sc E.~Pasqualetto}, {\em Projective and injective tensor products of {B}anach ${L}^0$-modules}, Ann. Funct. Anal., 15 (2024).

\bibitem{Pas24}
\leavevmode\vrule height 2pt depth -1.6pt width 23pt, {\em Limits and colimits in the category of {B}anach ${L}^0$-modules}, Rend. Semin. Mat. Univ. Padova, 154 (2025), pp.~105--142.

\bibitem{Phelps03}
{\sc R.~R. Phelps}, {\em Lectures on {C}hoquet's {T}heorem}, Lecture Notes in Mathematics, Springer Berlin Heidelberg, 2003.

\bibitem{Pis16}
{\sc G.~Pisier}, {\em Martingales in {B}anach {S}paces}, Cambridge Studies in Advanced Mathematics, Cambridge University Press, 2016.

\bibitem{royden2010real}
{\sc H.~Royden and P.~Fitzpatrick}, {\em Real {A}nalysis}, Prentice Hall, 2010.

\bibitem{Ryan02}
{\sc R.~A. Ryan}, {\em Introduction to {T}ensor {P}roducts of {B}anach {S}paces}, Springer Monographs in Mathematics, Springer London, 2002.

\bibitem{SS1983}
{\sc B.~Schweizer and A.~Sklar}, {\em Probabilistic Metric Spaces}, Elsevier/North-Holland, New York,; reissued by Dover Publications, New York (2005), 1983.

\bibitem{sousi2013advanced}
{\sc P.~Sousi}, {\em Advanced probability}, University of Cambridge, Cambridge,  (2013).

\bibitem{SunGuoTu25}
{\sc Y.~Sun, T.~Guo, and Q.~Tu}, {\em A fixed point theorem for random asymptotically nonexpansive mappings}, New York J. Math., 31 (2025), pp.~182--194.

\bibitem{TuMuGuo24}
{\sc Q.~Tu, X.~Mu, and T.~Guo}, {\em The random {M}arkov-{K}akutani fixed point theorem in a random locally convex module}, New York J. Math., 30 (2024), pp.~1196--1219.

\bibitem{TuMuGuoYangSun25}
{\sc Q.~Tu, X.~Mu, T.~Guo, G.~Yang, and Y.~Sun}, {\em The random {K}akutani fixed point theorem in random normed modules}, New York J. Math., 31 (2025), pp.~1543--1564.

\bibitem{Weaver99}
{\sc N.~Weaver}, {\em Lipschitz algebras}, World Scientific Publishing Co., Inc., River Edge, NJ, 1999.

\bibitem{WuGuo15}
{\sc M.~Wu and T.~Guo}, {\em A counterexample shows that not every locally ${L}^{0}$-convex topology is necessarily induced by a family of ${L}^0$-seminorms}.
\newblock Preprint arXiv:1501.04400, 2015.

\bibitem{yip1998stochastic}
{\sc N.~K. Yip}, {\em Stochastic motion by mean curvature}, Archive for rational mechanics and analysis, 144 (1998), pp.~313--355.

\bibitem{Zapata17}
{\sc J.~Zapata}, {\em On the characterization of locally ${L}^0$-convex topologies induced by a family of ${L}^0$-seminorms}, J. Convex Anal., 24 (2017), pp.~383--391.

\end{thebibliography}
%
%

%
%
%
%
\end{document}